\documentclass{scrartcl}
\usepackage{preamble}

\begin{document}

\maketitle

\begin{abstract}
     \small{\textsc{Abstract.} We study de Rham character sheaves on a commutative connected algebraic group $G$, defined as multiplicative line bundles with integrable connection. We construct a group algebraic space $G^\flat$ representing their moduli problem on seminormal test schemes, and we investigate its functoriality and geometry. The main technical ingredient is a study of extension sheaves on the de Rham space $G_\text{dR}$. An appendix provides self-contained, elementary proofs of basic results on de Rham spaces that may be of independent interest.}
\end{abstract}

\section{Introduction}
\subsection*{Motivation from number theory}
Let $p$ and $\ell$ be distinct prime numbers, and let $G$ be a commutative connected algebraic group over $\mathbb{F}_p$. Denoting by $\operatorname{Fr}_G\colon G\to G$ the arithmetic Frobenius, the \emph{Lang isogeny}
\[\operatorname{L}_G\colon G \to G, \qquad x \mapsto \operatorname{Fr}_G(x)\,x^{-1}.\]
is a finite étale Galois covering with automorphism group $G(\mathbb{F}_p)$. Consequently, $\operatorname{L}_G$ realizes $G(\mathbb{F}_p)$ as a quotient of $\pi_1^\text{ét}(G)$. A character $\chi\colon G(\mathbb{F}_p)\to \overline{\mathbb{Q}}_\ell^\times$ then naturally induces a rank-one $\ell$-adic representation of $\pi_1^\text{ét}(G)$:
\[
\pi_1^\text{ét}(G)\twoheadrightarrow G(\mathbb{F}_p)\xrightarrow{\ \chi^{-1}\ } \overline{\mathbb{Q}}_\ell^\times.
\]
Equivalently, we obtain a rank-one $\ell$-adic local system on $G$ that we denote by $\mathscr{L}_\chi$.

\begin{samepage}
Write $m\colon G\times G\to G$ for the group law. Among rank-one $\ell$-adic local systems on $G$, the local systems above are precisely the \emph{multiplicative} ones; those admitting an isomorphism
\[
m^*\mathscr{L}\simeq \mathscr{L}\boxtimes \mathscr{L},
\]
reflecting the identity $\chi(xy)=\chi(x)\chi(y)$. More precisely, the assignment
\[\begin{array}{c}
	\left\{\begin{array}{c}
	\text{Characters}\\
	\chi\colon G(\mathbb{F}_p)\to \overline{\mathbb{Q}}_\ell^\times
	\end{array}\right\}
	\end{array} \longrightarrow \begin{array}{c}
	\left\{\begin{array}{c}
	\text{Isomorphism classes of rank-one }\ell\text{-adic local}\\
	\text{systems }\mathscr{L}\text{ on }G\text{ with }m^*\mathscr{L}\simeq\mathscr{L}\boxtimes\mathscr{L}
	\end{array}\right\}
	\end{array}\]
is an isomorphism of groups \cite[Lem.\ 2.16]{sawin2021ramanujan}.
\end{samepage}

This geometrization of characters has had far-reaching consequences in number theory. It underlies Deligne's $\ell$-adic Fourier transform, which Laumon used to give a striking simplification of Deligne's proof of Weil~II \cite{weilII,laumontransf}. Since then, Deligne's theory of weights has remained a central input in analytic number theory through trace functions and character sums; see \cite{deligne1971formes,KL,kowalski2017bilinear,forey2025arithmeticfouriertransformsfinite} for some remarkable examples.

\subsection*{Character sheaves}
Henceforth, we assume that the base field $k$ has characteristic zero. Motivated by the discussion above, we introduce the following \emph{de Rham} counterpart of the multiplicative local systems $\mathscr{L}_\chi$.

\begin{definition*}
	Let $S$ be a $k$-scheme. A \emph{character sheaf on} $G$ \emph{relative to $S$} is a line bundle $\mathscr{L}$ on $G_S$ equipped with an integrable connection $\nabla$ relative to $S$ satisfying
    \[m_S^*(\mathscr{L},\nabla)\simeq (\mathscr{L},\nabla)\boxtimes (\mathscr{L},\nabla).\]
    In the absolute case $S=\operatorname{Spec} k$, we call such an object a \emph{character sheaf on} $G$.
\end{definition*}

For reasons that will soon become apparent, we denote by $\mathrm{H}^1_m(G_\text{dR}\times S,\mathbb{G}_m)$ the group of isomorphism classes of character sheaves on $G$ relative to $S$. To get a first grip on these objects, we recall the Barsotti--Chevalley theorem: the algebraic group $G$ fits into a short exact sequence
\begin{equation*}
    0\to T\times U\to G\to A\to 0,
\end{equation*}
where $T$ is a torus, $U$ is a unipotent group, and $A$ is an abelian variety. The groups $\mathrm{H}^1_m(G_\text{dR},\mathbb{G}_m)$ have straightforward descriptions for these building blocks.

\begin{example*}
	In characteristic zero, a unipotent group $U$ is necessarily a vector group. The group $\mathrm{H}^1_m(U_\text{dR},\mathbb{G}_m)$ then identifies with the dual vector space $U^*$. Choosing coordinates $x_1,\dots,x_n$ on $U$, the isomorphism is given by
	\begin{align*}
		k^n &\to \mathrm{H}^1_m(U_\text{dR},\mathbb{G}_m)\\
		(\chi_1,\dots,\chi_n) &\mapsto \left(\mathcal{O}_U,\mathrm{d}-\chi_1\,\mathrm{d}x_1-\dots -\chi_n\,\mathrm{d}x_n\right).
	\end{align*}

    Given a torus $T$ with character group $X$ and Lie algebra $\mathfrak{t}$, the group $\mathrm{H}^1_m(T_\text{dR},\mathbb{G}_m)$ is isomorphic to $\mathfrak{t}^*/X$. After a finite extension of $k$, we may assume that $T$ is split with coordinates $t_1,\dots,t_r$. The isomorphism is then explicitly given by
	\begin{align*}
		(k/\mathbb{Z})^r &\to \mathrm{H}^1_m(T_\text{dR},\mathbb{G}_m)\\
		(\chi_1,\dots,\chi_r) &\mapsto \left(\mathcal{O}_T,\mathrm{d}-\chi_1\,\frac{\mathrm{d}t_1}{t_1}-\dots -\chi_r\,\frac{\mathrm{d}t_r}{t_r}\right).
	\end{align*}
	
	For abelian varieties $A$, every line bundle with integrable connection is a character sheaf. This is because a line bundle equipped with an integrable connection has a vanishing first Chern class, implying that it lies in $\operatorname{Pic}^0(A)$. (See \cite[Lem.~2.1.1]{Lau}.)
\end{example*}

\subsection*{A representability theorem}
In \cite{mazur2006universal}, Mazur and Messing constructed a moduli space of line bundles with integrable connection on an abelian variety. By the discussion above, this is equivalently a moduli space of character sheaves. This construction may be viewed as an early instance of the moduli spaces that later became central in non-abelian Hodge theory \cite{simpson1994moduli}. For affine groups, however, the collection of line bundles with flat connection is typically far too large to be algebro-geometric in any reasonable sense.\footnote{For instance, the "moduli space" of line bundles with integrable connection on $\mathbb{A}^1$ is the ind-scheme $\mathbb{A}^\infty = \operatorname{colim}_n \mathbb{A}^n$, whereas the moduli space of character sheaves on $\mathbb{G}_a$ is $\mathbb{G}_a$ itself.} Imposing the character condition restores finiteness and leads to a well-behaved moduli problem.

\renewcommand{\thedummy}{\Alph{dummy}}
\begin{theorem}[\ref{G flat is an algebraic space}]\label{thm A}
	There exists a smooth commutative connected group algebraic space $G^\flat$ satisfying $\dim G\leq \dim G^\flat\leq 2\dim G$, whose $S$-points parametrize character sheaves on $G$ relative to $S$ for every seminormal $k$-scheme $S$.
\end{theorem}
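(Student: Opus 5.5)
The plan is to build $G^\flat$ in two ways at once. It will be an extension of the moduli of the underlying multiplicative line bundles by the moduli of multiplicative connections on the trivial bundle. Concretely, I would first rewrite character sheaves relative to $S$ as multiplicative $\mathbb{G}_m$-torsors on $G_{\mathrm{dR}}\times S$, hence as classes in $\operatorname{Ext}^1(G_{\mathrm{dR}},\mathbb{G}_m)(S)$; this is the point of the notation $\mathrm{H}^1_m$. The key input is a study of the extension sheaves $\mathcal{E}xt^i(G_{\mathrm{dR}},\mathbb{G}_m)$ for $i=0,1$ on the big fppf (or étale) site. Here I will use the de Rham space results from the appendix, in particular that $G\to G_{\mathrm{dR}}$ is an epimorphism with kernel the formal completion $\widehat{G}$.

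Applying $\operatorname{Hom}(-,\mathbb{G}_m)$ to $0\to\widehat G\to G\to G_{\mathrm{dR}}\to 0$ gives a long exact sequence of sheaves
\[0\to \mathcal{H}om(G_{\mathrm{dR}},\mathbb{G}_m)\to \mathcal{H}om(G,\mathbb{G}_m)\to \mathcal{H}om(\widehat G,\mathbb{G}_m)\to \mathcal{E}xt^1(G_{\mathrm{dR}},\mathbb{G}_m)\to \mathcal{E}xt^1(G,\mathbb{G}_m).\]
The terms can be identified as follows. We have $\mathcal{H}om(\widehat G,\mathbb{G}_m)=\widehat G^\vee\simeq \operatorname{Lie}(G)^*\otimes\mathbb{G}_a$, using the formal exponential in characteristic zero, i.e.\ invariant differentials. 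The sheaf $\mathcal{H}om(G,\mathbb{G}_m)=X^*(T)$ is a lattice, up to Galois twist. Finally $\mathcal{E}xt^1(G,\mathbb{G}_m)$ is representable by an extension of the dual abelian variety $A^\vee$ by a group that is étale-locally discrete, namely $\mathcal{E}xt^1$ of the torus and vector parts, which vanish or are discrete. The image of $\mathcal{E}xt^1(G_{\mathrm{dR}},\mathbb{G}_m)$ in it lands in the subgroup of multiplicative line bundles that admit a connection. On seminormal $S$ I expect this subgroup to be $A^\vee$, or an extension of $A^\vee$ by a finite étale group, and hence smooth of dimension at most $\dim A$.

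From this I get a short exact sequence
\[0\to \operatorname{Lie}(G)^*\otimes\mathbb{G}_a/X^*(T)\to G^\flat\to \mathcal{P}\to 0,\]
where $\mathcal{P}$ is the smooth group of line bundles in $\operatorname{Pic}^0$-type that carry connections. The quotient $\mathfrak g^*/X$ is a quotient of a vector group by a lattice acting freely. It is a smooth commutative group algebraic space (non-separated in general), of dimension $\dim G$. An extension of a smooth group algebraic space by another is again a smooth group algebraic space, since the torsor is representable. Connectedness follows because both pieces are connected.

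For the dimension count, the total dimension is $\dim G+\dim\mathcal{P}$ with $0\le\dim\mathcal{P}\le\dim A\le\dim G$, which gives $\dim G\le\dim G^\flat\le 2\dim G$. The examples in the introduction are consistent with this. For abelian $A$ the answer is the Mazur--Messing universal vector extension, of dimension $2\dim A$; for tori it is $\mathfrak t^*/X$; for vector groups it is $U^*$.

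The main obstacle is the seminormality hypothesis. The sheaf $\mathcal{E}xt^1(G_{\mathrm{dR}},\mathbb{G}_m)$ computed naively on all schemes is wrong over nonreduced bases. There, $\mathbb{G}_m$-torsors on $G_{\mathrm{dR}}\times S$ and $\mathcal{H}om(\widehat G,\mathbb{G}_m)$ pick up extra nilpotent contributions: the de Rham space of $G_S$ relative to $S$ is not $G_{\mathrm{dR}}\times S$ when $S$ is not reduced. Moreover, the lattice quotient only behaves well when $\mathcal{H}om(G_{\mathrm{dR}}\times S,\mathbb{G}_m)$ is locally constant. So I would prove the comparison map from the functor of character sheaves to $G^\flat$ is bijective on seminormal $S$. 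The approach is to check it first on reduced points and fields via the exact sequence above, and then to use that a seminormal scheme is determined by its points and its normalization, so that sections of $G^\flat$ descend. Verifying this descent for the non-separated quotient $\mathfrak g^*/X$ is where I expect most of the work.
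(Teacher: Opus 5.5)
There is a genuine gap. You never specify a sheaf on all of $\textsf{Sch}/k$ that is actually representable, and the extension $0\to\mathfrak g^*/X\to G^\flat\to\mathcal{P}\to 0$ you build it from rests on misidentified terms.

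First, $\underline{\operatorname{Hom}}(G,\mathbb{G}_m)$ is not $X$. Dualizing $0\to T\times U\to G\to A\to 0$ shows it is the kernel of the connecting map $X\times\widehat{U^*}\to A'$. So it contains a formal part coming from $U^D\simeq\widehat{U^*}$, and it loses every character of $T$ that does not extend to $G$. For $G=U$, the quotient $\Omega_G/G^D$ is $U^*/\widehat{U^*}\simeq (U^*)_{\mathrm{dR}}$, which is not an algebraic space.

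Second, $\underline{\operatorname{Ext}}^1(G,\mathbb{G}_m)$ is not an extension of $A'$ by something discrete. It is an extension of $\underline{\operatorname{Ext}}^1(U,\mathbb{G}_m)$ (a nonzero, non-representable sheaf) by the quotient of $A'$ by the image of $X\times\widehat{U^*}$. Moreover $G^\natural\to G'$ is surjective because $\underline{\operatorname{Ext}}^1(\widehat G,\mathbb{G}_m)=0$, so your $\mathcal{P}$ is all of $G'$.

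Even for semiabelian $G$ your sequence is wrong. Let $G$ be the extension of an elliptic curve $E$ by $\mathbb{G}_m$ given by a non-torsion point $P\in E'(\mathbb{C})$. Then $G^D=0$ and $0\to\Omega_G\to G^\flat\to E'/\mathbb{Z}P\to 0$ (here $G^\flat=G^\natural$), so neither the sub nor the quotient is what you predicted. Since $G^\natural=\underline{\operatorname{Ext}}^1(G_{\mathrm{dR}},\mathbb{G}_m)$ has these two non-representable pieces whenever $U\neq 0$, your ``comparison map to $G^\flat$'' and your descent along normalizations have no well-defined target.

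The missing idea is to replace $G^\natural$ by an explicit sheaf. The paper sets $K=\underline{\operatorname{Ext}}^1([\widehat G\to A],\mathbb{G}_m)\simeq(\Omega_G\times A^\natural)/\Omega_A$, an honest algebraic group extending $A'$ by $\Omega_G$. It then defines $G^\flat=K/X$, i.e.\ the Laumon dual $K/(X\times\widehat{U^*})$ with the formal group $\widehat{U^*}$ discarded. It shows that $K/(X\times\widehat{U^*})\to G^\natural$ is a monomorphism with cokernel $\underline{\operatorname{Ext}}^1(U,\mathbb{G}_m)$. Representability and $\dim G^\flat=\dim G+\dim A$ then come from the exact sequence $0\to A^\natural\to G^\flat\to\Omega_T/X\times U^*\to 0$.

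This also shows you have misplaced the role of seminormality. Reducedness alone handles the nilpotent issues:
\begin{enumerate}
\item It gives $\operatorname{Ext}^1_S(G_{\mathrm{dR}},\mathbb{G}_m)\simeq\mathrm{H}^1_m(G_{\mathrm{dR}}\times S,\mathbb{G}_m)$ via a Rosenlicht-type lemma over reduced bases, a step you assert without proof.
\item It kills $\widehat{U^*}(S)$ and $\mathrm{H}^1(S,\widehat{U^*})$, so $K/X$ and $K/(X\times\widehat{U^*})$ agree on $S$.
\end{enumerate}
Seminormality enters only through $\underline{\operatorname{Ext}}^1(U,\mathbb{G}_m)(S)=0$. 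This follows from Traverso's theorem $\operatorname{Pic}(S)\simeq\operatorname{Pic}(U_S)$, together with the fact that a multiplicative torsor pulled back from $S$ is trivial. No descent argument for the non-separated quotient is needed.
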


\begin{proof}[Sketch of the proof]
	The inspiration for this construction is the Barsotti--Weil formula \cite[Cor.~3.6]{ribeiro2025}. For an abelian variety $A$, this formula asserts that the extension sheaf $\underline{\operatorname{Ext}}^1(A,\mathbb{G}_m)$, computed in the category of abelian sheaves on the fppf site $(\textsf{Sch}/k)_\text{fppf}$, is representable by the dual abelian variety $A^\prime$. This is a moduli space of line bundles $\mathscr{L}$ on $A$ satisfying $m^*\mathscr{L}\simeq \mathscr{L}\boxtimes \mathscr{L}$.
	
	The next ingredient is the so-called \emph{de Rham space}. For a smooth algebraic variety $X$, line bundles on the de Rham space $X_\text{dR}$ correspond to line bundles with flat connection on $X$. Consequently, it can be shown that the abelian sheaf $A^\natural\colonequals \underline{\operatorname{Ext}}^1(A_\text{dR},\mathbb{G}_m)$ parametrizes character sheaves on $A$ and is representable by a commutative connected algebraic group. This provides a simpler construction of the Mazur--Messing algebraic group.
	
	For a general commutative connected algebraic group $G$ and a reduced $k$-scheme $S$, Theorem~\ref{global de rham} provides a functorial isomorphism between the $S$-points of the sheaf $G^\natural\colonequals \underline{\operatorname{Ext}}^1(G_\text{dR},\mathbb{G}_m)$ and the group $\mathrm{H}^1_m(G_\text{dR}\times S,\mathbb{G}_m)$. If $G$ is semiabelian (i.e., an extension of an abelian variety by a torus), then this abelian sheaf is representable by a commutative connected group algebraic space, thereby establishing the theorem.

    However, when $G$ contains a unipotent subgroup $U$, the sheaf $G^\natural$ is no longer representable by an algebraic space. Proposition~\ref{computation of ker gamma} shows that the issue comes from a quotient of $G^\natural$ isomorphic to the abelian sheaf $\underline{\operatorname{Ext}}^{1}(U,\mathbb{G}_m)$, studied in~\cite{ribeiro2025}, which is nonzero but vanishes on all seminormal $k$-schemes. Discarding this quotient produces the desired moduli space $G^\flat$.
\end{proof}

For the reader's convenience, we record that every smooth $k$-scheme is seminormal. The need for this regularity hypothesis stems from the fact that $G$ need not be proper. By contrast, when $G$ is an abelian variety, the $S$-points of $G^\flat$ parametrize character sheaves on $G$ relative to $S$ for \emph{every} $k$-scheme $S$. A similar phenomenon occurs in \cite{rosengarten2025restrictedpicardfunctor}, where the author proves the (almost-)representability of the Picard functor for non-proper schemes after restricting to smooth test schemes.

\subsection*{Functorial and geometric properties of the moduli space}
Motivated by the analogy between the moduli space $G^\flat$ and the Pontryagin dual of a locally compact abelian group, one might expect that a short exact sequence
\[0\to G_1\to G_2\to G_3\to 0\]
of commutative connected algebraic groups induces a short exact sequence
\begin{equation}\tag{$\ast$}\label{dual SES}
    0\to G_3^\flat\to G_2^\flat\to G_1^\flat\to 0
\end{equation}
of abelian fppf sheaves. Somewhat surprisingly, this fails in general; see Example~\ref{flats are not exact} for a counterexample in which $G_1$ is an elliptic curve and $G_3=\mathbb{G}_m$. Nevertheless, the following theorem shows that~\eqref{dual SES} is exact in many cases.

\begin{theorem}[\ref{exact sequence of flats}, \ref{exact sequence of flats abelian}, \ref{extensions of linear groups}]
The sequence~\textnormal{(\ref{dual SES})} is always left exact and right exact. Moreover, it is exact in the middle in each of the following cases:
\begin{enumerate}
\item $G_1$ is linear and $G_3$ is an abelian variety;
\item $G_1,G_2,G_3$ are all linear;
\item $G_1,G_2,G_3$ are all abelian varieties.
\end{enumerate}
\end{theorem}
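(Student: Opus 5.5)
The plan is to work first at the level of the extension sheaves $G^\natural=\underline{\operatorname{Ext}}^1(G_{\mathrm{dR}},\mathbb{G}_m)$ and then pass to $G^\flat$ by discarding the unipotent defect. Since $(-)_{\mathrm{dR}}$ preserves the short exact sequence $0\to G_1\to G_2\to G_3\to 0$ (fppf-locally $G_2\to G_3$ has sections, and de Rham spaces commute with products), applying $\underline{\operatorname{Hom}}(-,\mathbb{G}_m)$ gives a long exact sequence
\[
\underline{\operatorname{Hom}}(G_{1,\mathrm{dR}},\mathbb{G}_m)\to G_3^\natural\to G_2^\natural\to G_1^\natural\to \underline{\operatorname{Ext}}^2(G_{3,\mathrm{dR}},\mathbb{G}_m)\to\cdots.
\]

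\emph{Left exactness.} For connected $G_1$, the sheaf $\underline{\operatorname{Hom}}(G_{1,\mathrm{dR}},\mathbb{G}_m)$ vanishes. A homomorphism from $G_{1,\mathrm{dR}}$ is a character of $G_1$ that is locally constant, hence trivial. So $G_3^\natural\to G_2^\natural$ is injective. I would then check that this injectivity survives passage to the quotients $G^\flat$. The kernel of $G^\natural\to G^\flat$ is the functorial piece coming from $\underline{\operatorname{Ext}}^1(U,\mathbb{G}_m)$ (Proposition~\ref{computation of ker gamma}), and since both sides are representable by reduced (smooth) algebraic spaces, it suffices to test on seminormal $S$. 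There the comparison with $\mathrm{H}^1_m(G_{\mathrm{dR}}\times S,\mathbb{G}_m)$ of Theorem~\ref{global de rham} applies. Concretely, a character sheaf on $G_3$ whose pullback to $G_2$ is trivial is trivial, by descent along the smooth surjection $G_2\to G_3$ together with the vanishing of Hom above.

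\emph{Right exactness.} Here I need surjectivity of $G_2^\flat\to G_1^\flat$ as fppf sheaves. Since $G_1^\flat$ is smooth and $G_2^\flat\to G_1^\flat$ is a homomorphism of smooth connected group algebraic spaces, it suffices to show that the image has full dimension. I would show this on $\bar k$-points: every character sheaf on $G_1$ extends to $G_2$. The obstruction lies in $\operatorname{Ext}^2(G_{3,\mathrm{dR}},\mathbb{G}_m)$. I would show that this group vanishes, or at least that its connecting image is zero, by dévissage via Barsotti--Chevalley on $G_3$ into torus, vector group and abelian variety. The alternative is a dimension count, using that $\dim G_i^\flat$ can be computed from the building blocks. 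Indeed, left exactness plus middle exactness up to a finite group gives $\dim G_2^\flat\le \dim G_1^\flat+\dim G_3^\flat$, and the additivity of dimensions of the pieces forces surjectivity.

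\emph{Middle exactness in cases (1)--(3).} The composite $G_3^\flat\to G_1^\flat$ is zero, so I would compute the defect $\ker/\operatorname{im}$. At the $\natural$-level this is exact, so the defect comes only from the kernels $K_i$ of $G_i^\natural\to G_i^\flat$. By the snake lemma it is controlled by the failure of $K_2\to K_1$ to be surjective. I expect this to be the main obstacle. The $K_i$ are the $\underline{\operatorname{Ext}}^1(U_i,\mathbb{G}_m)$ pieces, and whether unipotent parts split off compatibly depends on the extension. In case (2) (all linear), each $G_i$ splits as $T_i\times U_i$ and the sequence splits into torus and unipotent sequences. There $K$ is exact by~\cite{ribeiro2025}, which handles the vector-group sequence, while the toric part is exact directly from $\mathfrak{t}^*/X$. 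In case (3) there is no unipotent part, $K_i=0$, and exactness follows from the $\natural$-sequence (or Mazur--Messing). In case (1), $G_2$ has unipotent radical $U_2=U_1$, since $G_3$ has none, so $K_2\to K_1$ is an isomorphism. The counterexample (elliptic curve by $\mathbb{G}_m$) shows that the hard part is precisely when $G_3$ has linear part interacting nontrivially with an abelian $G_1$, which is excluded here.
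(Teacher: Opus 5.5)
Cases (2) and (3) match the paper, once (2) is read as the direct computation with $\Omega_T/X\times U^*$ and the nine lemma. Your injectivity argument is also fine, by a different route. The paper applies the snake lemma to the Barsotti--Chevalley diagram. You instead combine $\underline{\operatorname{Hom}}(G_{1,\text{dR}},\mathbb{G}_m)=0$ with the functorial identification $G^\flat(S)\simeq G^\natural(S)$ for seminormal $S$. One correction: what must be smooth for the reduction to seminormal test schemes is the kernel of $G_3^\flat\to G_2^\flat$, not its source and target. This route does not need $T_3^\flat\to T_2^\flat$ to be injective on its own, which fails when $T_2\to T_3$ is an isogeny of degree $>1$. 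There are, however, two genuine gaps.

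\textbf{There is no map $G^\natural\to G^\flat$.} Your snake-lemma argument for middle exactness uses ``the kernels $K_i$ of $G_i^\natural\to G_i^\flat$''. Proposition~\ref{computation of ker gamma} says something different.
\begin{itemize}
\item $\gamma\colon K/L^D\to G^\natural$ is a \emph{monomorphism} with \emph{cokernel} $\underline{\operatorname{Ext}}^1(U,\mathbb{G}_m)$.
\item $G^\flat=K/X$ \emph{surjects} onto $K/L^D=K/(X\times\widehat{U^*})$, with kernel $\widehat{U^*}$.
\end{itemize}
So $G^\flat$ and $G^\natural$ are linked only by the zigzag $G^\flat\twoheadrightarrow K/L^D\hookrightarrow G^\natural$, with two defect terms. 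The paper notes it is unclear whether any natural morphism $G^\natural\to G^\flat$ exists.

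In case (1) this is repairable. There $U_1\to U_2$ is an isomorphism, so both defect complexes (built from $\widehat{U_i^*}$ and from $\underline{\operatorname{Ext}}^1(U_i,\mathbb{G}_m)$) have the form $0\to 0\to D\xrightarrow{\sim}D\to 0$ and are acyclic. Hence the complex of $G^\flat$'s is quasi-isomorphic to $0\to A^\natural\to G^\natural\to L^\natural\to 0$. That complex is exact because $\underline{\operatorname{Hom}}(L_{\text{dR}},\mathbb{G}_m)=0$ and $\underline{\operatorname{Ext}}^2(A_{\text{dR}},\mathbb{G}_m)=0$. This would be a legitimate alternative to the paper's proof, which maps $K/X\to\Omega_T/X\times\Omega_U$ via the pushout description $K\simeq(\Omega_G\times A^\natural)/\Omega_A$ and applies the nine lemma.

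\textbf{Right exactness is not proved.} Neither of your two options goes through.
\begin{itemize}
\item The first needs the connecting map $G_1^\natural\to\underline{\operatorname{Ext}}^2(G_{3,\text{dR}},\mathbb{G}_m)$ to vanish. The only available vanishing is for abelian varieties (Proposition~\ref{vanishing ext2}). D\'evissage on $G_3$ leaves you with $\underline{\operatorname{Ext}}^2(T_{3,\text{dR}},\mathbb{G}_m)$ and $\underline{\operatorname{Ext}}^2(U_{3,\text{dR}},\mathbb{G}_m)$, which nothing here controls. Vanishing of the connecting image alone depends on the extension, not just on $G_3$.
\item The dimension count is circular. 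It needs $\dim\ker(G_2^\flat\to G_1^\flat)\le\dim G_3^\flat$, which is the ``middle exactness up to a finite group'' you never establish outside cases (1)--(3). Additivity of $\dim A_i$ also needs Brion's exactness up to isogeny.
\end{itemize}

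The missing idea is to d\'evissage the \emph{sources} $G_1,G_2$ rather than $G_3$. The paper applies $(-)^\flat$ to the $3\times 3$ Barsotti--Chevalley diagram, whose columns are exact by case (1).
\begin{itemize}
\item $T_2^\flat\times U_2^\flat\to T_1^\flat\times U_1^\flat$ is onto, because $T_1\to T_2$ and $U_1\to U_2$ are monomorphisms.
\item $A_2^\flat\to A_1^\flat$ is onto after factoring $A_1\to A_1/F\hookrightarrow A_2$ with $F$ finite, using case (3) and $\underline{\operatorname{Ext}}^1(F,\mathbb{G}_m)=0$.
\end{itemize}
The long exact cohomology sequence then gives surjectivity. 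In your $\bar k$-point language, write a character sheaf on $G_1$ as $(\mathcal{O},\mathrm{d}+\omega)\otimes\psi_1^*(\mathscr{L},\nabla)$ (Corollary~\ref{character sheaves on G}). Then extend $\omega$ to $\Omega_{G_2}$ and $(\mathscr{L},\nabla)$ along $A_1\to A_2$ separately.
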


We next turn to the geometry of these moduli spaces. For an abelian variety $A$, certain \emph{big} subsets of $A^\flat$ play a central role in non-abelian Hodge theory \cite{simpson1993subspaces} and in the formulation of generic vanishing theorems for de Rham cohomology \cite{schnell2015holonomic}. For a torus $T$, similar \emph{big} subsets of $T^\flat$ were studied in \cite{sabbah1992lieu}. Here, we extend the definition of these subsets to arbitrary commutative connected algebraic groups.

\begin{definition*}[\ref{def linear subspace}, \ref{def generic subspace}]
For an epimorphism $\rho\colon G\twoheadrightarrow \widetilde{G}$ with connected kernel, the image of $\rho^\flat\colon \widetilde{G}^\flat\hookrightarrow G^\flat$ is said to be a \emph{linear subspace} of $G^\flat$. A \emph{generic subspace} of $G^\flat$ is the complement of a finite union of translates of linear subspaces of $G^\flat$ with positive codimension.
\end{definition*}

In the affine case these notions reduce to familiar linear-algebraic ones, while for abelian varieties they recover the subsets studied by Simpson and Schnell.

\begin{example*}[\ref{examples of linear subvarieties}]
Let $T$ be a torus with character group $X$ and Lie algebra $\mathfrak{t}$. A linear subspace of $T^\flat\simeq \mathfrak{t}^*/X$ is of the form $V/Y$, where $Y$ is a subgroup of $X$ and $V$ is the linear subspace of $\mathfrak{t}^*$ generated by $Y$, in the sense of linear algebra. For a unipotent group $U$, a linear subspace of $U^\flat \simeq U^*$ corresponds to a linear subspace of the underlying vector space of $U^*$, in the sense of linear algebra.

For an abelian variety $A$, linear subspaces of $A^\flat\simeq A^\natural$ were first studied by Simpson, who termed them \emph{triple tori} \cite[p.\ 365]{simpson1993subspaces}. Schnell refers to translates of linear subspaces of $A^\flat$ as \emph{linear subvarieties} \cite[Def.\ 2.3]{schnell2015holonomic}.
\end{example*}

We refer to Subsection~\ref{Subsection on generic subspaces} for a detailed study of linear and generic subspaces. For example, when $k=\mathbb{C}$, we consider the analytification $G^\flat_\text{an}$ of the moduli space $G^\flat$, and we show that if $V$ is a generic subspace of $G^\flat$, then $V_\text{an}\subset G^\flat_\text{an}$ is open and dense with respect to the analytic topology; see Proposition~\ref{generic implies analytic open dense}.

\subsection*{Comparison with Betti character sheaves}
Replacing the coefficients of de Rham cohomology by their Betti counterparts yields another natural geometrization of characters: rank-one local systems of $\mathbb{C}$-vector spaces on the analytification $G_\text{an}$.\footnote{In this setting, multiplicativity is automatic, so these are precisely the multiplicative local systems on $G_\text{an}$.} Equivalently, they correspond to characters of the topological fundamental group $\pi_1(G_\text{an})\to \mathbb{C}^\times$.

These objects have a well-studied moduli space in the literature: the character variety
\[\operatorname{Char}(G)\colonequals \operatorname{Spec}(\mathbb{C}[\pi_1(G_\text{an})]).\]
Its $\mathbb{C}$-points identify with characters $\pi_1(G_\text{an})\to\mathbb{C}^\times$, hence with rank-one local systems on $G_\text{an}$. Moreover, the Hopf algebra structure on $\mathbb{C}[\pi_1(G_\text{an})]$ endows $\operatorname{Char}(G)$ with a natural structure of algebraic group, corresponding to tensor product of local systems.

On the de Rham side, a character sheaf $(\mathscr{L},\nabla)$ on $G$ determines a rank-one local system on $G_\text{an}$ by taking horizontal sections. This construction induces a holomorphic morphism of groups
\[G^\flat_\text{an}\to \operatorname{Char}(G)_\text{an}.\]
Our final theorem, a consequence of the Riemann--Hilbert correspondence, identifies these analytic moduli spaces for semiabelian varieties.

\begin{theorem}[\ref{character sheaves and character variety}]\label{thm C}
The Riemann--Hilbert map $\normalfont G^\flat_\text{an}\to \operatorname{Char}(G)_\text{an}$ is surjective. Moreover, it is an isomorphism if and only if $G$ is a semiabelian variety.
\end{theorem}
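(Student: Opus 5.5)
The plan is to reduce to the building blocks of the Barsotti--Chevalley sequence $0\to T\times U\to G\to A\to 0$ and compare the two sides using the exactness results for $(-)^\flat$ (Theorem~\ref{exact sequence of flats}) together with the analogous exactness for character varieties. On the Betti side, the long exact homotopy sequence of the fibration $G_\text{an}\to A_\text{an}$ with fibre $T_\text{an}\times U_\text{an}\simeq (\mathbb{C}^\times)^r\times\mathbb{C}^n$ gives a short exact sequence $0\to\pi_1(T_\text{an})\to\pi_1(G_\text{an})\to\pi_1(A_\text{an})\to 0$ of finitely generated free abelian groups. This uses $\pi_2(A_\text{an})=0$ and the fact that $\pi_0$ of the fibre is trivial. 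Applying $\operatorname{Hom}(-,\mathbb{C}^\times)$ gives an exact sequence $0\to\operatorname{Char}(A)\to\operatorname{Char}(G)\to\operatorname{Char}(T)\to 0$, and $\operatorname{Char}(U)$ is trivial. The Riemann--Hilbert map is natural in $G$, so we get a morphism of sequences from $0\to A^\flat_\text{an}\to G^\flat_\text{an}\to (T\times U)^\flat_\text{an}$ to the Betti sequence.

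First we treat the building blocks. For an abelian variety, $A^\flat=A^\natural$ parametrizes all rank-one flat bundles. Riemann--Hilbert together with GAGA on the proper variety $A$ then gives a bijective holomorphic group map $A^\natural_\text{an}\to\operatorname{Char}(A)_\text{an}$ between complex Lie groups of dimension $2g$, hence an isomorphism. For a split torus, $T^\flat\simeq(k/\mathbb{Z})^r$ via $\chi\mapsto\mathrm{d}-\sum\chi_i\,\mathrm{d}t_i/t_i$. The monodromy around the $i$-th loop is $\exp(2\pi i\chi_i)$, so the map is $(\mathbb{C}/\mathbb{Z})^r\to(\mathbb{C}^\times)^r$, which is an isomorphism. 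For a vector group, $U^\flat\simeq U^*$ has positive dimension $n$, while $\operatorname{Char}(U)$ is a point. So the map is surjective but not injective when $n>0$.

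Next we handle general $G$. The map $G^\flat\to(T\times U)^\flat$ is an epimorphism by right exactness, and $A^\flat\to G^\flat$ is injective. For surjectivity, take $\rho\in\operatorname{Char}(G)$. Its restriction to $T$ lifts to some $\xi\in(T\times U)^\flat_\text{an}$ with trivial $U$-component, by the torus case. Lift $\xi$ to $\eta\in G^\flat_\text{an}$. Since $\pi_1(U_\text{an})=0$, the restriction map $\operatorname{Char}(G)\to\operatorname{Char}(T)$ factors through $\operatorname{Char}(T\times U)=\operatorname{Char}(T)$. Hence $\rho\cdot RH(\eta)^{-1}$ is trivial on $\pi_1(T_\text{an})$, so it comes from $\operatorname{Char}(A)$. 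By the abelian case it is $RH$ of some element of $A^\flat_\text{an}$, and surjectivity follows.

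For the isomorphism criterion, suppose $G$ is semiabelian, i.e.\ $U=0$. Then $0\to A^\flat\to G^\flat\to T^\flat\to 0$ is exact: middle exactness holds by case (1) of the exactness theorem, since $T$ is linear and $A$ is abelian. Exactness of the analytic sequences then follows because a surjective map of smooth group spaces is smooth and hence submersive analytically. A five-lemma argument shows that $G^\flat_\text{an}\to\operatorname{Char}(G)_\text{an}$ is a bijective holomorphic homomorphism of complex Lie groups, hence biholomorphic. Conversely, suppose $U\neq0$. Pull back along $G\to G/(T\times A\text{-part})$; concretely, the composite $U^\flat\leftarrow G^\flat$ is surjective, so there is a positive-dimensional family of nontrivial character sheaves in $G^\flat$. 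Alternatively, count dimensions: $\dim\operatorname{Char}(G)=\operatorname{rk}\pi_1=r+2g$, whereas $G^\flat$ surjects onto $(T\times U)^\flat$ with kernel containing $A^\flat$, so $\dim G^\flat\ge 2g+r+n>\dim\operatorname{Char}(G)$. A surjective holomorphic map between connected complex Lie groups of different dimensions cannot be an isomorphism.

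I expect the main obstacle to be the semiabelian case. There one must justify exactness of the sequence of $\flat$'s on $\mathbb{C}$-points, including the subtlety that $\ker(G^\flat\to T^\flat)$ may be bigger than $A^\flat$ in general (cf.\ Example~\ref{flats are not exact}); here this is handled by case (1). The other delicate point is the dimension count $\dim G^\flat\ge 2g+r+n$, which needs only left exactness at $A^\flat$ and surjectivity onto $(T\times U)^\flat$.
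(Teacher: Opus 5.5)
Your proposal is correct. For the semiabelian case it follows the same skeleton as the paper's proof of Proposition~\ref{character sheaves and character variety}:
\begin{itemize}
\item the homotopy sequence of $G_{\mathrm{an}}\to A_{\mathrm{an}}$ gives $0\to\operatorname{Char}(A)\to\operatorname{Char}(G)\to\operatorname{Char}(T)\to 0$;
\item Theorem~\ref{exact sequence of flats} gives exactness of the top row;
\item one compares the two rows factor by factor and concludes with the five lemma.
\end{itemize}

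There are two differences from the paper. The first is minor. For the torus you compute the monodromy explicitly, obtaining the isomorphism $\chi\mapsto e^{2\pi i\chi}$ from $\mathbb{C}/\mathbb{Z}$ to $\mathbb{C}^\times$. The paper instead uses Riemann--Hilbert injectivity for regular connections, together with the observation that a rank-one connection $(\mathcal{O},\mathrm{d}+f\,\mathrm{d}x/x)$ on $\mathbb{G}_m$ is regular exactly when $f$ is constant. Your version is more self-contained and gives biholomorphicity of the torus factor directly.

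The second difference is more substantive. The paper's written proof only treats semiabelian $G$. It leaves general surjectivity and the ``only if'' direction to the informal remark that unipotent character sheaves are irregular. Your argument supplies both pieces:
\begin{itemize}
\item For surjectivity, your diagram chase uses $\pi_1(U_{\mathrm{an}})=0$ and corrects a lift by an element pulled back from $A^\flat_{\mathrm{an}}$.
\item For the converse, your dimension count uses Corollary~\ref{G flat is an algebraic space}: $\dim G^\flat_{\mathrm{an}}=r+n+2g>r+2g=\dim\operatorname{Char}(G)_{\mathrm{an}}$ when $n>0$. This even proves non-injectivity, since a surjective holomorphic homomorphism of connected complex Lie groups from a larger-dimensional source has positive-dimensional kernel.
\end{itemize}

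A few cleanups are needed.
\begin{itemize}
\item Drop the sentence about pulling back along ``$G\to G/(T\times A\text{-part})$''. No such quotient exists in general, and the mere existence of nontrivial character sheaves says nothing about injectivity. The dimension count is the actual argument.
\item Your worry about Example~\ref{flats are not exact} is misplaced. That example concerns an abelian subgroup with linear quotient. For the Barsotti--Chevalley sequence (linear subgroup, abelian quotient), Theorem~\ref{exact sequence of flats} gives exactness unconditionally.
\item When you pass from epimorphisms of fppf sheaves to surjectivity on $\mathbb{C}$-points, say why. One route is the Nullstellensatz. Alternatively, argue directly: $G^\flat(\mathbb{C})=K(\mathbb{C})/X$, and $K(\mathbb{C})\to\Omega_T\times\Omega_U$ is surjective because $\Omega_G\to\Omega_T\times\Omega_U$ already is.
\end{itemize}
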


For unipotent groups, the Riemann--Hilbert map is far from being an isomorphism: their character varieties are trivial, whereas their nontrivial character sheaves are the abundant exponential connections. These are the prototypical examples of irregular connections. Accessing them on the Betti side would require an enhancement of the category of local systems, such as the irregular constructible sheaves of Kuwagaki \cite{kuwagaki} or the wild Betti sheaves of Scholze \cite{scholze2025wildbettisheaves}. We leave the development of this direction to future work.

\vspace{1em}

\textsc{Acknowledgements.} Most of this work was carried out while I was a doctoral student of Javier Fresán, whose guidance and support are gratefully acknowledged. I would like to thank Thomas Krämer, who suggested that the preceding definition of linear subspaces might be interesting beyond abelian varieties, and Michel Brion, to whom I owe much of what I know about algebraic groups. 

This paper is dedicated to Gérard Laumon. This project grew out of an attempt to understand his paper \cite{Lau}; after kindly listening to my questions, he offered characteristically lucid advice on how to approach them. As a naive doctoral student, I promptly tried a different route---no doubt with Laumon's ideas firmly at work in the background---and most of the results of this paper were proved the following week.

\renewcommand{\thedummy}{\arabic{dummy}}\numberwithin{dummy}{section}

\section{Cartier duality of commutative group stacks}
\subsection{Definitions and first properties}\label{commutative group stacks}

Let $\textsf{Lat}$ be the full subcategory of $\textsf{Ab}$ whose objects are free abelian groups of finite rank. Given an $\infty$-category $\textsf{C}$ with finite products, we define its category of \emph{abelian group objects} $\textsf{Ab}(\textsf{C})$ as the $\infty$-category of functors $\textsf{Lat}^{\text{op}}\to \textsf{C}$ that commute with finite products. (See \cite[\S1.2]{lurie2017elliptic} for more.)

When $\textsf{C}$ is the category of sets, $\textsf{Ab}(\textsf{C})$ coincides with the usual category of abelian groups. If $\textsf{C}$ is the $\infty$-category of anima\footnote{We refer to \cite[\S 5.1]{purity} for the definition of animation and of the $\infty$-category $\mathsf{An}$ of anima (also known as the $\infty$-category of spaces or $\infty$-groupoids).}, $\textsf{Ab}(\textsf{C})$ is the $\infty$-category $\mathsf{An}(\mathsf{Ab})$ of animated abelian groups which, by the Dold--Kan correspondence, is equivalent to $\mathsf{D}^{\leq 0}(\mathsf{Ab})$. More generally, if $\textsf{X}$ is an $\infty$-topos, then $\textsf{Ab}(\textsf{X})$ is the $\infty$-category of sheaves on $\textsf{X}$ valued in animated abelian groups. This motivates the definition below.

\begin{definition}[Commutative group stack]\label{def commutative group stack}
Let $\textsf{C}$ be a 1-site. A \emph{commutative group stack} on $\textsf{C}$ is a sheaf
\[\textsf{C}^\text{op}\to \textsf{An}_{\leq 1}(\textsf{Ab}),\]
where $\textsf{An}_{\leq 1}(\textsf{Ab})$ is the full subcategory of $\textsf{An}(\textsf{Ab})$ consisting of animated abelian groups $M$ with $\pi_i(M)=0$ for $i\neq 0,1$.
\end{definition}

Equivalently, a commutative group stack on $\textsf{C}$ is an abelian group object in the 2-topos $\textsf{Sh}(\textsf{C}, \textsf{Grpd})$. These objects were first suggested by Grothendieck and subsequently studied by Deligne in \cite[Exp.\ XVIII, \S1.4]{SGA43} under the name \emph{champs de Picard strictement commutatifs}.

\begin{remark}
The forgetful functor $\textsf{An}(\textsf{Ab})\to \textsf{An}$ restricts to $\textsf{An}_{\leq 1}(\textsf{Ab})\to \textsf{Grpd}$. Concretely, an object of $\textsf{An}_{\leq 1}(\textsf{Ab})$ is a groupoid $M$ endowed with a symmetric monoidal structure $+$ such that the set of isomorphism classes $\pi_0(M)$ is a group. Additionally, for all $x,y$ in $M$, the symmetry constraints $x+x\to x+x$ and $x+y\to y+x\to x+y$ are supposed to be the identity maps.
\end{remark}

The following proposition, a consequence of the Dold--Kan correspondence, provides a concrete characterization of commutative group stacks. This characterization will underpin virtually every computation involving these objects throughout this paper.

\begin{proposition}[Deligne]\label{DK}
	Let $\normalfont\textsf{C}$ be a 1-site. Denote by $\normalfont\textsf{Ch}^{[-1,0]}(\textsf{Ab}(\textsf{C}))$ the category of complexes of abelian sheaves\footnote{Denoting the category of abelian sheaves on $\textsf{C}$ by $\textsf{Ab}(\textsf{C})$ is an abuse of notation, since this is the category of abelian group objects \emph{on the associated 1-topos}.} on $\normalfont\textsf{C}$ concentrated in degrees $-1$ and $0$. Similarly, denote by $\normalfont\mathsf{D}^{[-1,0]}(\textsf{Ab}(\textsf{C}))$ the full subcategory of $\normalfont\mathsf{D}(\textsf{Ab}(\textsf{C}))$ consisting of the objects whose cohomologies are concentrated in degrees $-1$ and $0$. The functor
	\begin{align*}
	     \normalfont\textsf{Ch}^{[-1,0]}(\textsf{Ab}(\textsf{C})) &\to \normalfont\textsf{Sh}(\textsf{C},\textsf{An}_{\leq 1}(\textsf{Ab})) \\
		[\mathscr{F}\to \mathscr{G}] &\mapsto [\mathscr{G}/\mathscr{F}]
	\end{align*}
	factors through $\normalfont\mathsf{D}^{[-1,0]}(\textsf{Ab}(\textsf{C}))$, and the induced functor $\normalfont\mathsf{D}^{[-1,0]}(\textsf{Ab}(\textsf{C}))\to \textsf{Sh}(\textsf{C},\textsf{An}_{\leq 1}(\textsf{Ab}))$ is an equivalence of $\infty$-categories.
\end{proposition}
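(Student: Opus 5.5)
The plan is to reduce the statement to the Dold--Kan correspondence for sheaves of animated abelian groups, plus a truncation argument. Write $\mathsf{X}=\textsf{Sh}(\textsf{C},\textsf{An})$ for the $\infty$-topos of sheaves of anima on $\textsf{C}$. As recalled above, $\textsf{Ab}(\mathsf{X})$ is the $\infty$-category of sheaves on $\textsf{C}$ with values in $\textsf{An}(\textsf{Ab})\simeq \mathsf{D}^{\leq 0}(\textsf{Ab})$. The first step is to identify this with the connective part $\mathsf{D}^{\leq 0}(\textsf{Ab}(\textsf{C}))$ of the derived $\infty$-category of abelian sheaves. The intended comparison functor is hypercomplete sheafification of the Dold--Kan construction applied sectionwise. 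This is a standard statement (Lurie, \emph{Spectral Algebraic Geometry}, §1.3 and §2.1; equivalently, connective sheaves of $H\mathbb{Z}$-modules). To avoid hypercompleteness issues I will invoke it only in the truncated range, where every sheaf is automatically hypercomplete, since $1$-truncated objects are.

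The second step restricts both sides to the relevant range. On the sheaf side, $\textsf{Sh}(\textsf{C},\textsf{An}_{\leq 1}(\textsf{Ab}))$ is the full subcategory of $1$-truncated objects, because truncatedness of a sheaf is detected on its homotopy sheaves. On the derived side, this corresponds to complexes with cohomology in degrees $-1,0$, i.e.\ $\mathsf{D}^{[-1,0]}$, since the homotopy sheaves of the associated animated sheaf are the cohomology sheaves $\mathscr{H}^{-i}$. Hence the equivalence of the first step restricts to an equivalence $\mathsf{D}^{[-1,0]}(\textsf{Ab}(\textsf{C}))\simeq \textsf{Sh}(\textsf{C},\textsf{An}_{\leq 1}(\textsf{Ab}))$.

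The third step checks that this equivalence is the explicit functor of the statement. Start with a two-term complex $[\mathscr{F}\xrightarrow{d}\mathscr{G}]$. Applying Dold--Kan to its sections over $U$ gives the simplicial abelian group whose underlying groupoid is the action groupoid of $\mathscr{F}(U)$ acting on $\mathscr{G}(U)$ through $d$. Its sheafification as a stack is exactly the quotient stack $[\mathscr{G}/\mathscr{F}]$, with the symmetric monoidal structure induced by addition. Because the construction factors through Dold--Kan and sheafification, it sends quasi-isomorphisms to equivalences: a quasi-isomorphism of complexes of sheaves induces isomorphisms on homotopy sheaves $\pi_0=\operatorname{coker} d$ and $\pi_1=\ker d$, and a map of $1$-truncated sheaves with this property is an equivalence. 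This gives the factorization through $\mathsf{D}^{[-1,0]}$.

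Finally I must check that the induced functor agrees with the restricted equivalence. This is where I expect the main obstacle to lie. Every object of $\mathsf{D}^{[-1,0]}$ is represented by a two-term complex, via the truncation $\tau^{\geq -1}$ of a resolution. So essential surjectivity follows from the first two steps once the identification of functors is established. The real difficulty is full faithfulness at the $\infty$-level, namely matching mapping spaces rather than only objects. To handle it I will compare the composite $\textsf{Ch}^{[-1,0]}\to\mathsf{D}\to\textsf{Ab}(\mathsf{X})$ with the sectionwise construction through a natural transformation, and conclude via the localization universal property of $\mathsf{D}$. The alternative is Deligne's original 2-categorical argument in SGA4, Exp.\ XVIII, 1.4.
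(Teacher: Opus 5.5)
Your proposal is correct and follows essentially the same route as the paper. Both arguments identify $\mathsf{D}(\textsf{Ab}(\textsf{C}))$ with hypercomplete sheaves valued in $\mathsf{D}(\textsf{Ab})$ (Lurie, \emph{Spectral Algebraic Geometry}, Cor.~2.1.2.3), note that hypercompleteness is automatic for truncated objects (\emph{Higher Topos Theory}, Lem.~6.5.2.9), and apply Dold--Kan pointwise to reach $\textsf{An}_{\leq 1}(\textsf{Ab})$. The differences are minor: you apply Dold--Kan before the sheaf-theoretic comparison rather than after, and you explicitly check that the resulting equivalence is the quotient-stack functor $[\mathscr{F}\to\mathscr{G}]\mapsto[\mathscr{G}/\mathscr{F}]$, which the paper leaves implicit.
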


\begin{proof}
Let $\textsf{D}$ be an $\infty$-category that admits small limits. Recall that a functor $\mathscr{F}\colon \textsf{C}^\text{op}\to \textsf{D}$ is a sheaf if it preserves finite products and if, for every covering $X\to S$ in $\textsf{C}$, the natural morphism
\[\mathscr{F}(S)\to \lim \Bigg[\mathscr{F}(X) \rightrightarrows \mathscr{F}(X\times_S X)\rightrightrightarrows \mathscr{F}(X\times_S X\times_S X)\rightrightrightrightarrows\cdots \Bigg]\]
is invertible. Functors preserving finite products and satisfying a similar descent condition with respect to \emph{hypercovers} (see \cite[Def.~A.4.19]{heyer20246functorformalismssmoothrepresentations} for a precise definition) are called \emph{hypersheaves}. We denote by $\textsf{HSh}(\textsf{C},\textsf{D})$ the full subcategory of $\textsf{Sh}(\textsf{C},\textsf{D})$ constituted of the hypersheaves.

Consider a sheaf of abelian groups $\mathscr{G}$ on $\textsf{C}$. Using the natural inclusion $\textsf{Ab}\hookrightarrow \mathsf{D}(\textsf{Ab})$, $\mathscr{G}$ can be viewed as a presheaf valued in the derived $\infty$-category $\mathsf{D}(\textsf{Ab})$. While this often fails to satisfy descent, one can sheafify to obtain a functor $\textsf{Ab}(\textsf{C})\to \textsf{Sh}(\textsf{C},\mathsf{D}(\textsf{Ab}))$. This induces a $t$-exact fully faithful functor $\mathsf{D}(\textsf{Ab}(\textsf{C}))\hookrightarrow \textsf{Sh}(\textsf{C},\mathsf{D}(\textsf{Ab}))$, whose essential image is precisely $\textsf{HSh}(\textsf{C},\mathsf{D}(\textsf{Ab}))$ \cite[Cor.\ 2.1.2.3]{lurie2018spectral}.

The distinction between hypersheaves and ordinary sheaves is a phenomenon unique to $\infty$-categories. More precisely, the inclusion $\textsf{HSh}(\textsf{C},\textsf{D})\hookrightarrow \textsf{Sh}(\textsf{C},\textsf{D})$ is an equivalence as soon as $\textsf{D}$ is a $n$-category for some \emph{finite} $n$ \cite[Lem.\ 6.5.2.9]{lurie2009HTT}. Consequently,
\[\mathsf{D}^{[-1,0]}(\textsf{Ab}(\textsf{C}))\to\textsf{Sh}(\textsf{C},\mathsf{D}^{[-1,0]}(\textsf{Ab}))\]
is an equivalence of $\infty$-categories. 

Finally, recall that the Dold--Kan correspondence gives an equivalence of $\infty$-categories $\mathsf{D}^{\leq 0}(\textsf{Ab})\xrightarrow{\ \sim\ } \textsf{An}(\textsf{Ab})$. Here, the $i$-th homotopy group of an object on the right is isomorphic to the $-i$-th cohomology group of the corresponding object on the left. In particular, this restricts to an equivalence $\mathsf{D}^{[-1,0]}(\textsf{Ab})\xrightarrow{\ \sim\ } \textsf{An}_{\leq 1}(\textsf{Ab})$ leading to the desired equivalence $\mathsf{D}^{[-1,0]}(\textsf{Ab}(\textsf{C}))\xrightarrow{\ \sim\ } \textsf{Sh}(\textsf{C},\textsf{An}_{\leq 1}(\textsf{Ab}))$.
\end{proof}

Let us momentarily denote by $\mathcal{G}^\circ$ the object in the derived category of abelian sheaves associated with a commutative group stack $\mathcal{G}$. (We will soon begin identifying these objects.) Here, we outline some consequences of the proof of Proposition~\ref{DK}.

\begin{remark}$ $
	\begin{enumerate}
		\item Let $\mathscr{G}$ be an abelian sheaf. Composing with the natural inclusion $\textsf{Ab}\hookrightarrow \textsf{An}_{\leq 1}(\textsf{Ab})$, we obtain a commutative group stack denoted by the same symbol. Then, $\mathscr{G}^\circ$ is isomorphic to the complex $[0\to \mathscr{G}]$. Similarly, $(\mathsf{B}\mathscr{G})^\circ$ is isomorphic to $[\mathscr{G}\to 0]$.
		\item Given a commutative group stack $\mathcal{G}$, the abelian sheaf $\mathscr{H}^0(\mathcal{G}^\circ)$ is isomorphic to the coarse moduli sheaf of $\mathcal{G}$. This is the sheafification of the presheaf sending an object $X$ of $\textsf{C}$ to the group of isomorphism classes $\pi_0(\mathcal{G}(X))$ of $\mathcal{G}(X)$. Similarly, $\mathscr{H}^{-1}(\mathcal{G}^\circ)$ is isomorphic to the automorphism sheaf of a zero section of $\mathcal{G}$.
		\item Let $\mathcal{G}$ and $\mathcal{H}$ be commutative group stacks. We denote by $\underline{\operatorname{Hom}}(\mathcal{G},\mathcal{H})$ the internal Hom of commutative group stacks. This is another commutative group stack satisfying $\underline{\operatorname{Hom}}(\mathcal{G},\mathcal{H})^\circ\simeq \tau_{\leq 0}\mathsf{R}\underline{\operatorname{Hom}}(\mathcal{G}^\circ,\mathcal{H}^\circ)$.\vspace{-2em}
	\end{enumerate}
\end{remark}

Henceforth, we will focus on the big fppf site $\textsf{C}=(\textsf{Sch}/S)_\text{fppf}$ associated with a base scheme $S$. There is a distinguished abelian sheaf on this site: the multiplicative group $\mathbb{G}_m$, sending a $S$-scheme $T$ to the group of units $\Gamma(T,\mathcal{O}_T)^\times$. Its classifying stack, $\textsf{B}\mathbb{G}_m$, will play a role in this story similar to that of the circle group in the Pontryagin duality of locally compact abelian groups.

\begin{definition}\label{def stacky cartier dual}
Let $\mathcal{G}$ be a commutative group stack on $(\textsf{Sch}/S)_\text{fppf}$. We define its \emph{Cartier dual} $\mathcal{G}^D$ as $\underline{\operatorname{Hom}}(\mathcal{G},\mathbb{G}_m)$ and its \emph{stacky Cartier dual} $\mathcal{G}^\vee$ as $\underline{\operatorname{Hom}}(\mathcal{G},\mathsf{B}\mathbb{G}_m)$.
\end{definition}

From our perspective, the stacky Cartier dual $\mathcal{G}^\vee$ is the \emph{true} dual of a commutative group stack while $\mathcal{G}^D$ is a good enough approximation in some situations. The following remark explains our motivation for considering $\mathcal{G}^\vee$.

\begin{remark}\label{objects of stacky Cartier dual}
	Let $\mathcal{G}$ be a commutative group stack on $(\textsf{Sch}/S)_\text{fppf}$, and denote is group law by $m\colon \mathcal{G}\times\mathcal{G}\to \mathcal{G}$. Given an $S$-scheme $T$, the groupoid $\mathcal{G}^\vee(T)$ can be described as follows: its objects are pairs $(\mathscr{L},\alpha)$, where $\mathscr{L}$ is a line bundle on $\mathcal{G}\times_S T$ and $\alpha$ is an isomorphism $m^*\mathscr{L}\to \mathscr{L}\boxtimes\mathscr{L}$ making diagrams (A) and (B) just above \cite[Rem.\ 3.13]{brochard2021duality} commute. A morphism from $(\mathscr{L},\alpha)$ to $(\mathscr{L}^\prime, \alpha^\prime)$ is an isomorphism of line bundles $\varphi\colon \mathscr{L}\to \mathscr{L}^\prime$ making the diagram
\[\begin{tikzcd}[ampersand replacement=\&]
	{m^*\mathscr{L}} \& {\mathscr{L}\boxtimes\mathscr{L}} \\
	{m^*\mathscr{L}^\prime} \& {\mathscr{L}^\prime\boxtimes\mathscr{L}^\prime}
	\arrow["\alpha", from=1-1, to=1-2]
	\arrow["{m^*\varphi}"', from=1-1, to=2-1]
	\arrow["\varphi\boxtimes\varphi", from=1-2, to=2-2]
	\arrow["{\alpha^\prime}", from=2-1, to=2-2]
\end{tikzcd}\]
commute. The symmetric monoidal structure on $\mathcal{G}^\vee(T)$ sends $(\mathscr{L},\alpha)$ and $(\mathscr{L}^\prime,\alpha^\prime)$ to the pair $(\mathscr{L}\otimes\mathscr{L}^\prime,\alpha\cdot \alpha^\prime)$, where $\alpha\cdot\alpha^\prime$ is defined as the composition
\[m^*(\mathscr{L}\otimes\mathscr{L}^\prime)\simeq m^*\mathscr{L}\otimes m^*\mathscr{L}^\prime \xrightarrow{\alpha\otimes\alpha^\prime} (\mathscr{L}\boxtimes \mathscr{L})\otimes (\mathscr{L}^\prime\boxtimes \mathscr{L}^\prime)\simeq (\mathscr{L}\otimes \mathscr{L}^\prime)\boxtimes (\mathscr{L}\otimes \mathscr{L}^\prime).\]
The reader might also be interested in comparing this description to \cite[Exp.\ VII, \S 1.1.6]{raynaud2006groupes} or \cite[\S I.2.3]{moret1985pinceaux}.
\end{remark} 

In most cases of interest in this paper, we will work with commutative group stacks $\mathcal{G}$ defined by complexes $[\mathscr{F}\to \mathscr{G}]$, where the map $d\colon \mathscr{F}\to \mathscr{G}$ is a monomorphism. In this case, $\mathcal{G}\simeq \operatorname{coker} d$ takes values in the full subcategory $\textsf{Ab}$ of $\textsf{An}_{\leq 1}(\textsf{Ab})$. The following proposition gives a convenient criterion for computing the stacky Cartier duals of these objects.

\begin{proposition}\label{computation of stacky cartier duals}
Let $\mathscr{G}$ be an abelian sheaf on $(\normalfont\textsf{Sch}/S)_{\normalfont\text{fppf}}$. Then the stacky Cartier dual of the classifying stack $\mathsf{B}\mathscr{G}$ is isomorphic to $\mathscr{G}^D$. If $\underline{\operatorname{Ext}}^1(\mathscr{G},\mathbb{G}_m)=0$, then $\mathscr{G}^\vee\simeq \mathsf{B}\mathscr{G}^D$. Similarly, if $\mathscr{G}^D=0$, then $\mathscr{G}^\vee\simeq \underline{\operatorname{Ext}}^1(\mathscr{G},\mathbb{G}_m)$.
\end{proposition}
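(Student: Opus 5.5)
Everything will be computed through the Dold--Kan dictionary of Proposition~\ref{DK} and part (3) of the subsequent remark, which identifies $\underline{\operatorname{Hom}}(\mathcal{G},\mathcal{H})^\circ$ with $\tau_{\leq 0}\mathsf{R}\underline{\operatorname{Hom}}(\mathcal{G}^\circ,\mathcal{H}^\circ)$. By part (1) of the same remark, $(\mathsf{B}\mathbb{G}_m)^\circ\simeq \mathbb{G}_m[1]$, $(\mathsf{B}\mathscr{G})^\circ\simeq\mathscr{G}[1]$, and $\mathscr{G}^\circ\simeq\mathscr{G}$. This reduces each claim to a statement about truncations of $\mathsf{R}\underline{\operatorname{Hom}}(\mathscr{G},\mathbb{G}_m)$, whose cohomology sheaves are $\mathscr{H}^i=\underline{\operatorname{Ext}}^i(\mathscr{G},\mathbb{G}_m)$. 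Here $\mathscr{H}^0=\mathscr{G}^D$, since $\underline{\operatorname{Hom}}$ of abelian sheaves is computed internally.

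\emph{First claim.} We have
\[(\mathsf{B}\mathscr{G})^{\vee\circ}\simeq\tau_{\leq0}\mathsf{R}\underline{\operatorname{Hom}}(\mathscr{G}[1],\mathbb{G}_m[1])\simeq\tau_{\leq 0}\mathsf{R}\underline{\operatorname{Hom}}(\mathscr{G},\mathbb{G}_m).\]
Since $\mathsf{R}\underline{\operatorname{Hom}}(\mathscr{G},\mathbb{G}_m)$ is concentrated in nonnegative degrees, this truncation is just $\mathscr{H}^0=\mathscr{G}^D$ placed in degree $0$. Hence $(\mathsf{B}\mathscr{G})^\vee\simeq\mathscr{G}^D$ by the equivalence of Proposition~\ref{DK}.

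\emph{Second and third claims.} Here
\[\mathscr{G}^{\vee\circ}\simeq\tau_{\leq0}\big(\mathsf{R}\underline{\operatorname{Hom}}(\mathscr{G},\mathbb{G}_m)[1]\big)=\big(\tau_{\leq1}\mathsf{R}\underline{\operatorname{Hom}}(\mathscr{G},\mathbb{G}_m)\big)[1].\]
This object has cohomology only in degrees $-1$ and $0$, given by $\mathscr{G}^D$ and $\underline{\operatorname{Ext}}^1(\mathscr{G},\mathbb{G}_m)$ respectively. The two-term object sits in a distinguished triangle
\[\mathscr{G}^D[1]\to\mathscr{G}^{\vee\circ}\to\underline{\operatorname{Ext}}^1(\mathscr{G},\mathbb{G}_m)\xrightarrow{+1}.\]
If $\underline{\operatorname{Ext}}^1(\mathscr{G},\mathbb{G}_m)=0$, the first map is an isomorphism, so $\mathscr{G}^{\vee\circ}\simeq\mathscr{G}^D[1]=(\mathsf{B}\mathscr{G}^D)^\circ$ and therefore $\mathscr{G}^\vee\simeq\mathsf{B}\mathscr{G}^D$. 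If instead $\mathscr{G}^D=0$, the second map is an isomorphism, so $\mathscr{G}^\vee\simeq\underline{\operatorname{Ext}}^1(\mathscr{G},\mathbb{G}_m)$ viewed as a discrete stack.

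\emph{Main point requiring care.} The delicate step is justifying the remark's identification: internal Hom of commutative group stacks must match $\tau_{\leq0}$ of the derived internal Hom in $\mathsf{D}(\textsf{Ab}(\textsf{C}))$, and the latter must have cohomology sheaves equal to the sheafified Ext's. The first part follows from the fully faithful embedding of Proposition~\ref{DK}, since truncations of hypersheaves are again hypersheaves. Once this is granted, the remaining steps are formal manipulations of the shift $[1]$ and of truncations, together with the vanishing of negative Ext sheaves between sheaves placed in degree $0$.
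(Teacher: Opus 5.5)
Your proof is correct and follows the paper's own argument. Both reduce, via Proposition~\ref{DK} and the identification $\underline{\operatorname{Hom}}(\mathcal{G},\mathcal{H})^\circ\simeq\tau_{\leq 0}\mathsf{R}\underline{\operatorname{Hom}}(\mathcal{G}^\circ,\mathcal{H}^\circ)$, to truncations of $\mathsf{R}\underline{\operatorname{Hom}}(\mathscr{G},\mathbb{G}_m)$, and then use that a complex with cohomology in a single degree is isomorphic to that cohomology sheaf placed in that degree; your triangle $\mathscr{G}^D[1]\to\mathscr{G}^{\vee\circ}\to\underline{\operatorname{Ext}}^1(\mathscr{G},\mathbb{G}_m)$ just makes that last step explicit.
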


\begin{proof}
	The first isomorphism is simply the fact that $((\mathsf{B}\mathscr{G})^\vee)^\circ$ is given by
	\[\tau_{\leq 0}\mathsf{R}\underline{\operatorname{Hom}}(\mathscr{G}[1],\mathbb{G}_m[1])\simeq \underline{\operatorname{Hom}}(\mathscr{G},\mathbb{G}_m).\]
	Similarly, using the explicit description of $(\mathscr{G}^\vee)^\circ$ as $\tau_{\leq 0}\mathsf{R}\underline{\operatorname{Hom}}(\mathscr{G},\mathbb{G}_m[1])$, the other isomorphisms follow from the fact that an object of a derived category, whose cohomology is concentrated in a single degree, is isomorphic to that cohomology in the corresponding degree.
\end{proof}

\subsection{Computations of Cartier duals}\label{section computation of cartier duals}

Consider a base scheme $S$, and let $\mathscr{G}$ be an abelian sheaf on $(\textsf{Sch}/S)_\text{fppf}$, regarded as a commutative group stack. Our approach to computing the stacky Cartier dual $\mathscr{G}^\vee$ predominantly follows the method outlined in Proposition~\ref{computation of stacky cartier duals}. We typically either prove the vanishing of $\underline{\operatorname{Ext}}^1(\mathscr{G}, \mathbb{G}_m)$ and calculate $\mathscr{G}^D$, or we establish that $\mathscr{G}^D$ vanishes and compute $\underline{\operatorname{Ext}}^1(\mathscr{G}, \mathbb{G}_m)$. This section focuses on computing the Cartier duals $\mathscr{G}^D$ for some abelian sheaves $\mathscr{G}$ of interest.

\begin{proposition}\label{cartier dual of abelian schemes and tori}
	Let $A$ be an abelian scheme and $T$ be a torus over a base scheme $S$. Then the Cartier dual $A^D$ of $A$ vanishes, and the Cartier dual $X$ of $T$ is representable by a group scheme that is étale locally isomorphic to a finite product of copies of the constant group scheme $\mathbb{Z}$. Moreover, the Cartier dual of $X$ is isomorphic to $T$.
\end{proposition}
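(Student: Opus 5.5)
For $A^D=0$, a homomorphism $A_T\to\mathbb{G}_{m,T}$ over an $S$-scheme $T$ is in particular a morphism of $T$-schemes. Since $A_T\to T$ is proper, flat, and finitely presented with geometrically connected and reduced fibres, we have $\pi_*\mathcal{O}_{A_T}=\mathcal{O}_T$; this holds after arbitrary base change by cohomology and base change. Hence every morphism $A_T\to\mathbb{G}_m$ factors through $T$, i.e.\ it is a unit $u\in\Gamma(T,\mathcal{O}_T)^\times$. Compatibility with the zero section forces $u=1$. So $\underline{\operatorname{Hom}}(A,\mathbb{G}_m)(T)=0$ for all $T$, and $A^D=0$.

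For the torus, I would use that $T$ is fppf (indeed étale) locally split, i.e.\ isomorphic to $\mathbb{G}_m^r$. The sheaf $T^D=\underline{\operatorname{Hom}}(T,\mathbb{G}_m)$ is an fppf sheaf, so it suffices to compute it for $\mathbb{G}_m^r$ over an arbitrary base $T'$ and check the answer is compatible with descent. A homomorphism $\mathbb{G}_{m,T'}^r\to\mathbb{G}_{m,T'}$ corresponds to a unit $f\in\mathcal{O}(T')[t_1^{\pm1},\dots,t_r^{\pm1}]^\times$ that is grouplike: $f(st)=f(s)f(t)$ and $f(1)=1$. Write $f=\sum_n a_n t^n$. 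The grouplike condition gives $a_na_m=\delta_{nm}a_n$ and $\sum a_n=1$, so the $a_n$ form a finite family of orthogonal idempotents summing to one. These correspond to locally constant functions $T'\to\mathbb{Z}^r$. Thus $\underline{\operatorname{Hom}}(\mathbb{G}_m^r,\mathbb{G}_m)\simeq\underline{\mathbb{Z}}^r$, the constant group scheme.

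By descent, $X:=T^D$ is an fppf sheaf that becomes isomorphic to $\underline{\mathbb{Z}}^r$ étale locally. The descent datum is a cocycle valued in $\operatorname{Aut}(\underline{\mathbb{Z}}^r)=\underline{\operatorname{GL}_r(\mathbb{Z})}$, and this datum is effective for such étale-locally-constant sheaves. Concretely, the restriction to a quasi-compact open is a disjoint union of twisted forms of finite étale schemes, so descent of affine-over-base pieces suffices. The step needing the most care is exactly this effectivity, i.e.\ representability of $X$ by a scheme. I would handle it by writing $\underline{\mathbb{Z}}^r$ as an increasing union of the finite étale clopen pieces $\{|n|\le N\}$. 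These are stable under $\operatorname{GL}_r(\mathbb{Z})$-twists only up to enlarging $N$ locally, so I would work locally on $S$, where the cocycle is bounded.

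Finally, for $X^D\simeq T$, there is a natural evaluation map $T\to X^D=\underline{\operatorname{Hom}}(X,\mathbb{G}_m)$. To check that it is an isomorphism of fppf sheaves, it suffices to work étale locally, where $T=\mathbb{G}_m^r$ and $X=\underline{\mathbb{Z}}^r$. There $\underline{\operatorname{Hom}}(\underline{\mathbb{Z}}^r,\mathbb{G}_m)=\mathbb{G}_m^r$ by evaluation on the basis vectors, and one checks that the evaluation map is the identity in these coordinates.
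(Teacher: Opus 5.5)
Your argument for $A^D=0$ is the paper's: it also uses that $p_{X,*}\mathcal{O}_{A_X}\simeq\mathcal{O}_X$ universally. Your computation $\underline{\operatorname{Hom}}(\mathbb{G}_m^r,\mathbb{G}_m)\simeq\underline{\mathbb{Z}}^r$ is fine, provided you do it on affine test schemes, where $f$ really is a finite Laurent sum. Your double-duality check is also fine. For tori the paper does nothing beyond citing SGA3, Exp.~X, Cor.~5.7.

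\textbf{The gap is the representability of $X$.} You assert that over a quasi-compact open of $S$ the sheaf $X$ is a disjoint union of twisted forms of finite étale schemes. That amounts to saying the monodromy of the locally constant sheaf $X$ has finite orbits on $\mathbb{Z}^r$. This holds when $T$ is isotrivial, e.g.\ over a normal base and in particular over a field, which is the only case the paper uses later. It fails in general. Here is a counterexample:
\begin{enumerate}
\item Take the nodal cubic $S$ of Example~\ref{nodal example} and its infinite-chain étale cover $\widetilde{S}_\infty\to S$, made of copies of the normalization glued end to end; this is a $\mathbb{Z}$-torsor.
\item Descend $\widetilde{S}_\infty\times\mathbb{G}_m^2$ along it, letting the generator of $\mathbb{Z}$ act on $\mathbb{G}_m^2$ through an element of $\operatorname{GL}_2(\mathbb{Z})$ of infinite order. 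The descent is effective because the object is affine over the base.
\item The result is an étale-locally split torus over an affine base whose character sheaf has an infinite orbit.
\item That orbit gives a clopen piece of $X$ isomorphic to $\widetilde{S}_\infty$ itself, which is étale but not quasi-compact over $S$.
\end{enumerate}
Your proposed repair does not help. The pieces $\{|n|\le N\}$ are not carried to themselves by the transition isomorphisms, so they carry no descent datum. A cocycle taking only finitely many values can still generate an infinite group, so no enlargement of $N$ ever becomes stable.

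What you need is a descent statement that does not pass through finite pieces. For example:
\begin{enumerate}
\item On an étale cover $S'\to S$ the sheaf $X$ is represented by $\coprod_{\mathbb{Z}^r}S'$.
\item By the fppf-local nature of algebraic spaces (the same input used in Remark~\ref{remark on descent}), $X$ is an algebraic space.
\item $X$ is étale and separated over $S$, since both properties are fppf local on the base.
\item A separated, locally quasi-finite algebraic space over a scheme is a scheme (Knutson's theorem; see \cite{Stacks}), so $X$ is representable.
\end{enumerate}
Alternatively, simply cite SGA3 as the paper does. You should also flag that étale-local splitness of $T$ is itself an imported theorem, not a formality. SGA3 defines tori fpqc-locally, and étale-local triviality is proved there.
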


\begin{proof}
Denote by $p\colon A\to S$ the structure map, and let $X$ be an $S$-scheme. By the universal property of the global spectrum, a morphism of schemes $A_X\to \mathbb{G}_{m,X}$ over $X$ is equivalent to a morphism of $\mathcal{O}_X$-algebras 
\[\mathcal{O}_X[x,x^{-1}]\to p_{X,*}\mathcal{O}_{A_X}\simeq  \mathcal{O}_X,\]
where $p_X\colon A_X\to X$ is the base change of $p$ \cite[Tag \href{https://stacks.math.columbia.edu/tag/0E0L}{0E0L}]{Stacks}. It follows that every morphism of schemes $A_X\to \mathbb{G}_{m,X}$ over $X$ must be constant. If it is a morphism of groups, it has to be trivial. This proves that $A^D$ vanishes. The statements about tori are proven in \cite[Exp.\ X, Cor.\ 5.7]{sga32}.
\end{proof}

Moving forward, we focus on the case where the base scheme $S$ is the spectrum of a characteristic zero field $k$. The following proposition recalls the standard setting for Cartier duality \cite[Exp.\ $\text{VII}_\text{B}$, Prop.\ 2.2.2]{sga31}. (Proposition~\ref{affine cartier duality} is independent of the characteristic of $k$, but every other result below requires it to be zero.)

\begin{proposition}\label{affine cartier duality}
	Let $G=\operatorname{Spec}R$ be an affine commutative group scheme over $k$. Its Cartier dual $G^D$ is represented by the formal group $\operatorname{Spf}R^*$, where $R^*$ is the dual Hopf algebra. Moreover, the double dual $(G^D)^D$ is naturally isomorphic to $G$.
\end{proposition}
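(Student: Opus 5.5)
The plan is to identify $\underline{\operatorname{Hom}}(G,\mathbb{G}_m)$ on a test $k$-algebra $A$ with an explicit set of elements of $R\otimes_k A$, and then pass to the dual side. A morphism of $A$-schemes $G_A\to\mathbb{G}_{m,A}$ is the same as a unit $f\in(R\otimes_k A)^\times$. Write $\Delta\colon R\to R\otimes_k R$ for the comultiplication and $\varepsilon\colon R\to k$ for the counit. Such an $f$ is a group homomorphism exactly when $\Delta(f)=f\otimes f$ in $R\otimes_k R\otimes_k A$. The condition $\varepsilon(f)=1$ is then automatic: it follows from the counit axiom together with invertibility of $f$. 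Hence
\[G^D(A)=\{f\in R\otimes_k A:\ \Delta(f)=f\otimes f,\ \varepsilon(f)=1\},\]
the group of grouplike elements of the $A$-Hopf algebra $R\otimes_k A$. Conversely, any such grouplike $f$ is automatically a unit, with inverse $S(f)$ where $S$ is the antipode.

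Next I would dualize. Let $R^*=\operatorname{Hom}_k(R,k)$, the linear dual. It is a profinite-dimensional (pseudocompact) topological commutative $k$-algebra: its multiplication is dual to $\Delta$, its unit is $\varepsilon$, and its topology is given by annihilators of finite-dimensional subcoalgebras. The finiteness theorem for coalgebras says that $R$ is the union of its finite-dimensional subcoalgebras $C$, so $R^*=\lim_C C^*$ is a limit of finite-dimensional $k$-algebras. Thus $\operatorname{Spf}R^*=\operatorname{colim}_C\operatorname{Spec}C^*$ is a formal scheme, and it is a group because $R$ is commutative. For a $k$-algebra $A$, continuous $k$-algebra maps $R^*\to A$ are exactly the compatible families of algebra maps $C^*\to A$. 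An algebra map $C^*\to A$ is an element of $C\otimes_k A$, since $C$ is finite-dimensional. Unwinding, it is an element $f\in C\otimes A$ satisfying $\Delta f=f\otimes f$ and $\varepsilon(f)=1$: multiplicativity corresponds to grouplikeness, and unitality to the counit condition. Taking the colimit over $C$ identifies $\operatorname{Hom}_{\mathrm{cont}}(R^*,A)$ with the grouplike elements of $R\otimes_k A$, i.e.\ with $G^D(A)$. One checks that this bijection is compatible with the group laws, where multiplication in $R$ corresponds to comultiplication on $R^*$. So $G^D\simeq\operatorname{Spf}R^*$ as fppf sheaves; Zariski and fppf descent are formal since both sides are limits of representables.

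For double duality, I would compute $(\operatorname{Spf}R^*)^D(A)$ in the same way. It consists of homomorphisms $\operatorname{colim}_C\operatorname{Spec}C^*_A\to\mathbb{G}_{m,A}$, i.e.\ compatible units in $\lim_C(C^*\otimes A)=\operatorname{Hom}_k(R,A)$ that are grouplike for the comultiplication dual to the product of $R$. Grouplike elements of $\operatorname{Hom}_k(R,A)$ are exactly the $k$-linear maps $R\to A$ that are multiplicative and unital, i.e.\ $k$-algebra maps $R\to A$, which is $G(A)$. The natural evaluation pairing $G\times G^D\to\mathbb{G}_m$ induces this identification, so the map $G\to(G^D)^D$ is an isomorphism.

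The main obstacle is the bookkeeping with infinite-dimensional $R$. The key technical points are the identification $\lim_C(C^*\otimes A)=\operatorname{Hom}_k(R,A)$, and the fact that the grouplike condition in $R\otimes A$ forces $f$ to lie in some $C\otimes A$ with $C$ finite-dimensional. Both follow from the local finiteness of comodules.
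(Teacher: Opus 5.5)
Your argument is correct. The paper gives no proof of its own and simply recalls the statement from SGA~3, Exp.~$\text{VII}_\text{B}$, Prop.~2.2.2; what you wrote is the standard proof behind that citation: characters of $G$ as grouplike elements of $R\otimes_k A$, matched with continuous algebra maps $R^*\to A$ via the fundamental theorem of coalgebras, and symmetrically for the double dual.

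One justification should be fixed. $\operatorname{Spf}R^*$ is a filtered \emph{colimit} of representables, not a limit, so its sheaf property is best deduced from the functorial bijection with the sheaf $G^D=\underline{\operatorname{Hom}}(G,\mathbb{G}_m)$. Also, no grouplike condition is needed to place $f$ in some $C\otimes_k A$, since every element of $R\otimes_k A$ is a finite sum.
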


Let $U$ be a commutative unipotent algebraic group over $k$. Since the exponential map gives an isomorphism between $U$ and its Lie algebra, $U$ is necessarily a vector group \cite[Prop.\ 14.32]{M} and we denote by $U^*$ its vector space dual.

\begin{proposition}\label{computation of cartier duals}
Let $U$ be a commutative unipotent algebraic group with dual $U^*$. Denote by $\widehat{U}$ and $\widehat{U^*}$ their formal completions along the zero sections. Then $U^D\simeq \widehat{U^*}$ and $\widehat{U}^D\simeq U^*$.
\end{proposition}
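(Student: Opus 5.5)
The plan is to reduce to $U=\mathbb{G}_a^n$ and then apply Proposition~\ref{affine cartier duality} directly. Choosing a basis identifies $U\simeq \mathbb{G}_a^n$ and $U^*\simeq \mathbb{G}_a^n$ through the dual basis. Cartier duality sends finite products to finite products, since $\underline{\operatorname{Hom}}(-,\mathbb{G}_m)$ converts finite direct sums into products. It therefore suffices to treat $n=1$, provided we check at the end that the resulting isomorphisms do not depend on the chosen coordinates. To make this independence transparent, I would write the isomorphisms intrinsically in terms of the pairing $U\times U^*\to\mathbb{G}_a$ together with the exponential series.

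For $U=\mathbb{G}_a=\operatorname{Spec}k[x]$, with $x$ primitive, Proposition~\ref{affine cartier duality} gives $U^D=\operatorname{Spf}R^*$, where $R^*=k[x]^*$ is the dual Hopf algebra. The divided-power basis $\{\delta_n\}$, dual to $\{x^n\}$, multiplies as $\delta_a\delta_b=\binom{a+b}{a}\delta_{a+b}$. This is where characteristic zero is needed: we have $\delta_n=y^n/n!$ with $y=\delta_1$, and the topology is the one induced by the dual of the degree filtration. Together these give $R^*\simeq k[[y]]$ as topological rings, with $y$ primitive. Hence $U^D\simeq\operatorname{Spf}k[[y]]=\widehat{\mathbb{G}_a}=\widehat{U^*}$.

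Concretely, for a $k$-algebra $A$ and a nilpotent $\lambda\in A$, the character of $\mathbb{G}_{a,A}$ attached to $\lambda$ is $x\mapsto\exp(\lambda x)$, which is a polynomial because $\lambda$ is nilpotent. One should check on $T$-points that these are exactly all the characters. A homomorphism $\mathbb{G}_{a,A}\to\mathbb{G}_{m,A}$ is a unit $f(x)\in A[x]^\times$ with $f(x+y)=f(x)f(y)$. Differentiating in $y$ gives $f'=f'(0)f$, and over $\mathbb{Q}$ this forces $f=\exp(f'(0)x)$. The unit condition then forces $f'(0)$ to be nilpotent, because the units of $A[x]$ are the polynomials with unit constant term and nilpotent higher coefficients. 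Viewed this way, the computation is a direct verification and does not even need Proposition~\ref{affine cartier duality}.

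For the second isomorphism $\widehat{U}^D\simeq U^*$, I would apply the double-duality part of Proposition~\ref{affine cartier duality} to $U^*$. The first part applied to $U^*$ gives $(U^*)^D\simeq\widehat{U^{**}}=\widehat{U}$, and then $\widehat{U}^D\simeq((U^*)^D)^D\simeq U^*$. Alternatively, I could compute directly: a homomorphism $\widehat{\mathbb{G}}_{a,A}\to\mathbb{G}_{m,A}$ is a group-like element of $A[[x]]$, and over $\mathbb{Q}$ every such element is $\exp(\lambda x)$ for an arbitrary $\lambda\in A$.

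The main obstacle I expect is not the algebra but the sheaf-theoretic bookkeeping. Proposition~\ref{affine cartier duality} identifies $G^D$ with $\operatorname{Spf}R^*$ as a functor on $k$-algebras. I must make sure that the identification $R^*\simeq k[[y]]$ is an isomorphism of topological Hopf algebras, so that it induces an isomorphism of fppf sheaves compatible with the group structures. I must also make sure that the formal completion $\widehat{U^*}$ means the ind-scheme $\operatorname{colim}_n\operatorname{Spec}(\operatorname{Sym}U/\mathfrak{m}^{n})$, which is what $\operatorname{Spf}$ produces. Writing the pairing intrinsically as $(u,\lambda)\mapsto\exp\langle u,\lambda\rangle$ handles both issues at once, and it also yields the coordinate-free statement.
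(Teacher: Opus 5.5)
Your proposal is correct and follows essentially the paper's route. The paper also deduces $U^D\simeq\widehat{U^*}$ from Proposition~\ref{affine cartier duality}, using the characteristic-zero identification of the dual Hopf algebra of $\operatorname{Sym}(U^*)$ with the completion of $\operatorname{Sym}(U)$ (in coordinates, $k[x_1,\dots,x_n]^*\simeq k\llbracket x_1,\dots,x_n\rrbracket$), and then obtains $\widehat{U}^D\simeq U^*$ by double duality exactly as you do. Your reduction to $n=1$ and the explicit exponential check on points are harmless additions; the paper works coordinate-free from the start, which removes the basis-independence bookkeeping you flag.
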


\begin{proof}
The computation $U^D\simeq  \widehat{U^*}$ follows from the fact that the dual of $\operatorname{Sym}(U^*)$ is the completion of $\operatorname{Sym}(U)$ at the ideal of degree one elements. (Upon a choice of basis, this is nothing but the isomorphism $k[x_1,\dotsc,x_n]^*\simeq  k\llbracket x_1,\dotsc,x_n\rrbracket$.) The other computation follows from this one by duality.
\end{proof}

Given a $k$-algebra $R$, we have that $\widehat{\mathbb{G}}_a(R)$ is the group of nilpotent elements in $R$ and $\widehat{\mathbb{G}}_m(R)$ is that of unipotent elements. We remark that the formal groups $\widehat{\mathbb{G}}_a$ and $\widehat{\mathbb{G}}_m$ are isomorphic via the map
\begin{align*}\widehat{\mathbb{G}}_m(R) &\to \widehat{\mathbb{G}}_a(R) \\ 1+x &\mapsto \log(1+x).\end{align*}
This phenomenon is a general property of formal groups in characteristic zero, and it simplifies their study.

\begin{proposition}[Cartier]\label{Cartier lemma}
Let $G$ be a commutative algebraic group over $k$ and let $\mathfrak{g}$ be its Lie algebra, seen as a vector group. The formal completions of $G$ and of $\mathfrak{g}$ along their zero sections coincide.
\end{proposition}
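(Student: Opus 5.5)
The plan is to prove that the formal completion $\widehat{G}$ of $G$ along the identity is isomorphic, as a formal group, to $\widehat{\mathfrak{g}}$, the completion of the vector group $\mathfrak{g}$. The argument runs through the characteristic-zero structure theory of commutative formal groups, via the logarithm.

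First, since $G$ is smooth (characteristic zero, Cartier's theorem), $\widehat{G}$ is a commutative formal Lie group of dimension $n=\dim G$. After choosing local parameters at the identity, it is $\operatorname{Spf} k\llbracket x_1,\dots,x_n\rrbracket$ with a commutative formal group law $F(x,y)$.

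Second, I construct a logarithm, i.e.\ an isomorphism of formal groups $\log\colon \widehat{G}\to \widehat{\mathbb{G}}_a^n$. The concrete route uses translation-invariant $1$-forms.
\begin{itemize}
\item Take a basis $\omega_1,\dots,\omega_n$ of $\mathfrak{g}^*$ and extend each to the invariant differential on $\widehat{G}$.
\item Because $G$ is commutative, these forms are closed. The cleanest way to see this is that the Maurer--Cartan equation $d\omega=-\tfrac12[\omega,\omega]$ has vanishing right-hand side for an abelian Lie algebra.
\item By the formal Poincar\'e lemma, valid in characteristic zero since we may divide by all integers, each $\omega_i=d\ell_i$ for a unique $\ell_i\in k\llbracket x\rrbracket$ with $\ell_i(0)=0$.
\item Invariance gives $d\bigl(\ell_i(F(x,y))-\ell_i(x)-\ell_i(y)\bigr)=0$ in each variable separately. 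Evaluating at $0$ then shows $\ell_i(F(x,y))=\ell_i(x)+\ell_i(y)$.
\item Since $\ell=(\ell_1,\dots,\ell_n)$ has invertible linear part, it is an automorphism of $k\llbracket x\rrbracket$. It is therefore an isomorphism $\widehat{G}\simeq\widehat{\mathbb{G}}_a^n$.
\end{itemize}

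Third, I make the identification canonical. The linear term of $\ell$ is the identity $T_0\widehat{G}=\mathfrak{g}\to\mathfrak{g}$, so the isomorphism is really $\widehat{G}\simeq\widehat{\mathfrak{g}}$ and is independent of the chosen basis. Uniqueness holds because any homomorphism $\widehat{\mathbb{G}}_a^n\to\widehat{\mathbb{G}}_a^n$ is linear in characteristic zero: additive power series have no higher-degree terms when all integers are invertible. Hence an isomorphism inducing the identity on Lie algebras is unique, so it is natural in $G$.

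A dual, Hopf-algebraic alternative is available: $\widehat{G}$ corresponds to the cocommutative Hopf algebra $\operatorname{Dist}(G)$. In characteristic zero, Milnor--Moore gives $\operatorname{Dist}(G)\simeq U(\mathfrak{g})=\operatorname{Sym}(\mathfrak{g})$ because $\mathfrak{g}$ is abelian. Dualizing then gives the completion of $\operatorname{Sym}(\mathfrak{g}^*)$, which by the computation in Proposition~\ref{computation of cartier duals} is $\widehat{\mathfrak{g}}$.

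The main obstacle is the closedness and integration step, that is, producing a primitive $\ell_i$ that is genuinely additive. This is where both commutativity and characteristic zero are used. Everything else is bookkeeping about formal completions.
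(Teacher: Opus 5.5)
Your proof is correct, but it takes a genuinely different route from the paper. The paper's argument is two lines. Composing formal completion with the Lie functor on infinitesimal formal groups gives $G\mapsto\operatorname{Lie}(G)$, so $\widehat{G}$ and $\widehat{\mathfrak{g}}$ have the same (abelian) Lie algebra. In characteristic zero, $\operatorname{Lie}$ is an equivalence from infinitesimal formal groups to Lie algebras (SGA~3, Exp.~$\text{VII}_\text{B}$, Cor.~3.3.2), so the two completions are isomorphic. You instead build the isomorphism by hand: closed invariant $1$-forms, the formal Poincar\'e lemma, and the resulting formal logarithm. Naturality then comes from your observation that additive power series are linear in characteristic zero. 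Your Hopf-algebraic alternative, $\operatorname{Dist}(G)\simeq U(\mathfrak{g})=\operatorname{Sym}(\mathfrak{g})$ via Cartier--Kostant--Milnor--Moore, is essentially what sits inside the SGA~3 equivalence, so that variant is the paper's proof unpacked. The paper's version is short but black-boxes a substantial theorem; that theorem also covers non-commutative groups, and the paper uses commutativity only to see that $\mathfrak{g}$ is abelian. Your main argument is self-contained and elementary, and it produces an explicit isomorphism normalized to induce the identity on Lie algebras. It also shows exactly where commutativity enters (closedness of invariant forms) and where characteristic zero enters (integration, and linearity of additive series). Finally, your uniqueness argument directly gives the naturality in $G$ that the paper later relies on in Corollary~\ref{cartier dual of formal completion}; in the paper's approach, that naturality comes from full faithfulness of $\operatorname{Lie}$.
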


\begin{proof}
Since an algebraic group and its formal completion share the same Lie algebra, the composition
\[\begin{tikzcd}
	{\left\{\begin{array}{c}       \text{Algebraic}\\       \text{groups over }k     \end{array}\right\}} & {\left\{\begin{array}{c}       \text{Infinitesimal formal}\\      \text{groups over }k     \end{array}\right\}} & {\left\{\begin{array}{c}       \text{Lie algebras}\\       \text{over }k     \end{array}\right\}}
	\arrow["{\widehat{(-)}}", from=1-1, to=1-2]
	\arrow["{\operatorname{Lie}(-)}", from=1-2, to=1-3]
\end{tikzcd}\]
is the functor associating an algebraic group to its Lie algebra. In particular, $G$ and $\mathfrak{g}$, seen as a vector group, have the same image by the composition above. Now, by \cite[Exp.\ \texorpdfstring{VII\textsubscript{B}}{VIIB}, Cor.\ 3.3.2]{sga31}, the functor on the right is an equivalence of categories. In particular, $G$ and $\mathfrak{g}$ have isomorphic formal completions.   
\end{proof}

The preceding propositions enable us to compute the Cartier dual of the formal completions of commutative algebraic groups. This result, with a slightly different proof, also appears in \cite[Lem.\ A.3.1]{barbieri2009sharp}.

\begin{corollary}\label{cartier dual of formal completion}
Let $G$ be a commutative algebraic group over $k$ and denote by $\mathfrak{g}$ its Lie algebra. The Cartier dual of the formal completion $\widehat{G}$ is naturally isomorphic to $\mathfrak{g}^*$. This also coincides with the \emph{invariant} differentials $\Omega_G$ of $G$.
\end{corollary}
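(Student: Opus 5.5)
The plan is to reduce to the vector group case using Proposition~\ref{Cartier lemma}, and then apply Proposition~\ref{computation of cartier duals}. By Proposition~\ref{Cartier lemma}, the formal completion $\widehat{G}$ is isomorphic to $\widehat{\mathfrak{g}}$, the formal completion of the vector group $\mathfrak{g}$ along zero. Hence $\widehat{G}^D\simeq \widehat{\mathfrak{g}}^D$. Since $\mathfrak{g}$ is a commutative unipotent group, namely a vector group, Proposition~\ref{computation of cartier duals} gives $\widehat{\mathfrak{g}}^D\simeq \mathfrak{g}^*$. This settles the first assertion up to the question of naturality.

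For naturality, I will not rely on the choice of isomorphism in Proposition~\ref{Cartier lemma}, since that isomorphism is canonical only up to the equivalence of categories cited there. Instead, I will describe the map $\widehat{G}^D\to \mathfrak{g}^*$ intrinsically. A homomorphism $\chi\colon \widehat{G}_R\to \widehat{\mathbb{G}}_{m,R}\subset\mathbb{G}_{m,R}$ (any character of an infinitesimal formal group lands in $\widehat{\mathbb{G}}_m$) is sent to its differential at the origin, $\operatorname{Lie}(\chi)\in \operatorname{Hom}(\mathfrak{g},k)\otimes R$, after identifying $\operatorname{Lie}(\widehat{\mathbb{G}}_m)=k$ through $\log$. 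This assignment is functorial in $G$ and in $R$, and it is compatible with the isomorphism of Proposition~\ref{Cartier lemma}, because that isomorphism induces the identity on Lie algebras.

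So it suffices to check that the differential map is an isomorphism for $G=\mathfrak{g}$ a vector group. In coordinates, $\widehat{\mathbb{G}}_a^n$ has dual algebra $k[x_1,\dots,x_n]$. A character corresponds to a grouplike element, i.e.\ to $\exp(\sum a_ix_i)$ with $a_i\in R$, and its differential is exactly $(a_i)$. This is the explicit form of the isomorphism $\widehat{U}^D\simeq U^*$ of Proposition~\ref{computation of cartier duals}. Some care is needed over a general $k$-algebra $R$: characteristic zero is used to make sense of $\exp$ and $\log$, and to show that grouplike elements of $R\llbracket \dots\rrbracket$-type duals are exponentials of primitive elements.

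Finally, for the identification with invariant differentials, recall that the invariant $1$-forms on $G$ correspond bijectively to $\omega\in \mathfrak{g}^*=(\operatorname{Lie}G)^*$ via restriction at the identity. Composing gives $\widehat{G}^D\simeq \Omega_G$; concretely, $\chi$ maps to the invariant form $\chi^*(\mathrm{d}t/t)$.

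I expect the main obstacle to be the naturality bookkeeping, namely verifying that the intrinsic differential map agrees with the abstract isomorphism built from Propositions~\ref{Cartier lemma} and~\ref{computation of cartier duals}. The fix is to check that both induce the identity on Lie algebras.
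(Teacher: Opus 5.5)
Your proposal is correct and follows the paper's implicit argument, which likewise combines Proposition~\ref{Cartier lemma} ($\widehat{G}\simeq\widehat{\mathfrak{g}}$) with the isomorphism $\widehat{U}^D\simeq U^*$ of Proposition~\ref{computation of cartier duals}. Your extra step goes beyond what the paper writes: you check via the differential map $\chi\mapsto\operatorname{Lie}(\chi)$, normalizing the Cartier isomorphism to induce the identity on Lie algebras, and this makes precise the word ``naturally,'' which the paper leaves unaddressed.
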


We are now in position to compute the Cartier dual of the de Rham space $G_\text{dR}\simeq G/\widehat{G}$ of a commutative connected algebraic group $G$.

\begin{proposition}\label{cartier dual of dR}
For a commutative connected algebraic group $G$ over $k$, the Cartier dual of $G_{\normalfont\text{dR}}$ vanishes.
\end{proposition}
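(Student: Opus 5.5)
We use the presentation $G_{\text{dR}}\simeq G/\widehat{G}$, i.e.\ the short exact sequence of abelian fppf sheaves
\[0\to \widehat{G}\to G\to G_{\text{dR}}\to 0.\]
Applying $\underline{\operatorname{Hom}}(-,\mathbb{G}_m)$ yields a left exact sequence
\[0\to G_{\text{dR}}^D\to G^D\to \widehat{G}^D.\]
Hence $G_{\text{dR}}^D$ is the kernel of the restriction map $G^D\to\widehat{G}^D$. It therefore suffices to show that a character of $G_S$ (over an arbitrary $k$-scheme $S$) which becomes trivial on the formal completion $\widehat{G}_S$ is itself trivial.

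To see this, fix $S$ and a homomorphism $\chi\colon G_S\to\mathbb{G}_{m,S}$ whose restriction to $\widehat{G}_S$ is trivial. The first route is structural. By Corollary~\ref{cartier dual of formal completion}, $\widehat{G}^D\simeq\mathfrak{g}^*$, and the map $G^D\to\mathfrak{g}^*$ sends $\chi$ to its differential $\chi^*(\mathrm{d}t/t)$, an invariant differential. One then uses the Barsotti--Chevalley decomposition $0\to T\times U\to G\to A\to 0$ together with left exactness of $(-)^D$. Since $A^D=0$ by Proposition~\ref{cartier dual of abelian schemes and tori}, we get $G^D\hookrightarrow T^D\times U^D$. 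For $U$, Proposition~\ref{computation of cartier duals} gives $U^D\simeq\widehat{U^*}$, and the map $U^D\to\widehat{U}^D\simeq U^*$ is the inclusion $\widehat{U^*}\hookrightarrow U^*$. For $T$, the dual $T^D=X$ is étale locally $\mathbb{Z}^r$, and the map $X\to\mathfrak{t}^*$, $\lambda\mapsto \mathrm{d}\lambda$, is injective in characteristic zero even on $S$-points with $S$ non-reduced, because it is a homomorphism from a locally constant sheaf of lattices and is injective on geometric fibres. Hence the composition $G^D\to\widehat{G}^D$ is injective.

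Making this rigorous requires the naturality square relating $G^D\to\widehat{G}^D$ with $(T\times U)^D\to (\widehat{T}\times\widehat{U})^D$. Since $\widehat{T\times U}\to\widehat{G}$ is an isomorphism (the formal completion only sees the Lie algebra, and $\mathfrak{g}=\mathfrak{t}\oplus\mathfrak{u}$ because $A$ contributes… no). Here one must be careful: $\operatorname{Lie}A\neq0$, so $\widehat{T\times U}\to\widehat{G}$ is not an isomorphism. Still, restricting $\chi$ to $T\times U$ and then to its completion gives the trivial character, since $\widehat{T\times U}\subset\widehat{G}$. The previous paragraph then shows $\chi|_{T\times U}=0$. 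Thus $\chi$ factors through $A_S$, and $A^D=0$ forces $\chi=0$.

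The main obstacle is the torus step over non-reduced $S$. There one must check that a character $\lambda\in X(S)$ with vanishing differential is zero. Étale locally $\lambda$ is a locally constant function $S\to\mathbb{Z}^r$, and its differential is the same locally constant function viewed in $\mathcal{O}_S^r$. This vanishes only if $\lambda=0$, because $\mathbb{Z}\to\mathcal{O}_S$ is injective on each nonempty connected component when $\operatorname{char}k=0$. The unipotent step is immediate from the explicit Cartier duality.
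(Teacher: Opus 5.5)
Your proof is correct and is essentially the paper's argument: both reduce, via the Barsotti--Chevalley decomposition and $A^D=0$, to the injectivity of $H^D\to\widehat{H}^D$ for $H=T,U$, using $\widehat{U^*}\hookrightarrow U^*$ for $U$ and a first-order computation for $T$ (your injectivity of $\mathbb{Z}\to\mathcal{O}_S$ in characteristic zero is exactly what the paper's test algebra $R[z]/(z-1)^r$ detects). The only difference is the order of operations. The paper dualizes the exact sequence $0\to T_{\text{dR}}\times U_{\text{dR}}\to G_{\text{dR}}\to A_{\text{dR}}\to 0$, while you first identify $G_{\text{dR}}^D$ with $\ker(G^D\to\widehat{G}^D)$ and then dualize the sequence for $G$ itself, needing only $\widehat{T\times U}\subset\widehat{G}$. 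You should also delete the aborted parenthetical suggesting $\widehat{T\times U}\to\widehat{G}$ is an isomorphism; your argument never uses it.
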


\begin{proof}
Recall from the Barsotti--Chevalley theorem \cite[Thm.\ 8.28 and Cor.\ 16.15]{M} that $G$ is an extension of an abelian variety $A$ by a product of a torus $T$ and a unipotent group $U$. Since the de Rham functor $(-)_\text{dR}$ is exact, the de Rham space $G_\text{dR}$ is an extension of $A_\text{dR}$ by $T_\text{dR}\times U_\text{dR}$. Thus, we have an induced exact sequence
\[0\to \underline{\operatorname{Hom}}(A_\text{dR},\mathbb{G}_m)\to \underline{\operatorname{Hom}}(G_\text{dR},\mathbb{G}_m)\to \underline{\operatorname{Hom}}(T_\text{dR},\mathbb{G}_m)\times \underline{\operatorname{Hom}}(U_\text{dR},\mathbb{G}_m),\]
and so it suffices to prove the result when $G$ is an abelian variety, a torus, or a unipotent group.

Because the functor $\underline{\operatorname{Hom}}(-,\mathbb{G}_m)$ is left-exact, the vanishing of $G_\text{dR}^D$ is equivalent to the morphism $\underline{\operatorname{Hom}}(G,\mathbb{G}_m)\to \underline{\operatorname{Hom}}(\widehat{G},\mathbb{G}_m)$ being a monomorphism. We verify this in the relevant particular cases. For abelian varieties, their vanishing Cartier dual ensures this property. For a unipotent group $U$, the relevant morphism is isomorphic to $\widehat{U^*}\to U^*$, which is likewise monic per Corollary~\ref{inclusion of formal completion}.

For a torus $T$, the question of whether $\underline{\operatorname{Hom}}(T,\mathbb{G}_m)\to \underline{\operatorname{Hom}}(\widehat{T},\mathbb{G}_m)$ is a monomorphism is local on the base, allowing us to assume that $T=\mathbb{G}_m$. Then the statement boils down to the following: given a connected $k$-algebra $R$, if
\begin{alignat*}{2}
	\operatorname{Uni}(B)&\hookrightarrow B^\times &&\to B^\times\\
	x\hspace{1.1em} &\mapsto \hspace{0.4em} x &&\mapsto \hspace{0.07em} x^n
\end{alignat*}
is the unit map for every $R$-algebra $B$, then $n=0$. Here, $\operatorname{Uni}(B)$ denotes the group of unipotent elements in $B$. This can be proven by choosing $B=R[z]/(z-1)^{r}$ for some $r>n$, leading to the desired result.
\end{proof}

The hypothesis that $G$ is connected in the Proposition~\ref{cartier dual of dR} is essential. In general, a commutative algebraic group $G$ over $k$ fits into a short exact sequence
\[0\to G^0\to G\to \pi_0(G)\to 0,\]
where $G^0$ is the connected component of $G$ containing the identity, and $\pi_0(G)$ is the finite étale group of connected components. Proposition~\ref{de rham is an epimorphism of presheaves} states that the natural map $\pi_0(G)\to \pi_0(G)_\text{dR}$ is an isomorphism, and the preceding result then implies that the Cartier duals of $G_\text{dR}$ and $\pi_0(G)$ coincide.

\subsection{Extensions by the multiplicative group}\label{extensions section}

As in the previous section, consider a base scheme $S$ and an abelian sheaf $\mathscr{G}$ on $(\textsf{Sch}/S)_\text{fppf}$. To continue our strategy outlined in the beginning of Section~\ref{section computation of cartier duals}, we now focus on computing the extension sheaves $\underline{\operatorname{Ext}}^1(\mathscr{G},\mathbb{G}_m)$. We start by recalling some fundamental computations.

\begin{proposition}\label{T'}
	Let $A$ be an abelian scheme and $T$ be a torus over $S$ with Cartier dual $X$. Then $\underline{\operatorname{Ext}}^1(A,\mathbb{G}_m)$ is isomorphic to the dual abelian scheme of $A$, while both $\underline{\operatorname{Ext}}^1(T,\mathbb{G}_m)$ and $\underline{\operatorname{Ext}}^1(X,\mathbb{G}_m)$ vanish.
\end{proposition}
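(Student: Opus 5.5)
The three claims are essentially classical, and I would handle each separately, citing the literature where possible and using the local-on-the-base nature of extension sheaves to reduce to split cases.

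\emph{Abelian schemes.} For $\underline{\operatorname{Ext}}^1(A,\mathbb{G}_m)$ I would invoke the Barsotti--Weil formula in the relative form recalled in the introduction \cite[Cor.~3.6]{ribeiro2025}. The underlying argument runs as follows.
\begin{enumerate}
\item An extension $0\to\mathbb{G}_m\to E\to A\to 0$ is a $\mathbb{G}_m$-torsor on $A$, hence a line bundle $\mathscr{L}$.
\item The group structure on $E$ yields an isomorphism $m^*\mathscr{L}\simeq\mathscr{L}\boxtimes\mathscr{L}$ satisfying the compatibilities of Remark~\ref{objects of stacky Cartier dual}.
\item Conversely, such rigidified data determine a commutative extension.
\end{enumerate}
This identifies $\underline{\operatorname{Ext}}^1(A,\mathbb{G}_m)$ with the sheaf of line bundles on $A$ that are primitive (translation-invariant) and rigidified along the zero section. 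By the theorem of the square and the seesaw principle, this is the sheaf $\operatorname{Pic}^0_{A/S}$, which is representable by the dual abelian scheme. The vanishing $A^D=0$ from Proposition~\ref{cartier dual of abelian schemes and tori} is exactly what removes automorphisms, so that the associated stack is indeed a sheaf. This agrees with Proposition~\ref{computation of stacky cartier duals}.

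\emph{Tori and their character groups.} Both vanishing statements are fppf-local on $S$, because $\underline{\operatorname{Ext}}^1$ is the sheafification of the presheaf $T\mapsto \operatorname{Ext}^1_{S_T}(-,\mathbb{G}_m)$. A torus splits étale locally, and $X$ is étale locally $\mathbb{Z}^r$ by Proposition~\ref{cartier dual of abelian schemes and tori}, so it suffices to treat $\mathbb{G}_m$ and $\mathbb{Z}$.

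For $X=\mathbb{Z}$, we have $\mathsf{R}\underline{\operatorname{Hom}}(\mathbb{Z},\mathbb{G}_m)\simeq\mathbb{G}_m$. Since $\mathbb{Z}$ is the free abelian sheaf on a point, $\underline{\operatorname{Ext}}^1(\mathbb{Z},\mathbb{G}_m)$ is the sheafification of $T\mapsto \mathrm{H}^1(T,\mathbb{G}_m)=\operatorname{Pic}(T)$. This sheafification is zero because line bundles are Zariski locally trivial.

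For $T=\mathbb{G}_m$, I would use SGA~3 \cite[Exp.~X]{sga32} / \cite{sga31}. Given an extension $E$ of $\mathbb{G}_m$ by $\mathbb{G}_m$, the underlying torsor is a line bundle on $\mathbb{G}_{m,S}$, which is trivial Zariski locally on $S$. Hence $E$ is, locally on $S$, an affine commutative group scheme of multiplicative type: it is an extension of tori, and is therefore a torus by rigidity. A torus extension of split tori splits, because the sequence of character groups $0\to\mathbb{Z}\to X(E)\to\mathbb{Z}\to0$ splits.

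\emph{Main obstacle.} The delicate point is the claim that an extension of $\mathbb{G}_m$ by $\mathbb{G}_m$ over an arbitrary base is locally a split torus. This requires that extensions of groups of multiplicative type are of multiplicative type, which is \cite[Exp.~XVII, Prop.~7.1.1]{SGA3}-type material. Rather than reprove it, I would cite it; a direct cocycle computation of $\operatorname{Ext}^1(\mathbb{G}_m,\mathbb{G}_m)$ via the Breen–Deligne resolution is too heavy to be worthwhile here.
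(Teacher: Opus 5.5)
Your treatment of $A$ and of $X$ is the paper's. It cites \cite[Cor.~3.6]{ribeiro2025} for Barsotti--Weil, and remarks that Lemma~\ref{vanishing lemma} also gives it. The passage from pairs $(\mathscr{L},\alpha)$ to primitive line bundles is exactly where the constancy of all maps $A^n\to\mathbb{G}_m$ enters. For $X\simeq\mathbb{Z}$ the paper uses that $\underline{\operatorname{Hom}}(\mathbb{Z},-)$ is the identity functor, hence exact, which is equivalent to your sheafified-$\operatorname{Pic}$ argument. For $T$ the paper simply cites \cite[Exp.~VIII, Prop.~3.3.1]{raynaud2006groupes}. Your unpacking of that result, however, contains a step that is false as written.

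The claim that the line bundle underlying an extension $0\to\mathbb{G}_m\to E\to\mathbb{G}_m\to 0$ is trivial Zariski locally on $S$ is wrong. So is the Zariski-local splitting you deduce from it via ``a torus extension of split tori splits''. The paper's own Example~\ref{nodal example} is a counterexample. For the nodal curve $S$, the argument there gives $\operatorname{Ext}^1_U(\mathbb{G}_m,\mathbb{G}_m)\simeq\mathrm{H}^1_{\text{ét}}(U,\mathbb{Z})$ for every open $U\subset S$. When $U$ contains the node, this group is $\mathbb{Z}$, and restriction from $S$ is an isomorphism, because the normalization of $U$ is connected with two points over the node. Since $U$ is reduced, Theorem~\ref{injectivity} then says the extension attached to a generator has nontrivial underlying line bundle on $\mathbb{G}_{m,U}$. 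Hence neither the torsor nor the extension trivializes on any neighbourhood of the node. The group $E$ is a torus, but one that splits only étale locally.

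Fortunately you never need that step. A $\mathbb{G}_m$-torsor over the affine $S$-scheme $\mathbb{G}_{m,S}$ is representable and affine over it. So $E$ is a smooth affine commutative $S$-group with torus fibres, hence a torus by the SGA~3 material you invoke. It is therefore étale locally split, and étale locally the character sequence $0\to\mathbb{Z}\to X(E)\to\mathbb{Z}\to 0$ splits by lifting $1$. Étale-local splitting is all that the vanishing of the fppf sheaf $\underline{\operatorname{Ext}}^1(\mathbb{G}_m,\mathbb{G}_m)$ requires. With ``Zariski'' replaced by ``étale'' and affineness obtained from the torsor structure, your sketch becomes a correct expansion of the citation the paper uses, rather than a different route.
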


\begin{proof}
	The identification of $\underline{\operatorname{Ext}}^1(A,\mathbb{G}_m)$ with the dual abelian scheme is commonly referred to as the \emph{Barsotti--Weil formula}. A complete proof may be found in \cite[Cor.~3.6]{ribeiro2025}. (Alternatively, this identification also follows directly from Lemma~\ref{vanishing lemma}.) The vanishing of $\underline{\operatorname{Ext}}^1(T,\mathbb{G}_m)$ was established in \cite[Exp.\ VIII, Prop.\ 3.3.1]{raynaud2006groupes}. Finally, the vanishing of $\underline{\operatorname{Ext}}^1(X,\mathbb{G}_m)$ is local on $S$, so we may reduce to the case $X \simeq \mathbb{Z}$. In this case, the claim follows from the exactness of the functor $\underline{\operatorname{Hom}}(\mathbb{Z},-)$, which is isomorphic to the identity functor.
\end{proof}

We now describe the strategy of \cite[\S 2]{ribeiro2025} for computing extension sheaves, specialized to our setting. Let $\mathscr{G}$ and $\mathscr{A}$ be abelian sheaves on $(\textsf{Sch}/S)_\text{fppf}$. As $\underline{\operatorname{Ext}}^i(\mathscr{G},\mathscr{A})$ is the sheafification of the presheaf $X\mapsto \operatorname{Ext}^i_X(\mathscr{G},\mathscr{A})$ \cite[Prop.\ V.6.1]{SGA41}, there is a natural morphism of groups
\[\operatorname{Ext}^i_X(\mathscr{G},\mathscr{A})\to \underline{\operatorname{Ext}}^i(\mathscr{G},\mathscr{A})(X),\]
functorial on $\mathscr{G}$, $\mathscr{A}$, and $X$. The following simple result, also contained in \cite[Prop.\ V.6.1]{SGA41}, gives a relation between these objects

\begin{proposition}\label{sheafification map}
	Let $X$ be a scheme over $S$ and $\mathscr{G},\mathscr{A}$ be abelian sheaves on $\normalfont(\textsf{Sch}/S)_\text{fppf}$. There exists an exact sequence
    \[0\to \mathrm{H}^1(X,\underline{\operatorname{Hom}}(\mathscr{G},\mathscr{A}))\to \operatorname{Ext}^1_X(\mathscr{G},\mathscr{A})\to \underline{\operatorname{Ext}}^1(\mathscr{G},\mathscr{A})(X)\to \mathrm{H}^2(X,\underline{\operatorname{Hom}}(\mathscr{G},\mathscr{A})),\]
    that is functorial on $\mathscr{G}$, $\mathscr{A}$, and $X$.
\end{proposition}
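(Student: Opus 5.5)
The plan is to compare the global and local Ext functors through the local-to-global spectral sequence for $\mathsf{R}\operatorname{Hom}_X$. Work on the localized site $(\textsf{Sch}/X)_\text{fppf}$ and write $\operatorname{Ext}^i_X(\mathscr{G},\mathscr{A})$ for the Ext groups of the restrictions $\mathscr{G}|_X$, $\mathscr{A}|_X$ in the category of abelian sheaves on this site. Restriction to a slice is exact and preserves injectives, since it has an exact left adjoint given by extension by zero. It follows that $\underline{\operatorname{Ext}}^q(\mathscr{G},\mathscr{A})|_X\simeq \underline{\operatorname{Ext}}^q(\mathscr{G}|_X,\mathscr{A}|_X)$ and $\underline{\operatorname{Hom}}(\mathscr{G},\mathscr{A})|_X\simeq \underline{\operatorname{Hom}}(\mathscr{G}|_X,\mathscr{A}|_X)$. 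We may therefore assume $X$ is the final object.

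Next, factor $\operatorname{Hom}_X(\mathscr{G},-)$ as the composite $\Gamma(X,-)\circ \underline{\operatorname{Hom}}(\mathscr{G},-)$. The functor $\underline{\operatorname{Hom}}(\mathscr{G},-)$ sends injective sheaves to flasque sheaves, hence to $\Gamma$-acyclic ones; this is the standard fact that for injective $\mathscr{I}$, $\underline{\operatorname{Hom}}(\mathscr{G},\mathscr{I})$ has vanishing Čech cohomology on every covering. Grothendieck's spectral sequence for a composite of functors then gives
\[E_2^{p,q}=\mathrm{H}^p(X,\underline{\operatorname{Ext}}^q(\mathscr{G},\mathscr{A}))\Longrightarrow \operatorname{Ext}^{p+q}_X(\mathscr{G},\mathscr{A}).\]

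The five-term exact sequence of low-degree terms reads
\[0\to E_2^{1,0}\to \operatorname{Ext}^1_X\to E_2^{0,1}\to E_2^{2,0}\to \operatorname{Ext}^2_X,\]
with $E_2^{1,0}=\mathrm{H}^1(X,\underline{\operatorname{Hom}}(\mathscr{G},\mathscr{A}))$, $E_2^{0,1}=\underline{\operatorname{Ext}}^1(\mathscr{G},\mathscr{A})(X)$ and $E_2^{2,0}=\mathrm{H}^2(X,\underline{\operatorname{Hom}}(\mathscr{G},\mathscr{A}))$. Truncating this sequence yields exactly the claimed one. We must still check that the edge map $\operatorname{Ext}^1_X\to E_2^{0,1}$ is the sheafification map. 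This holds because the edge map is induced by the map from global sections of the derived Hom complex to global sections of its cohomology sheaf, and the latter is the sheafification of $U\mapsto \operatorname{Ext}^1_U$.

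Functoriality in $\mathscr{G}$, $\mathscr{A}$ and $X$ follows from the naturality of the Grothendieck spectral sequence. One realizes it through Cartan--Eilenberg resolutions, which can be chosen functorially up to homotopy, or by working with the derived functor $\mathsf{R}\Gamma\circ \mathsf{R}\underline{\operatorname{Hom}}$ and its canonical filtration $\tau_{\leq q}$. Pullback along $X'\to X$ is compatible because restriction is exact and preserves injectives.

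The main point requiring care is the acyclicity input: that $\underline{\operatorname{Hom}}(\mathscr{G},\mathscr{I})$ is $\Gamma$-acyclic for every injective $\mathscr{I}$, so that the composite-functor spectral sequence exists. I would prove it by showing that the Čech complexes of $\underline{\operatorname{Hom}}(\mathscr{G},\mathscr{I})$ are $\operatorname{Hom}(-,\mathscr{I})$ applied to the exact Čech resolution of $\mathscr{G}$ by the sheaves $\mathscr{G}\otimes\mathbb{Z}[U_\bullet]$, and then invoking the Čech criterion for acyclicity. Alternatively, this is exactly the content of the cited \cite[Prop.\ V.6.1]{SGA41}.
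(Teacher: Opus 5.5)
Your proposal is correct and matches the paper's proof: both obtain the sequence as the five-term exact sequence of the local-to-global spectral sequence $\mathrm{H}^p(X,\underline{\operatorname{Ext}}^q(\mathscr{G},\mathscr{A}))\Rightarrow\operatorname{Ext}^{p+q}_X(\mathscr{G},\mathscr{A})$, which comes from the factorization $\operatorname{Hom}_X(\mathscr{G},-)=\Gamma(X,-)\circ\underline{\operatorname{Hom}}(\mathscr{G},-)$. You additionally justify the inputs the paper leaves to \cite[Prop.~V.6.1]{SGA41}, and each justification is sound: the localization to $(\textsf{Sch}/X)_\text{fppf}$, the $\Gamma$-acyclicity of $\underline{\operatorname{Hom}}(\mathscr{G},\mathscr{I})$ for injective $\mathscr{I}$ via \v{C}ech complexes, and the identification of the edge map with the sheafification map.
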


\begin{proof}
	The group of sections of $\underline{\operatorname{Hom}}(\mathscr{G},\mathscr{A})$ over $X$ is, by definition, $\operatorname{Hom}_X(\mathscr{G},\mathscr{A})$. In particular, there is a Grothendieck spectral sequence, often called \emph{local-to-global spectral sequence}, whose five-term exact sequence yields the desired result.
\end{proof}

Now that we have established a connection between extension \emph{sheaves} and extension \emph{groups}, we will study a method for computing the latter. The following proposition, initially suggested by Grothendieck in \cite[Exp.~VII, Rem.~3.5.4]{raynaud2006groupes} and partially developed by Breen in \cite{breen1969extensions}, has been independently proven by Deligne (in a letter to Breen available in \cite[App.~B]{ribeiro}) and by Clausen--Scholze \cite[Thm.~4.10]{scholze2019condensed}.

\begin{proposition}[Breen--Deligne resolution]\label{BD resolution}
Let $\mathscr{G}$ be an abelian sheaf on $\normalfont(\textsf{Sch}/S)_\text{fppf}$. There exists a functorial resolution of the form
\[\cdots \to \bigoplus_{j=1}^{n_i}\mathbb{Z}[\mathscr{G}^{r_{i,j}}]\to \cdots \to \mathbb{Z}[\mathscr{G}^3]\oplus \mathbb{Z}[\mathscr{G}^2]\to \mathbb{Z}[\mathscr{G}^2]\to \mathbb{Z}[\mathscr{G}]\to \mathscr{G},\]
where the $n_i$ and $r_{i,j}$ are all positive integers.
\end{proposition}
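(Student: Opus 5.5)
The approach is to build the resolution once and for all over the category of abelian groups, with only integral formulas in it, and then transport it to sheaves. The key observation is that "functorial" here means something strong: every differential is a finite integral combination of maps of the form $[(x_1,\dots,x_r)]\mapsto[(\ell_1(x),\dots,\ell_s(x))]$, where the $\ell_i$ are $\mathbb{Z}$-linear forms in the $x_j$. Such expressions make sense for any abelian group object in any topos. So it suffices to produce a complex of functors on $\textsf{Ab}$, built from this restricted vocabulary of maps, that is a resolution of $M$ for every abelian group $M$. Applying it sectionwise to $\mathscr{G}$ and sheafifying then gives the resolution of $\mathscr{G}$, because sheafification is exact and commutes with forming $\mathbb{Z}[-]$ of finite products.

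The construction proceeds degree by degree, in the spirit of building a free resolution, but one must kill homology uniformly in $M$.

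\textbf{Degree zero.} We start with the augmentation $\mathbb{Z}[M]\to M$, $[x]\mapsto x$. Its kernel is generated by the elements $[x+y]-[x]-[y]$, which gives the map $\mathbb{Z}[M^2]\to\mathbb{Z}[M]$.

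\textbf{Inductive step.} Suppose the complex has been built up to degree $i$. Write $Z_i(M)$ for the cycles in degree $i$. We need finitely many "universal cycles" $c_1,\dots,c_{n}$ with the following property: each $c_j$ is a cycle in the complex evaluated on a free abelian group of finite rank $r_j$, and for every $M$, $Z_i(M)$ modulo boundaries is generated by the images of the $c_j$ under all maps $\mathbb{Z}^{r_j}\to M$. Given such cycles, we add the summand $\bigoplus_j\mathbb{Z}[M^{r_j}]$ in degree $i+1$, with differential sending $[(x_1,\dots,x_{r_j})]$ to the pushforward of $c_j$.

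The existence of such finitely many cycles uses the stable homology of Eilenberg--MacLane spaces. The homology of the functor complex is computed by evaluating on free groups, and since every term is a left Kan extension from lattices, it suffices to control the complex on lattices. The derived functors involved are governed by $H_*(K(\mathbb{Z},n))$. In each stable degree these groups are finitely generated, with the finite generation coming from Serre's computation of $H_*(K(\mathbb{Z},n),\mathbb{Z})$ in the stable range. That finite generation is exactly what produces finitely many $r_{i,j}$.

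\textbf{Main obstacle.} This finite-generation and uniformity step is the hard part. It is essentially the theorem of Deligne and Clausen--Scholze, which uses stable homotopy theory: the fact that $\mathbb{Z}$ is the homotopy colimit of $\mathbb{Z}[-]$ along suspension spectra (the Dold--Thom and Freudenthal arguments). In the paper, I would simply invoke \cite[Thm.~4.10]{scholze2019condensed}, and then carry out the sheafification step explicitly as described above.
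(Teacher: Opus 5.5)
Your proposal is correct and takes the same approach as the paper, which likewise gives no independent proof and simply cites Deligne's letter and Clausen--Scholze \cite[Thm.~4.10]{scholze2019condensed}. Your explicit transfer from abelian groups to abelian sheaves (differentials given by universal formulas via Yoneda, applied sectionwise, then exact sheafification) is exactly the step the paper leaves implicit, while your sketch of the finite-generation step is only a heuristic and not part of the actual argument.
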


Clausen and Scholze's proof shows that the first terms of the resolution can be chosen in the following way:
\[\mathbb{Z}[\mathscr{G}^4]\oplus \mathbb{Z}[\mathscr{G}^3]\oplus \mathbb{Z}[\mathscr{G}^3]\oplus \mathbb{Z}[\mathscr{G}^2]\oplus \mathbb{Z}[\mathscr{G}]\xrightarrow{d_3} \mathbb{Z}[\mathscr{G}^3]\oplus \mathbb{Z}[\mathscr{G}^2]\xrightarrow{d_2} \mathbb{Z}[\mathscr{G}^2]\xrightarrow{d_1} \mathbb{Z}[\mathscr{G}]\xrightarrow{d_0} \mathscr{G},\]
where the differentials are given by
\begin{align*}
    d_3([x,y,z,t]) &= ([x+y,z,t] - [x,y+z,t] + [x,y,z+t]- [x,y,z] - [y,z,t],0)\\
    d_3([x,y,z]) &= (-[x,y,z]+[x,z,y]-[z,x,y],[x+y,z]-[x,z]-[y,z])\\
    d_3([x,y,z]) &= ([x,y,z]-[y,x,z]+[y,z,x],[x,y+z]-[x,y]-[x,z])\\
    d_3([x,y]) &= (0,[x,y]+[y,x])\\
    d_3([x]) &= (0,[x,x]) \\
    d_2([x,y,z]) &= [x+y,z]-[x,y+z]+[x,y]-[y,z] \\
	d_2([x,y]) &= [x,y]-[y,x] \\
    d_1([x,y]) &= [x+y]-[x]-[y]\\
	d_0([x]) &= x.
\end{align*}
Here, the top $d_3([x,y,z])$ acts on the first factor of $\mathbb{Z}[\mathscr{G}^3]$, while the bottom $d_3([x,y,z])$ acts on the second factor. Henceforth, we fix a Breen--Deligne resolution that begins with these terms. In particular, this explicit description allows us to define two important invariants.

\begin{definition}
	Let $\mathscr{G}$ and $\mathscr{A}$ be abelian sheaves on $(\textsf{Sch}/S)_\text{fppf}$. Applying the functor $\operatorname{Hom}(-,\mathscr{A})$ to the Breen--Deligne resolution of $\mathscr{G}$, we obtain the complex
\[\begin{tikzcd}
	{\Gamma(\mathscr{G},\mathscr{A})\to  \Gamma(\mathscr{G}^2,\mathscr{A}) \to \Gamma(\mathscr{G}^3,\mathscr{A})\oplus \Gamma(\mathscr{G}^2,\mathscr{A})} \ar[draw=none]{d}[name=X, anchor=center]{}\ar[rounded corners,
            to path={ -- ([xshift=2ex]\tikztostart.east)
                      |- (X.center) \tikztonodes
                      -| ([xshift=-2ex]\tikztotarget.west)
                      -- (\tikztotarget)}]{d}[at end]{} \\
	{\Gamma(\mathscr{G}^4,\mathscr{A})\oplus \Gamma(\mathscr{G}^3,\mathscr{A})\oplus \Gamma(\mathscr{G}^3,\mathscr{A})\oplus \Gamma(\mathscr{G}^2,\mathscr{A})\oplus \Gamma(\mathscr{G},\mathscr{A}).}
\end{tikzcd}\]
We denote the first cohomology of this complex by $\mathrm{H}^2_s(\mathscr{G},\mathscr{A})$ and the second cohomology by $\mathrm{H}^3_s(\mathscr{G},\mathscr{A})$.
\end{definition}

Let $\mathscr{X}$ be a sheaf of sets and let $\mathscr{A}$ be an abelian sheaf on $(\textsf{Sch}/S)_\text{fppf}$. We define the cohomology group $\mathrm{H}^i(\mathscr{X},\mathscr{A})$ as the value at $\mathscr{A}$ of the $i$-th right derived functor of
\[\operatorname{Mor}(\mathscr{X},-)\colon \mathsf{Ab}((\textsf{Sch}/S)_\text{fppf}) \to \mathsf{Ab}.\]
If $\mathscr{X}$ is representable by an $S$-scheme $X$, the Yoneda lemma implies that $\mathrm{H}^i(\mathscr{X},\mathscr{A})$ coincides with the usual cohomology group $\mathrm{H}^i(X,\mathscr{A})$.

As usual, the group $\mathrm{H}^1(\mathscr{X},\mathscr{A})$ classifies $\mathscr{A}$-torsors over $\mathscr{X}$, where the group law is given by the contracted product. Moreover, any morphism $f\colon \mathscr{X} \to \mathscr{Y}$ induces a homomorphism
\[f^*\colon \mathrm{H}^1(\mathscr{Y},\mathscr{A}) \to \mathrm{H}^1(\mathscr{X},\mathscr{A}),\]
which sends an $\mathscr{A}$-torsor $\mathscr{P} \to \mathscr{Y}$ to its pullback $\mathscr{P} \times_{\mathscr{Y}} \mathscr{X} \to \mathscr{X}$. When $\mathscr{X} = \mathscr{G}$ is itself an abelian sheaf, we define $\mathrm{H}_m^1(\mathscr{G},\mathscr{A})$ to be the subgroup of $\mathrm{H}^1(\mathscr{G},\mathscr{A})$ consisting of those $\mathscr{A}$-torsors over $\mathscr{G}$ that are compatible with the group structure on $\mathscr{G}$.

\begin{definition}\label{def Hm}
	Let $\mathscr{G}$ and $\mathscr{A}$ be abelian sheaves on $(\textsf{Sch}/S)_\text{fppf}$. Denote by $m\colon\mathscr{G}\times \mathscr{G}\to \mathscr{G}$ the group operation of $\mathscr{G}$, and by $\operatorname{pr}_1,\operatorname{pr}_2\colon \mathscr{G}\times \mathscr{G}\to \mathscr{G}$ the natural projections. We define $\mathrm{H}_m^1(\mathscr{G},\mathscr{A})$ as the kernel of the morphism $m^*-\operatorname{pr}_1^*-\operatorname{pr}_2^*$.
\end{definition}

Put simply, $\mathrm{H}^1_m(\mathscr{G},\mathscr{A})$ is the group of isomorphism classes of $\mathscr{A}$-torsors $\mathscr{P}$ over $\mathscr{G}$ satisfying $m^*\mathscr{P}\simeq\operatorname{pr}_1^*\mathscr{P}\wedge\operatorname{pr}_2^*\mathscr{P}$. Often, we say that these $\mathscr{A}$-torsors are \emph{multiplicative}. With this terminology established, we may now explain the computation of the extension groups.

\begin{proposition}[Breen]\label{short exact sequence computing extensions}
	Let $\mathscr{G}$ and $\mathscr{A}$ be abelian sheaves on $\normalfont(\textsf{Sch}/S)_\text{fppf}$. There exists an exact sequence
\[0\to \mathrm{H}^2_s(\mathscr{G},\mathscr{A})\to \operatorname{Ext}^1(\mathscr{G},\mathscr{A})\to \mathrm{H}^1_m(\mathscr{G},\mathscr{A})\to \mathrm{H}^3_s(\mathscr{G},\mathscr{A})\to\operatorname{Ext}^2(\mathscr{G},\mathscr{A}),\]
that is functorial in $\mathscr{G}$ and $\mathscr{A}$.
\end{proposition}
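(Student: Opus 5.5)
We will compute $\operatorname{Ext}^\ast(\mathscr{G},\mathscr{A})$ through the Breen--Deligne resolution of Proposition~\ref{BD resolution}. Write $P_\bullet\to\mathscr{G}$ for the fixed resolution, with $P_i=\bigoplus_j\mathbb{Z}[\mathscr{G}^{r_{i,j}}]$. Since $\operatorname{Hom}(\mathbb{Z}[\mathscr{X}],\mathscr{A})=\operatorname{Mor}(\mathscr{X},\mathscr{A})$, applying $\mathsf{R}\operatorname{Hom}(-,\mathscr{A})$ to $P_\bullet$ gives a first-quadrant spectral sequence
\[E_1^{i,j}=\bigoplus_{l}\mathrm{H}^j(\mathscr{G}^{r_{i,l}},\mathscr{A})\Longrightarrow \operatorname{Ext}^{i+j}(\mathscr{G},\mathscr{A}).\]
Concretely, we take a Cartan--Eilenberg (or injective) resolution $\mathscr{A}\to I^\bullet$ and consider the double complex $\operatorname{Hom}(P_\bullet,I^\bullet)$. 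Its total complex computes $\operatorname{Ext}^\ast(\mathscr{G},\mathscr{A})$ because $P_\bullet$ is a resolution and each $I^q$ is injective. Filtering by the $P$-degree yields the $E_1$-page above, since $\operatorname{Hom}(\mathbb{Z}[\mathscr{X}],I^\bullet)$ computes $\mathrm{H}^\ast(\mathscr{X},\mathscr{A})$. All of these constructions are functorial in $\mathscr{G}$ and $\mathscr{A}$, because the resolution is functorial.

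The row $j=0$ of the $E_1$-page is exactly the complex appearing in the definition of $\mathrm{H}^2_s$ and $\mathrm{H}^3_s$. Its cohomology is therefore $E_2^{0,0}$, $E_2^{1,0}=\mathrm{H}^2_s(\mathscr{G},\mathscr{A})$ and $E_2^{2,0}=\mathrm{H}^3_s(\mathscr{G},\mathscr{A})$ (note the index shift in the paper's naming). For the row $j=1$, the $E_1$-differential $E_1^{0,1}=\mathrm{H}^1(\mathscr{G},\mathscr{A})\to E_1^{1,1}=\mathrm{H}^1(\mathscr{G}^2,\mathscr{A})$ is induced by $d_1([x,y])=[x+y]-[x]-[y]$, so it equals $m^*-\operatorname{pr}_1^*-\operatorname{pr}_2^*$. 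Hence $E_2^{0,1}=\mathrm{H}^1_m(\mathscr{G},\mathscr{A})$ by Definition~\ref{def Hm}.

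The five-term exact sequence of this spectral sequence then reads
\[0\to E_2^{1,0}\to \operatorname{Ext}^1\to E_2^{0,1}\xrightarrow{d_2} E_2^{2,0}\to \operatorname{Ext}^2,\]
which is precisely the claimed sequence. The one point needing care is the standard identification of the edge maps. The map $E_2^{1,0}\to\operatorname{Ext}^1$ comes from the filtration, and the map $\operatorname{Ext}^1\to E_2^{0,1}\subset \mathrm{H}^1(\mathscr{G},\mathscr{A})$ sends an extension to its underlying torsor, i.e.\ pullback along $\mathbb{Z}[\mathscr{G}]\to\mathscr{G}$.

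I expect the main obstacle to be bookkeeping rather than conceptual. One has to check that the $E_1$-differentials are the maps induced by the explicit $d_i$; this holds because $\operatorname{Hom}(-,I^\bullet)$ turns a map $\mathbb{Z}[\mathscr{G}^r]\to\mathbb{Z}[\mathscr{G}^s]$ given by a formal combination of morphisms $\mathscr{G}^r\to\mathscr{G}^s$ into the corresponding combination of pullbacks. One also has to verify the convergence and functoriality claims. Since only the first terms of the resolution enter $E_2^{p,q}$ with $p+q\le 2$, the higher, unspecified terms of the resolution play no role.
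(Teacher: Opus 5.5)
Your proposal is correct and follows the paper's proof. The Breen--Deligne resolution gives a first-quadrant spectral sequence $E_1^{i,j}=\bigoplus_l \mathrm{H}^j(\mathscr{G}^{r_{i,l}},\mathscr{A})\Rightarrow \operatorname{Ext}^{i+j}(\mathscr{G},\mathscr{A})$, and the claimed sequence is its five-term exact sequence once $E_2^{1,0}$, $E_2^{2,0}$ and $E_2^{0,1}$ are identified with $\mathrm{H}^2_s$, $\mathrm{H}^3_s$ and $\mathrm{H}^1_m$; you make these identifications, and the edge maps, more explicit than the paper does.
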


Before diving into the proof, let us explain the morphism $\operatorname{Ext}^1(\mathscr{G},\mathscr{A})\to \mathrm{H}^1_m(\mathscr{G},\mathscr{A})$. Since $\operatorname{Ext}^1(-,\mathscr{A})$ is an additive functor, an extension $\mathscr{E}$ of $\mathscr{G}$ by $\mathscr{A}$ always satisfies
\[m^*\mathscr{E}=(\operatorname{pr}_1+\operatorname{pr}_2)^*\mathscr{E}\simeq \operatorname{pr}_1^*\mathscr{E}+\operatorname{pr}_2^*\mathscr{E},\]
where the sum on the right is the Baer sum of extensions. Such an extension defines an $\mathscr{A}$-torsor\footnote{We refer the reader to Remark~\ref{remark on descent} for more explanations.} $\mathscr{P}$ over $\mathscr{G}$, which satisfies $m^*\mathscr{P}\simeq\operatorname{pr}_1^*\mathscr{P}\wedge\operatorname{pr}_2^*\mathscr{P}$.

\begin{proof}[Proof of Proposition~\ref{short exact sequence computing extensions}]
	The universal property of free objects gives that $\mathrm{H}^i(\mathscr{G}^n,\mathscr{A})$ is isomorphic to $\operatorname{Ext}^i(\mathbb{Z}[\mathscr{G}^n],\mathscr{A})$ for all $n$ and $i$. Then, the Breen--Deligne resolution yields a spectral sequence
	\[E_1^{i,j}\colon\prod_{r=1}^{n_i}\mathrm{H}^j(\mathscr{G}^{s_{i,r}},\mathscr{A})\implies \operatorname{Ext}^{i+j}(\mathscr{G},\mathscr{A}),\]
	whose five-term exact sequence is precisely the one in the statement.
\end{proof}

\begin{remark}\label{Grothendieck's remark}
	In \cite[Exp.\ VII, \S 1.2]{raynaud2006groupes}, Grothendieck proved that $\operatorname{Ext}^1(\mathscr{G},\mathscr{A})$ is isomorphic to the group of isomorphism classes of pairs $(\mathscr{P},\alpha)$, where $\mathscr{P}$ is a $\mathscr{A}$-torsor over $\mathscr{G}$ and $\alpha\colon m^*\mathscr{P}\to\operatorname{pr}_1^*\mathscr{P}\wedge\operatorname{pr}_2^*\mathscr{P}$ is an isomorphism of $\mathscr{A}$-torsors over $\mathscr{G}\times\mathscr{G}$ making two diagrams (imposing that $\mathscr{P}$ admits an associative and commutative group law) commute. In particular, our invariants $\mathrm{H}^2_s(\mathscr{G},\mathscr{A})$ and $\mathrm{H}^3_s(\mathscr{G},\mathscr{A})$ govern how far the map
	\begin{align*}
		\operatorname{Ext}^1(\mathscr{G},\mathscr{A}) &\to \mathrm{H}^1_m(\mathscr{G},\mathscr{A})\\
		[\mathscr{P},\alpha] &\mapsto [\mathscr{P}]
	\end{align*}
	is from being an isomorphism.
\end{remark}

Even though the first terms of the Breen--Deligne resolution are explicit, the invariants $\mathrm{H}^2_s(\mathscr{G},\mathscr{A})$ and $\mathrm{H}^3_s(\mathscr{G},\mathscr{A})$ are usually quite hard to compute. The following observation will suffice for their computations in many interesting cases.

\begin{lemma}\label{vanishing lemma}
Let $\mathscr{G}$ and $\mathscr{A}$ be abelian sheaves on $\normalfont(\textsf{Sch}/S)_\text{fppf}$. If every morphism $\mathscr{G}^n\to \mathscr{A}$ of sheaves of \emph{sets} (for $n=2,3$) can be expressed as a sum of maps $\mathscr{G}\to \mathscr{A}$, then both $\mathrm{H}^2_s(\mathscr{G},\mathscr{A})$ and $\mathrm{H}^3_s(\mathscr{G},\mathscr{A})$ vanish.
\end{lemma}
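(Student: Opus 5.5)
The plan is a direct cocycle computation in the explicit low-degree part of the Breen--Deligne complex. Throughout, ``morphism $\mathscr{G}^n\to\mathscr{A}$ of sheaves of sets'' is tested on $T$-valued points for arbitrary $S$-schemes $T$. By the Yoneda lemma, every identity below can then be checked pointwise in variables $x,y,z,t\in\mathscr{G}(T)$. By hypothesis, every $f\in\Gamma(\mathscr{G}^2,\mathscr{A})$ can be written $f(x,y)=a(x)+b(y)$, and every $F\in\Gamma(\mathscr{G}^3,\mathscr{A})$ can be written $F(x,y,z)=p(x)+q(y)+r(z)$, with $a,b,p,q,r\in\Gamma(\mathscr{G},\mathscr{A})$. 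These decompositions are not unique: constants may be shifted between the summands. I will use this freedom to normalize, say $b(0)=0$, $q(0)=0$, $r(0)=0$.

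\emph{Vanishing of $\mathrm{H}^2_s$.} Let $f=a(x)+b(y)$ be a $1$-cocycle, so $d_2^*f=0$. The second component of this condition gives $f(x,y)=f(y,x)$, so $a-b$ is a constant $c$. Substituting $a=b+c$ into the first component $f(x+y,z)-f(x,y+z)+f(x,y)-f(y,z)=0$, everything collapses to $b(x+y)=b(y+z)$. Setting $y=z=0$ shows that $b$ is constant, hence $f$ is a constant $\kappa$. Then $f=d_1^*g$ for the constant map $g=-\kappa$, since $g(x+y)-g(x)-g(y)=\kappa$. Hence $\mathrm{H}^2_s(\mathscr{G},\mathscr{A})=0$.

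\emph{Vanishing of $\mathrm{H}^3_s$.} Let $(F,h)\in\Gamma(\mathscr{G}^3,\mathscr{A})\oplus\Gamma(\mathscr{G}^2,\mathscr{A})$ be a $2$-cocycle, and write $h=a(x)+b(y)$.

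The components of $d_3^*$ coming from $[x]$ and $[x,y]$ give $h(x,x)=0$ and $h(x,y)+h(y,x)=0$, which force $h(x,y)=a(x)-a(y)$. Subtracting $d_2^*g$ for $g(x,y)=a(x)$ kills $h$, because the second component of $d_2^*g$ is $g(x,y)-g(y,x)$. We may therefore assume $h=0$.

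Next I would feed $F=p(x)+q(y)+r(z)$ into the three remaining components. The two $\mathbb{Z}[\mathscr{G}^3]$ components give the (anti)symmetry relations; with $h=0$ the bilinearity terms drop out. Setting variables to $0$ in these relations should show that $q$ is constant and that $r=-p$ up to a constant. The $\mathbb{Z}[\mathscr{G}^4]$ component is the associativity cocycle, which reduces to
\[p(x+y)-p(x)-p(y)+r(z+t)-r(z)-r(t)=\text{const}.\]
From it I expect to conclude that, after absorbing constants, $p$ is additive, i.e.\ a homomorphism. This gives the normal form $F(x,y,z)=p(x)-p(z)+\kappa$, with $p$ additive and $\kappa$ constant; the consistency of the relations should then force $\kappa=0$, or allow it to be absorbed.

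Finally, $F$ is realized as a coboundary. For $g(x,y)=\lambda(x)+\mu(y)$ with $\lambda,\mu$ additive, one computes $d_2^*g=\bigl(\lambda(x)-\mu(z),\ (\lambda-\mu)(x)-(\lambda-\mu)(y)\bigr)$. Taking $\lambda=\mu=p$ yields exactly $(F,0)$, and any leftover constant is handled by a constant $g$.

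The main obstacle is this bookkeeping step for $\mathrm{H}^3_s$. One must extract from the four relations that $q$ is constant, that $r=-p$, and that $p$ is a homomorphism, while carefully tracking the constants that the non-uniqueness of the decomposition introduces. I would handle it by systematically substituting $0$ into all but one variable in each relation.
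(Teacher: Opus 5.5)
Your proposal is correct and follows essentially the paper's route: a direct cocycle computation in the explicit low-degree Breen--Deligne complex, using the decomposition hypothesis and substituting zeros. The only organizational difference is in the $\mathrm{H}^3_s$ part. You first kill $h$ by subtracting $d_2^*g$ for $g(x,y)=a(x)$ and then re-decompose the modified $F$; this is legitimate, since the hypothesis applies to every map $\mathscr{G}^3\to\mathscr{A}$. The paper instead keeps $q(x,y)=q_1(x)-q_1(y)$ throughout and writes down the single primitive $h(x,y)=p_1(x)-p_3(y)$.

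One remark in your last step is false, although you do not need it. A constant $g$ has $d_2^*g=0$, so it cannot absorb a leftover constant. What actually happens is your first alternative: the relations force $\kappa=0$. With $h=0$ and the normalization $q(0)=r(0)=0$, the two $\mathbb{Z}[\mathscr{G}^3]$ components read $q(z)-p(z)-r(z)=q(x)+q(y)$ and $p(x)+r(x)+q(y)-q(x)+q(z)=0$. These force $q\equiv 0$ and $r=-p$, hence $F(x,y,z)=p(x)-p(z)$ with no constant and with $p(0)=0$. The $\mathbb{Z}[\mathscr{G}^4]$ component then gives $p(x+y)=p(x)+p(y)$, and $g(x,y)=p(x)+p(y)$ finishes.
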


\begin{proof}
	First, we demonstrate that $\mathrm{H}^2_s(\mathscr{G},\mathscr{A})$ vanishes. The kernel of 
\[\Gamma(\mathscr{G}^2,\mathscr{A})\to \Gamma(\mathscr{G}^3,\mathscr{A})\oplus \Gamma(\mathscr{G}^2,\mathscr{A})\]
is composed of the maps $f\colon\mathscr{G}^2\to \mathscr{A}$ that satisfy $f(x+y,z)-f(y,z)=f(x,y+z)-f(x,y)$ and $f(x,y)=f(y,x)$. By applying the given hypothesis, we find morphisms $f_1,f_2\colon \mathscr{G}\to\mathscr{A}$ such that $f(x,y)=f_1(x)+f_2(y)$. This simplifies the first equation to
\[f_1(x+y)-f_1(y)=f_2(y+z)-f_2(y).\]
By setting $x=y=0$ and $y=z=0$, we observe that $f$ must be constant. Next, the image of
\[\Gamma(\mathscr{G},\mathscr{A})\to \Gamma(\mathscr{G}^2,\mathscr{A})\]
consists of maps of the form $(x,y)\mapsto g(x+y)-g(x)-g(y)$, for some $g\colon\mathscr{G}\to\mathscr{A}$. Since every constant map is of this form, it follows that the cohomology vanishes.

We will use the same strategy to show that $\mathrm{H}^3_s(\mathscr{G},\mathscr{A})$ also vanishes. The kernel of the morphism
\[\Gamma(\mathscr{G}^3,\mathscr{A})\oplus \Gamma(\mathscr{G}^2,\mathscr{A})\to \Gamma(\mathscr{G}^4,\mathscr{A})\oplus \Gamma(\mathscr{G}^3,\mathscr{A})\oplus \Gamma(\mathscr{G}^3,\mathscr{A})\oplus \Gamma(\mathscr{G}^2,\mathscr{A})\oplus \Gamma(\mathscr{G},\mathscr{A})\]
is composed of the maps $p\colon \mathscr{G}^3\to \mathscr{A}$ and $q\colon \mathscr{G}^2\to \mathscr{A}$ satisfying
\begin{align}
	p(x,y,z)+p(x,y+z,t)+p(y,z,t) &= p(x+y,z,t)+p(x,y,z+t) \tag{1.1}\\
	p(x,y,z)+p(z,x,y)+q(x,z)+q(y,z) &= p(x,z,y)+q(x+y,z) \tag{2.1}\\
	p(x,y,z)+p(y,z,x)+q(x,y+z) &= p(y,x,z)+q(x,y)+q(x,z) \tag{3.1}\\
	q(x,y)+q(y,x) &= 0 \tag{4.1}\\
	q(x,x) &= 0. \tag{5.1}
\end{align}
To organize the remainder of the proof, we will denote a simplified version of the Equation ($n$.$i$) as ($n$.$i+1$). Once again, we write $p(x,y,z)=p_1(x)+p_2(y)+p_3(z)$ for some $p_i\colon \mathscr{G}\to\mathscr{A}$, and $q(x,y)=q_1(x)+q_2(y)$ for some $q_i\colon \mathscr{G}\to\mathscr{A}$. Using Equation (5.1), we replace every instance of $q_2$ by $-q_1$. This yields the following relations.
\begin{align}
	p_3(z)+p_1(x)+p_2(y+z)+p_1(y)+p_3(t) &= p_1(x+y)+p_3(z+t) \tag{1.2}\\
	p_2(y)+p_3(z)+p_1(z)+p_2(x)+q_1(x)+q_1(y) &= p_2(z)+q_1(x+y)+q_1(z) \tag{2.2}\\
	p_1(x)+p_2(y)+p_2(z)+p_3(x)+q_1(y)+q_1(z) &= p_2(x)+q_1(y+z)+q_1(x) \tag{3.2}
\end{align}
By setting $x=y=0$ in Equations (2.2) and (3.2), we obtain
\begin{align*}
	p_2(0)+p_3(z)+p_1(z)+q_1(0) &= q_1(z) \tag{2.3}\\
	p_1(0)+p_2(z)+p_3(0) &= 0 \tag{3.3}.
\end{align*}
With this information, we can simplify Equation (1.2):
\begin{align}
	p_3(z)+p_1(x)+p_2(0)+p_1(y)+p_3(t) &= p_1(x+y)+p_3(z+t). \tag{1.3}
\end{align}
Restricting this equation to $x = 0$ and $t = 0$ gives two new relations, implying that $p_1$ and $p_3$ are essentially morphisms of groups up to constants.
\begin{align}
    p_1(x)+p_1(y)-p_1(0) &= p_1(x+y) \tag{1.4}\\
	p_3(z)+p_3(t)-p_3(0) &= p_3(z+t) \tag{1.5}
\end{align}
In summary, we have obtained the following equations:
\begin{align}
    p_1(x)+p_1(y)-p_1(0) &= p_1(x+y) \tag{1.4}\\
	p_3(z)+p_3(t)-p_3(0) &= p_3(z+t) \tag{1.5}\\
	p_1(z)-p_1(0)+p_3(z)-p_3(0) &= q_1(z)-q_1(0) \tag{2.3}\\
	p_1(0)+p_2(z)+p_3(0) &= 0. \tag{3.3}
\end{align}

Finally, to prove that $\mathrm{H}^3_s(\mathscr{G},\mathscr{A})$ vanishes, we need to find a morphism $h\colon \mathscr{G}^2\to \mathscr{A}$ such that
\begin{align*}
	p(x,y,z) &= h(x+y,z)-h(x,y+z)+h(x,y)-h(y,z) \\
	q(x,y) &= h(x,y) - h(y,x).
\end{align*}
A quick verification using the previously established equations shows that the morphism $h(x, y) \colonequals p_1(x) - p_3(y)$ satisfies this criterion.
\end{proof}

The following proposition extends a result by Colliot-Thélène and Gabber \cite[Prop.\ 3.2 and Thm.\ 5.6]{colliotthelene2006resolutions}. Their method of proof can also be applied to our setting, but this result follows directly from our machinery.

\begin{theorem}\label{injectivity}
	Let $G$ be a commutative group scheme over a reduced base $S$. Assume that the morphism $G\to S$ is smooth and has connected geometric fibers. Then the natural map
	\[\operatorname{Ext}^1_S(G,\mathbb{G}_m) \to \mathrm{H}^1_m(G,\mathbb{G}_{m})\]
	is an isomorphism.
\end{theorem}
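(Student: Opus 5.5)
Using Breen's exact sequence (Proposition~\ref{short exact sequence computing extensions}) with $\mathscr{G}=G$ and $\mathscr{A}=\mathbb{G}_m$ on $(\textsf{Sch}/S)_\text{fppf}$, the map $\operatorname{Ext}^1_S(G,\mathbb{G}_m)\to \mathrm{H}^1_m(G,\mathbb{G}_m)$ has kernel $\mathrm{H}^2_s(G,\mathbb{G}_m)$, and its cokernel injects into $\mathrm{H}^3_s(G,\mathbb{G}_m)$. So it suffices to show that both invariants vanish. By Lemma~\ref{vanishing lemma}, it is enough to check that every morphism of $S$-schemes $G^n\to\mathbb{G}_m$ (with $n=2,3$, fibre products over $S$) is a sum of maps $G\to\mathbb{G}_m$. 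In other words, we need a decomposition of units of the form
\[
u(x_1,\dots,x_n)=u_1(x_1)\cdots u_n(x_n)
\]
for invertible functions on $G^n$. Here $u_i(x)$ is obtained by restricting $u$ to the copy of $G$ in the $i$-th factor with the other coordinates set to $0$, up to a correction by a unit pulled back from $S$. Since constants $S\to\mathbb{G}_m$ are themselves maps $G\to\mathbb{G}_m$, this correction is harmless.

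The key input is therefore a Rosenlicht-type unit theorem in the relative setting. For $X,Y$ smooth over $S$ with connected geometric fibres and sections $e_X,e_Y$, and $S$ reduced, every unit $u$ on $X\times_S Y$ should satisfy
\[
u(x,y)\,u(e_X,e_Y)=u(x,e_Y)\,u(e_X,y).
\]
I would prove this by considering the unit
\[
v(x,y)=u(x,y)\,u(e_X,e_Y)\,u(x,e_Y)^{-1}u(e_X,y)^{-1},
\]
which equals $1$ along $X\times e_Y$ and along $e_X\times Y$, and showing that $v=1$. Because $S$ is reduced and $X\times_S Y\to S$ is flat (indeed smooth), hence with reduced total space, it suffices to check $v=1$ on geometric fibres over points of $S$. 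This uses that a function on a reduced scheme which is $1$ at all points, or on a dense set of fibres, is $1$. Over an algebraically closed field, this is Rosenlicht's classical lemma: for connected smooth varieties, $\mathcal{O}^\times(X\times Y)=\mathcal{O}^\times(X)\cdot\mathcal{O}^\times(Y)$. Applying this iteratively to $G\times_S G$ and then to $G\times_S(G\times_S G)$ handles $n=2,3$.

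The main obstacle is the reduction from $S$ to its fibres. One needs $G^n$ to be reduced, which follows from smoothness over reduced $S$. One also needs that vanishing of $v-1$ on every geometric fibre forces $v-1=0$, which holds since a section of a reduced scheme vanishing at all points is zero. A second subtlety is that Rosenlicht's lemma over a non-algebraically-closed residue field should be applied after base change to an algebraic closure, where connectedness of geometric fibres is exactly what is needed. Finally, note that the hypotheses of Lemma~\ref{vanishing lemma} concern morphisms of fppf sheaves $G^n\to\mathbb{G}_m$ over $S$. By Yoneda, these are units on the scheme $G^n$, and the decomposition is natural. The argument therefore applies without further sheafification, giving the isomorphism.
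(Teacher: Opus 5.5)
Your proposal is correct and follows the paper's proof. Breen's exact sequence reduces the claim to the vanishing of $\mathrm{H}^2_s$ and $\mathrm{H}^3_s$, and Lemma~\ref{vanishing lemma} then reduces this to a relative Rosenlicht decomposition of units on $G^n$. The only difference is that the paper cites Gonz\'alez-Avil\'es' relative Rosenlicht lemma \cite[Thm.~1.1]{gonzalez2017group}, which also covers the syntomic case where only the maximal geometric fibres are reduced and connected. You instead prove the smooth case by hand, using that $G^n$ is reduced and that $v-1$ vanishes on every geometric fibre by the classical Rosenlicht lemma; this is a valid, self-contained argument for the theorem as stated.
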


\begin{proof}
This result is a direct application of Lemma~\ref{vanishing lemma}, using González-Avilés' generalization of Rosenlicht's lemma \cite[Thm.\ 1.1]{gonzalez2017group}. For the reader's convenience, we note that this result holds more generally in the following context: $G$ is a commutative group scheme over a reduced base $S$ such that $G \to S$ is syntomic and has reduced, connected maximal geometric fibers. Here, a maximal geometric fiber of $G \to S$ is a scheme $G \times_S \operatorname{Spec}\overline{\kappa(\eta)}$, for some generic point $\eta$ of an irreducible component of $S$.
\end{proof}

\begin{remark}
The hypothesis that the base $S$ be reduced is essential in the theorem above. Indeed, let $R$ be a reduced ring and set $R' \colonequals R[\varepsilon]/(\varepsilon^2)$. As explained in \cite[Rem.~7.3]{ribeiro2025}, there is a natural isomorphism
\[\mathrm{H}^2_s(\mathbb{G}_{a,R'},\mathbb{G}_{m,R'}) \simeq \mathrm{H}^2_s(\mathbb{G}_{a,R},\mathbb{G}_{a,R}).\]
The right-hand side was explicitly computed in \cite[Prop.~7.1]{ribeiro2025} and is shown to be a free $R$-module whose rank equals the number of prime numbers that are not invertible in $R$. It follows that, unless $R$ is a $\mathbb{Q}$-algebra, the natural map
\[\operatorname{Ext}^1_{R'}(\mathbb{G}_a,\mathbb{G}_m) \to \mathrm{H}^1_m(\mathbb{G}_a,\mathbb{G}_m)\]
is not injective.
\end{remark}

Henceforth, we concentrate on the case where the base scheme is the spectrum of a characteristic zero field. We start by using Proposition~\ref{sheafification map} to prove that the sheafification map is an isomorphism in certain cases.

\begin{proposition}\label{local-to-global}
	Let $T$ be a torus and $U$ be a unipotent group over a characteristic zero field $k$. The sheafification map $\operatorname{Ext}^1_S(T,\mathbb{G}_m)\to \underline{\operatorname{Ext}}^1(T,\mathbb{G}_m)(S)$ is an isomorphism for an irreducible geometrically unibranch $k$-scheme $S$. Similarly, the map $\operatorname{Ext}^1_S(U,\mathbb{G}_m)\to \underline{\operatorname{Ext}}^1(U,\mathbb{G}_m)(S)$ is an isomorphism for a $k$-scheme $S$ that is either reduced or affine.
\end{proposition}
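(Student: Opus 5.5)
The plan is to use the exact sequence of Proposition~\ref{sheafification map},
\[0\to \mathrm{H}^1(S,\underline{\operatorname{Hom}}(\mathscr{G},\mathbb{G}_m))\to \operatorname{Ext}^1_S(\mathscr{G},\mathbb{G}_m)\to \underline{\operatorname{Ext}}^1(\mathscr{G},\mathbb{G}_m)(S)\to \mathrm{H}^2(S,\underline{\operatorname{Hom}}(\mathscr{G},\mathbb{G}_m)),\]
with $\mathscr{G}=T$ or $U$. Bijectivity of the sheafification map then follows once $\mathrm{H}^1$ and $\mathrm{H}^2$ (fppf cohomology over $S$) of the Cartier dual $\mathscr{G}^D$ vanish. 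By Proposition~\ref{cartier dual of abelian schemes and tori} we have $T^D=X$, an étale form of $\mathbb{Z}^r$. By Proposition~\ref{computation of cartier duals} we have $U^D\simeq\widehat{U^*}$, which is (non-canonically) $\widehat{\mathbb{G}}_a^n$.

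\textbf{Torus case.} First I would reduce from fppf to étale cohomology. $X$ is smooth, so its fppf cohomology agrees with its étale cohomology (Grothendieck). Next, $X$ splits over a finite Galois extension $k'/k$, so $X$ is isomorphic to a direct summand of a sum of Weil restrictions $\operatorname{Res}_{k'/k}\mathbb{Z}$. I am not sure of that directly, so instead I would use a Hochschild--Serre argument, or the standard fact that for a normal (more generally geometrically unibranch) irreducible scheme $S$ one has $\mathrm{H}^1_{\text{ét}}(S,\mathbb{Z})=0$ and $\mathrm{H}^2_{\text{ét}}(S,\mathbb{Z})\hookrightarrow \mathrm{H}^1(S,\mathbb{Q}/\mathbb{Z})$.

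The key point is the vanishing of $\mathrm{H}^1(S,\underline{\mathbb{Z}})$ and $\mathrm{H}^1(S_{k'},\underline{\mathbb{Z}})$ for unibranch $S$: a $\mathbb{Z}$-torsor is an étale cover with discrete fibers, and a geometrically unibranch irreducible scheme admits no nontrivial such connected covers with infinite fibers (SGA~3, Exp.~X, or the Stacks Project on unibranch schemes). For $X$ itself, I would twist: $\mathrm{H}^1(S,X)$ injects, via inflation--restriction along $S_{k'}\to S$, into classes arising from $\mathrm{H}^1(\operatorname{Gal}(k'/k),X(S_{k'}))$. These are Galois cohomology classes, i.e.\ torsion classes split by a finite étale cover.

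The $\mathrm{H}^2$ term is the main obstacle, since $\mathrm{H}^2(S,X)$ need not vanish. In that case I would fall back on the fact that we only need the map to $\mathrm{H}^2$ to be zero, i.e.\ surjectivity. For that I would argue directly: $\underline{\operatorname{Ext}}^1(T,\mathbb{G}_m)=0$ by Proposition~\ref{T'}, so surjectivity is automatic, and only injectivity is needed. Injectivity is exactly $\mathrm{H}^1(S,X)=0$, which is the unibranch statement. If $S$ is irreducible and geometrically unibranch, so is $S_{k'}$ componentwise, and on each component $X$ becomes constant $\mathbb{Z}^r$, whence $\mathrm{H}^1=0$. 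The Galois descent step then requires $\mathrm{H}^1(\operatorname{Gal},X(S_{k'}))=\mathrm{H}^1(\operatorname{Gal},X(k'))$ (constant sections on connected components), and this should be handled by the same argument used to compute $\underline{\operatorname{Ext}}^1(T,\mathbb{G}_m)$ in SGA~3. I expect this to be the subtle point, but it is necessary only to show the nonsplit Galois twist does not produce torsors. Alternatively, I would compare with the known isomorphism $\operatorname{Ext}^1_S(T,\mathbb{G}_m)\simeq\mathrm{H}^1(S,X)$.

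\textbf{Unipotent case.} Again $\underline{\operatorname{Ext}}^1$ control reduces to the cohomology of $\widehat{\mathbb{G}}_a$. Via $\widehat{\mathbb{G}}_a=\operatorname{colim}_n\operatorname{Spec}k[x]/(x^n)$, or via the exact sequence $0\to\widehat{\mathbb{G}}_a\to\mathbb{G}_a\to\mathbb{G}_a/\widehat{\mathbb{G}}_a=\mathbb{G}_{a,\text{dR}}\to0$, one computes $\mathrm{H}^i(S,\widehat{\mathbb{G}}_a)$. If $S$ is affine, $\mathrm{H}^i(S,\mathbb{G}_a)=0$ for $i>0$. If $S$ is reduced, $\widehat{\mathbb{G}}_a(S')=\mathrm{Nil}(S')$ behaves like the quotient $\mathbb{G}_a\to\mathbb{G}_a^{\text{perf-red}}$, and the de Rham space $\mathbb{G}_{a,\text{dR}}(S')=\Gamma(S'_{\text{red}},\mathcal{O})$. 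Hence $\mathrm{H}^i(S,\widehat{\mathbb{G}}_a)$ sits between $\mathrm{H}^{i-1}(S,\mathcal{O}_{S,\text{red}})\to\ldots$ and vanishes in degree $1$ in either case, using $\Gamma(S,\mathcal{O})\to\Gamma(S_{\text{red}},\mathcal{O})$ surjective for affine $S$ and bijective for reduced $S$. For $\mathrm{H}^2$ I would use the same long exact sequence, together with the observation that surjectivity may alternatively be checked directly from the explicit description of $\underline{\operatorname{Ext}}^1(U,\mathbb{G}_m)$ in \cite{ribeiro2025}.
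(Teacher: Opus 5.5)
\textbf{Torus case.} Your reduction matches the paper's. Since $\underline{\operatorname{Ext}}^1(T,\mathbb{G}_m)=0$, surjectivity is automatic, and injectivity is exactly $\mathrm{H}^1(S,X)=0$.

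The step that fails is the Galois descent. Suppose $T$ splits over a finite Galois extension $k'/k$ and $S$ is geometrically connected and geometrically unibranch. Then $\mathrm{H}^1(S_{k'},X)=0$, and Hochschild--Serre gives $\mathrm{H}^1(S,X)\simeq \mathrm{H}^1(\operatorname{Gal}(k'/k),X(k'))$. This torsion group is exactly what \emph{survives} the descent, and it is not zero in general.

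For example, take $S=\operatorname{Spec}k$, a quadratic extension $k'/k$, and the norm-one torus $T=\ker(N\colon \operatorname{Res}_{k'/k}\mathbb{G}_m\to\mathbb{G}_m)$. Its character group is $\mathbb{Z}$ with the nontrivial Galois element acting by $-1$, so $\mathrm{H}^1(k,X)\simeq\mathbb{Z}/2$. The nonzero class is the extension $0\to\mathbb{G}_m\to\operatorname{Res}_{k'/k}\mathbb{G}_m\to T\to 0$ given by $z\mapsto z/\bar z$, which splits over $k'$ but not over $k$. Hence $\operatorname{Ext}^1_k(T,\mathbb{G}_m)\simeq\mathbb{Z}/2$ while $\underline{\operatorname{Ext}}^1(T,\mathbb{G}_m)(k)=0$.

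So no argument can close this step. The torus assertion only holds when $X$ is constant, i.e.\ $T$ is split. The reference the paper invokes (SGA~3, Exp.~VIII, Prop.~5.1) concerns constant torsion-free groups, so the paper's proof also covers only split tori. Restricted to that case, your argument ($\mathrm{H}^1(S,\mathbb{Z}^r)=0$ for irreducible geometrically unibranch $S$) is the paper's.

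\textbf{Unipotent case.} You only actually establish $\mathrm{H}^1(S,\widehat{\mathbb{G}}_a)=0$ for affine $S$, using $\mathrm{H}^1(S,\mathbb{G}_a)=0$ and surjectivity of $\mathcal{O}(S)\to\mathcal{O}(S_{\text{red}})$.

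For reduced non-affine $S$, bijectivity on global sections only identifies $\mathrm{H}^1(S,\widehat{\mathbb{G}}_a)$ with the kernel of $\mathrm{H}^1(S,\mathbb{G}_a)\to\mathrm{H}^1(S,\mathbb{G}_{a,\text{dR}})$. Since $\mathrm{H}^1(S,\mathcal{O}_S)$ is usually nonzero, you need injectivity of this map, and you do not address it.

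For $\mathrm{H}^2$, the long exact sequence gives nothing by itself: for affine $S$ it merely identifies $\mathrm{H}^2(S,\widehat{\mathbb{G}}_a)$ with $\mathrm{H}^1(S,\mathbb{G}_{a,\text{dR}})$.

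The missing inputs are the paper's:
\begin{itemize}
\item For affine $S$: the vanishing of $\mathrm{H}^i(S,\widehat{\mathbb{G}}_a)$ for all $i\neq0$ (Bhatt--Lurie, Rem.~2.2.18). This is a genuinely nontrivial fppf statement.
\item For reduced $S$: $\widehat{\mathbb{G}}_a$ restricts to zero on the small étale site of $S$, because étale $S$-schemes are reduced. Hence $\mathrm{H}^i_{\text{ét}}(S,\widehat{\mathbb{G}}_a)=0$, and this is transported to fppf cohomology via $\mathrm{H}^i(S,\mathbb{G}_a)\simeq\mathrm{H}^i_{\text{ét}}(S,\mathbb{G}_a)$ and a comparison of the two long exact sequences.
\end{itemize}
A clean way to do the reduced case is to use the affine vanishing directly. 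Along the morphism from the fppf site to the small Zariski site of $S$, it gives $R^q\epsilon_*\widehat{\mathbb{G}}_a=0$ for $q>0$. Moreover $\epsilon_*\widehat{\mathbb{G}}_a$ is the nilradical sheaf, which vanishes on reduced $S$.

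Pointing to the explicit description of $\underline{\operatorname{Ext}}^1(U,\mathbb{G}_m)$ in the reference does not substitute for these inputs.
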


\begin{proof}
Let $S$ be an irreducible geometrically unibranch $k$-scheme. Proposition~\ref{T'} states that the sheaf $\underline{\operatorname{Ext}}^1(T,\mathbb{G}_m)$ vanishes, so the desired result is equivalent to the vanishing of the cohomology group $\mathrm{H}^1(S,X)=0$, where $X\colonequals T^D$ is the Cartier dual of $T$. This holds due to \cite[Exp.\ VIII, Prop.\ 5.1]{raynaud2006groupes}.

For the unipotent case, we may assume that $U = \mathbb{G}_a$. If $S$ is an affine scheme, \cite[Rem.\ 2.2.18]{bhatt2022prismatic} implies that $\mathrm{H}^i(S,\widehat{\mathbb{G}}_a)$ vanishes for all $i\neq 0$. We claim that, for a reduced $k$-scheme $S$, the cohomology groups $\mathrm{H}^i(S,\widehat{\mathbb{G}}_a)$ vanish for all $i$. First, we prove that the \emph{étale} cohomology groups $\mathrm{H}^i_\text{ét}(S, \widehat{\mathbb{G}}_a)$ vanish, and then we identify these groups with their fppf analogues.

Recall that $\widehat{\mathbb{G}}_a(\operatorname{Spec}R)$ is the nilradical of a $k$-algebra $R$. Thus, the sheaf condition implies that $\widehat{\mathbb{G}}_a(S)$ vanishes for reduced $S$. Next, the cohomology $\mathrm{H}_\text{ét}^i(S, \widehat{\mathbb{G}}_a)$ can be computed on the small étale site of $S$ and, according to \cite[Tag \href{https://stacks.math.columbia.edu/tag/03PC}{03PC}.(8)]{Stacks}, the restriction of $\widehat{\mathbb{G}}_a$ to this site vanishes.

Since $\mathbb{G}_{a,\text{dR}}$ is the étale quotient of $\mathbb{G}_a$ by $\widehat{\mathbb{G}}_a$, we obtain the following morphism of long exact sequences:
\[\begin{tikzcd}[column sep=small]
	0 & {\Gamma(S,\widehat{\mathbb{G}}_a)} & {\Gamma(S,\mathbb{G}_a)} & {\Gamma(S,\mathbb{G}_{a,\text{dR}})} & {\mathrm{H}^1(S,\widehat{\mathbb{G}}_a)} & {\mathrm{H}^1(S,\mathbb{G}_a)} & \cdots \\
	0 & {\Gamma(S,\widehat{\mathbb{G}}_a)} & {\Gamma(S,\mathbb{G}_a)} & {\Gamma(S,\mathbb{G}_{a,\text{dR}})} & {\mathrm{H}^1_\text{ét}(S,\widehat{\mathbb{G}}_a)} & {\mathrm{H}^1_\text{ét}(S,\mathbb{G}_a)} & \cdots.
	\arrow[from=2-1, to=2-2]
	\arrow[from=2-2, to=2-3]
	\arrow[from=2-3, to=2-4]
	\arrow[from=2-4, to=2-5]
	\arrow[from=1-1, to=1-2]
	\arrow[from=1-2, to=1-3]
	\arrow[from=1-3, to=1-4]
	\arrow[equals, from=1-2, to=2-2]
	\arrow[equals, from=1-3, to=2-3]
	\arrow[equals, from=1-4, to=2-4]
	\arrow[from=1-4, to=1-5]
	\arrow[from=1-5, to=2-5]
	\arrow[from=1-5, to=1-6]
	\arrow[from=2-5, to=2-6]
	\arrow[from=1-6, to=2-6]
	\arrow[from=2-6, to=2-7]
	\arrow[from=1-6, to=1-7]
\end{tikzcd}\]
Given that $S$ is reduced, we have shown that $\mathrm{H}_\text{ét}^i(S,\widehat{\mathbb{G}}_a)=0$ for all $i$. Moreover, since $\mathbb{G}_a$ is smooth, the natural map $\mathrm{H}^i(S,\mathbb{G}_a)\to \mathrm{H}_\text{ét}^i(S,\mathbb{G}_a)$ is an isomorphism. Using these facts, a diagram chase gives that $\mathrm{H}^i(S,\widehat{\mathbb{G}}_a)$ vanishes for all $i$.
\end{proof}

In order to analyze the sheafification map $\operatorname{Ext}^1_S(G, \mathbb{G}_m) \to \underline{\operatorname{Ext}}^1(G, \mathbb{G}_m)(S)$ for a connected commutative algebraic group $G$ over a field of characteristic zero, we require the following vanishing result. We refer the reader to \cite[\S 6]{ribeiro2025} for further discussion of the elusive sheaf $\underline{\operatorname{Ext}}^1(\mathbb{G}_a,\mathbb{G}_m)$; in particular, we note that it does not vanish.

\begin{proposition}\label{unipotent extension}
	Let $U$ be a unipotent group over a characteristic zero field $k$. Then the group $\underline{\operatorname{Ext}}^1(U,\mathbb{G}_m)(S)$ vanishes for all seminormal $k$-schemes $S$.
\end{proposition}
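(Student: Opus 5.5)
We reduce to $U=\mathbb{G}_a$: since $U\simeq\mathbb{G}_a^n$ and $\underline{\operatorname{Ext}}^1(-,\mathbb{G}_m)$ is additive, it suffices to treat a single factor. Let $S$ be a seminormal $k$-scheme and take a section $\xi\in\underline{\operatorname{Ext}}^1(\mathbb{G}_a,\mathbb{G}_m)(S)$. Seminormal schemes are reduced, so Proposition~\ref{local-to-global} shows that $\xi$ comes from a unique class in $\operatorname{Ext}^1_S(\mathbb{G}_a,\mathbb{G}_m)$. Theorem~\ref{injectivity} applies because $S$ is reduced and $\mathbb{G}_{a,S}\to S$ is smooth with connected fibers, so this class is determined by its image in $\mathrm{H}^1_m(\mathbb{G}_{a,S},\mathbb{G}_m)$. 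The problem therefore becomes: every multiplicative line bundle on $\mathbb{A}^1_S$ is trivial when $S$ is seminormal.

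We use the classical fact of Traverso and Swan that $\operatorname{Pic}(S)\to\operatorname{Pic}(\mathbb{A}^1_S)$ is an isomorphism for seminormal $S$; for affine $S$ this is the statement "$R$ seminormal $\iff \operatorname{Pic}R=\operatorname{Pic}R[t]$". We globalize it via the Zariski sheaf $\mathcal{N}\colon V\mapsto \operatorname{coker}(\operatorname{Pic}V\to\operatorname{Pic}\mathbb{A}^1_V)$. For reduced $V$ we have $\mathcal{O}(\mathbb{A}^1_V)^\times=\mathcal{O}(V)^\times$, so the Leray sequence for $\pi\colon\mathbb{A}^1_S\to S$ gives
\[
0\to\mathrm{H}^1(S,\mathbb{G}_m)\to\mathrm{H}^1(\mathbb{A}^1_S,\mathbb{G}_m)\to \mathrm{H}^0(S,\mathrm{R}^1\pi_*\mathbb{G}_m).
\]
By Traverso--Swan applied on affine opens, which are again seminormal, the last term vanishes. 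Hence every line bundle $\mathscr{L}$ on $\mathbb{A}^1_S$ is isomorphic to $\pi^*\mathscr{M}$ with $\mathscr{M}=0^*\mathscr{L}$. If $\mathscr{L}$ is multiplicative, pulling back $m^*\mathscr{L}\simeq\operatorname{pr}_1^*\mathscr{L}\otimes\operatorname{pr}_2^*\mathscr{L}$ along the zero section of $\mathbb{G}_a\times\mathbb{G}_a$ yields $\mathscr{M}\simeq\mathscr{M}^{\otimes2}$. Thus $\mathscr{M}$ is trivial, and so is $\mathscr{L}$. Consequently $\mathrm{H}^1_m(\mathbb{G}_{a,S},\mathbb{G}_m)=0$, hence $\operatorname{Ext}^1_S(\mathbb{G}_a,\mathbb{G}_m)=0$, and $\xi=0$.

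The main obstacle is the Traverso--Swan input together with its globalization, and we must check that the Zariski Picard groups match the fppf $\mathrm{H}^1(-,\mathbb{G}_m)$; Hilbert 90 provides this. We also note where the hypothesis is genuinely needed. For non-seminormal $S$, for instance a cusp, there exist nontrivial line bundles on $\mathbb{A}^1_S$ of the form $\exp(a t)$ with $a$ in the conductor. These are multiplicative and realize nonzero sections, which is consistent with the nonvanishing of the sheaf $\underline{\operatorname{Ext}}^1(\mathbb{G}_a,\mathbb{G}_m)$.
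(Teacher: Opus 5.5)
Your proof is correct and follows essentially the same route as the paper. Both reduce via Proposition~\ref{local-to-global} and Theorem~\ref{injectivity} to showing $\mathrm{H}^1_m(U_S,\mathbb{G}_m)=0$, use Traverso's theorem to see that every line bundle on $U_S$ is pulled back from $S$, and then use the zero section to kill the class. The only difference is that the paper cites a global form of Traverso's theorem for $U_S\simeq\mathbb{A}^n_S$, whereas you reduce to $\mathbb{G}_a$ and globalize through the Leray sequence yourself, which is fine.

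One small correction to your closing aside, which the proof does not use. For the cusp $R=k[x^2,x^3]$, the nontrivial multiplicative classes are glued along the conductor by $\exp(axt)=1+axt$, with $a\in k$ and $x$ a nilpotent of $\overline{R}/\mathfrak{c}=k[x]/(x^2)$ that does not come from $R$. They are not given by $a$ in the conductor $\mathfrak{c}$: for such $a$, $\exp(at)\equiv 1$ modulo $\mathfrak{c}$, so it gives the trivial class.
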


\begin{proof}
	Since seminormal schemes are reduced \cite[Tag \href{https://stacks.math.columbia.edu/tag/0EUQ}{0EUQ}]{Stacks}, Theorem~\ref{injectivity} and Proposition~\ref{local-to-global} imply that $\underline{\operatorname{Ext}}^1(U,\mathbb{G}_m)(S)\simeq \mathrm{H}^1_m(U_S,\mathbb{G}_{m,S})$. Denoting by $p\colon U_S\to S$ the structure map, Traverso's theorem \cite[Lem.\ 4.3]{sadhu2021equivariant} gives that
\[p^*\colon \mathrm{H}^1(S,\mathbb{G}_{m})\to \mathrm{H}^1(U_S,\mathbb{G}_{m})\]
is an isomorphism. In particular, $\mathrm{H}^1_m(U_S,\mathbb{G}_{m})$ is isomorphic to the subgroup of $\mathrm{H}^1(S,\mathbb{G}_{m})$ consisting of elements $x\in \mathrm{H}^1(S,\mathbb{G}_{m})$ that satisfy $p^*x\in \mathrm{H}^1_m(U_S,\mathbb{G}_{m})$. Next, denote by $m\colon U_S\times_S U_S\to U_S$ the group operation and by $\operatorname{pr}_i\colon U_S\times_S U_S\to U_S$ the natural projections. Then, $p^*x$ lies in $\mathrm{H}^1_m(U_S,\mathbb{G}_{m})$ if and only if
\[m^*p^*x=\operatorname{pr}_1^*p^*x+\operatorname{pr}_2^*p^*x.\]
However, the morphisms $p\circ m$, $p\circ\operatorname{pr}_1$, and $p\circ\operatorname{pr}_2$ are all equal to the structure map of $U^2_S$, which has a section $S\to U^2_S$. Thus, $m^*p^*x=\operatorname{pr}_1^*p^*x+\operatorname{pr}_2^*p^*x$ holds if and only if $x=0$, completing the proof.
\end{proof}

We are now in position to prove an enhancement of Theorem~\ref{injectivity} for commutative connected algebraic groups over a characteristic zero field.

\begin{theorem}\label{local to global for G}
	Let $G$ be a commutative connected algebraic group over a characteristic zero field $k$. For a regular $k$-scheme $S$, the natural maps
	\[\underline{\operatorname{Ext}}^1(G,\mathbb{G}_m)(S)\leftarrow\operatorname{Ext}^1_S(G,\mathbb{G}_m)\to \mathrm{H}^1_m(G_S,\mathbb{G}_m)\]
    are isomorphisms.
\end{theorem}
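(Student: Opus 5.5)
The right-hand map is an isomorphism by Theorem~\ref{injectivity}, since a regular scheme is reduced and $G_S\to S$ is smooth with connected geometric fibers. It remains to show that the sheafification map $\operatorname{Ext}^1_S(G,\mathbb{G}_m)\to\underline{\operatorname{Ext}}^1(G,\mathbb{G}_m)(S)$ is an isomorphism. By Proposition~\ref{sheafification map}, it suffices to prove
\[\mathrm{H}^1(S,G^D)=0,\qquad \mathrm{H}^2(S,G^D)\to \mathrm{H}^2(S,G^D)\ \text{compatible},\]
where $G^D=\underline{\operatorname{Hom}}(G,\mathbb{G}_m)$. More precisely, I will show that $\mathrm{H}^1(S,G^D)=0$ and that the boundary map to $\mathrm{H}^2(S,G^D)$ vanishes on the image, which gives surjectivity. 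The cleanest route is to reduce to the building blocks using the Barsotti--Chevalley sequence
\[0\to T\times U\to G\to A\to 0,\]
and compare long exact sequences of $\underline{\operatorname{Ext}}^\bullet(-,\mathbb{G}_m)$ and $\operatorname{Ext}^\bullet_S(-,\mathbb{G}_m)$.

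First, the blocks. For $A$, Proposition~\ref{cartier dual of abelian schemes and tori} gives $A^D=0$, so Proposition~\ref{sheafification map} shows that $\operatorname{Ext}^1_S(A,\mathbb{G}_m)\simeq \underline{\operatorname{Ext}}^1(A,\mathbb{G}_m)(S)$, and this is the dual abelian variety by Proposition~\ref{T'}. For $T$, a regular scheme is normal, hence locally irreducible and geometrically unibranch, so Proposition~\ref{local-to-global} applies on each connected component. For $U$, Proposition~\ref{local-to-global} applies since $S$ is reduced. Thus the sheafification map is an isomorphism for $A$, $T$ and $U$. For $T$ I will also use that $\mathrm{H}^1(S,X)=0$ and that $\mathrm{H}^2(S,X)$ injects into $\mathrm{H}^2$ of the generic points. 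The latter holds because $X$ is étale-locally $\mathbb{Z}^r$ and $S$ is regular, in the style of SGA3 VIII.5. For $U$, $\mathrm{H}^i(S,\widehat{U^*})=0$ for all $i$, as in the proof of Proposition~\ref{local-to-global}.

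Next, the dévissage. I apply $\operatorname{Hom}(-,\mathbb{G}_m)$ both globally over $S$ and sheaf-theoretically to the Barsotti--Chevalley sequence, obtaining the rows
\[0\to (T\times U)^D(S)\to \operatorname{Ext}^1_S(A,\mathbb{G}_m)\to\operatorname{Ext}^1_S(G,\mathbb{G}_m)\to \operatorname{Ext}^1_S(T\times U,\mathbb{G}_m)\to \operatorname{Ext}^2_S(A,\mathbb{G}_m),\]
with the analogous sheaf row mapping to sections. Since $A^D=0=G^D(S)$-type terms match (Proposition~\ref{cartier dual of abelian schemes and tori}), a five-lemma argument needs three things:
\begin{enumerate}
\item isomorphisms on the $A$ and $T\times U$ terms, which come from the previous paragraph;
\item isomorphisms on the $\operatorname{Hom}$ terms, which hold tautologically;
\item injectivity of $\operatorname{Ext}^2_S(A,\mathbb{G}_m)\to \underline{\operatorname{Ext}}^2(A,\mathbb{G}_m)(S)$.
\end{enumerate}
The sheaf row is not exact on sections at the right end, so I will instead argue with the derived objects $\mathsf{R}\Gamma(S,\mathsf{R}\underline{\operatorname{Hom}}(-,\mathbb{G}_m))$. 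This reduces the claim to showing that $\mathrm{H}^i(S,\tau_{\le 0}\mathsf{R}\underline{\operatorname{Hom}}(G,\mathbb{G}_m))$ vanishes for $i=1$ and has no contribution hitting degree $2$. The complex $G^D$ is an extension of $\underline{\operatorname{Hom}}(T\times U,\mathbb{G}_m)=X\times\widehat{U^*}$ modulo the image of $\underline{\operatorname{Hom}}(G,\mathbb{G}_m)$, together with a connecting map into $A'$.

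I expect the main obstacle to be the possible nonvanishing of the $\mathrm{H}^2(S,G^D)$ obstruction. Concretely, $G^D$ is the kernel of $X\times\widehat{U^*}\to A'$, where the map sends a character to the pushout extension. My first attempt is as follows. $\widehat{U^*}$ maps to $A'$ with zero image on reduced points and is cohomologically trivial. So $G^D$ is, up to $\widehat{U^*}$-terms, the sheaf $\ker(X\to A')$, which is étale-locally a lattice. Regularity of $S$ then gives the vanishing of $\mathrm{H}^1$ and the injectivity of $\mathrm{H}^2$ into the generic points, where everything can be checked over a field using Theorem~\ref{injectivity}. If this comparison at generic points fails, my fallback is to prove surjectivity directly: a multiplicative torsor is locally an extension, and Theorem~\ref{injectivity} glues the local extension structures uniquely, since they are determined by the torsor.
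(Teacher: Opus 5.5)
\textbf{There is a genuine gap, at the surjectivity step.}

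Your treatment of the right-hand map is the paper's: Theorem~\ref{injectivity} only needs $S$ reduced. Your use of Proposition~\ref{sheafification map} is also correct: it reduces the left-hand map to two facts, the vanishing of $\mathrm{H}^1(S,G^D)$ and the vanishing of the boundary $\underline{\operatorname{Ext}}^1(G,\mathbb{G}_m)(S)\to\mathrm{H}^2(S,G^D)$. The problem is that neither of your arguments kills this boundary.

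Passing to generic points only reproduces the same surjectivity question over a field $K$, where $\mathrm{H}^2(K,\mathbb{Z})\simeq\mathrm{H}^1(K,\mathbb{Q}/\mathbb{Z})$ is usually nonzero. Theorem~\ref{injectivity} does not help there, since it compares $\operatorname{Ext}^1$ with $\mathrm{H}^1_m$, not with sections of $\underline{\operatorname{Ext}}^1$.

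Your fallback fails for the same reason. A section of $\underline{\operatorname{Ext}}^1(G,\mathbb{G}_m)$ over $S$ is only a family of local extensions, identified on overlaps up to automorphisms in $G^D=\underline{\operatorname{Hom}}(G,\mathbb{G}_m)$. So there is no global torsor on $G_S$ to which uniqueness of extension structures could be applied. The failure of the cocycle condition is exactly your $\mathrm{H}^2(S,G^D)$ class.

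You were right to distrust the rows of $S$-sections. The paper runs the same d\'evissage along $0\to L\to G\to A\to 0$. It uses $\operatorname{Ext}^1_S(L,\mathbb{G}_m)=0=\underline{\operatorname{Ext}}^1(L,\mathbb{G}_m)(S)$ and the isomorphism for $A$, then closes with a diagram chase. That chase treats $\underline{\operatorname{Hom}}(L,\mathbb{G}_m)(S)\to\underline{\operatorname{Ext}}^1(A,\mathbb{G}_m)(S)\to\underline{\operatorname{Ext}}^1(G,\mathbb{G}_m)(S)\to 0$ as exact, and that is precisely the unjustified step.

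\textbf{The step cannot be repaired: the left-hand map is not an isomorphism in general.} Take $k=\mathbb{C}$ and $G=(E\times\mathbb{G}_m)/\mu_p$ from Example~\ref{flats are not exact}. It is an extension of $B=E/\mu_p$ by $\mathbb{G}_m$ whose class $Q\in B'(\mathbb{C})$ generates the kernel $\mathbb{Z}/p$ of the dual isogeny $B'\to E'$.
\begin{itemize}
\item The sheaf sequence gives $G^D=p\mathbb{Z}$ and $\underline{\operatorname{Ext}}^1(G,\mathbb{G}_m)\simeq B'/\langle Q\rangle\simeq E'$.
\item For connected normal $S$, the global sequence, using $\operatorname{Ext}^1_S(\mathbb{G}_m,\mathbb{G}_m)=\mathrm{H}^1(S,\mathbb{Z})=0$, gives $\operatorname{Ext}^1_S(G,\mathbb{G}_m)\simeq B'(S)/\langle Q\rangle$. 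The sheafification map is induced by $B'\to E'$.
\item For $S=E'$, the tautological point $\mathrm{id}_{E'}$ is not in the image, because the connected \'etale degree-$p$ cover $B'\to E'$ has no section. The obstruction is already nonzero at the generic point of $E'$.
\end{itemize}

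Injectivity also fails once $T$ is not split. Take $k=S=\mathbb{R}$ and the norm-one torus $T=\mathrm{R}^1_{\mathbb{C}/\mathbb{R}}\mathbb{G}_m$. The extension $1\to\mathbb{G}_m\to\mathrm{R}_{\mathbb{C}/\mathbb{R}}\mathbb{G}_m\xrightarrow{z\mapsto z/\bar z}T\to 1$ is a nonzero element of $\operatorname{Ext}^1_{\mathbb{R}}(T,\mathbb{G}_m)\simeq\mathrm{H}^1(\mathbb{R},X)=\mathbb{Z}/2$, whereas $\underline{\operatorname{Ext}}^1(T,\mathbb{G}_m)=0$. So your claim $\mathrm{H}^1(S,X)=0$ needs $X$ constant, as does the SGA3 citation behind Proposition~\ref{local-to-global}.

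\textbf{What your method does establish.} For normal $S$ and split $T$, the left map is injective. Its cokernel is $\ker\bigl(\mathrm{H}^1(S,c(X))\to\mathrm{H}^1(S,A')\bigr)$, where $c(X)\subset A'(k)$ is the image of $X\to A'$. Hence the map is bijective whenever $c(X)$ is torsion-free.
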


\begin{proof}
	That the arrow on the right is an isomorphism was already proven in Theorem~\ref{injectivity}. This result holds even if $S$ is merely assumed to be reduced. For the reader's convenience, we note that the hypotheses of Theorem~\ref{injectivity} are satisfied due to \cite[Cor.\ 1.32 and 8.39]{M}.
	
	We now turn our attention to the sheafification map on the left. As in the proof of Proposition~\ref{cartier dual of dR}, we know that $G$ is an extension of an abelian variety $A$ by a linear group $L$, that is a product of a torus $T$ and a unipotent group $U$. This leads to the commutative diagram
\[\begin{tikzcd}[column sep=small]
	{\operatorname{Hom}_S(L,\mathbb{G}_m)} & {\operatorname{Ext}^1_S(A,\mathbb{G}_m)} & {\operatorname{Ext}^1_S(G,\mathbb{G}_m)} & {\operatorname{Ext}^1_S(L,\mathbb{G}_m)} \\
	{\underline{\operatorname{Hom}}(L,\mathbb{G}_m)(S)} & {\underline{\operatorname{Ext}}^1(A,\mathbb{G}_m)(S)} & {\underline{\operatorname{Ext}}^1(G,\mathbb{G}_m)(S)} & {\underline{\operatorname{Ext}}^1(L,\mathbb{G}_m)(S),}
	\arrow[from=1-1, to=1-2]
	\arrow[equals, from=1-1, to=2-1]
	\arrow[from=2-1, to=2-2]
	\arrow[from=2-2, to=2-3]
	\arrow[from=1-2, to=1-3]
	\arrow[from=1-3, to=1-4]
	\arrow[from=2-3, to=2-4]
	\arrow[r, "\sim" labl, from=1-2, to=2-2]
	\arrow[r, "\sim" labl, from=1-4, to=2-4]
	\arrow[from=1-3, to=2-3]
\end{tikzcd}\]
whose rows are exact. Propositions~\ref{T'} and \ref{unipotent extension} imply that $\underline{\operatorname{Ext}}^1(L,\mathbb{G}_m)(S)$ vanishes, and Proposition~\ref{local-to-global} further ensures that $\operatorname{Ext}^1_S(L,\mathbb{G}_m)=0$. The desired result then follows from a diagram chase.
\end{proof}

The following example demonstrates that the map $\operatorname{Ext}^1_S(G, \mathbb{G}_m) \to \underline{\operatorname{Ext}}^1(G, \mathbb{G}_m)(S)$ may not be an isomorphism if $S$ is not regular.

\begin{example}\label{nodal example}
	Let $k$ be a field, and consider the nodal curve $S=\operatorname{Spec}k[x,y]/(y^2-xy-x^3)$. The proof of Proposition~\ref{short exact sequence computing extensions} indicates that $\operatorname{Ext}^1_S(\mathbb{G}_m,\mathbb{G}_m)$ can be computed in the étale topology. Since Grothendieck's proof of Proposition~\ref{T'} also shows that $\underline{\operatorname{Ext}}^1(\mathbb{G}_m,\mathbb{G}_m)$ vanishes on the étale site, Proposition~\ref{sheafification map} gives that
	\[\operatorname{Ext}^1_S(\mathbb{G}_m,\mathbb{G}_m)\simeq\mathrm{H}^1_\text{ét}(S,\mathbb{Z}).\]
	
	As observed in \cite[Rem.~5.5.2]{Weibel}, the étale cohomology group $\mathrm{H}^1_\text{ét}(S,\mathbb{Z})$ is isomorphic to $\mathbb{Z}$; implying that the sheafification map $\operatorname{Ext}^1_S(\mathbb{G}_m, \mathbb{G}_m) \to \underline{\operatorname{Ext}}^1(\mathbb{G}_m, \mathbb{G}_m)(S)$ is not an isomorphism.
\end{example}

We also prove an analogue of Theorem~\ref{injectivity} for de Rham spaces.

\begin{theorem}\label{global de rham}
	Let $G$ be a commutative connected algebraic group over a characteristic zero field $k$. For a reduced $k$-scheme $S$, the natural maps
	\[\underline{\operatorname{Ext}}^1(G_{\normalfont\text{dR}},\mathbb{G}_m)(S)\leftarrow\operatorname{Ext}^1_S(G_{\normalfont\text{dR}},\mathbb{G}_m)\to \mathrm{H}^1_m(G_{\normalfont\text{dR}}\times S,\mathbb{G}_m)\]
    are isomorphisms.
\end{theorem}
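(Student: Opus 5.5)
We treat the two arrows separately, in each case following the pattern of the proofs of Theorems~\ref{injectivity} and~\ref{local to global for G}, with $G$ replaced by the abelian sheaf $G_{\text{dR}}=G/\widehat{G}$.

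\textbf{The right arrow.} We would apply Proposition~\ref{short exact sequence computing extensions} over $S$ with $\mathscr{G}=G_{\text{dR}}\times S$ and $\mathscr{A}=\mathbb{G}_m$. Via Lemma~\ref{vanishing lemma}, this reduces to the following Rosenlicht-type statement: every morphism of sheaves of sets $(G_{\text{dR}})^n_S\to\mathbb{G}_m$, for $n=2,3$, is a sum of morphisms $G_{\text{dR},S}\to\mathbb{G}_m$.

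To prove this, note that such a morphism is a unit $f$ on $G^n_S$ that is invariant under translation by $\widehat{G}^n$. By González-Avilés' generalization of Rosenlicht's lemma (already used in Theorem~\ref{injectivity}, and valid since $S$ is reduced), we can write
\[
f(x_1,\dots,x_n)=c\,\chi_1(x_1)\cdots\chi_n(x_n)
\]
with $c$ a unit on $S$ and $\chi_i$ characters of $G_S$. Restricting $\widehat{G}^n$-invariance to each factor shows that each $\chi_i$ is $\widehat{G}$-invariant, so each $\chi_i$ factors through $G_{\text{dR}}$. Hence $f$ is a sum of maps $G_{\text{dR},S}\to\mathbb{G}_m$. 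Lemma~\ref{vanishing lemma} then gives $\mathrm{H}^2_s=\mathrm{H}^3_s=0$, so the right arrow is an isomorphism.

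One point needs care here: we must know that $\mathbb{G}_m$-valued maps out of the quotient sheaf $G^n_{\text{dR}}$ are exactly the $\widehat{G}^n$-invariant units on $G^n$. This holds because $G\to G_{\text{dR}}$ is an epimorphism of fppf sheaves (Proposition~\ref{de rham is an epimorphism of presheaves}).

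\textbf{The left arrow.} We use Proposition~\ref{sheafification map}. By Proposition~\ref{cartier dual of dR}, $\underline{\operatorname{Hom}}(G_{\text{dR}},\mathbb{G}_m)=0$. Both $\mathrm{H}^1$ and $\mathrm{H}^2$ terms in that exact sequence therefore vanish, so the sheafification map is an isomorphism for every $S$, reduced or not.

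\textbf{Main obstacle.} The hard step is the Rosenlicht-type factorization in the first part, specifically the requirement that the decomposition be made in a way compatible with $\widehat{G}$-invariance over a reduced but possibly non-normal base. The constant $c$ and the characters are unique up to the obvious ambiguity, so invariance can be checked factor by factor by evaluating at zero in the other variables. If this factor-by-factor argument fails, the first fallback is to argue with the exact sequence $0\to\widehat{G}\to G\to G_{\text{dR}}\to0$ and the long exact sequences in $\operatorname{Ext}$ and $\mathrm{H}^1_m$, using Theorem~\ref{injectivity} for $G$ together with the Cartier duality computations for $\widehat{G}$ (Corollary~\ref{cartier dual of formal completion}).
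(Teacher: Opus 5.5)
Your proposal is correct and is essentially the paper's proof: the left arrow follows from $\underline{\operatorname{Hom}}(G_{\text{dR}},\mathbb{G}_m)=0$ (Proposition~\ref{cartier dual of dR}) together with the sequence of Proposition~\ref{sheafification map}, and the right arrow follows from Lemma~\ref{vanishing lemma}, Gonz\'alez-Avil\'es' Rosenlicht lemma, and the fact that $G^n_S\to G^n_{\text{dR}}\times S$ is an epimorphism (on $\textsf{Sch}/k$ this is Proposition~\ref{GdR is reasonable}, not the presheaf statement on $\textsf{Aff}/k$), so the factors descend. Your evaluation-at-zero argument showing that each factor is $\widehat{G}$-invariant is exactly the step the paper leaves implicit; it needs no uniqueness or normality, so the fallback in your last paragraph is unnecessary.
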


\begin{proof}
Proposition~\ref{cartier dual of dR} asserts that the Cartier dual of $G_\text{dR}$ vanishes. Consequently, Proposition~\ref{sheafification map} implies that the map on the left is an isomorphism for all $k$-schemes $S$. To show that the map on the right is also an isomorphism for reduced $k$-schemes $S$, we use Lemma~\ref{vanishing lemma}.

Consider a morphism of $S$-schemes $f\colon G^n_\text{dR}\times S\to \mathbb{G}_{m,S}$, where $n$ is either $2$ or $3$. As in the proof of Theorem~\ref{injectivity}, there exist morphisms $f_i\colon G_S\to \mathbb{G}_{m,S}$ for $i=1,\dots,n$, such that their product equals the composition
\[G_S^n\to G^n_\text{dR}\times S\to \mathbb{G}_{m,S}.\]
Since epimorphisms in topoi are stable under base change, Proposition~\ref{GdR is reasonable} implies that the map $G_S^n\to G^n_\text{dR}\times S$ is an epimorphism. Therefore, the morphisms $f_i$ factor through the quotient, yielding maps
\[\overline{f}_i\colon G_\text{dR}\times S\to \mathbb{G}_{m,S},\]
whose product is equal to $f$.
\end{proof}

We will require two further vanishing results. The first concerns extension sheaves of commutative formal groups and is proved in \cite[Lem.\ 1.14]{russell2013albanese}. This result will be applied primarily to the formal completion $\widehat{G}$ of a commutative algebraic group $G$ over a field of characteristic zero, a context in which it first appeared in \cite[Lem.\ A.4.6]{barbieri2009sharp}.

\begin{proposition}\label{ext G^}
Let $\mathscr{G}$ be a commutative formal group over a field whose Cartier dual is of finite type. Then $\underline{\operatorname{Ext}}^1(\mathscr{G},\mathbb{G}_m)$ vanishes.
\end{proposition}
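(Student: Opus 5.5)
We want to show that $\underline{\operatorname{Ext}}^1(\mathscr{G},\mathbb{G}_m)=0$ for a commutative formal group $\mathscr{G}$ over $k$ whose Cartier dual $\mathscr{G}^D$ is of finite type. The plan is to follow the strategy of Section~\ref{extensions section}. We combine Proposition~\ref{sheafification map} with the Breen sequence of Proposition~\ref{short exact sequence computing extensions}. It suffices to show that, for every affine $k$-scheme $X=\operatorname{Spec}R$ and every class $\xi\in\operatorname{Ext}^1_X(\mathscr{G},\mathbb{G}_m)$, the class $\xi$ becomes zero fppf-locally on $X$.

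Step one: control $\mathrm{H}^2_s$ and reduce to multiplicative torsors. We write $\mathscr{G}=\operatorname{Spf}C$, where $C=\varinjlim C_\alpha$ is a filtered union of finite-dimensional coalgebras; equivalently $\mathscr{G}=\operatorname{colim}\mathscr{G}_\alpha$ with $\mathscr{G}_\alpha$ finite (infinitesimal and étale pieces). A morphism of sheaves of sets $\mathscr{G}^n_X\to\mathbb{G}_m$ is then a unit of the completed ring $R\widehat{\otimes}(C^*)^{\widehat\otimes n}$. Consider $\mathscr{G}$ infinitesimal, which is the main case needed for $\widehat{G}$. After a choice of coordinates we have $\mathscr{G}\simeq\widehat{\mathbb{G}}_a^{\,d}$ (Proposition~\ref{Cartier lemma}), and these units are power series in $R\llbracket x_1,\dots,x_{nd}\rrbracket^\times$. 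Every such unit is $c\cdot\exp$ of a series without constant term, but only if $R$ is a $\mathbb{Q}$-algebra, which holds here. Because of this, I would not expect Lemma~\ref{vanishing lemma} to apply directly, since not every function splits as a sum. Instead, I would take logarithms and work additively. Up to the constant unit, $\mathbb{G}_m$-valued cochains on $\mathscr{G}$ coincide with $\mathbb{G}_a$-valued cochains. The constants are handled exactly as in the proof of Lemma~\ref{vanishing lemma}. So $\mathrm{H}^i_s(\mathscr{G},\mathbb{G}_m)\simeq \mathrm{H}^i_s(\mathscr{G},\mathbb{G}_a)$ for $i=2,3$.

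Step two: compute the additive symmetric cohomology. The group $\mathrm{H}^2_s(\widehat{\mathbb{G}}_a^d,\mathbb{G}_a)$ over a $\mathbb{Q}$-algebra classifies symmetric $2$-cocycles in power series modulo coboundaries. The classical Lazard-type argument in characteristic zero shows that every symmetric polynomial or power-series $2$-cocycle is a coboundary. The only obstructions come from the primes, as in the remark after Theorem~\ref{injectivity}, and these are absent here. Hence $\mathrm{H}^2_s=0$. Combined with Proposition~\ref{short exact sequence computing extensions}, this embeds $\operatorname{Ext}^1_X(\mathscr{G},\mathbb{G}_m)$ into $\mathrm{H}^1_m(\mathscr{G}_X,\mathbb{G}_m)$.

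Step three: kill the multiplicative torsors locally. A $\mathbb{G}_m$-torsor on $\mathscr{G}_X=\operatorname{colim}(\mathscr{G}_\alpha)_X$ is a compatible system of torsors on the $(\mathscr{G}_\alpha)_X$. Each $(\mathscr{G}_\alpha)_X$ is finite over $X$ and is a nilpotent thickening of $X$ in the infinitesimal case. Hence $\operatorname{Pic}((\mathscr{G}_\alpha)_X)\simeq\operatorname{Pic}(X)$, and the torsor is pulled back from $X$. The argument with the zero section from the proof of Proposition~\ref{unipotent extension} then forces it to be trivial. We also need the $\varprojlim^1$ term of units along the system. Here the finiteness of $\mathscr{G}^D$ should give Mittag-Leffler surjectivity of $R\otimes C_\beta^*\to R\otimes C_\alpha^*$, so this term vanishes. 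If $\mathscr{G}$ has an étale part, we first pass to an fppf cover of $X$ on which this part becomes constant. The finite-type assumption on $\mathscr{G}^D$ means that the étale part is dual to a group of multiplicative type of finite type. So its extensions by $\mathbb{G}_m$ reduce to $\underline{\operatorname{Ext}}^1$ of diagonalizable groups, which vanish by Proposition~\ref{T'} and the SGA~3 computation.

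Main obstacle: I expect the hardest part to be step two for a general $R$ that is not reduced, together with the $\varprojlim^1$ issue in step three. Both involve the completed tensor products, and the finite-type hypothesis on $\mathscr{G}^D$ must be used there. If the direct cocycle computation becomes unwieldy, I would instead try to reduce, via Cartier duality (Proposition~\ref{affine cartier duality}), to the statement that $\underline{\operatorname{Ext}}^1(\mathscr{G}^D,\mathbb{G}_m)$-type data for the algebraic group $\mathscr{G}^D$ is controlled by Proposition~\ref{T'} and the computation for vector groups.
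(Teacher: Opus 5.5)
Your proposal is correct and follows essentially the same route as the paper. You split off the étale part (via Proposition~\ref{T'} and SGA~3), and for $\widehat{\mathbb{G}}_a^{d}$ you use the Breen sequence, killing $\mathrm{H}^1_m$ with $\operatorname{Pic}(R[t]/(t^n))\simeq\operatorname{Pic}(R)$ and the zero section, and $\mathrm{H}^2_s$ with $\log$ and the characteristic-zero symmetric cocycle lemma.

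The paper differs only in that it first reduces to $d=1$ and proves your Step two directly, by differentiating the cocycle identity and formally integrating. That argument works over any $\mathbb{Q}$-algebra $R$, so the non-reducedness you worried about is no obstacle. Similarly, your $\varprojlim^1$ term vanishes simply because units lift along the nilpotent surjections $R[t]/(t^{n+1})\to R[t]/(t^n)$; this does not need the finiteness of $\mathscr{G}^D$.
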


For the reader's convenience, we provide a proof of this proposition, based on the methods developed in this section, under the assumption that the base field has characteristic zero.

\begin{proof}
    Let $k$ be the base field of characteristic $0$, and set $L \colonequals \mathscr{G}^D$. By \cite[Prop.~2.37 and Cor.~16.15]{M}, there exist a finite étale group scheme $F$, a torus $T$, and a vector group $U$, fitting into a short exact sequence
    \[0 \to T \times U \to L \to F \to 0.\]
    Dualizing this extension (see \cite[Exp.\ VIII, Prop.\ 3.3.1]{raynaud2006groupes} and \cite[Tag~\href{https://stacks.math.columbia.edu/tag/02KB}{02KB}]{Stacks}) yields a short exact sequence
    \[0 \to F^D \to \mathscr{G} \to T^D \times U^D \to 0.\]
    Taking the long exact sequence in cohomology associated to $(-)^D=\underline{\operatorname{Hom}}(-,\mathbb{G}_m)$, we reduce to showing that
    \[\underline{\operatorname{Ext}}^1(F^D,\mathbb{G}_m)=\underline{\operatorname{Ext}}^1(T^D,\mathbb{G}_m)=\underline{\operatorname{Ext}}^1(U^D,\mathbb{G}_m)=0.\]
    The first vanishing follows from another application of \cite[Exp.\ VIII, Prop.\ 3.3.1]{raynaud2006groupes}, whereas the second was dealt with in Proposition~\ref{T'}. Hence it remains to prove the vanishing for $U^D$.

    Since $U$ is a vector group, after choosing coordinates in $U$ we have that $U^D$ is a power of $\widehat{\mathbb{G}}_a$, so it is enough to show that
    \[\operatorname{Ext}^1_R(\widehat{\mathbb{G}}_{a},\mathbb{G}_{m})=0\]
    for every $k$-algebra $R$. By Proposition~\ref{short exact sequence computing extensions}, this in turn reduces to proving that $\mathrm{H}^1_m(\widehat{\mathbb{G}}_{a,R},\mathbb{G}_m)$ and $\mathrm{H}^2_s(\widehat{\mathbb{G}}_{a,R},\mathbb{G}_m)$ are trivial. We now establish these two vanishings.

    For $n\geq 1$, let $\mathbb{G}_{a,R}^{(n)}$ denote the abelian sheaf represented by $\operatorname{Spec}R[t]/(t^n)$. By definition, the formal additive group $\widehat{\mathbb{G}}_{a,R}$ is the filtered colimit of the $\mathbb{G}_{a,R}^{(n)}$ in the category of fppf sheaves, and the same colimit computes $\widehat{\mathbb{G}}_{a,R}$ in the $(2,1)$-category of fppf stacks. Consequently, we have a natural equivalence of groupoids
    \[\operatorname{Map}(\widehat{\mathbb{G}}_{a,R},\mathsf{B}\mathbb{G}_m)\simeq \operatorname{Map}(\operatorname{colim}_{n} \mathbb{G}_{a,R}^{(n)},\mathsf{B}\mathbb{G}_m)\simeq \operatorname{lim}_{n}\operatorname{Map}(\mathbb{G}_{a,R}^{(n)},\mathsf{B}\mathbb{G}_m).\]

    A $\mathbb{G}_m$-torsor on $\mathbb{G}_{a,R}^{(n)}$ is the same as a line bundle on $\operatorname{Spec}R[t]/(t^n)$; i.e.\ an element of $\operatorname{Pic}(R[t]/(t^n))$. Since the Picard group is invariant under quotienting by nilpotent ideals \cite[Tag~\href{https://stacks.math.columbia.edu/tag/0C6R}{0C6R}]{Stacks}, we have a natural isomorphism $\operatorname{Pic}(R[t]/(t^n))\simeq \operatorname{Pic}(R)$. Thus every $\mathbb{G}_m$-torsor on $\widehat{\mathbb{G}}_{a,R}$ is pulled back from $\operatorname{Spec}R$. Finally, as in the proof of Proposition~\ref{unipotent extension}, such a torsor is multiplicative precisely if it is trivial. In other words, the group $\mathrm{H}^1_m(\widehat{\mathbb{G}}_{a,R},\mathbb{G}_m)$ vanishes.

    Unwinding the definitions, we see that the group $\mathrm{H}^2_s(\widehat{\mathbb{G}}_{a,R},\mathbb{G}_m)$ is the middle cohomology of the complex
    \[R\llbracket x\rrbracket^\times \xrightarrow{\ \alpha\ } R\llbracket x,y\rrbracket^\times\xrightarrow{\ \beta\ } R\llbracket x,y,z\rrbracket^\times \oplus R\llbracket x,y\rrbracket^\times,\]
    whose differentials are given by
    \begin{align*}
        \alpha(p(x)) &= \frac{p(x+y)}{p(x)p(y)} \\
        \beta(q(x,y)) &= \mleft(\frac{q(x+y,z)q(x,y)}{q(x,y+z)q(y,z)},\frac{q(x,y)}{q(y,z)}\mright).
    \end{align*}
    Let $q(x,y)\in R\llbracket x,y\rrbracket^\times$ be a formal power series in the kernel of $\beta$. Without loss of generality, we may assume that $q(0,0)=1$. Since $R$ is a $\mathbb{Q}$-algebra, the usual power series for the exponential and logarithm define mutually inverse group isomorphisms
    \[\exp\colon (x_1,\dots,x_r)R\llbracket x_1,\dots,x_r\rrbracket \rightleftarrows 1+ (x_1,\dots,x_r)R\llbracket x_1,\dots,x_r\rrbracket \mathrel{:}\!\log.\]
    Thus, $g(x,y)\colonequals \log(q(x,y))\in (x,y)R\llbracket x,y\rrbracket$ satisfies the additive relations
    \begin{equation}\tag{$\ast$}\label{power series relations}
        g(x+y,z) + g(x,y)=g(x,y+z)+g(y,z)\quad \text{and}\quad g(x,y)=g(y,x).
    \end{equation}

    First, setting $y=z=0$ in (\ref{power series relations}) yields $g(x,0)=g(0,x)=0$. Next, differentiate the first identity in (\ref{power series relations}) with respect to $z$. Writing $g_i$ for the partial derivative of $g$ with respect to its $i$-th variable, we obtain
    \[g_2(x+y,z)=g_2(x,y+z)+g_2(y,z).\]
    Putting $z=0$ and using the second relation in (\ref{power series relations}), we get the relations
    \[g_1(x,y)=\varphi(x+y)-\varphi(x) \quad \text{and}\quad g_2(x,y)=\varphi(x+y)-\varphi(y),\]
    where $\varphi(x)\colonequals g_1(0,x) = g_2(x,0)$.

    Formally integrating $\varphi$, we obtain an element $f(x)\in xR\llbracket x\rrbracket$ satisfying $f^\prime(x)=\varphi(x)$. We claim that
    \[g(x,y)=f(x+y)-f(x)-f(y).\]
    Indeed, the difference $g(x,y)-f(x+y)+f(x)+f(y)$ vanishes at $(0,0)$, and its partial derivatives with respect to both variables are zero. Consequently, the formal power series $p(x)\colonequals \exp(f(x))\in 1+ xR\llbracket x\rrbracket$ is such that $\alpha(p(x))=q(x,y)$, finishing the proof.
\end{proof}

\begin{proposition}\label{vanishing ext2}
    Let $A$ be an abelian variety over a characteristic zero field. Then the abelian sheaves $\underline{\operatorname{Ext}}^2(A,\mathbb{G}_m)$ and $\normalfont\underline{\operatorname{Ext}}^2(A_\text{dR},\mathbb{G}_m)$ vanish.
\end{proposition}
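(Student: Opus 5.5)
We first treat $\underline{\operatorname{Ext}}^2(A,\mathbb{G}_m)$, using Breen's method from Proposition~\ref{short exact sequence computing extensions}. Since $\underline{\operatorname{Ext}}^2$ is the sheafification of $X\mapsto \operatorname{Ext}^2_X(A,\mathbb{G}_m)$, it suffices to show that every class in $\operatorname{Ext}^2_R(A,\mathbb{G}_m)$, for $R$ a $k$-algebra, dies fppf-locally on $\operatorname{Spec}R$. We use the spectral sequence $E_1^{i,j}=\prod_r \mathrm{H}^j(A^{s_{i,r}},\mathbb{G}_m)\Rightarrow \operatorname{Ext}^{i+j}(A,\mathbb{G}_m)$ coming from the Breen--Deligne resolution. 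Its total degree-two part has three graded pieces: $E_\infty^{0,2}\subset \mathrm{H}^2(A_R,\mathbb{G}_m)$, a subquotient of $\mathrm{H}^1$ of the complex $\mathrm{H}^1(A^{\bullet},\mathbb{G}_m)$, and $E_\infty^{2,0}$, a subquotient of $\mathrm{H}^3_s(A,\mathbb{G}_m)$. By Lemma~\ref{vanishing lemma}, every morphism $A_R^n\to\mathbb{G}_m$ is constant, because $A$ is proper with geometrically connected fibers (as in Proposition~\ref{cartier dual of abelian schemes and tori}). Constant maps are in particular sums of maps $A\to\mathbb{G}_m$, so $\mathrm{H}^3_s=0$.

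For the $\mathrm{H}^1$ row we use the theorem of the cube together with the seesaw principle. Locally on $R$ we have $\operatorname{Pic}(A^n_R)/\operatorname{Pic}(R)\simeq \bigoplus_{i}\operatorname{Pic}_{A}(R)\oplus\bigoplus_{i<j}\operatorname{Hom}(A,A')(R)$, with the second summand recording the biextension parts. The resulting complex in the row is then the standard complex computing the "symmetric $2$-cocycle" cohomology of $A$ with coefficients in $\operatorname{Pic}$. We aim to show its $\mathrm{H}^1$ vanishes fppf-locally. The idea is that a multiplicative class on $A^2$ which is compatible with symmetry is of the form $m^*L-\operatorname{pr}_1^*L-\operatorname{pr}_2^*L$, after passing to a cover on which the relevant symmetric homomorphism $A\to A'$ comes from a line bundle. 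Because $A'$ is divisible and we only need local statements, the obstruction in the Néron--Severi group can be killed.

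For the $\mathrm{H}^2(A_R,\mathbb{G}_m)$ term, a Brauer class is killed étale-locally on $\operatorname{Spec}R$ only after we know the classes in $E_\infty^{0,2}$ are primitive. Here we expect to use that $m^*-\operatorname{pr}_1^*-\operatorname{pr}_2^*$ kills them, combined with the Künneth-type decomposition of $\mathsf{R}\Gamma(A,\mathbb{G}_m)$. \textbf{This is the main obstacle.} Our first attempt is to sidestep it: sheafify first and use the local vanishing $\mathsf{R}^2 p_*\mathbb{G}_m$ restricted to primitive classes, which reduces, via Kummer sequences and the known structure $\mathrm{H}^*(A,\mu_n)=\wedge^*\mathrm{H}^1$, to the fact that primitive classes in $\wedge^2$ are zero.

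The de Rham case then follows by dévissage. From $0\to\widehat{A}\to A\to A_{\text{dR}}\to 0$ we get the exact sequence
\[\underline{\operatorname{Ext}}^1(\widehat{A},\mathbb{G}_m)\to\underline{\operatorname{Ext}}^2(A_{\text{dR}},\mathbb{G}_m)\to \underline{\operatorname{Ext}}^2(A,\mathbb{G}_m).\]
The left term vanishes by Proposition~\ref{ext G^}, since $\widehat{A}^D\simeq\mathfrak{g}^*$ is of finite type by Corollary~\ref{cartier dual of formal completion}. The right term vanishes by the first part, and hence $\underline{\operatorname{Ext}}^2(A_{\text{dR}},\mathbb{G}_m)=0$.
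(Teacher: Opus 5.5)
Your final step, the dévissage along $0\to\widehat{A}\to A\to A_{\mathrm{dR}}\to 0$ using Proposition~\ref{ext G^}, is exactly the paper's argument. The difference lies in the first vanishing. The paper does not prove $\underline{\operatorname{Ext}}^2(A,\mathbb{G}_m)=0$; it quotes it from \cite[Thm.~B]{ribeiro2025}. You instead try to rederive it from the Breen--Deligne spectral sequence, and that derivation is incomplete. The bottom row is handled correctly: all maps $A_R^n\to\mathbb{G}_m$ are constant, so in fact the whole row $E_2^{\bullet,0}$ vanishes. Your $E^{1,1}$ analysis is also essentially right, since the cube and seesaw reduce $E_2^{1,1}(R)$ to symmetric homomorphisms $A\to A'$ modulo those of the form $\phi_L$. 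However, the reason these die \'etale-locally is not the divisibility of $A'$. It is the theorem that over $\bar k$ every symmetric homomorphism $A\to A'$ equals $\phi_L$ for some line bundle $L$.

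The genuine gap is $E_\infty^{0,2}$, which you leave as an expectation. The Kummer reduction fails as stated. If $\beta\in\mathrm{H}^2(A,\mathbb{G}_m)[n]$ is primitive and $\tilde\beta\in\mathrm{H}^2(A,\mu_n)$ lifts it, you only know that $(m^*-\operatorname{pr}_1^*-\operatorname{pr}_2^*)\tilde\beta$ lies in the image of $\operatorname{Pic}(A\times A)/n$. So the vanishing of primitive elements of $\wedge^2\mathrm{H}^1$ does not apply. You would need to show that this defect, which lives in the $\operatorname{Hom}(A,A')/n$-part, can be absorbed by modifying $\tilde\beta$ by an element of $\operatorname{NS}(A)/n$. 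That is a non-formal statement you never address. Relatedly, since the bottom row vanishes, $E_\infty^{0,2}=\ker\bigl(d_2\colon E_2^{0,2}\to E_2^{2,1}\bigr)$, and your sketch never analyzes $d_2$; that is where any surviving primitive classes would have to be killed. Finally, Kummer theory only sees torsion classes, and checking at geometric points is not enough for an fppf sheaf on $\mathsf{Sch}/k$. Over non-reduced test rings, $\mathsf{R}^2p_*\mathbb{G}_m$ has non-torsion infinitesimal sections (the formal Brauer group of $A$, with tangent space $\mathrm{H}^2(A,\mathcal{O}_A)$), and your argument is silent on these. So the first vanishing is not established. The fix is to invoke \cite[Thm.~B]{ribeiro2025}, as the paper does, after which your de Rham step finishes the proof.
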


\begin{proof}
    The vanishing of $\underline{\operatorname{Ext}}^2(A,\mathbb{G}_m)$ for abelian schemes over an arbitrary base was established in \cite[Thm.~B]{ribeiro2025}. The abelian sheaf $\normalfont\underline{\operatorname{Ext}}^2(A_\text{dR},\mathbb{G}_m)$ sits in the exact sequence
    \[\underline{\operatorname{Ext}}^1(\widehat{A},\mathbb{G}_m)\to \underline{\operatorname{Ext}}^2(A_\text{dR},\mathbb{G}_m)\to \underline{\operatorname{Ext}}^2(A,\mathbb{G}_m),\]
    and the left-hand term vanishes by the preceding proposition.
\end{proof}

In order to have a bird's-eye view of this section, consider the following definition.

\begin{definition}\label{def of G natural}
	Let $G$ be a commutative connected algebraic group over a characteristic zero field $k$. We denote the abelian sheaf $\underline{\operatorname{Ext}}^1(G_\text{dR},\mathbb{G}_m)$ by $G^\natural$, and the abelian sheaf $\underline{\operatorname{Ext}}^1(G,\mathbb{G}_m)$ by $G^\prime$.
	\end{definition}

Denote by $m\colon G\times G\to G$ the group law of $G$. For a reduced $k$-scheme $S$, Theorem~\ref{global de rham} gives isomorphisms
\[G^\natural (S)\colonequals \underline{\operatorname{Ext}}^1(G_\text{dR},\mathbb{G}_m)(S) \xleftarrow{\ \sim\ } \operatorname{Ext}^1_S(G_\text{dR},\mathbb{G}_m) \xrightarrow{\ \sim\ } \mathrm{H}^1_m(G_\text{dR}\times S,\mathbb{G}_{m})\]
that are functorial on $G$ and $S$. Corollary~\ref{fundamentalresultonderhamspaces} then implies that $G^\natural(S)$ is isomorphic to the set of isomorphism classes of line bundles $\mathscr{L}$ on $G_S$ with integrable connection $\nabla$ relative to $S$ satisfying $m^*(\mathscr{L},\nabla)\simeq (\mathscr{L},\nabla)\boxtimes (\mathscr{L},\nabla)$. Moreover, the group structure of $G^\natural(S)$ corresponds to the tensor products of connections.

Similarly, for a regular $k$-scheme $S$, Theorem~\ref{injectivity} yields isomorphisms
\[G^\prime (S)\colonequals \underline{\operatorname{Ext}}^1(G,\mathbb{G}_m)(S) \xleftarrow{\ \sim\ } \operatorname{Ext}^1_S(G,\mathbb{G}_m) \xrightarrow{\ \sim\ } \mathrm{H}^1_m(G_S,\mathbb{G}_{m})\]
that are functorial on $G$ and $S$. As above, this implies that $G^\prime(S)$ can be identified to the group of isomorphism classes of line bundles $\mathscr{L}$ on $G_S$ satisfying $m^*\mathscr{L}\simeq \mathscr{L}\boxtimes \mathscr{L}$.

The following remark explains our choice of notation in Definition~\ref{def of G natural}.

\begin{remark}\label{remark 3}
Let $A$ be an abelian variety over a characteristic zero field $k$. According to Proposition~\ref{T'}, the abelian sheaf $A^\prime$ in Definition~\ref{def of G natural} is represented by the dual abelian variety. Thus, our notation is not overloaded. 

By considering the long exact sequence in cohomology associated with the Cartier duality functor $(-)^D\colonequals \underline{\operatorname{Hom}}(-,\mathbb{G}_m)$ and the short exact sequence
\[0\to \widehat{A}\to A\to A_\text{dR}\to 0,\]
we deduce that $A^\natural$ is an extension of $A^\prime$ by $\Omega_A$, the vector group of the invariant differentials of $A$. (Consequently, $A^\natural$ is representable by an algebraic group.\footnote{We refer the reader to Remark~\ref{remark on descent} for more explanations.}) We affirm that $A^\natural$ is the universal vector extension of $A^\prime$ as defined in \cite[\S I.1]{mazur2006universal}.
	 
The proof of Theorem~\ref{global de rham} provides natural isomorphisms
\[A^\natural (S)\colonequals \underline{\operatorname{Ext}}^1(A_\text{dR},\mathbb{G}_m)(S) \xleftarrow{\ \sim\ } \operatorname{Ext}^1_S(A_\text{dR},\mathbb{G}_m) \xrightarrow{\ \sim\ } \mathrm{H}^1_m(A_\text{dR}\times S,\mathbb{G}_{m})\]
for \emph{all} $k$-schemes $S$. This implies that the presheaf $S\mapsto \mathrm{H}^1_m(A_\text{dR}\times S,\mathbb{G}_{m})$ already satisfies fppf descent. Therefore, the sheafification in the definition of $E^\natural$ \cite[Def.\ I.4.1.6]{mazur2006universal} is superfluous, leading to the identification $A^\natural\simeq E^\natural$.

Mazur and Messing also define an abelian sheaf $\underline{\operatorname{Ext}}^\natural(A,\mathbb{G}_m)$ which, by Remark~\ref{Grothendieck's remark}, is isomorphic to $A^\natural= \underline{\operatorname{Ext}}^1(A_\text{dR},\mathbb{G}_m)$. In particular, our methods reprove their \cite[Prop.\ I.4.2.1]{mazur2006universal}, which compares $\underline{\operatorname{Ext}}^\natural(A,\mathbb{G}_m)$ and $E^\natural$. Finally, from \cite[Props.\ I.2.6.7 and I.3.2.3]{mazur2006universal}, we conclude that $A^\natural$ is indeed the universal vector extension of $A^\prime$.
\end{remark}

\section{Moduli of character sheaves}
\subsection{Generalized 1-motives and their Laumon duals}\label{section laumon dual}

Let $k$ be a perfect field of any characteristic. In this section, we will systematically apply Proposition~\ref{DK} to represent commutative group stacks over $k$ as two-term complexes (in degrees $-1$ and $0$) of abelian sheaves on the site $(\textsf{Sch}/k)_\text{fppf}$. For the reader's convenience, we recall that the Cartier dual of a commutative formal group is represented by an affine commutative group scheme.

\begin{definition}\label{def generalized 1-motive}
A \emph{generalized 1-motive} is a two-term complex of abelian sheaves $[\mathscr{G}\to G]$ in $\normalfont(\textsf{Sch}/k)_\text{fppf}$, where $G$ is a smooth commutative connected algebraic group, and $\mathscr{G}$ is a commutative formal group whose Cartier dual is smooth and connected.
\end{definition}

A usual 1-motive, as defined in \cite[\S 10.1]{deligne1974theorie}, is a special case of the definition above where $k$ is algebraically closed, $G$ is a semiabelian variety, and $\mathscr{G}$ is a finitely generated free $\mathbb{Z}$-module. Our Definition~\ref{def generalized 1-motive} is inspired by the one in \cite{russell2013albanese} and extends Laumon's \cite{Lau} to base fields that may have positive characteristic.

Let $[\mathscr{G}\to G]$ be a generalized 1-motive. The Barsotti--Chevalley theorem \cite[Thm.\ 8.27]{M} states that $G$ has a smallest subgroup $L$ such that $G/L$ is proper. This subgroup is affine, smooth and connected, and the quotient $G/L$ is an abelian variety $A$. In other words, $G$ can be functorially decomposed as an extension
\[0\to L\to G\to A\to 0,\]
of an abelian variety $A$ by an affine smooth commutative connected algebraic group $L$.

\begin{remark}
    Every algebraic group over a characteristic zero field is smooth. In positive characteristic, a singular connected algebraic group is still an extension of an abelian variety by a linear group, but this decomposition may not be unique. See \cite[Ex.\ 4.3.8]{brion2017some}.
\end{remark}

The motivation for Definition~\ref{def generalized 1-motive} arises from the fact that the Cartier duality of Proposition~\ref{affine cartier duality} naturally restricts to an anti-equivalence of categories
\begin{align*}
    \left\{\begin{array}{c}       \text{Affine smooth}\\ \text{commutative connected}\\       \text{algebraic groups over }k     \end{array}\right\} &\longleftrightarrow\left\{\begin{array}{c}       \text{Commutative formal groups}\\       \text{over }k \text{ whose Cartier dual is}\\ \text{smooth and connected}    \end{array}\right\} \\
    G=\operatorname{Spec}R &\longmapsto G^D\simeq \operatorname{Spf}R^*\\
    \mathscr{G}^D\simeq \operatorname{Spec}R^* &\longmapsfrom \mathscr{G}=\operatorname{Spf}R.
\end{align*}
Using this, we will concoct a dual of $[\mathscr{G}\to G]$ of the form $[L^D\to K]$, for some smooth commutative connected algebraic group $K$ fitting into a short exact sequence
\[0\to \mathscr{G}^D\to K\to A^\prime\to 0,\]
where $A^\prime$ is the dual abelian variety of $A$. A functorial definition of $K$ is given by the following lemma.

\begin{lemma}\label{lemma on K}
Let $[\mathscr{G}\to A]$ be the generalized 1-motive defined by the composition $\mathscr{G}\to G\to A$. The complex $\mathsf{R}\underline{\operatorname{Hom}}([\mathscr{G}\to A],\mathbb{G}_m)$ has no cohomology in degrees $0$ and $2$. Moreover, $\underline{\operatorname{Ext}}^1([\mathscr{G}\to A],\mathbb{G}_m)$ is representable by a smooth commutative connected algebraic group.
\end{lemma}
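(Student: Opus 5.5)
The plan is to use the distinguished triangle attached to the two-term complex $M\colonequals[\mathscr{G}\to A]$ (with $\mathscr{G}$ in degree $-1$), namely $A\to M\to \mathscr{G}[1]\to A[1]$. Applying $\mathsf{R}\underline{\operatorname{Hom}}(-,\mathbb{G}_m)$ gives a long exact sequence
\[0\to \underline{\operatorname{Hom}}(M,\mathbb{G}_m)\to \underline{\operatorname{Hom}}(A,\mathbb{G}_m)\to \underline{\operatorname{Hom}}(\mathscr{G},\mathbb{G}_m)\to \underline{\operatorname{Ext}}^1(M,\mathbb{G}_m)\to \underline{\operatorname{Ext}}^1(A,\mathbb{G}_m)\to \underline{\operatorname{Ext}}^1(\mathscr{G},\mathbb{G}_m)\to \underline{\operatorname{Ext}}^2(M,\mathbb{G}_m)\to \underline{\operatorname{Ext}}^2(A,\mathbb{G}_m).\]
Here I use the shift convention $\underline{\operatorname{Ext}}^i(\mathscr{G}[1],\mathbb{G}_m)=\underline{\operatorname{Ext}}^{i-1}(\mathscr{G},\mathbb{G}_m)$. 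Identifying the terms: $\underline{\operatorname{Hom}}(A,\mathbb{G}_m)=A^D=0$ by Proposition~\ref{cartier dual of abelian schemes and tori}, so $\underline{\operatorname{Hom}}(M,\mathbb{G}_m)=0$. This gives the vanishing in degree $0$. Next, $\underline{\operatorname{Ext}}^1(\mathscr{G},\mathbb{G}_m)=0$ by Proposition~\ref{ext G^}, since the Cartier dual of $\mathscr{G}$ is of finite type by assumption.

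For degree $2$ I need $\underline{\operatorname{Ext}}^2(A,\mathbb{G}_m)=0$. In characteristic zero this is Proposition~\ref{vanishing ext2}. Since this section allows any perfect $k$, I will instead invoke the general statement cited in its proof, \cite[Thm.~B]{ribeiro2025}, which holds over any base. Together with $\underline{\operatorname{Ext}}^1(\mathscr{G},\mathbb{G}_m)=0$, this gives $\underline{\operatorname{Ext}}^2(M,\mathbb{G}_m)=0$. One point to check is whether Proposition~\ref{ext G^}, as stated for an arbitrary field via \cite[Lem.~1.14]{russell2013albanese}, covers our $\mathscr{G}$ in positive characteristic. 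It does, since $\mathscr{G}^D$ is smooth and connected, hence of finite type.

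The remaining segment of the sequence is the short exact sequence
\[0\to \mathscr{G}^D\to \underline{\operatorname{Ext}}^1(M,\mathbb{G}_m)\to A'\to 0,\]
using $\underline{\operatorname{Ext}}^1(A,\mathbb{G}_m)\simeq A'$ from Proposition~\ref{T'}. Here $\mathscr{G}^D$ is an affine smooth connected commutative algebraic group and $A'$ is an abelian variety. So $K\colonequals\underline{\operatorname{Ext}}^1(M,\mathbb{G}_m)$ is an fppf sheaf extension of an abelian variety by an affine algebraic group.

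The main obstacle is representability of $K$. I would argue as follows. The extension is a $\mathscr{G}^D$-torsor over the scheme $A'$ (fppf locally trivial). Since $\mathscr{G}^D$ is affine, fppf descent for affine morphisms shows this torsor is representable by a scheme affine over $A'$; this is presumably the content of Remark~\ref{remark on descent}. The group structure then makes $K$ a commutative group scheme of finite type. It is smooth because both the kernel and the quotient are smooth, and smoothness is fppf local on the base $A'$. It is connected because its kernel and quotient are connected.
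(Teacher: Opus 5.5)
Your proposal is correct and follows the paper's proof: you apply $\mathsf{R}\underline{\operatorname{Hom}}(-,\mathbb{G}_m)$ to the same triangle $\mathscr{G}\to A\to[\mathscr{G}\to A]$, use the same three vanishings ($A^D=0$, $\underline{\operatorname{Ext}}^1(\mathscr{G},\mathbb{G}_m)=0$, $\underline{\operatorname{Ext}}^2(A,\mathbb{G}_m)=0$), and then deduce representability by descent as in Remark~\ref{remark on descent}. Your two deviations are minor refinements rather than a different route: you cite \cite[Thm.~B]{ribeiro2025} directly so that the argument also covers positive characteristic, and you use effective fppf descent for affine morphisms instead of passing through algebraic spaces and separatedness.
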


\begin{proof}
Applying the functor $\mathsf{R}\underline{\operatorname{Hom}}(-,\mathbb{G}_m)$ to the fiber sequence $\mathscr{G}\to A\to [\mathscr{G}\to A]$, we obtain the following long exact sequence:
\[\begin{tikzcd}
	0 & {\underline{\operatorname{Ext}}^0([\mathscr{G}\to A],\mathbb{G}_m)}\ar[draw=none]{d}[name=X, anchor=center]{} & 0 & {\mathscr{G}^D} \\
	& {\underline{\operatorname{Ext}}^1([\mathscr{G}\to A],\mathbb{G}_m)}\ar[draw=none]{d}[name=Y, anchor=center]{} & {A^\prime} & 0 \\
	& {\underline{\operatorname{Ext}}^2([\mathscr{G}\to A],\mathbb{G}_m)} & 0.
	\arrow[from=1-1, to=1-2]
	\arrow[from=1-2, to=1-3]
	\arrow[from=1-3, to=1-4]
	\arrow[from=2-2, to=2-3]
	\arrow[from=2-3, to=2-4]
	\arrow[from=3-2, to=3-3]
	\ar[from=1-4, to=2-2, crossing over, rounded corners,
            to path={ -- ([xshift=5ex]\tikztostart.east)
                      |- (X.center) \tikztonodes
                      -| ([xshift=-5ex]\tikztotarget.west)
                      -- (\tikztotarget)}]{dlll}[at end]{}
    \ar[from=2-4, to=3-2, crossing over, rounded corners,
            to path={ -- ([xshift=5ex]\tikztostart.east)
                      |- (Y.center) \tikztonodes
                      -| ([xshift=-5ex]\tikztotarget.west)
                      -- (\tikztotarget)}]{dlll}[at end]{}
\end{tikzcd}\]
Here, we use Propositions~\ref{cartier dual of abelian schemes and tori}, \ref{ext G^}, and \ref{vanishing ext2} to justify the non-trivial zeros above. The vanishing results follow directly, and the representability of $\underline{\operatorname{Ext}}^1([\mathscr{G}\to A],\mathbb{G}_m)$ by a smooth commutative connected algebraic group follows by descent. (See the following remark for an explanation of how these descent arguments work.)
\end{proof}

\begin{remark}[On properties inherited by extensions]\label{remark on descent}
	Let $S$ be a base scheme, and consider an extension of abelian sheaves on the site $(\textsf{Sch}/S)_\text{fppf}$:
	\[0 \to \mathscr{H}\xrightarrow{\ \varphi\ } \mathscr{E}\xrightarrow{\ \psi\ } \mathscr{G}\to 0.\]
	The composition $\mathscr{H}\times \mathscr{E}\xrightarrow{\ \varphi\times\operatorname{id}\ }\mathscr{E}\times\mathscr{E}\xrightarrow{\ m\ } \mathscr{E}$, where $m$ is the group operation of $\mathscr{E}$, defines an action of $\mathscr{H}$ on $\mathscr{E}$. This action fits into the cartesian diagram
\[\begin{tikzcd}[ampersand replacement=\&]
	{\mathscr{H}\times\mathscr{E}} \&\& {\mathscr{E}\times\mathscr{E}} \& {\mathscr{E}} \\
	{\mathscr{E}} \&\&\& {\mathscr{G},}
	\arrow["{\varphi\times\operatorname{id}}", from=1-1, to=1-3]
	\arrow["{\operatorname{pr}_2}"', from=1-1, to=2-1]
	\arrow["m", from=1-3, to=1-4]
	\arrow["\psi", from=1-4, to=2-4]
	\arrow["\psi", from=2-1, to=2-4]
\end{tikzcd}\]
	giving that $\mathscr{E}$ is an $\mathscr{H}$-torsor over $\mathscr{G}$. In other words, we have an $\mathscr{H}$-equivariant isomorphism $\mathscr{H}\times \mathscr{E}\to \mathscr{E}\times_{\mathscr{G}}\mathscr{E}$.
	
	Assume that $\mathscr{G}$ and $\mathscr{H}$ are represented by group algebraic spaces $G$ and $H$ over $S$, respectively. By \cite[Tag \href{https://stacks.math.columbia.edu/tag/00WN}{00WN}]{Stacks}, there exists a scheme $T$ with a map $T\to \mathscr{E}$ such that the composition
	\[T\to \mathscr{E}\to G\]
	is an fppf cover. This induces a trivialization $H\times T\simeq \mathscr{E}\times_{G}T$. Since the definition of an algebraic space is local in the fppf topology \cite[Tag \href{https://stacks.math.columbia.edu/tag/04SK}{04SK}]{Stacks}, it follows that $\mathscr{E}$ is representable by an algebraic space $E$ over $G$. Therefore, $E$ is also a group algebraic space over $S$.
	
	Let $\mathcal{P}$ be a property of morphisms of algebraic spaces that is stable under base change and fppf-local on the base. The same reasoning implies that if $H\to S$ satisfies $\mathcal{P}$, then so does $E\to G$. Furthermore, if $\mathcal{P}$ is stable under composition and is satisfied by $G\to S$, then $E\to S$ also satisfies $\mathcal{P}$. (See \cite[Tags \href{https://stacks.math.columbia.edu/tag/03H8}{03H8} and \href{https://stacks.math.columbia.edu/tag/03YE}{03YE}]{Stacks} for an extensive list of such properties $\mathcal{P}$.)
	
	Now, suppose that $S$ is the spectrum of a field $k$. If $G$ and $H$ are schemes, then the maps $G\to S$ and $H\to S$ are necessarily separated \cite[Tag \href{https://stacks.math.columbia.edu/tag/047L}{047L}]{Stacks}. The preceding discussion implies that $E\to S$ is also separated, and so \cite[Tag \href{https://stacks.math.columbia.edu/tag/0B8G}{0B8G}]{Stacks} shows that $E$ is a scheme as well.
	
	For the remainder of this discussion, assume that $G$ and $H$ are group algebraic spaces locally of finite type over $k$. Then, we note that $E$ has dimension $\dim G+\dim H$. Indeed, \cite[Tag \href{https://stacks.math.columbia.edu/tag/0AFH}{0AFH}]{Stacks} says that the relative dimension of $E\to G$ is $\dim E-\dim G$. However, this relative dimension is also the dimension of the fiber $H$, completing the proof.
	
	Since $E\to G$ is faithfully flat of finite presentation, the induced morphism $|E|\to |G|$ between the underlying topological spaces is a quotient map \cite[Tag \href{https://stacks.math.columbia.edu/tag/0413}{0413}]{Stacks}. Although the fibers of $|E|\to |G|$ may not be homeomorphic to $|H|$, there exists a continuous surjection from $|H|$ to them, as stated in \cite[Tag \href{https://stacks.math.columbia.edu/tag/03H4}{03H4}]{Stacks}. In particular, if $G$ and $H$ are connected, so is $E$.
\end{remark}

Following Laumon, we will systematically denote by $K$ the algebraic group representing $\underline{\operatorname{Ext}}^1([\mathscr{G}\to A],\mathbb{G}_m)$. The following variant of (stacky) Cartier duality was initially introduced in \cite{deligne1974theorie} and was subsequently generalized in \cite{Lau} and \cite{russell2013albanese}.

\begin{definition}\label{generalized 1-motive}
	Let $[\mathscr{G}\to G]$ be a generalized 1-motive. We define the \emph{Laumon dual} $[\mathscr{G}\to G]^\tri$ to be the generalized 1-motive $[L^D\to K]$, where $L^D\to K$ is the connecting morphism induced by the fiber sequence $L\to [\mathscr{G}\to G]\to [\mathscr{G}\to A]$ via the Cartier duality functor.
\end{definition}

We note that the Laumon dual is functorial on the generalized 1-motive. Indeed, by the functoriality of the Barsotti--Chevalley theorem, a morphism of complexes
\[\begin{tikzcd}[ampersand replacement=\&]
	{\mathscr{G}_1} \& G_1 \\
	{\mathscr{G}_2} \& {G_2}
	\arrow[from=1-1, to=1-2]
	\arrow[from=1-1, to=2-1]
	\arrow[from=1-2, to=2-2]
	\arrow[from=2-1, to=2-2]
\end{tikzcd}\]
between two generalized 1-motives induces maps $L_1\to L_2$ and $A_1\to A_2$ making the diagram
\[\begin{tikzcd}[ampersand replacement=\&]
	0 \& {L_1} \& {G_1} \& {A_1} \& 0 \\
	0 \& {L_2} \& {G_2} \& {A_2} \& 0
	\arrow[from=1-1, to=1-2]
	\arrow[from=1-2, to=1-3]
	\arrow[from=1-2, to=2-2]
	\arrow[from=1-3, to=1-4]
	\arrow[from=1-3, to=2-3]
	\arrow[from=1-4, to=1-5]
	\arrow[from=1-4, to=2-4]
	\arrow[from=2-1, to=2-2]
	\arrow[from=2-2, to=2-3]
	\arrow[from=2-3, to=2-4]
	\arrow[from=2-4, to=2-5]
\end{tikzcd}\]
commute. Consequently, we obtain a commutative diagram in the derived category of abelian sheaves
\[\begin{tikzcd}[ampersand replacement=\&]
	{L_1} \& {[\mathscr{G}_1\to G_1]} \& {[\mathscr{G}_1\to A_1]} \\
	{L_2} \& {[\mathscr{G}_2\to G_2]} \& {[\mathscr{G}_2\to A_2].}
	\arrow[from=1-1, to=1-2]
	\arrow[from=1-1, to=2-1]
	\arrow[from=1-2, to=1-3]
	\arrow[from=1-2, to=2-2]
	\arrow[from=1-3, to=2-3]
	\arrow[from=2-1, to=2-2]
	\arrow[from=2-2, to=2-3]
\end{tikzcd}\]
The Cartier duality functor then induces a morphism $[\mathscr{G}_2\to G_2]^\tri\to [\mathscr{G}_1\to G_1]^\tri$.

Our next proposition gives a comparison between the Laumon dual defined above and the stacky Cartier dual of Definition~\ref{def stacky cartier dual}.

\begin{proposition}\label{comparison dualities}
	Let $[\mathscr{G}\to G]$ be a generalized 1-motive. There exists a derived category map $[\mathscr{G}\to G]^\tri\to [\mathscr{G}\to G]^\vee$, whose cofiber is $\underline{\operatorname{Ext}}^1(L,\mathbb{G}_m)$.
\end{proposition}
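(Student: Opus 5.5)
Write $M=[\mathscr{G}\to G]$ and $M_A=[\mathscr{G}\to A]$. The stacky Cartier dual corresponds, via Proposition~\ref{DK}, to $\tau_{\leq 0}\mathsf{R}\underline{\operatorname{Hom}}(M,\mathbb{G}_m[1])$. The plan is to apply $\mathsf{R}\underline{\operatorname{Hom}}(-,\mathbb{G}_m[1])$ to the fiber sequence $L\to M\to M_A$. This gives a fiber sequence
\[\mathsf{R}\underline{\operatorname{Hom}}(M_A,\mathbb{G}_m[1])\to \mathsf{R}\underline{\operatorname{Hom}}(M,\mathbb{G}_m[1])\to \mathsf{R}\underline{\operatorname{Hom}}(L,\mathbb{G}_m[1]).\]
We then identify each outer term in degrees $\leq 0$.

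For the left term, Lemma~\ref{lemma on K} says that $\mathsf{R}\underline{\operatorname{Hom}}(M_A,\mathbb{G}_m)$ has cohomology only $\underline{\operatorname{Ext}}^1=K$ in degrees $\leq 2$. Hence $\tau_{\leq 0}\mathsf{R}\underline{\operatorname{Hom}}(M_A,\mathbb{G}_m[1])\simeq K[0]$, and moreover $\mathscr{H}^1$ of this shifted complex, which is $\underline{\operatorname{Ext}}^2(M_A,\mathbb{G}_m)$, vanishes.

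For the right term, $\mathscr{H}^{-1}=L^D$ and $\mathscr{H}^0=\underline{\operatorname{Ext}}^1(L,\mathbb{G}_m)$. So $\tau_{\leq 0}$ of it is an extension of $\underline{\operatorname{Ext}}^1(L,\mathbb{G}_m)[0]$ by $L^D[1]$. The connecting map $\tau_{\leq 0}\mathsf{R}\underline{\operatorname{Hom}}(L,\mathbb{G}_m[1])\to K[1]$ on $\mathscr{H}^{-1}$ is by definition the morphism $L^D\to K$ defining the Laumon dual.

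Consider the composite $L^D[1]\to \mathsf{R}\underline{\operatorname{Hom}}(L,\mathbb{G}_m[1])\to K[1]$ and take its fiber; this fiber is the complex $[L^D\to K]$, that is $M^\tri$. Since $\tau_{\leq 0}$ preserves fibers when the next term has vanishing $\mathscr{H}^1$, the vanishing of $\underline{\operatorname{Ext}}^2(M_A,\mathbb{G}_m)$ shows that truncating the long sequence gives a fiber sequence
\[K\to M^\vee\to \tau_{\leq 0}\mathsf{R}\underline{\operatorname{Hom}}(L,\mathbb{G}_m[1]).\]
Compare this with the fiber sequence $K\to M^\tri\to L^D[1]$ using the map $L^D[1]\to\tau_{\leq 0}\mathsf{R}\underline{\operatorname{Hom}}(L,\mathbb{G}_m[1])$, which is the inclusion of $\tau_{\leq -1}$. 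This yields a map $M^\tri\to M^\vee$ commuting with both sequences. Rather than invoke the octahedral axiom, I would construct it directly: $M^\vee$ is the fiber of $\tau_{\leq0}\mathsf{R}\underline{\operatorname{Hom}}(L,\mathbb{G}_m[1])\to K[1]$, and precomposing with $\tau_{\leq -1}$ gives the map of fibers. By the octahedral axiom (or the $3\times3$ lemma), the cofiber of $M^\tri\to M^\vee$ equals the cofiber of $L^D[1]\to \tau_{\leq 0}\mathsf{R}\underline{\operatorname{Hom}}(L,\mathbb{G}_m[1])$, which is $\mathscr{H}^0=\underline{\operatorname{Ext}}^1(L,\mathbb{G}_m)$.

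The main obstacle is the bookkeeping in the truncation step. We need to check that $M^\vee$ is indeed the fiber of $\tau_{\leq 0}\mathsf{R}\underline{\operatorname{Hom}}(L,\mathbb{G}_m[1])\to K[1]$; this requires exactness of the long cohomology sequence at $\mathscr{H}^0(M^\vee)\to \underline{\operatorname{Ext}}^1(L,\mathbb{G}_m)\to \underline{\operatorname{Ext}}^2(M_A,\mathbb{G}_m)=0$. We also need to check that the induced $\mathscr{H}^{-1}$ map agrees with the Laumon connecting morphism, including the sign conventions.
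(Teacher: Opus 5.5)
Your proposal is correct and follows essentially the same route as the paper. You apply $\mathsf{R}\underline{\operatorname{Hom}}(-,\mathbb{G}_m[1])$ to $L\to M\to M_A$ and use $\underline{\operatorname{Ext}}^2(M_A,\mathbb{G}_m)=0$ to pass to $\tau_{\leq 0}$; the paper cites a lemma of Brochard for this step, whereas you check it by the five lemma. You then compare $K\to M^\vee\to L^\vee$ with the Laumon-dual sequence $K\to[L^D\to K]\to L^D[1]$ via the truncation map $\tau_{\leq -1}L^\vee\to L^\vee$, and read off the cofiber $\underline{\operatorname{Ext}}^1(L,\mathbb{G}_m)$ from the octahedral axiom, exactly as the paper does.
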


\begin{proof}
	The fiber sequence defining the Laumon dual induces the fiber sequence below.
	\[\mathsf{R}\underline{\operatorname{Hom}}([\mathscr{G}\to A],\mathbb{G}_m[1])\to \mathsf{R}\underline{\operatorname{Hom}}([\mathscr{G}\to G],\mathbb{G}_m[1])\to \mathsf{R}\underline{\operatorname{Hom}}(L,\mathbb{G}_m[1])\]
	By Lemma~\ref{lemma on K}, $\underline{\operatorname{Ext}}^2([\mathscr{G}\to A],\mathbb{G}_m)$ vanishes and so $\underline{\operatorname{Ext}}^1([\mathscr{G}\to G],\mathbb{G}_m)\to \underline{\operatorname{Ext}}^1(L,\mathbb{G}_m)$ is an epimorphism. Then, \cite[Lem.\ 3.10]{brochard2021duality} implies that
		\[\tau_{\leq 0}\mathsf{R}\underline{\operatorname{Hom}}([\mathscr{G}\to A],\mathbb{G}_m[1])\to \tau_{\leq 0}\mathsf{R}\underline{\operatorname{Hom}}([\mathscr{G}\to G],\mathbb{G}_m[1])\to \tau_{\leq 0}\mathsf{R}\underline{\operatorname{Hom}}(L,\mathbb{G}_m[1])\]
	is also a fiber sequence. Yet another application of Lemma~\ref{lemma on K} gives that
	\[\tau_{\leq 0}\mathsf{R}\underline{\operatorname{Hom}}([\mathscr{G}\to A],\mathbb{G}_m[1])\simeq K\]
	and so, up to a shift, the fiber sequence just obtained is $L^\vee[-1]\to K\to [\mathscr{G}\to G]^\vee$. Since $L^D\simeq \tau_{\leq 0}(L^\vee[-1])$, there is a natural map $L^D\to L^\vee[-1]$ making the square
\[\begin{tikzcd}
	{L^D} & K & {[L^D\to K]} \\
	{L^\vee[-1]} & K & {[\mathscr{G}\to G]^\vee}
	\arrow[from=1-1, to=1-2]
	\arrow[from=1-2, to=1-3]
	\arrow[color=ocre, from=1-1, to=2-1]
	\arrow[from=2-1, to=2-2]
	\arrow[from=2-2, to=2-3]
	\arrow[equals, from=1-2, to=2-2]
\end{tikzcd}\]
commute and inducing a morphism of fiber sequences. In this way we obtain the desired comparison map.

Now, according to \cite[Tag \href{https://stacks.math.columbia.edu/tag/08J5}{08J5}]{Stacks}, there exists a fiber sequence $L^D\to L^\vee[-1]\to \underline{\operatorname{Ext}}^1(L,\mathbb{G}_m)$. Finally, the octahedral axiom \cite[Tag \href{https://stacks.math.columbia.edu/tag/05R0}{05R0}]{Stacks} gives that the cofiber of $[\mathscr{G}\to G]^\tri\to [\mathscr{G}\to G]^\vee$ is isomorphic to $\underline{\operatorname{Ext}}^1(L,\mathbb{G}_m)$.
\end{proof}

This proposition implies that the comparison map $[\mathscr{G}\to G]^\tri\to [\mathscr{G}\to G]^\vee$ becomes an isomorphism precisely when $\underline{\operatorname{Ext}}^1(L,\mathbb{G}_m)=0$. This condition is satisfied when $G$ is semiabelian, thereby demonstrating that the Cartier dual on 1-motives, as defined by Deligne, agrees with the stacky Cartier dual. Extending this result, we derive the following corollary.

\begin{corollary}\label{both duals coincide often}
	The comparison morphism $[\mathscr{G}\to G]^\tri\to [\mathscr{G}\to G]^\vee$ is always an isomorphism when the base field $k$ has positive characteristic. When $k$ has characteristic zero, the comparison map is an isomorphism if and only if $G$ is a semiabelian variety.
\end{corollary}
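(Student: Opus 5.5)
By Proposition~\ref{comparison dualities}, the comparison morphism $[\mathscr{G}\to G]^\tri\to [\mathscr{G}\to G]^\vee$ is an isomorphism exactly when its cofiber $\underline{\operatorname{Ext}}^1(L,\mathbb{G}_m)$ vanishes. Here $L$ is the affine smooth connected part of $G$ from the Barsotti--Chevalley decomposition. The corollary therefore becomes a statement about when $\underline{\operatorname{Ext}}^1(L,\mathbb{G}_m)=0$ for a smooth connected commutative affine group $L$ over the perfect field $k$. I would analyse $L$ through its structure: over a perfect field, $L\simeq T\times U$ with $T$ a torus and $U$ smooth connected unipotent.

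The ``if'' direction in characteristic zero is already in hand. If $G$ is semiabelian, then $L=T$ is a torus, and $\underline{\operatorname{Ext}}^1(T,\mathbb{G}_m)=0$ by Proposition~\ref{T'}.

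For the converse in characteristic zero, suppose $G$ is not semiabelian. Then $U\neq 0$, and $U\simeq\mathbb{G}_a^n$ with $n\geq 1$. Since $\underline{\operatorname{Ext}}^1(-,\mathbb{G}_m)$ is additive, $\underline{\operatorname{Ext}}^1(L,\mathbb{G}_m)$ contains $\underline{\operatorname{Ext}}^1(\mathbb{G}_a,\mathbb{G}_m)$ as a direct summand. It then suffices to invoke the non-vanishing of $\underline{\operatorname{Ext}}^1(\mathbb{G}_a,\mathbb{G}_m)$ from \cite[\S 6]{ribeiro2025}, as recalled before Proposition~\ref{unipotent extension}. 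If one wants a self-contained witness, I would use Proposition~\ref{short exact sequence computing extensions} over a non-seminormal test algebra such as $R=k[t^2,t^3]$. On such an $R$, Traverso's theorem fails, so non-constant multiplicative line bundles on $\mathbb{G}_{a,R}$ should exist; this is precisely why Proposition~\ref{unipotent extension} requires $S$ seminormal.

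In positive characteristic, the claim is that $\underline{\operatorname{Ext}}^1(L,\mathbb{G}_m)=0$ for every $L$. The torus factor is handled by Proposition~\ref{T'}. For the unipotent factor, $U$ admits a composition series with subquotients $\mathbb{G}_a$ over the perfect field $k$, and the long exact sequence for $\underline{\operatorname{Ext}}^\ast(-,\mathbb{G}_m)$ reduces the problem to $\underline{\operatorname{Ext}}^1(\mathbb{G}_a,\mathbb{G}_m)=0$ in characteristic $p$. The subtlety in the dévissage is that the connecting map $\underline{\operatorname{Hom}}(U_1,\mathbb{G}_m)\to\underline{\operatorname{Ext}}^1(U_2,\mathbb{G}_m)$ could contribute; but the vanishing of $\underline{\operatorname{Ext}}^1$ for both ends already forces the middle term to vanish, so only the $\mathbb{G}_a$ case matters.

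The characteristic-$p$ vanishing for $\mathbb{G}_a$ is the main obstacle. I would take it from \cite{ribeiro2025}, where $\underline{\operatorname{Ext}}^1(\mathbb{G}_a,\mathbb{G}_m)$ is computed over general bases. Heuristically, the obstruction in characteristic zero comes from the infinitesimal Cartier dual $\widehat{\mathbb{G}}_a$ and from $\mathrm{H}^1(-,\widehat{\mathbb{G}}_a)$-type classes on non-seminormal rings. In characteristic $p$, the Cartier dual of $\mathbb{G}_a$ is instead the ind-finite group $\operatorname{colim}\alpha_{p^n}$-like dual (related to $\widehat{W}$), and the corresponding extensions become trivial fppf-locally. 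If I had to prove this step directly, I would show via Proposition~\ref{short exact sequence computing extensions} that the presheaf $R\mapsto\operatorname{Ext}^1_R(\mathbb{G}_a,\mathbb{G}_m)$ is fppf-locally zero. The key input would be that fppf extraction of $p$-th roots kills the relevant multiplicative line bundles and $\mathrm{H}^2_s$-cocycles.
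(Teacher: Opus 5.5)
Your argument is the paper's. Proposition~\ref{comparison dualities} reduces the claim to whether $\underline{\operatorname{Ext}}^1(L,\mathbb{G}_m)$ vanishes, and Proposition~\ref{T'} disposes of the torus. In characteristic zero the non-vanishing for $U\neq 0$ is imported from \cite{ribeiro2025}; the paper quotes Thm.~C there for $U$ itself, and your direct-summand reduction to $\mathbb{G}_a$ is equivalent.

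Your optional cusp witness does work. For $R=k[t^2,t^3]$, the conductor square with $k[t]$ gives $\operatorname{Pic}(R[x])\cong\{1+t\,b(x)\}\cong k[x]$. Multiplicativity forces $b$ to be additive, i.e.\ $b=cx$. Theorem~\ref{injectivity} and Proposition~\ref{local-to-global} then yield $\underline{\operatorname{Ext}}^1(\mathbb{G}_a,\mathbb{G}_m)(R)\cong k$. Your heuristic placing the obstruction in $\mathrm{H}^1(-,\widehat{\mathbb{G}}_a)$ is misplaced, though. Those classes form the kernel of $\operatorname{Ext}^1_S(\mathbb{G}_a,\mathbb{G}_m)\to\underline{\operatorname{Ext}}^1(\mathbb{G}_a,\mathbb{G}_m)(S)$, so sheafification kills them, and they vanish on affine schemes anyway.

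In characteristic $p$ the paper does no d\'evissage. It quotes \cite[Prop.~2.2.17]{rosengarten2023tate} for $\underline{\operatorname{Ext}}^1(L,\mathbb{G}_m)=0$, not \cite{ribeiro2025}. Your reduction to $\mathbb{G}_a$ is correct, but your sketch does not establish the $\mathbb{G}_a$ case, and that case is the entire content of this half. Over the same cusp in characteristic $p$, the classes $1+t\,b(x)$ with $b$ additive are still nonzero in $\operatorname{Ext}^1_R(\mathbb{G}_a,\mathbb{G}_m)$, since $R$ is reduced. So one must genuinely show that they, and the $\mathrm{H}^2_s$-classes over nonreduced bases, die fppf-locally. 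That step has to rest on the cited result rather than on the root-extraction heuristic.
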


\begin{proof}
	In positive characteristic, the abelian sheaf $\underline{\operatorname{Ext}}^1(L,\mathbb{G}_m)$ always vanishes due to \cite[Prop.\ 2.2.17]{rosengarten2023tate}. If $k$ has characteristic zero, $L$ is a product of a torus and a vector group $U$ \cite[Cor.\ 16.15]{M}. In particular, Proposition~\ref{T'} implies that $\underline{\operatorname{Ext}}^1(L,\mathbb{G}_m)\simeq \underline{\operatorname{Ext}}^1(U,\mathbb{G}_m)$. By \cite[Thm.~C]{ribeiro2025}, the latter sheaf vanishes precisely when $U$ does. 
\end{proof}

For the remainder of this section, we assume the base field $k$ has characteristic zero and $\mathscr{G}=\widehat{G}$, so that $[\widehat{G}\to G]\simeq G_\text{dR}$. We now provide a more explicit description of $K=\underline{\operatorname{Ext}}^1([\widehat{G}\to A],\mathbb{G}_m)$. The long exact sequence derived from the extension
\[0\to \widehat{A}\to A\to A_\text{dR}\to 0,\]
via the Cartier duality functor, results in the short exact sequence
\[0\to \Omega_A\to A^\natural\to A^\prime\to 0,\]
as noted in Remark~\ref{remark 3}. Now, the quotient map $\psi\colon G\to A$ induces a pullback map $\psi^*\colon \Omega_A\to \Omega_G$, and we consider the corresponding pushout extension.
\[\begin{tikzcd}
	0 & {\Omega_A} & {A^\natural} & {A^\prime} & 0 \\
	0 & {\Omega_G} & (\Omega_G\times A^\natural)/\Omega_A & {A^\prime} & 0
	\arrow[from=1-1, to=1-2]
	\arrow[from=1-2, to=1-3]
	\arrow[from=1-3, to=1-4]
	\arrow[from=1-4, to=1-5]
	\arrow[from=2-1, to=2-2]
	\arrow[from=2-2, to=2-3]
	\arrow[from=2-3, to=2-4]
	\arrow[from=2-4, to=2-5]
	\arrow["\psi^*", from=1-2, to=2-2]
	\arrow[from=1-3, to=2-3]
	\arrow[equals, from=1-4, to=2-4]
\end{tikzcd}\]
Here, and throughout this work, $\Omega_G$ represents the vector group consisting of the \emph{invariant} differentials of $G$.

\begin{proposition}\label{characterization of K}
	The algebraic group $K=\underline{\operatorname{Ext}}^1([\widehat{G}\to A],\mathbb{G}_m)$ is naturally isomorphic to the quotient $(\Omega_G\times A^\natural)/\Omega_A$.
\end{proposition}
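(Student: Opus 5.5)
We compare the two-term complexes $[\widehat{G}\to A]$ and $A_\text{dR}=[\widehat{A}\to A]$. The quotient map $\psi\colon G\to A$ induces $\widehat{\psi}\colon\widehat{G}\to\widehat{A}$, giving a morphism of complexes $[\widehat{G}\to A]\to[\widehat{A}\to A]$ which is the identity in degree $0$. Its cone is concentrated in degree $-1$: it is $\widehat{A}$ modulo the image of $\widehat{G}$, up to the kernel. By Proposition~\ref{Cartier lemma}, $\widehat{\psi}$ identifies with the linear surjection $\mathfrak{g}\to\mathfrak{a}$ on formal completions of vector groups, so $\widehat{\psi}$ is an epimorphism with kernel $\widehat{L}$. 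Hence we get a fiber sequence
\[\widehat{L}[1]\to [\widehat{G}\to A]\to A_\text{dR},\]
or more symmetrically the morphism of fiber sequences from $\widehat{G}\to A\to[\widehat{G}\to A]$ to $\widehat{A}\to A\to A_\text{dR}$.

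Next we apply $\mathsf{R}\underline{\operatorname{Hom}}(-,\mathbb{G}_m)$ to both fiber sequences $\widehat{G}\to A\to[\widehat{G}\to A]$ and $\widehat{A}\to A\to A_\text{dR}$. As in the proof of Lemma~\ref{lemma on K}, using Propositions~\ref{cartier dual of abelian schemes and tori}, \ref{ext G^} and \ref{vanishing ext2} together with Corollary~\ref{cartier dual of formal completion}, we obtain a morphism of short exact sequences
\[\begin{tikzcd}[ampersand replacement=\&]
	0 \& {\Omega_A} \& {A^\natural} \& {A^\prime} \& 0 \\
	0 \& {\Omega_G} \& K \& {A^\prime} \& 0,
	\arrow[from=1-1, to=1-2]
	\arrow[from=1-2, to=1-3]
	\arrow[from=1-3, to=1-4]
	\arrow[from=1-4, to=1-5]
	\arrow[from=2-1, to=2-2]
	\arrow[from=2-2, to=2-3]
	\arrow[from=2-3, to=2-4]
	\arrow[from=2-4, to=2-5]
	\arrow["{\widehat{\psi}^D}"', from=1-2, to=2-2]
	\arrow[from=1-3, to=2-3]
	\arrow[equals, from=1-4, to=2-4]
\end{tikzcd}\]
where the connecting maps are the coboundaries $\widehat{G}^D\to\underline{\operatorname{Ext}}^1(-,\mathbb{G}_m)$. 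The left vertical arrow is the Cartier dual of $\widehat{\psi}$. Under Corollary~\ref{cartier dual of formal completion}, it is the pullback of invariant differentials $\psi^*\colon\Omega_A\to\Omega_G$. This uses naturality of the identification $\widehat{G}^D\simeq\mathfrak{g}^*\simeq\Omega_G$ in $G$, which holds because Proposition~\ref{Cartier lemma} is functorial via the equivalence with Lie algebras.

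Finally, we use the standard fact that in a morphism of extensions with identity on the quotient, the target is canonically the pushout of the source along the left vertical map. Concretely, the map $\Omega_G\times A^\natural\to K$, $(\omega,x)\mapsto\omega+\bar{x}$, is surjective by the five lemma. Its kernel is the antidiagonal copy of $\Omega_A$. This yields the natural isomorphism $K\simeq(\Omega_G\times A^\natural)/\Omega_A$.

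The main obstacle is the bookkeeping in the middle step: checking that the two long exact sequences are genuinely compatible, with the correct sign and the correct direction, and that the left vertical map is exactly $\psi^*$. Naturality of connecting morphisms in $\mathsf{R}\underline{\operatorname{Hom}}$ for morphisms of fiber sequences handles compatibility. The identification with $\psi^*$ I would verify by reducing to the naturality statement in Corollary~\ref{cartier dual of formal completion}.
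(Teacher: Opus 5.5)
Your proposal is correct and follows essentially the same route as the paper. You map the fiber sequence $\widehat{G}\to A\to[\widehat{G}\to A]$ to $\widehat{A}\to A\to A_{\mathrm{dR}}$, dualize to get a morphism of extensions of $A^\prime$ whose left vertical map is $\psi^*$, and then identify $K$ with the pushout; the paper checks this last step with a $3\times 3$ diagram instead of your direct chase. One aside is inaccurate but unused: the cone of $[\widehat{G}\to A]\to A_{\mathrm{dR}}$ is $\widehat{L}[2]$, concentrated in degree $-2$ rather than $-1$, though the fiber sequence $\widehat{L}[1]\to[\widehat{G}\to A]\to A_{\mathrm{dR}}$ you then write down is right.
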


\begin{proof}
Consider the following commutative diagram, whose rows are fiber sequences.	
\[\begin{tikzcd}
	{\widehat{G}} & A & {[\widehat{G}\to A]} \\
	{\widehat{A}} & A & {[\widehat{A}\to A]}
	\arrow[equals, from=1-2, to=2-2]
	\arrow[from=1-3, to=2-3]
	\arrow[from=1-1, to=1-2]
	\arrow[from=1-2, to=1-3]
	\arrow[from=2-1, to=2-2]
	\arrow[from=2-2, to=2-3]
	\arrow[from=1-1, to=2-1]
\end{tikzcd}\]
After applying $\mathsf{R}\underline{\operatorname{Hom}}(-,\mathbb{G}_m)$ and taking long exact sequences in cohomology, we obtain the following commutative diagram with exact rows:
\[\begin{tikzcd}
	0 & {\Omega_A} & {A^\natural} & {A^\prime} & 0 \\
	0 & {\Omega_G} & K & {A^\prime} & 0.
	\arrow[from=1-1, to=1-2]
	\arrow[from=1-2, to=1-3]
	\arrow[from=1-3, to=1-4]
	\arrow[from=1-4, to=1-5]
	\arrow[from=2-1, to=2-2]
	\arrow[from=2-2, to=2-3]
	\arrow[from=2-3, to=2-4]
	\arrow[from=2-4, to=2-5]
	\arrow["\psi^*", from=1-2, to=2-2]
	\arrow[from=1-3, to=2-3]
	\arrow[equals, from=1-4, to=2-4]
\end{tikzcd}\]
We assert that the square on the left is cocartesian. This means that the complex
\[0\to \Omega_A\to \Omega_G \times A^\natural\to K\to 0,\]
with the action of $\Omega_A$ on $\Omega_G \times A^\natural$ as in the typical pushout construction, is exact. This complex is part of the following larger commutative diagram.
\[\begin{tikzcd}
	&& 0 & 0 \\
	& 0 & {\Omega_G} & {\Omega_G} & 0 \\
	0 & {\Omega_A} & {\Omega_G\times A^\natural} & K & 0 \\
	0 & {\Omega_A} & {A^\natural} & {A^\prime} & 0 \\
	& 0 & 0 & 0
	\arrow[from=3-1, to=3-2]
	\arrow[from=3-2, to=3-3]
	\arrow[from=3-3, to=3-4]
	\arrow[from=3-4, to=3-5]
	\arrow[from=1-4, to=2-4]
	\arrow[from=2-4, to=3-4]
	\arrow[from=3-4, to=4-4]
	\arrow[from=1-3, to=2-3]
	\arrow[from=2-2, to=2-3]
	\arrow[equals, from=2-3, to=2-4]
	\arrow[from=2-4, to=2-5]
	\arrow[from=2-3, to=3-3]
	\arrow[from=3-3, to=4-3]
	\arrow[from=2-2, to=3-2]
	\arrow[equals, from=3-2, to=4-2]
	\arrow[from=4-2, to=5-2]
	\arrow[from=4-3, to=5-3]
	\arrow[from=4-4, to=5-4]
	\arrow[from=4-1, to=4-2]
	\arrow[from=4-2, to=4-3]
	\arrow[from=4-3, to=4-4]
	\arrow[from=4-4, to=4-5]
\end{tikzcd}\]
In this diagram, every column is exact, and both the top and bottom rows are exact. Therefore, the middle row must also be exact.
\end{proof}

Let us fix some notation. Given that the algebraic group $K$ is a quotient of $\Omega_G\times A^\natural$, we will denote its points as (equivalence classes of) pairs $(\omega,(\mathscr{L},\nabla))$, where $\omega\in \Omega_G$ and $(\mathscr{L},\nabla)\in A^\natural$.\footnote{This is a slight abuse of notation since the sheafification involved in defining the quotient sheaf may be non-trivial. That being said, by Serre vanishing, $K(S)$ indeed is the quotient of $\Omega_G(S)\times A^\natural(S)$ by $\Omega_A(S)$ for affine $S$.\label{footnote serre vanishing}} We will also systematically denote the natural maps $L\to G$ and $G\to A$ as $\varphi$ and $\psi$, respectively.

\begin{remark}\label{description of the laumon map}
The morphism $L^D\to K$ in the Laumon dual of $[\widehat{G}\to G]$ can also be explicitly described: it maps a character $\chi\in L^D$ to $[\omega,(\mathscr{L},\nabla)]$, where $\omega$ is any element of $\Omega_G$ satisfying $\varphi^*\omega=\mathrm{d}\chi/\chi$, and $(\mathscr{L},\nabla)$ is the unique element of $A^\natural$ satisfying $\psi^*(\mathscr{L},\nabla)\simeq (\mathcal{O}_G,\mathrm{d}-\omega)$. Note that, since the Cartier dual of $G_\text{dR}$ vanishes, the map $L^D\to K$ is a monomorphism. In particular, the Laumon dual of $G_\text{dR}$ is a usual abelian sheaf.
\end{remark}

Finally, we describe the comparison map $[\widehat{G}\to G]^\tri\to [\widehat{G}\to G]^\vee$ from Proposition~\ref{comparison dualities}. The universal property of pushouts permits us to define the morphism of abelian sheaves
\begin{align*}K&\to G^\natural \\ [\omega,(\mathscr{L},\nabla)]&\mapsto (\mathcal{O}_G,\mathrm{d}+\omega)\otimes_{\mathcal{O}_G} \psi^*(\mathscr{L},\nabla).\end{align*}
This map factors through the quotient, resulting in a morphism $\gamma\colon K/L^D\to G^\natural$.

\begin{proposition}\label{computation of ker gamma}
The morphism $\gamma\colon K/L^D\to G^\natural$, sending an equivalence class $[\omega,(\mathscr{L},\nabla)]$ to the line bundle with integrable connection
\[(\mathcal{O}_G,\mathrm{d}+\omega)\otimes_{\mathcal{O}_G} \psi^*(\mathscr{L},\nabla),\]
coincides with the comparison map $G_{\normalfont\text{dR}}^\tri\to G_{\normalfont\text{dR}}^\vee$. In particular, $\gamma$ is a monomorphism and its cokernel is $\underline{\operatorname{Ext}}^1(L,\mathbb{G}_m)$.
\end{proposition}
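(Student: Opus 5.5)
The claim reduces to identifying two maps, since the rest follows from results already proved. By Remark~\ref{description of the laumon map}, the Cartier dual of $G_\text{dR}$ vanishes, so $L^D\to K$ is a monomorphism and $G_\text{dR}^\tri\simeq K/L^D$ is an honest abelian sheaf. Likewise, Propositions~\ref{cartier dual of dR} and~\ref{computation of stacky cartier duals} give $G_\text{dR}^\vee\simeq \underline{\operatorname{Ext}}^1(G_\text{dR},\mathbb{G}_m)=G^\natural$. Proposition~\ref{comparison dualities} then says the comparison map $K/L^D\to G^\natural$ has cofiber $\underline{\operatorname{Ext}}^1(L,\mathbb{G}_m)$, which is a sheaf concentrated in degree $0$. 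The long exact cohomology sequence of that cofiber sequence therefore shows that the comparison map is a monomorphism with cokernel $\underline{\operatorname{Ext}}^1(L,\mathbb{G}_m)$. So the plan is to prove $\gamma$ equals the comparison map; the "in particular" follows at once.

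To identify the maps, I would unwind the construction in the proof of Proposition~\ref{comparison dualities}. On $\mathscr{H}^0$, the comparison map is induced by the map $K=\tau_{\leq 0}\mathsf{R}\underline{\operatorname{Hom}}([\widehat{G}\to A],\mathbb{G}_m[1])\to G^\natural$ coming from pullback along $G_\text{dR}=[\widehat{G}\to G]\to[\widehat{G}\to A]$. So it is enough to show that this pullback map $K\to G^\natural$ sends $[\omega,(\mathscr{L},\nabla)]$ to $(\mathcal{O}_G,\mathrm{d}+\omega)\otimes\psi^*(\mathscr{L},\nabla)$. Both maps are homomorphisms and $K$ is a pushout of $\Omega_G\times A^\natural$ by Proposition~\ref{characterization of K}, so it suffices to check agreement on $A^\natural$ and on $\Omega_G$ separately.

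On $A^\natural$, the composite $A^\natural\to K\to G^\natural$ is induced by $G_\text{dR}\to[\widehat{G}\to A]\to[\widehat{A}\to A]=A_\text{dR}$, which is $\psi_\text{dR}$. Under the identification of Theorem~\ref{global de rham}, it sends a multiplicative torsor to its pullback, namely $\psi^*(\mathscr{L},\nabla)$.

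On $\Omega_G=\widehat{G}^D$, the map $\Omega_G\to K$ is the connecting map of the fiber sequence $\widehat{G}\to A\to[\widehat{G}\to A]$. Composed with pullback to $G_\text{dR}$, it becomes the connecting map $\widehat{G}^D\to\underline{\operatorname{Ext}}^1(G_\text{dR},\mathbb{G}_m)$ for $0\to\widehat{G}\to G\to G_\text{dR}\to 0$, by naturality of the map of fiber sequences. This connecting map sends a character $\chi\colon\widehat{G}\to\mathbb{G}_m$ to the pushout extension $\chi_*G$ of $G_\text{dR}$. Under Corollary~\ref{fundamentalresultonderhamspaces}, this is the trivial line bundle on $G$ whose descent datum along $\widehat{G}$ is twisted by $\chi$. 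By Corollary~\ref{cartier dual of formal completion}, $\chi$ corresponds to the invariant form $\omega=\mathrm{d}\log\chi$, so the result should be $(\mathcal{O}_G,\mathrm{d}\pm\omega)$.

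The main obstacle is exactly this identification, and in particular the sign: the connection $\mathrm{d}+\omega$ rather than $\mathrm{d}-\omega$. It depends on the conventions fixed in Remark~\ref{description of the laumon map} and in the equivalence of Corollary~\ref{fundamentalresultonderhamspaces}. I would settle it by computing on $G=\mathbb{G}_a$ with $\chi(x)=\exp(\lambda x)$. Here the connecting map is the dual of that convention, which should give the $+$ sign consistent with $L^D\to K$ landing in the kernel of $\gamma$. That consistency check is itself a useful confirmation: $\chi\in L^D$ maps to $[\omega,(\mathscr{L},\nabla)]$ with $\psi^*(\mathscr{L},\nabla)\simeq(\mathcal{O}_G,\mathrm{d}-\omega)$, so $\gamma$ is trivial on it.
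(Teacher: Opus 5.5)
Your proposal is correct and follows the paper's proof essentially step for step. The paper also identifies the explicit map $K\to G^\natural$ with the pullback map along $[\widehat{G}\to G]\to[\widehat{G}\to A]$ by checking the two components of the pushout $K\simeq(\Omega_G\times A^\natural)/\Omega_A$. The $A^\natural$ component is handled by functoriality, and the $\Omega_G$ component by the morphism of fiber sequences $(\widehat{G}\to G\to G_{\mathrm{dR}})\to(\widehat{G}\to A\to[\widehat{G}\to A])$. The paper then concludes with a uniqueness lemma for induced maps on cones (Stacks Tag 0FWZ), which in this case amounts to your observation that $K\to K/L^D$ is an epimorphism between sheaves in degree $0$. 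The paper does not verify the sign $\mathrm{d}+\omega$ that you flag either. It builds that sign in by taking the connecting map $\Omega_G\to G^\natural$ to be $\omega\mapsto(\mathcal{O}_G,\mathrm{d}+\omega)$, as recorded after the large diagram.
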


\begin{proof}
	Consider the morphism $K\to G^\natural$, defined above using the universal property of pushouts. We affirm that the diagram
\[\begin{tikzcd}
	K & {G^\natural} \\
	{\underline{\operatorname{Ext}}^1([\widehat{G}\to A],\mathbb{G}_m)} & {\underline{\operatorname{Ext}}^1([\widehat{G}\to G],\mathbb{G}_m),}
	\arrow["{\rotatebox[origin=c]{90}{$\sim$}}", from=1-1, to=2-1]
	\arrow[equals, from=1-2, to=2-2]
	\arrow[from=1-1, to=1-2]
	\arrow[from=2-1, to=2-2]
\end{tikzcd}\]
on which the map $\underline{\operatorname{Ext}}^1([\widehat{G}\to A],\mathbb{G}_m)\to \underline{\operatorname{Ext}}^1([\widehat{G}\to G],\mathbb{G}_m)$ is induced by the natural morphism of complexes $[\widehat{G}\to G]\to [\widehat{G}\to A]$, commutes. This is the same as showing that the diagram
\[\begin{tikzcd}
	& {\underline{\operatorname{Ext}}^1([\widehat{A}\to A],\mathbb{G}_m)} \\
	{\underline{\operatorname{Hom}}(\widehat{G},\mathbb{G}_m)} & {\underline{\operatorname{Ext}}^1([\widehat{G}\to A],\mathbb{G}_m)} \\
	&& {\underline{\operatorname{Ext}}^1([\widehat{G}\to G],\mathbb{G}_m)}
	\arrow[from=2-2, to=3-3]
	\arrow[from=1-2, to=2-2]
	\arrow[bend left=20, from=1-2, to=3-3]
	\arrow[bend right=15, from=2-1, to=3-3]
	\arrow[from=2-1, to=2-2]
\end{tikzcd}\]
commutes. The upper triangle clearly commutes by functoriality, and the lower triangle can be seen to commute by applying the functor $\mathsf{R}\underline{\operatorname{Hom}}(-,\mathbb{G}_m)$ to the morphism of fiber sequences
\[\begin{tikzcd}
	{\widehat{G}} & G & {[\widehat{G}\to G]} \\
	{\widehat{G}} & A & {[\widehat{G}\to A],}
	\arrow[equals, from=1-1, to=2-1]
	\arrow["\psi", from=1-2, to=2-2]
	\arrow[from=1-3, to=2-3]
	\arrow[from=2-1, to=2-2]
	\arrow[from=1-1, to=1-2]
	\arrow[from=1-2, to=1-3]
	\arrow[from=2-2, to=2-3]
\end{tikzcd}\]
and taking long exact sequences in cohomology. Now, as in the proof of Proposition~\ref{comparison dualities}, there are two dashed morphisms making the diagram
\[\begin{tikzcd}
	K & {[L^D\to K]} & {L^D[1]} \\
	K & {G^\natural} & {L^\vee}
	\arrow[equals, from=1-1, to=2-1]
	\arrow[dashed, color=ocre, from=1-2, to=2-2]
	\arrow[from=1-3, to=2-3]
	\arrow[from=2-1, to=2-2]
	\arrow[from=1-1, to=1-2]
	\arrow[from=1-2, to=1-3]
	\arrow[from=2-2, to=2-3]
\end{tikzcd}\]
commute: the comparison map of Proposition~\ref{comparison dualities} and $\gamma$. The fact that they coincide follows from \cite[Tag \href{https://stacks.math.columbia.edu/tag/0FWZ}{0FWZ}]{Stacks}.
\end{proof}

Recall that the linear part $L$ of $G$ decomposes as a product of a torus $T$, whose Cartier dual is denoted as $X$, and a vector group $U$. Inasmuch as $\underline{\operatorname{Ext}}^1(L,\mathbb{G}_m)\simeq \underline{\operatorname{Ext}}^1(U,\mathbb{G}_m)$ has no $k$-points, due to Propositions~\ref{T'} and \ref{unipotent extension}, this computation is particularly useful for obtaining concrete information about character sheaves. 

\begin{corollary}\label{character sheaves on G}
	The group of isomorphism classes $\mathrm{H}^1_m(G_{\normalfont\text{dR}},\mathbb{G}_m)$ of character sheaves on $G$ fits into the short exact sequence
	\[0\to X\to (\Omega_G\times A^\natural(k))/\Omega_A\to \mathrm{H}^1_m(G_{\normalfont\text{dR}},\mathbb{G}_m)\to 0,\]
where the morphism on the left is described in Remark~\ref{description of the laumon map}, and the morphism on the right is as in Proposition~\ref{computation of ker gamma}. In particular, every character sheaf on $G$ is of the form
\[(\mathcal{O}_G,\mathrm{d}+\omega)\otimes_{\mathcal{O}_G} \psi^*(\mathscr{L},\nabla),\]
for some $\omega\in \Omega_G$ and $(\mathscr{L},\nabla)\in A^\natural(k)$.
\end{corollary}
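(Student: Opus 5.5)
Take $k$-points in the exact sequence of abelian sheaves furnished by Proposition~\ref{computation of ker gamma}. Since $L^D\to K$ is a monomorphism (Remark~\ref{description of the laumon map}) and $\gamma$ is a monomorphism with cokernel $\underline{\operatorname{Ext}}^1(L,\mathbb{G}_m)$, we have a short exact sequence of fppf sheaves
\[0\to L^D\to K\to G^\natural\to \underline{\operatorname{Ext}}^1(L,\mathbb{G}_m)\to 0,\]
and I want its $k$-points. First, $L^D\simeq X\times U^D$ with $U^D\simeq\widehat{U^*}$ by Proposition~\ref{computation of cartier duals}. Since $\widehat{U^*}(k)=0$ ($k$ is reduced), we get $L^D(k)=X(k)$, which I identify with $X$ as in the statement. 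The cokernel term $\underline{\operatorname{Ext}}^1(L,\mathbb{G}_m)(k)\simeq \underline{\operatorname{Ext}}^1(U,\mathbb{G}_m)(k)$ vanishes by Propositions~\ref{T'} and \ref{unipotent extension}, because $\operatorname{Spec}k$ is seminormal. Finally, $G^\natural(k)\simeq \mathrm{H}^1_m(G_{\text{dR}},\mathbb{G}_m)$ by Theorem~\ref{global de rham}, and via Corollary~\ref{fundamentalresultonderhamspaces} this is the group of character sheaves.

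The main point is that taking global sections is only left exact, so surjectivity of $K(k)\to (K/L^D)(k)$ and the identification $(K/L^D)(k)\simeq K(k)/L^D(k)$ must be justified. Split the sheaf sequence into $0\to L^D\to K\to K/L^D\to 0$ and $0\to K/L^D\to G^\natural\to \underline{\operatorname{Ext}}^1(L,\mathbb{G}_m)\to 0$. The second gives $(K/L^D)(k)\simeq G^\natural(k)$ directly, by the vanishing above. For the first, the obstruction lies in $\mathrm{H}^1(\operatorname{Spec}k,L^D)=\mathrm{H}^1(k,X)\oplus\mathrm{H}^1(k,\widehat{U^*})$. The formal part vanishes by the argument in the proof of Proposition~\ref{local-to-global}, since $\operatorname{Spec}k$ is reduced. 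The term $\mathrm{H}^1(k,X)$ is Galois cohomology of a lattice and need not vanish for a non-split torus.

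I would therefore expect either to assume $k$ algebraically closed (or $T$ split), or to check that the connecting map $(K/L^D)(k)\to\mathrm{H}^1(k,X)$ is zero. For the latter, I would try showing that every $k$-point of $K/L^D$ lifts to $K(k)$ using Corollary~\ref{character sheaves on G}'s concrete form, by descending a representative $[\omega,(\mathscr{L},\nabla)]$ defined over a finite Galois extension $k'$ via Hilbert 90-type averaging on $\Omega_G$. This is the step I expect to be the main obstacle. If it fails, I would state the sequence with $X(k)$ and $K(k)$ over $\bar k$.

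It remains to identify $K(k)$ with $(\Omega_G\times A^\natural(k))/\Omega_A$. Use the short exact sequence $0\to\Omega_A\to\Omega_G\times A^\natural\to K\to0$ from Proposition~\ref{characterization of K} together with $\mathrm{H}^1(k,\Omega_A)=0$, since $\Omega_A$ is a vector group (this is the Serre vanishing of footnote~\ref{footnote serre vanishing}). The maps then agree with those of Remark~\ref{description of the laumon map} and Proposition~\ref{computation of ker gamma} by construction. The final assertion, that every character sheaf has the form $(\mathcal{O}_G,\mathrm{d}+\omega)\otimes\psi^*(\mathscr{L},\nabla)$, is a restatement of surjectivity.
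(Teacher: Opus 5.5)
Your argument follows the paper's proof. You take $k$-points of $0\to L^D\to K\to K/L^D\to 0$ and identify $(K/L^D)(k)$ with $G^\natural(k)\simeq \mathrm{H}^1_m(G_{\text{dR}},\mathbb{G}_m)$ using $\underline{\operatorname{Ext}}^1(U,\mathbb{G}_m)(k)=0$. You then discard $\mathrm{H}^1(k,\widehat{U^*})$ and compute $K(k)$ from $\mathrm{H}^1(k,\Omega_A)=0$. The only divergence is the term $\mathrm{H}^1(k,X)$, which the paper declares zero by citing SGA~3, Exp.~VIII, Prop.~5.1. Your suspicion there is well founded. That vanishing holds for a \emph{constant} lattice (over a field it amounts to $\operatorname{Hom}_{\mathrm{cont}}(\operatorname{Gal}(\bar k/k),\mathbb{Z}^r)=0$), so it only covers split $T$.

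\textbf{The rescue fails.} Your proposed Hilbert~90 averaging cannot work, because the statement itself is false for non-split tori.
\begin{itemize}
\item The image of the connecting map $(K/L^D)(k)\to \mathrm{H}^1(k,X)$ is $\ker\bigl(\mathrm{H}^1(k,X)\to \mathrm{H}^1(k,K)\bigr)$.
\item Since $\mathrm{H}^1(k,\Omega_G)=0$, the map $\mathrm{H}^1(k,K)\to \mathrm{H}^1(k,A^\prime)$ is injective. So this image equals $\ker\bigl(\mathrm{H}^1(k,X)\to \mathrm{H}^1(k,A^\prime)\bigr)$, which for $G=T$ is all of $\mathrm{H}^1(k,X)$.
\end{itemize}

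\textbf{Counterexample.} Take $k=\mathbb{R}$ and $G=T=\mathrm{SO}(2)$, with coordinate $z=x+iy$ on $T_{\mathbb{C}}\simeq \mathbb{G}_m$. Complex conjugation acts on $X(\mathbb{C})=\mathbb{Z}$ by $-1$ and sends $\mathrm{d}+c\,\mathrm{d}z/z$ to $\mathrm{d}-\bar c\,\mathrm{d}z/z$. Hence $X(\mathbb{R})=0$ and $\Omega_T(\mathbb{R})=i\mathbb{R}\,\mathrm{d}z/z$. On the other hand, $\mathrm{H}^1_m(T_{\text{dR}},\mathbb{G}_m)\simeq (\Omega_T/X)(\mathbb{R})\simeq i\mathbb{R}\times \mathbb{Z}/2$. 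The extra class comes from $\mathrm{d}+\tfrac12\,\mathrm{d}z/z$, which is isomorphic to its conjugate via multiplication by $z^{\pm 1}$. It therefore descends to a character sheaf on $T$ whose underlying line bundle is the nontrivial element of $\operatorname{Pic}(T)\simeq \mathrm{H}^1(\mathbb{R},X)\simeq \mathbb{Z}/2$. Such a sheaf is not of the form $(\mathcal{O}_T,\mathrm{d}+\omega)$ with $\omega\in\Omega_T(\mathbb{R})$.

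So the paper's proof has exactly the gap you located, and no averaging argument closes it. The right repair is your first option: assume $T$ split (for instance $k$ algebraically closed). Under that hypothesis your proof is complete and coincides with the paper's. Without it, the sequence must be corrected to
\[0\to X(k)\to K(k)\to \mathrm{H}^1_m(G_{\text{dR}},\mathbb{G}_m)\to \ker\bigl(\mathrm{H}^1(k,X)\to \mathrm{H}^1(k,A^\prime)\bigr)\to 0.\]
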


\begin{proof}
	The preceding proposition gives an isomorphism $(K/L^D)(k)\simeq \mathrm{H}^1_m(G_{\normalfont\text{dR}},\mathbb{G}_m)$. Now, the group of $k$-points $(K/L^D)(k)$ fits into the long exact sequence
	\[0\to X\to K(k)\to (K/L^D)(k)\to \mathrm{H}^1(k,X)\times \mathrm{H}^1(k,\widehat{U^*}),\]
	where the term on the right vanishes due to \cite[Exp.\ VIII, Prop.\ 5.1]{raynaud2006groupes} and \cite[Rem.\ 2.2.18]{bhatt2022prismatic}. Finally, as noted in Footnote~\ref{footnote serre vanishing}, the group $K(k)$ is isomorphic to the quotient $(\Omega_G\times A^\natural(k))/\Omega_A$, completing the proof.
\end{proof}

\subsection{The moduli of character sheaves}

As discussed in the previous section, Laumon defined a dual $[\widehat{G} \to G]^\tri$ that, in a sense, eliminates the enigmatic object $U^\prime = \underline{\operatorname{Ext}}^1(U, \mathbb{G}_m)$ from the stacky Cartier dual $G^\natural = G_\text{dR}^\vee$. Now, according to Remark~\ref{description of the laumon map}, the Laumon dual of $G_\text{dR}$ is isomorphic to the quotient sheaf $K/(X\times \widehat{U^*})$. Since $\widehat{U^*}$ is a formal group, it cannot be representable. This leads us to the following definition.

\begin{definition}[Moduli space of character sheaves]\label{def G flat}
We denote by $G^\flat$ the abelian sheaf $K/X$, where $X\hookrightarrow K$ is the morphism that maps $\chi\in X$ to $[\omega,(\mathscr{L},\nabla)]$, where $\omega$ is any element of $\Omega_G$ satisfying $\varphi^*\omega=\mathrm{d}\chi/\chi$ and $(\mathscr{L},\nabla)$ is the unique element of $A^\natural$ satisfying $\psi^*(\mathscr{L},\nabla)\simeq (\mathcal{O}_G,\mathrm{d}-\omega)$.
\end{definition}

Note that although the map $X \to K$ is a monomorphism, it is not an immersion. Consequently, it is unclear whether the quotient $G^\flat = K/X$ is a scheme, and it may indeed fail to be one. A brief verification shows that the morphism
\begin{align*}
	k &\to \mathrm{H}^1_m(\mathbb{G}_{m,\text{dR}},\mathbb{G}_m)\\
	\alpha &\mapsto (\mathcal{O}_{\mathbb{G}_m},\mathrm{d}-\alpha\:\mathrm{d}x/x)
\end{align*}
is surjective and induces an isomorphism $\mathrm{H}^1_m(\mathbb{G}_{m,\text{dR}},\mathbb{G}_m)\simeq k/\mathbb{Z}$. Accordingly, the moduli space $\mathbb{G}_m^\flat$ is isomorphic to $\mathbb{G}_a/\mathbb{Z}$, an algebraic space\footnote{We emphasize that, as in \cite[Tag \href{https://stacks.math.columbia.edu/tag/025Y}{025Y}]{Stacks} and contrarily to \cite[Déf.\ 1.1]{laumon2018champs}, we \emph{do not} suppose that algebraic spaces are quasi-separated.} that is not a scheme. Nonetheless, the main result of this section, Theorem~\ref{exact sequence of flats}, asserts in Corollary~\ref{G flat is an algebraic space} that $G^\flat$ is as well-behaved as one could hope for.

\begin{remark}
	Let $S$ be a $k$-scheme. According to Propositions~\ref{cartier dual of dR}, \ref{sheafification map}, and \ref{comparison dualities}, there are natural morphisms
	\[G^\flat(S)\xrightarrow{\ \text{(1)}\ } G_\text{dR}^\tri(S) \xrightarrow{\ \text{(2)}\ } G^\natural(S) \xrightarrow{\ \text{(3)}\ } \mathrm{H}^1_m(G_\text{dR}\times S,\mathbb{G}_m).\]
	Proposition~\ref{local-to-global} and Theorem~\ref{global de rham} imply that the maps (1) and (3) are isomorphisms if $S$ is reduced. Furthermore, Proposition~\ref{unipotent extension} states that (2) is an isomorphism for seminormal $k$-schemes $S$. In particular, $G^\flat(k)$ is isomorphic to the group of isomorphism classes of character sheaves on $G$, thereby justifying its name.
\end{remark}

\begin{remark}\label{functoriality of G flat}
	The assignment $G\mapsto G^\flat$ is functorial. Specifically, consider a morphism $G_1\to G_2$ between commutative connected algebraic groups over a characteristic zero field. Since the Barsotti--Chevalley decomposition is functorial, we obtain the following commutative diagram
\[\begin{tikzcd}[ampersand replacement=\&]
	0 \& {L_1} \& {G_1} \& {A_1} \& 0 \\
	0 \& {L_2} \& {G_2} \& {A_2} \& 0,
	\arrow[from=1-1, to=1-2]
	\arrow[from=1-2, to=1-3]
	\arrow[from=1-2, to=2-2]
	\arrow[from=1-3, to=1-4]
	\arrow[from=1-3, to=2-3]
	\arrow[from=1-4, to=1-5]
	\arrow[from=1-4, to=2-4]
	\arrow[from=2-1, to=2-2]
	\arrow[from=2-2, to=2-3]
	\arrow[from=2-3, to=2-4]
	\arrow[from=2-4, to=2-5]
\end{tikzcd}\]
where $A_i$ are abelian varieties and $L_i$ are affine. The linear parts $L_i$ further decompose into a product of tori $T_i$ and unipotent groups $U_i$. By \cite[Cor.\ 14.18]{M}, the morphism $L_1\to L_2$ is a product of morphisms $T_1\to T_2$ and $U_1\to U_2$. In particular, this induces a map $X_2\to X_1$ between the character groups of $T_2$ and $T_1$, respectively. The functoriality of the Laumon dual ensures that the square on the right of the diagram
\[\begin{tikzcd}[ampersand replacement=\&]
	{X_2} \& {X_2\times\widehat{U_2^*}} \& {K_2} \\
	{X_1} \& {X_1\times\widehat{U_1^*}} \& {K_1}
	\arrow[from=1-1, to=1-2]
	\arrow[from=1-1, to=2-1]
	\arrow[from=1-2, to=1-3]
	\arrow[from=1-2, to=2-2]
	\arrow[from=1-3, to=2-3]
	\arrow[from=2-1, to=2-2]
	\arrow[from=2-2, to=2-3]
\end{tikzcd}\]
commutes. Consequently, the universal property of cokernels gives the desired map $G_2^\flat\to G_1^\flat$.
\end{remark}

\begin{example}
	As usual, let $T$ be a torus, $U$ a unipotent group, and $A$ be an abelian variety. We have that
	\[T^\flat\simeq  T^\natural \simeq  \Omega_T/X\simeq \mathfrak{t}^*/X,\quad U^\flat\simeq  \Omega_U \simeq U^*,\quad A^\flat\simeq  A^\natural.\]
	The first one is a group algebraic space, while the other two are algebraic groups.
\end{example}

Before going further, we note that there exists a large diagram relating many of the objects appearing in this section. Since both formal completions and the de Rham functor are exact, we obtain the following commutative diagram:
\[\begin{tikzcd}[ampersand replacement=\&]
	\& 0 \& 0 \& 0 \\
	0 \& {\widehat{T}\times \widehat{U}} \& {\widehat{G}} \& {\widehat{A}} \& 0 \\
	0 \& {T\times U} \& G \& A \& 0 \\
	0 \& {T_\text{dR}\times U_\text{dR}} \& {G_\text{dR}} \& {A_\text{dR}} \& 0 \\
	\& 0 \& 0 \& 0,
	\arrow[from=1-2, to=2-2]
	\arrow[from=1-3, to=2-3]
	\arrow[from=1-4, to=2-4]
	\arrow[from=2-1, to=2-2]
	\arrow[from=2-2, to=2-3]
	\arrow[from=2-2, to=3-2]
	\arrow[from=2-3, to=2-4]
	\arrow[from=2-3, to=3-3]
	\arrow[from=2-4, to=2-5]
	\arrow[from=2-4, to=3-4]
	\arrow[from=3-1, to=3-2]
	\arrow["\varphi", from=3-2, to=3-3]
	\arrow[from=3-2, to=4-2]
	\arrow["\psi", from=3-3, to=3-4]
	\arrow[from=3-3, to=4-3]
	\arrow[from=3-4, to=3-5]
	\arrow[from=3-4, to=4-4]
	\arrow[from=4-1, to=4-2]
	\arrow[from=4-2, to=4-3]
	\arrow[from=4-2, to=5-2]
	\arrow[from=4-3, to=4-4]
	\arrow[from=4-3, to=5-3]
	\arrow[from=4-4, to=4-5]
	\arrow[from=4-4, to=5-4]
\end{tikzcd}\]
in which every column and row is exact. By applying the Cartier duality functor, and passing to the long exact sequences in cohomology, we obtain
\[\begin{tikzcd}
	&& 0 & 0 \\
	& 0 \ar[draw=none]{ddd}[name=X, anchor=center]{} & {G^D} & {X\times \widehat{U^*}} \\
	0 & {\Omega_A} & {\Omega_G} & {\Omega_T\times\Omega_U} & 0 \\
	0 & {A^\natural} & {G^\natural} & {T^\natural\times U^\natural} & 0 \\
	& {A^\prime} & {G^\prime} & {U^\prime} & 0 \\
	& 0 & 0 & 0.
	\arrow[from=3-1, to=3-2]
	\arrow["{\psi^*}", from=3-2, to=3-3]
	\arrow["{\varphi^*}", from=3-3, to=3-4]
	\arrow[from=3-4, to=3-5]
	\arrow[from=4-1, to=4-2]
	\arrow["{\psi^*}", from=4-2, to=4-3]
	\arrow["{\varphi^*}", from=4-3, to=4-4]
	\arrow[from=4-4, to=4-5]
	\arrow[from=5-4, to=5-5]
	\arrow["{\varphi^*}", from=5-3, to=5-4]
	\arrow["{\psi^*}", from=5-2, to=5-3]
	\arrow[from=2-2, to=3-2]
	\arrow[from=3-2, to=4-2]
	\arrow[from=4-2, to=5-2]
	\arrow[from=2-3, to=3-3]
	\arrow[from=3-3, to=4-3]
	\arrow[from=4-3, to=5-3]
	\arrow[from=5-3, to=6-3]
	\arrow[from=5-2, to=6-2]
	\arrow[from=5-4, to=6-4]
	\arrow[from=4-4, to=5-4]
	\arrow[from=3-4, to=4-4]
	\arrow[from=2-4, to=3-4]
	\arrow[from=2-2, to=2-3]
	\arrow["{\varphi^*}", from=2-3, to=2-4]
	\arrow[from=1-3, to=2-3]
	\arrow[from=1-4, to=2-4]
 \ar[from=2-4, to=5-2, crossing over, rounded corners,
            to path={ -- ([xshift=10ex]\tikztostart.east)
                      |- (X.center) \tikztonodes
                      -| ([xshift=-10ex]\tikztotarget.west)
                      -- (\tikztotarget)}]{dddlll}[at end]{}
\end{tikzcd}\label{large diagram}\]
Here, every row (including the zigzag path) and every column is exact. The necessary computations and vanishing results have already been discussed in the previous section. We note that the morphisms in the columns have natural geometric interpretations, as given by
\[\begin{tikzcd}[row sep=tiny]
	0 & {G^D} & {\Omega_G} & {G^\natural} & {G^\prime} & 0 \\
	& \chi & {\mathrm{d}\chi/\chi} & {(\mathscr{L},\nabla)} & {\mathscr{L}} \\
	&& \omega & {(\mathcal{O}_G,\mathrm{d}+\omega).}
	\arrow[maps to, from=2-2, to=2-3]
	\arrow[from=1-1, to=1-2]
	\arrow[from=1-2, to=1-3]
	\arrow[from=1-3, to=1-4]
	\arrow[from=1-4, to=1-5]
	\arrow[from=1-5, to=1-6]
	\arrow[maps to, from=3-3, to=3-4]
	\arrow[maps to, from=2-4, to=2-5]
\end{tikzcd}\]

We are now in position to state the main theorem of this section.

\begin{theorem}\label{exact sequence of flats}
Let $G$ be a commutative connected algebraic group over a characteristic zero field. Write $G$ as an extension of an abelian variety $A$ by a product $T\times U$ of a torus $T$ and a unipotent group $U$. Then the complex $0\to A^\flat\to G^\flat \to T^\flat\times U^\flat\to 0$ is exact.
\end{theorem}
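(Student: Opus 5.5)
We will work directly with the presentation $G^\flat = K/X$ of Definition~\ref{def G flat}. By Proposition~\ref{characterization of K}, $K$ is an extension of $A^\prime$ by $\Omega_G$. For $A$ itself the linear part vanishes, so $K_A = A^\natural$ and $A^\flat=A^\natural$. For the linear quotient $T\times U$ the abelian part vanishes, so $K_{T\times U}=\Omega_T\times\Omega_U$ and $(T\times U)^\flat = (\Omega_T\times \Omega_U)/X = T^\flat\times U^\flat$. The maps $A^\flat\to G^\flat\to T^\flat\times U^\flat$ are those of Remark~\ref{functoriality of G flat}, induced by $\psi\colon G\to A$ and $\varphi\colon T\times U\to G$. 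Concretely, the first sends $(\mathscr{L},\nabla)\mapsto[0,(\mathscr{L},\nabla)]$. The second sends $[\omega,(\mathscr{L},\nabla)]\mapsto \varphi^*\omega \bmod X$, which is well defined since $\varphi^*$ kills $\Omega_A$ and $\psi^*$ of anything is trivial on $L$.

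The plan is to build a $3\times 3$ diagram of short exact sequences of abelian fppf sheaves and apply the snake lemma. The top row is $0\to 0\to X \xrightarrow{=} X\to 0$, with the inclusions $X\hookrightarrow K$ (Remark~\ref{description of the laumon map}) and $X\hookrightarrow \Omega_T\times\Omega_U$, $\chi\mapsto \mathrm{d}\chi/\chi$. The middle row is
\[0\to A^\natural\to K\to \Omega_T\times\Omega_U\to 0.\]
The first key step is proving exactness of this middle row. Its right map is $[\omega,(\mathscr{L},\nabla)]\mapsto\varphi^*\omega$, which is surjective because $\varphi^*\colon\Omega_G\to\Omega_T\times\Omega_U$ is surjective by the exact third row of the large diagram. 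For the kernel, write $K=(\Omega_G\times A^\natural)/\Omega_A$ with $\Omega_A$ embedded antidiagonally. Then $\varphi^*\omega=0$ means $\omega=\psi^*\eta$ for some $\eta\in\Omega_A$, again by exactness of $0\to\Omega_A\to\Omega_G\to\Omega_T\times\Omega_U\to0$. Hence $[\omega,(\mathscr{L},\nabla)]=[0,(\mathscr{L},\nabla)+\eta]$ lies in the image of $A^\natural$. Injectivity of $A^\natural\to K$ follows because $\Omega_A\to\Omega_G$ is a monomorphism. This argument is cleanest if we view the row as the pushout computation of Proposition~\ref{characterization of K} combined with the nine-lemma, rather than chasing sections, since sections of the quotient sheaf only exist locally.

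The second key step is checking that the diagram commutes. We must show that the composite $X\to K\to\Omega_T\times\Omega_U$ is $\chi\mapsto\mathrm{d}\chi/\chi$. This is immediate from the description in Definition~\ref{def G flat}, because the $\omega$ chosen there satisfies $\varphi^*\omega=\mathrm{d}\chi/\chi$. We must also check that the induced quotient maps agree with the functorial maps of Remark~\ref{functoriality of G flat}. This amounts to a compatibility of the Laumon-dual functoriality with the explicit formulas, and we expect it to be the main bookkeeping obstacle. To handle it, we plan to use the explicit description of $\gamma$ in Proposition~\ref{computation of ker gamma}: pulling back character sheaves along $\psi$ and $\varphi$ matches the formulas above on reduced test schemes, and both sides are determined at the level of $K$.

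With both steps in place, the columns $0\to X\to K\to G^\flat\to 0$ and $0\to X\to\Omega_T\times\Omega_U\to T^\flat\times U^\flat\to0$ are exact by definition, and the left column is $0\to 0\to A^\natural\xrightarrow{=}A^\flat\to 0$. The top row is exact, and the middle row is exact by the first step. The $3\times3$ (nine) lemma in the abelian category of fppf sheaves then gives exactness of the bottom row $0\to A^\flat\to G^\flat\to T^\flat\times U^\flat\to0$.
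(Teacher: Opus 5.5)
Your proposal is correct and follows essentially the paper's proof. Both use the same $3\times 3$ diagram with rows $0\to 0\to X\to X\to 0$, $0\to A^\natural\to K\to \Omega_T\times\Omega_U\to 0$ and the target sequence, derive exactness of the middle row from the pushout description of $K$, and conclude by the nine lemma. The only minor difference is in checking that $X\to K\to\Omega_T\times\Omega_U$ is $\chi\mapsto \mathrm{d}\chi/\chi$: you read it off the explicit formula in Definition~\ref{def G flat}, whereas the paper obtains it by applying $\mathsf{R}\underline{\operatorname{Hom}}(-,\mathbb{G}_m)$ to a morphism of fiber sequences.
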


\begin{proof}
	Consider the map $K\to \Omega_T\times \Omega_U$ induced by $\varphi^*\colon\Omega_G\to \Omega_T\times\Omega_U$ and $0\colon A^\natural\to \Omega_T\times\Omega_U$. We claim that the composition $K\to \Omega_T\times \Omega_U\to \Omega_T/X\times \Omega_U$ descends to the quotient $K/X$. According to the universal property of the quotient, we need to verify that the composition
\[X\to X\times \widehat{U^*}\to K\to \Omega_T\times \Omega_U\to \Omega_T/X\times \Omega_U\]
is zero. Applying the functor $\mathsf{R}\underline{\operatorname{Hom}}(-,\mathbb{G}_m)$ to the morphism of fiber sequences
\[\begin{tikzcd}
	L & {[\widehat{L}\to L]} & {\widehat{L}[1]} \\
	L & {[\widehat{G}\to G]} & {[\widehat{G}\to A],}
	\arrow[from=2-1, to=2-2]
	\arrow[from=2-2, to=2-3]
	\arrow[from=1-1, to=1-2]
	\arrow[from=1-2, to=1-3]
	\arrow[from=1-2, to=2-2]
	\arrow[equals, from=1-1, to=2-1]
	\arrow[from=1-3, to=2-3]
\end{tikzcd}\]
and taking long exact sequences in cohomology, we find that the composition $X\times\widehat{U^*}\to K\to \Omega_T\times \Omega_U$ is the familiar map appearing on Page~\pageref{large diagram}. In particular, this composition is the product of $X\to \Omega_T$ and $\widehat{U^*}\to \Omega_U$. It follows that our large composition vanishes, and we obtain a map $K/X\to \Omega_T/X\times \Omega_U$.

Now, we have every morphism needed to consider the following commutative diagram
\[\begin{tikzcd}
	& 0 & 0 & 0 \\
	0 & 0 & X & X & 0 \\
	0 & {A^\natural} & K & {\Omega_T\times\Omega_U} & 0 \\
	0 & {A^\natural} & {K/X} & {\Omega_T/X\times \Omega_U} & 0 \\
	& 0 & 0 & 0,
	\arrow[from=1-3, to=2-3]
	\arrow[from=2-3, to=3-3]
	\arrow[from=3-3, to=4-3]
	\arrow[from=4-3, to=5-3]
	\arrow[from=2-2, to=2-3]
	\arrow[equals, from=2-3, to=2-4]
	\arrow[from=2-4, to=2-5]
	\arrow[from=2-1, to=2-2]
	\arrow[from=1-2, to=2-2]
	\arrow[from=2-2, to=3-2]
	\arrow[equals, from=3-2, to=4-2]
	\arrow[from=4-2, to=5-2]
	\arrow[from=1-4, to=2-4]
	\arrow[from=2-4, to=3-4]
	\arrow[from=3-4, to=4-4]
	\arrow[from=4-4, to=5-4]
	\arrow[from=3-2, to=3-3]
	\arrow[from=4-2, to=4-3]
	\arrow[from=3-3, to=3-4]
	\arrow[from=4-3, to=4-4]
	\arrow[from=3-1, to=3-2]
	\arrow[from=4-1, to=4-2]
	\arrow[from=3-4, to=3-5]
	\arrow[from=4-4, to=4-5]
\end{tikzcd}\]
whose columns are clearly exact. Since the top row is also exact, by the nine-lemma, it suffices to prove that the middle row is exact. This holds by an application of the snake lemma in the pushout extension defining $K$.
\end{proof}

The theorem above implies that $G^\flat$ is a "coarse moduli space" for $G^\natural$, in the sense that $G^\flat$ is represented by an algebraic space with the same $k$-points as $G^\natural$.\footnote{However, it remains unclear whether there is a natural morphism $G^\natural \to G^\flat$, or if such a morphism would satisfy the universal property.}

\begin{corollary}\label{G flat is an algebraic space}
The abelian sheaf $G^\flat$ is representable by a smooth commutative connected group algebraic space. Moreover, it satisfies $\dim G \leq \dim G^\flat \leq 2 \dim G$, with equality on the left if and only if $G$ is affine, and equality on the right if and only if $G$ is proper.
\end{corollary}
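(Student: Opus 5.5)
The plan is to read off everything from the short exact sequence $0\to A^\flat\to G^\flat\to T^\flat\times U^\flat\to 0$ of Theorem~\ref{exact sequence of flats}, where $A^\flat\simeq A^\natural$, $T^\flat\simeq \Omega_T/X$ and $U^\flat\simeq \Omega_U$.

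\textbf{Representability.} First I would show that the quotient $T^\flat\times U^\flat = \Omega_T/X\times\Omega_U$ is a smooth connected group algebraic space. The sheaf $X$ is étale locally constant, isomorphic to $\mathbb{Z}^r$. It acts freely on the vector group $\Omega_T$ through translations by a monomorphism, so the quotient sheaf is an algebraic space. Indeed, the map $\Omega_T\to\Omega_T/X$ is an $X$-torsor, hence étale and surjective, and $\Omega_T\times_{\Omega_T/X}\Omega_T\simeq X\times\Omega_T$ is a scheme. Smoothness and connectedness descend along this étale surjection from the connected space $\Omega_T$. I will also record that $\dim\Omega_T/X=\dim T$.

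Next, $A^\natural$ is a smooth connected algebraic group of dimension $2\dim A$ by Remark~\ref{remark 3}: it is an extension of $A'$ by $\Omega_A$. Applying Remark~\ref{remark on descent} to the extension $0\to A^\natural\to G^\flat\to \Omega_T/X\times\Omega_U\to 0$ then shows the following:
\begin{itemize}
\item $G^\flat$ is a group algebraic space;
\item $G^\flat\to \Omega_T/X\times\Omega_U$ is smooth, since smoothness is fppf local on the base and stable under base change;
\item $G^\flat$ is smooth over $k$ and connected;
\item $\dim G^\flat = 2\dim A+\dim T+\dim U$.
\end{itemize}
Commutativity is automatic because $G^\flat$ is an abelian sheaf.

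\textbf{Dimension bounds.} Since $\dim G=\dim A+\dim T+\dim U$, we get $\dim G^\flat=\dim G+\dim A$. The bounds $\dim G\le\dim G^\flat\le 2\dim G$ follow immediately. Equality on the left holds iff $\dim A=0$, i.e.\ $A=0$, i.e.\ $G=L$ is affine. Equality on the right holds iff $\dim T+\dim U=0$, i.e.\ $L=0$, i.e.\ $G=A$ is proper. For the last step I use that a connected proper group has trivial linear part by Barsotti--Chevalley, and conversely.

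\textbf{Main obstacle.} The only delicate point is the algebraicity of $\Omega_T/X$. The inclusion $X\to\Omega_T$ is not an immersion, and the quotient is not quasi-separated. This is why the paper does not assume quasi-separatedness for algebraic spaces. I would handle this via the torsor description above, passing to a finite Galois extension splitting $T$ and descending if needed. The remaining properties are formal from Remark~\ref{remark on descent}; that remark is phrased for extensions whose outer terms are algebraic spaces locally of finite type, which applies here.
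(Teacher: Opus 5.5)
Your proposal is correct and follows the paper's proof: you feed the exact sequence of Theorem~\ref{exact sequence of flats} into Remark~\ref{remark on descent} and add up $\dim T^\flat=\dim T$, $\dim U^\flat=\dim U$ and $\dim A^\flat=2\dim A$ to get $\dim G^\flat=\dim G+\dim A$. Two things the paper leaves implicit are made explicit in your version: that $\Omega_T/X$ is an algebraic space (as the quotient by the free étale action of $X$), and the analysis of the equality cases.
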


\begin{proof}
The discussion in Remark~\ref{remark on descent} indicates that $G^\flat$ is representable by a smooth commutative connected group algebraic space of dimension $\dim T^\flat+\dim U^\flat+\dim A^\flat$. Given that
\begin{align*}
	\dim T^\flat &=\dim \Omega_T/X=\dim \Omega_T=\dim T \\
	\dim U^\flat &= \dim U^* = \dim U\\
	\dim A^\flat &= \dim A^\natural = \dim \Omega_A+\dim A'=2\dim A,
\end{align*}
we find that $\dim G^\flat=\dim T+\dim U+2\dim A=\dim G+\dim A$, thus completing the proof.
\end{proof}

The full subcategory of $\textsf{Ab}((\textsf{Sch}/k)_\text{fppf})$ consisting of commutative connected algebraic groups is closed under extensions. Consequently, it inherits a structure of exact category. One might believe that this makes the functor $G\mapsto G^\flat$ exact. The following results provide some support for this conjecture.

\begin{proposition}\label{exact sequence of flats abelian}
	Let $0\to A_1\to A_2\to A_3\to 0$ be a short exact sequence of abelian varieties over a characteristic zero field. Then the complex $0\to A_3^\flat\to A_2^\flat\to A_1^\flat\to 0$ is exact.
\end{proposition}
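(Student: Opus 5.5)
For abelian varieties we have $A_i^\flat\simeq A_i^\natural=\underline{\operatorname{Ext}}^1(A_{i,\text{dR}},\mathbb{G}_m)$. The plan is therefore to apply $\mathsf{R}\underline{\operatorname{Hom}}(-,\mathbb{G}_m)$ to the de Rham sequence. Since the de Rham functor is exact, the given sequence yields a short exact sequence
\[0\to A_{1,\text{dR}}\to A_{2,\text{dR}}\to A_{3,\text{dR}}\to 0\]
of abelian fppf sheaves. The associated long exact sequence of extension sheaves reads
\[\underline{\operatorname{Hom}}(A_{1,\text{dR}},\mathbb{G}_m)\to \underline{\operatorname{Ext}}^1(A_{3,\text{dR}},\mathbb{G}_m)\to \underline{\operatorname{Ext}}^1(A_{2,\text{dR}},\mathbb{G}_m)\to \underline{\operatorname{Ext}}^1(A_{1,\text{dR}},\mathbb{G}_m)\to \underline{\operatorname{Ext}}^2(A_{3,\text{dR}},\mathbb{G}_m).\]
Proposition~\ref{cartier dual of dR} shows that the leftmost term $A_{1,\text{dR}}^D$ vanishes, which gives injectivity of $A_3^\natural\to A_2^\natural$. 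Proposition~\ref{vanishing ext2} shows that the rightmost term vanishes, which gives surjectivity of $A_2^\natural\to A_1^\natural$. Exactness in the middle is automatic from the long exact sequence. This yields exactly the sequence $0\to A_3^\natural\to A_2^\natural\to A_1^\natural\to 0$.

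It remains to check that the maps in this sequence agree with the maps $A_3^\flat\to A_2^\flat\to A_1^\flat$ from Remark~\ref{functoriality of G flat} under the identification $A^\flat\simeq A^\natural$. For an abelian variety we have $L=0$ and $X=0$, so $G^\flat=K$. Proposition~\ref{characterization of K} with $G=A$ gives $K=(\Omega_A\times A^\natural)/\Omega_A\simeq A^\natural=\underline{\operatorname{Ext}}^1([\widehat{A}\to A],\mathbb{G}_m)$. This identification is induced by the identity of complexes $[\widehat{A}\to A]=A_{\text{dR}}$, so it is functorial in $A$. The functoriality of the Laumon dual is likewise defined by applying $\mathsf{R}\underline{\operatorname{Hom}}(-,\mathbb{G}_m)$ to the induced maps of complexes. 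Hence the two sets of maps are both pullback along $A_{1,\text{dR}}\to A_{2,\text{dR}}\to A_{3,\text{dR}}$ and coincide.

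I expect this compatibility check to be the only real obstacle, and even it is mostly bookkeeping. The substantive input, namely the vanishing of $\underline{\operatorname{Ext}}^2(A_{\text{dR}},\mathbb{G}_m)$ and of $A_{\text{dR}}^D$, is already available. As a sanity check, one can instead use the snake lemma on the morphism of extensions $0\to\Omega_{A_i}\to A_i^\natural\to A_i'\to 0$. The sequences $0\to\Omega_{A_3}\to\Omega_{A_2}\to\Omega_{A_1}\to 0$ and $0\to A_3'\to A_2'\to A_1'\to 0$ are exact, the latter by the same long exact sequence argument using $A_1^D=0$ and $\underline{\operatorname{Ext}}^2(A_3,\mathbb{G}_m)=0$. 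The snake lemma then recovers the result.
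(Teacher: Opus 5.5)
Your proposal is correct and is essentially the paper's own proof: exactness of the de Rham functor, then the long exact sequence of $\underline{\operatorname{Ext}}$-sheaves, with the two end terms vanishing by Propositions~\ref{cartier dual of dR} and~\ref{vanishing ext2}. Your check that these maps agree with those of Remark~\ref{functoriality of G flat} is a harmless addition that the paper leaves implicit.
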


\begin{proof}
	By Proposition~\ref{GdR is reasonable}, the complex $0\to A_{1,\text{dR}}\to A_{2,\text{dR}}\to A_{3,\text{dR}}\to 0$ is exact. The Cartier duality functor then induces the exact sequence
\[\underline{\operatorname{Hom}}(A_{1,\text{dR}},\mathbb{G}_m)=0\to A_3^\natural\to A_2^\natural\to A_1^\natural\to 0=\underline{\operatorname{Ext}}^2(A_{3,\text{dR}},\mathbb{G}_m),\]
where the vanishing results are due to Propositions~\ref{cartier dual of dR} and \ref{vanishing ext2}.
\end{proof}

\begin{proposition}\label{extensions of linear groups}
	Let $0\to L_1\to L_2\to L_3\to 0$ be a short exact sequence of linear commutative connected algebraic groups over a characteristic zero field. Then the complex $0\to L_3^\flat\to L_2^\flat\to L_1^\flat\to 0$ is exact.
\end{proposition}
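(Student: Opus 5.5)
In characteristic zero every linear commutative connected group splits canonically as $L_i\simeq T_i\times U_i$, with $T_i$ a torus and $U_i$ a vector group. By \cite[Cor.\ 14.18]{M}, the given short exact sequence is then the product of two short exact sequences
\[0\to T_1\to T_2\to T_3\to 0,\qquad 0\to U_1\to U_2\to U_3\to 0.\]
Indeed, a morphism of linear groups respects this decomposition. Kernels and cokernels can be computed factorwise, since $\operatorname{Hom}(T,U)=\operatorname{Hom}(U,T)=0$. Moreover, $L^\flat$ is $T^\flat\times U^\flat$ functorially, by the Example following Remark~\ref{functoriality of G flat} (here $A=0$, so $K=\Omega_T\times\Omega_U$ and $L^\flat=\Omega_T/X\times\Omega_U$). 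Since a product of exact complexes is exact, it therefore suffices to treat the toral and the unipotent cases separately.

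\emph{Unipotent case.} Here $U_i^\flat\simeq \Omega_{U_i}\simeq U_i^*$, and the induced maps are the pullbacks of invariant differentials, i.e.\ the linear duals. I will check from Definition~\ref{def G flat} and Remark~\ref{functoriality of G flat} that the map $U_2^\flat\to U_1^\flat$ is indeed $\varphi^*\colon\Omega_{U_2}\to\Omega_{U_1}$. A short exact sequence of vector groups is a split short exact sequence of finite-dimensional vector spaces, so its dual $0\to U_3^*\to U_2^*\to U_1^*\to 0$ is exact as vector groups, hence as fppf sheaves.

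\emph{Toral case.} Here $T_i^\flat\simeq \mathfrak{t}_i^*/X_i$, with $X_i\hookrightarrow\mathfrak{t}_i^*$ given by $\chi\mapsto \mathrm{d}\chi/\chi$. I will form the commutative diagram whose rows are
\[0\to X_3\to X_2\to X_1\to 0\quad\text{and}\quad 0\to \mathfrak{t}_3^*\to\mathfrak{t}_2^*\to\mathfrak{t}_1^*\to 0,\]
with vertical monomorphisms $X_i\to\mathfrak{t}_i^*$ and cokernels $T_i^\flat$. The top row is exact because Cartier duality is exact on groups of multiplicative type; this follows from \cite[Exp.\ VIII, Prop.\ 3.3.1]{raynaud2006groupes} together with the vanishing $\underline{\operatorname{Ext}}^1(T_3,\mathbb{G}_m)=0$ of Proposition~\ref{T'}. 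The bottom row is the dual of the Lie algebra sequence, which is exact because the Lie algebra functor is exact in characteristic zero (all groups are smooth). Commutativity holds because $\mathrm{d}\log$ is natural. The snake lemma, applied to the morphism between these two short exact sequences, gives a long exact sequence
\[0\to \ker\to\ker\to\ker\to T_3^\flat\to T_2^\flat\to T_1^\flat\to 0.\]
All kernels vanish because the vertical maps are monomorphisms, so $0\to T_3^\flat\to T_2^\flat\to T_1^\flat\to 0$ is exact.

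The main obstacle I expect is purely bookkeeping: verifying that the abstract functorial map $L_2^\flat\to L_1^\flat$ of Remark~\ref{functoriality of G flat}, defined through the Laumon dual, agrees with the concrete maps induced on $\Omega_T/X\times\Omega_U$. I would handle this by observing that for $A=0$ the identification $K\simeq\Omega_L$ comes from the long exact sequence of $\widehat{L}\to L\to L_{\text{dR}}$. That sequence is natural in $L$, and naturality then yields the required compatibility.
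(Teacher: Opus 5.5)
Your proof is correct and follows essentially the paper's argument. You decompose $L_i\simeq T_i\times U_i$, use exactness of the character-group row $0\to X_3\to X_2\to X_1\to 0$ (from Proposition~\ref{T'}) and of the row of invariant differentials, and conclude with a snake/nine-lemma argument on the monomorphisms $X_i\hookrightarrow\Omega_{T_i}$. The only differences are cosmetic: you treat the toral and unipotent factors separately and get exactness of the $\Omega$-row from exactness of $\operatorname{Lie}$ plus linear duality, whereas the paper handles $\Omega_{T_i}\times\Omega_{U_i}$ all at once by dualizing the exact sequence of formal completions $\widehat{T}_i\times\widehat{U}_i$, using Corollary~\ref{cartier dual of formal completion} and the vanishing of $\underline{\operatorname{Ext}}^1(\widehat{T}_3\times\widehat{U}_3,\mathbb{G}_m)$.
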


\begin{proof}
	First, we decompose the linear groups $L_i$ as a product of tori $T_i$ and unipotent groups $U_i$. The exactness of the formal completion functor induces the short exact sequence
\[0 \to \widehat{T}_1\times \widehat{U}_1\to \widehat{T}_2\times \widehat{U}_2\to \widehat{T}_3\times \widehat{U}_3\to 0.\]
Then, the Cartier duality functor gives rise to the exact sequence
\[0\to \Omega_{T_3}\times \Omega_{U_3}\to \Omega_{T_2}\times \Omega_{U_2}\to \Omega_{T_1}\times \Omega_{U_1}\to 0 = \underline{\operatorname{Ext}}^1(\widehat{T}_3\times \widehat{U}_3,\mathbb{G}_m).\]
Here, we use Corollary~\ref{cartier dual of formal completion} and Proposition~\ref{ext G^}. As in Remark~\ref{functoriality of G flat}, we have induced maps $X_3\to X_2\to X_1$ between the character groups of the tori $T_i$, fitting into the diagram
\[\begin{tikzcd}[ampersand replacement=\&]
	\& 0 \& 0 \& 0 \\
	0 \& {X_3} \& {X_2} \& {X_1} \& 0 \\
	0 \& {\Omega_{T_3}\times \Omega_{U_3}} \& {\Omega_{T_2}\times \Omega_{U_2}} \& {\Omega_{T_1}\times \Omega_{U_1}} \& 0 \\
	0 \& {\Omega_{T_3}/X_3\times \Omega_{U_3}} \& {\Omega_{T_2}/X_2\times \Omega_{U_2}} \& {\Omega_{T_1}/X_1\times \Omega_{U_1}} \& 0 \\
	\& 0 \& 0 \& 0.
	\arrow[from=1-2, to=2-2]
	\arrow[from=1-3, to=2-3]
	\arrow[from=1-4, to=2-4]
	\arrow[from=2-1, to=2-2]
	\arrow[from=2-2, to=2-3]
	\arrow[from=2-2, to=3-2]
	\arrow[from=2-3, to=2-4]
	\arrow[from=2-3, to=3-3]
	\arrow[from=2-4, to=2-5]
	\arrow[from=2-4, to=3-4]
	\arrow[from=3-1, to=3-2]
	\arrow[from=3-2, to=3-3]
	\arrow[from=3-2, to=4-2]
	\arrow[from=3-3, to=3-4]
	\arrow[from=3-3, to=4-3]
	\arrow[from=3-4, to=3-5]
	\arrow[from=3-4, to=4-4]
	\arrow[from=4-1, to=4-2]
	\arrow[from=4-2, to=4-3]
	\arrow[from=4-2, to=5-2]
	\arrow[from=4-3, to=4-4]
	\arrow[from=4-3, to=5-3]
	\arrow[from=4-4, to=4-5]
	\arrow[from=4-4, to=5-4]
\end{tikzcd}\]
The first row above is exact due to Proposition~\ref{T'}, and the nine lemma implies that the bottom row is also exact.
\end{proof}

\begin{proposition}\label{flat preserve injections}
Let $0\to G_1\to G_2\to G_3\to 0$ be a short exact sequence of commutative connected algebraic groups over a characteristic zero field. Then $G_3^\flat\to G_2^\flat$ is a monomorphism and $G_2^\flat\to G_1^\flat$ is an epimorphism.
\end{proposition}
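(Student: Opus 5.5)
The plan is to reduce both assertions to the three-term description of Theorem~\ref{exact sequence of flats}, applied to each $G_i$, and then run a snake-lemma argument. First I record what the Barsotti--Chevalley decompositions $0\to L_i\to G_i\to A_i\to 0$ do under the given short exact sequence.

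\begin{itemize}
\item $L_1\to L_2$ is a monomorphism.
\item $L_2\to L_3$ is an epimorphism: the image of $L_2$ in $G_3$ is linear, and $G_3$ modulo it is a quotient of $A_2$, hence proper, so the image contains $L_3$ and therefore equals it.
\item $A_2\to A_3$ is an epimorphism.
\item $A_1\to A_2$ has finite kernel, namely $(G_1\cap L_2)/L_1$.
\end{itemize}

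By Remark~\ref{functoriality of G flat} these maps induce a morphism between the sequences
\[0\to A_i^\flat\to G_i^\flat\to T_i^\flat\times U_i^\flat\to 0,\]
which are exact by Theorem~\ref{exact sequence of flats}. The snake lemma then reduces the monomorphism claim to two checks: that $A_3^\flat\to A_2^\flat$ and $T_3^\flat\times U_3^\flat\to T_2^\flat\times U_2^\flat$ are monomorphisms. It reduces the epimorphism claim to the two analogous surjectivity statements $A_2^\flat\to A_1^\flat$ and $T_2^\flat\times U_2^\flat\to T_1^\flat\times U_1^\flat$.

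\textbf{Abelian parts.} Since $A^\flat=A^\natural=\underline{\operatorname{Ext}}^1(A_{\text{dR}},\mathbb{G}_m)$, I would use the long exact sequence of $\mathsf{R}\underline{\operatorname{Hom}}(-,\mathbb{G}_m)$ attached to de Rham spaces, which are exact by Proposition~\ref{GdR is reasonable}.
\begin{itemize}
\item For $A_2\to A_3$ with kernel $B$, injectivity of $A_3^\natural\to A_2^\natural$ reduces to the vanishing of $B_{\text{dR}}^D$. By the remark after Proposition~\ref{cartier dual of dR}, this equals $\pi_0(B)^D$, which need not vanish. So I would instead factor $A_2\to A_2/B^0\to A_3$. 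The first map has connected kernel and is handled by Proposition~\ref{exact sequence of flats abelian}; the second is an isogeny, which must be treated separately.
\item For surjectivity, factor $A_1\to A_2$ as the isogeny $A_1\to A_1'$ onto its image followed by an inclusion. The inclusion gives an epimorphism on $\natural$ by Proposition~\ref{exact sequence of flats abelian}. For the isogeny I would use $\underline{\operatorname{Ext}}^2(F,\mathbb{G}_m)$-type vanishing for the finite kernel $F$, together with Proposition~\ref{vanishing ext2}.
\end{itemize}

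\textbf{Linear parts.} Here Proposition~\ref{extensions of linear groups} applies directly whenever the relevant kernels are connected; the unipotent factors are vector spaces, so only the tori matter. Concretely, injectivity of $\Omega_{T_3}/X_3\to\Omega_{T_2}/X_2$ amounts to $X_3$ being saturated in $X_2$. Surjectivity of $\Omega_{T_2}/X_2\to \Omega_{T_1}/X_1$ is automatic, because $\Omega_{T_2}\to\Omega_{T_1}$ is already surjective.

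\textbf{Main obstacle.} The hard step is controlling the finite groups $\pi_0(G_1\cap L_2)$ and $\pi_0(\ker(A_2\to A_3))$, which obstruct the naive injectivity on the separate pieces. My first attempt would be to exploit that $G_1$ is connected: these finite components are glued between the linear and abelian pieces. So the snake-lemma connecting map $\ker(T_3^\flat\to T_2^\flat)\to\operatorname{coker}(A_3^\flat\to A_2^\flat)$, or the dual one in the other direction, should be injective, and this should cancel the defect. To verify it, I would compute explicitly on $k$-points and on $K$ using the description of $X\to K$ in Definition~\ref{def G flat}: a character trivial on the connected kernel is matched with a torsion point of the dual abelian variety. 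As a fallback, I would compare with $G^\natural$, which satisfies $\underline{\operatorname{Hom}}(G_{1,\text{dR}},\mathbb{G}_m)=0$ by Proposition~\ref{cartier dual of dR}. This gives injectivity $G_3^\natural\hookrightarrow G_2^\natural$ on seminormal points, and I would then transfer the conclusion to $G^\flat$ through Proposition~\ref{computation of ker gamma}.
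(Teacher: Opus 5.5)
\textbf{Comparison with the paper.} Your first reduction is exactly the paper's proof. Apply $(-)^\flat$ to the Barsotti--Chevalley diagram, use Theorem~\ref{exact sequence of flats} on the columns, and conclude with the snake lemma from the abelian and linear pieces.

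You are right that the linear piece need not be injective, and here you are more careful than the paper. Its proof asserts, via Proposition~\ref{extensions of linear groups}, that $T_3^\flat\times U_3^\flat\to T_2^\flat\times U_2^\flat$ is a monomorphism. That proposition needs $\ker(L_2\to L_3)=G_1\cap L_2$ to be connected. But the component group of $G_1\cap L_2$ is $(G_1\cap L_2)/L_1=\ker(A_1\to A_2)$, which need not vanish. In the situation of Example~\ref{flats are not exact}, $L_2\to L_3$ is in fact $t\mapsto t^p$, and $T_3^\flat\to T_2^\flat$ is multiplication by $p$ on $\mathbb{G}_a/\mathbb{Z}$.

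Conversely, your worry about the abelian piece is unfounded. We have $\ker(A_2\to A_3)=f^{-1}(L_3)/L_2$, where $f\colon G_2\to G_3$, and $f^{-1}(L_3)$ is an extension of $L_3$ by $G_1$, hence connected. So Proposition~\ref{exact sequence of flats abelian} applies directly. Had there been a genuine isogeny with kernel $F$, its $\flat$-map would have kernel $F^D$, so it could not have been ``treated separately''.

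The epimorphism half is fine and is the paper's argument. One correction: the vanishing you need for the isogeny $A_1\to A_1/F$ is $\underline{\operatorname{Ext}}^1(F,\mathbb{G}_m)=0$, not an $\underline{\operatorname{Ext}}^2$.

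\textbf{The gap.} Once the torus map fails to be injective, the whole content of the monomorphism claim sits in the injectivity of the connecting map $\delta\colon \ker(T_3^\flat\to T_2^\flat)\to\operatorname{coker}(A_3^\flat\to A_2^\flat)$. You only assert this (``should be injective'').

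Your fallback is the right way to close it, but as stated it does not yet give a monomorphism of fppf sheaves. Injectivity on seminormal points does not suffice, because an infinitesimal kernel is invisible on reduced test schemes. The formal group $\widehat{U^*}$, which is precisely what separates $G^\flat$ from $G^\natural$, is an example. Make it precise as follows.
\begin{enumerate}
\item Proposition~\ref{cartier dual of dR} (this is where connectedness of $G_1$ enters) and the exactness of $(-)_{\text{dR}}$ give a monomorphism $G_3^\natural\hookrightarrow G_2^\natural$ on \emph{all} test schemes.
\item The maps $\gamma_i\colon K_i/L_i^D\to G_i^\natural$ are natural in $G_i$ and injective by Proposition~\ref{computation of ker gamma}. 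Hence $K_3/L_3^D\to K_2/L_2^D$ is a monomorphism.
\item Apply the snake lemma to the exact sequences $0\to \widehat{U_i^*}\to G_i^\flat\to K_i/L_i^D\to 0$. Since $U_2\to U_3$ is surjective, the map $\widehat{U_3^*}\to\widehat{U_2^*}$ is injective. It follows that $G_3^\flat\to G_2^\flat$ is a monomorphism.
\end{enumerate}
This route bypasses the Barsotti--Chevalley bookkeeping for the injectivity claim entirely. It also shows, a posteriori, that your $\delta$ is injective.
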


\begin{proof}
	Consider the following commutative diagram, composed of the short exact sequence of commutative connected algebraic groups $G_i$, along with the induced maps on their Barsotti--Chevalley decompositions.
\[\begin{tikzcd}[ampersand replacement=\&]
	\& 0 \& 0 \& 0 \\
	0 \& {T_1\times U_1} \& {T_2\times U_2} \& {T_3\times U_3} \& 0 \\
	0 \& {G_1} \& {G_2} \& {G_3} \& 0 \\
	0 \& {A_1} \& {A_2} \& {A_3} \& 0 \\
	\& 0 \& 0 \& 0
	\arrow[from=1-2, to=2-2]
	\arrow[from=1-3, to=2-3]
	\arrow[from=1-4, to=2-4]
	\arrow[from=2-1, to=2-2]
	\arrow[from=2-2, to=2-3]
	\arrow[from=2-2, to=3-2]
	\arrow[from=2-3, to=2-4]
	\arrow[from=2-3, to=3-3]
	\arrow[from=2-4, to=2-5]
	\arrow[from=2-4, to=3-4]
	\arrow[from=3-1, to=3-2]
	\arrow[from=3-2, to=3-3]
	\arrow[from=3-2, to=4-2]
	\arrow[from=3-3, to=3-4]
	\arrow[from=3-3, to=4-3]
	\arrow[from=3-4, to=3-5]
	\arrow[from=3-4, to=4-4]
	\arrow[from=4-1, to=4-2]
	\arrow[from=4-2, to=4-3]
	\arrow[from=4-2, to=5-2]
	\arrow[from=4-3, to=4-4]
	\arrow[from=4-3, to=5-3]
	\arrow[from=4-4, to=4-5]
	\arrow[from=4-4, to=5-4]
\end{tikzcd}\]

The exactness of Barsotti--Chevalley decompositions has been thoroughly examined by Brion in \cite{brion2017commutative}. He proved that the complex $0\to U_1\to U_2\to U_3\to 0$ is exact, but 
\[0\to T_1\to T_2\to T_3\to 0 \qquad \text{and}\qquad 0\to A_1\to A_2\to A_3\to 0\]
might only be exact up to isogeny \cite[Thm.\ 2.9, Lems.\ 4.3 and 4.7]{brion2017commutative}. That being said, the map $T_1\to T_2$ is a monomorphism, while $T_2\to T_3$ and $A_2\to A_3$ are epimorphisms.\footnote{Brion’s results apply in arbitrary characteristic, while the characteristic zero case required here is significantly simpler.}

Next, we apply the functor $(-)^\flat$ to the diagram above, obtaining the following commutative diagram
	\[\begin{tikzcd}[ampersand replacement=\&]
	\& 0 \& 0 \& 0 \\
	0 \& {A_3^\flat} \& {A_2^\flat} \& {A_1^\flat} \& 0 \\
	0 \& {G_3^\flat} \& {G_2^\flat} \& {G_1^\flat} \& 0 \\
	0 \& {T_3^\flat\times U_3^\flat} \& {T_2^\flat\times U_2^\flat} \& {T_1^\flat\times U_1^\flat} \& 0 \\
	\& 0 \& 0 \& 0,
	\arrow[from=1-2, to=2-2]
	\arrow[from=1-3, to=2-3]
	\arrow[from=1-4, to=2-4]
	\arrow[from=2-1, to=2-2]
	\arrow[from=2-2, to=2-3]
	\arrow[from=2-2, to=3-2]
	\arrow[from=2-3, to=2-4]
	\arrow[from=2-3, to=3-3]
	\arrow[from=2-4, to=2-5]
	\arrow[from=2-4, to=3-4]
	\arrow[from=3-1, to=3-2]
	\arrow[from=3-2, to=3-3]
	\arrow[from=3-2, to=4-2]
	\arrow[from=3-3, to=3-4]
	\arrow[from=3-3, to=4-3]
	\arrow[from=3-4, to=3-5]
	\arrow[from=3-4, to=4-4]
	\arrow[from=4-1, to=4-2]
	\arrow[from=4-2, to=4-3]
	\arrow[from=4-2, to=5-2]
	\arrow[from=4-3, to=4-4]
	\arrow[from=4-3, to=5-3]
	\arrow[from=4-4, to=4-5]
	\arrow[from=4-4, to=5-4]
\end{tikzcd}\]
whose rows are complexes and columns are exact. According to the preceding propositions, the maps $A_3^\flat\to A_2^\flat$ and $T_3^\flat\times U_3^\flat\to T_2^\flat\times U_2^\flat$ are monomorphisms. The long exact sequence in cohomology associated with the short exact sequence of complexes above implies that $G_3^\flat\to G_2^\flat$ is also a monomorphism.

We claim that $A_2^\flat\to A_1^\flat$ is an epimorphism. Let $F$ be the kernel of $A_1\to A_2$ and $B$ be its cokernel. Note that $F$ is a finite group, while $B$ is an abelian variety. Consider the short exact sequences
\[0\to F\to A_1\to A_1/F\to 0\qquad\text{and}\qquad 0\to A_1/F\to A_2\to B\to 0.\]
By passing to the de Rham spaces and considering the long exact sequences induced by the Cartier duality functor, we obtain that the maps $A_2^\flat\to (A_1/F)^\flat$ and $(A_1/F)^\flat\to A_1^\flat$ are epimorphisms. Hence, so is their composition. Since $T_2^\flat\times U_2^\flat\to T_1^\flat\times U_1^\flat$ is an epimorphism, the same argument as above proves the same for $G_2^\flat\to G_1^\flat$.
\end{proof}

As it turns out, the lack of exactness in the Barsotti--Chevalley decompositions is more than just a nuisance in the proof above. Inspired by \cite[Rem.\ 4.6]{brion2017commutative}, we provide below a counter-example to our conjecture.

\begin{example}\label{flats are not exact}
	Assume the base field $k$ contains a non-trivial $p$-th root of unity, for some prime $p$. Given an elliptic curve $E$ containing a rational point of order $p$, we let $\mu_p$ act diagonally on the product $E\times\mathbb{G}_m$ and consider the quotient $G$. This group fits into the short exact sequence
	\[0\to E\to G\to \mathbb{G}_m\to 0.\]
	
	Taking the Barsotti--Chevalley decomposition of each group, we obtain the following commutative diagram
\[\begin{tikzcd}[ampersand replacement=\&]
	\&\&\& 0 \& 0 \\
	\&\& 0 \& {\mathbb{G}_m} \& {\mathbb{G}_m} \& 0 \\
	\& 0 \& E \& G \& {\mathbb{G}_m} \& 0 \\
	0 \& {\mu_p} \& E \& {E/\mu_p} \& 0 \\
	\& 0 \& 0 \& 0,
	\arrow[from=1-4, to=2-4]
	\arrow[from=1-5, to=2-5]
	\arrow[from=2-3, to=2-4]
	\arrow[from=2-3, to=3-3]
	\arrow[equals, from=2-4, to=2-5]
	\arrow[from=2-4, to=3-4]
	\arrow[from=2-5, to=2-6]
	\arrow[equals, from=2-5, to=3-5]
	\arrow[from=3-2, to=3-3]
	\arrow[from=3-2, to=4-2]
	\arrow[from=3-3, to=3-4]
	\arrow[equals, from=3-3, to=4-3]
	\arrow[from=3-4, to=3-5]
	\arrow[from=3-4, to=4-4]
	\arrow[from=3-5, to=3-6]
	\arrow[from=3-5, to=4-5]
	\arrow[from=4-1, to=4-2]
	\arrow[from=4-2, to=4-3]
	\arrow[from=4-2, to=5-2]
	\arrow[from=4-3, to=4-4]
	\arrow[from=4-3, to=5-3]
	\arrow[from=4-4, to=4-5]
	\arrow[from=4-4, to=5-4]
\end{tikzcd}\]
whose rows and columns are exact. Passing to the de Rham spaces on the bottom row and taking the long exact sequence in cohomology induced by the Cartier dual, we obtain the exact sequence
\[0 \to \mathbb{Z}/p\mathbb{Z}\to (E/\mu_p)^\flat\to E^\flat\to 0.\]

The functor $(-)^\flat$ can be applied to the portion of the preceding diagram consisting of commutative connected algebraic groups. This yields the commutative diagram
\[\begin{tikzcd}[ampersand replacement=\&]
	\&\& 0 \& 0 \\
	\& 0 \& {(E/\mu_p)^\flat} \& {E^\flat} \& 0 \\
	0 \& {\mathbb{G}_m^\flat} \& {G^\flat} \& {E^\flat} \& 0 \\
	0 \& {\mathbb{G}_m^\flat} \& {\mathbb{G}_m^\flat} \& 0 \\
	\& 0 \& 0,
	\arrow[from=1-3, to=2-3]
	\arrow[from=1-4, to=2-4]
	\arrow[from=2-2, to=2-3]
	\arrow[from=2-2, to=3-2]
	\arrow[from=2-3, to=2-4]
	\arrow[from=2-3, to=3-3]
	\arrow[from=2-4, to=2-5]
	\arrow[equals, from=2-4, to=3-4]
	\arrow[from=3-1, to=3-2]
	\arrow[from=3-2, to=3-3]
	\arrow[equals, from=3-2, to=4-2]
	\arrow[from=3-3, to=3-4]
	\arrow[from=3-3, to=4-3]
	\arrow[from=3-4, to=3-5]
	\arrow[from=3-4, to=4-4]
	\arrow[from=4-1, to=4-2]
	\arrow[equals, from=4-2, to=4-3]
	\arrow[from=4-2, to=5-2]
	\arrow[from=4-3, to=4-4]
	\arrow[from=4-3, to=5-3]
\end{tikzcd}\]
where the columns are exact and the rows are complexes. The long exact sequence in cohomology induced from a short exact sequence of complexes then identifies the middle cohomology of 
\[0\to \mathbb{G}_m^\flat\to G^\flat\to E^\flat\to 0\]
with $\mathbb{Z}/p\mathbb{Z}$, thereby proving that the functor $(-)^\flat$ is not exact.
\end{example}

\subsection{Linear and generic subspaces of the moduli space}\label{Subsection on generic subspaces}

For an abelian variety $A$, certain \emph{big} subsets of $A^\flat$ play a central role in non-abelian Hodge theory \cite{simpson1993subspaces} and in the formulation of generic vanishing theorems for de Rham cohomology \cite{schnell2015holonomic}. For a torus $T$, similar \emph{big} subsets of $T^\flat$ were also studied in \cite{sabbah1992lieu}. In this section, we extend the definition of these subsets---see Definition~\ref{def generic subspace}---to arbitrary commutative connected algebraic groups.

\begin{definition}[Linear subspace]\label{def linear subspace}
Let $G$ be a commutative connected algebraic group over a characteristic zero field. For an epimorphism $\rho\colon G\twoheadrightarrow \widetilde{G}$ with connected kernel, the image of $\rho^\flat\colon \widetilde{G}^\flat\hookrightarrow G^\flat$ is said to be a \emph{linear subspace} of $G^\flat$.
\end{definition}

The following remark concretely characterizes linear subspaces of $G^\flat$ when $G$ is affine. Furthermore, when $G$ is an abelian variety, this notion is related to existing concepts in the literature.

\begin{remark}\label{examples of linear subvarieties}
Let $G\twoheadrightarrow \widetilde{G}$ be an epimorphism between commutative connected algebraic groups with connected kernel $N$. If $G$ is linear, unipotent, a torus, or an abelian variety, then $\widetilde{G}$ and $N$ inherit the same properties. The following characterizations follow from this observation.

Consider a torus $T$ with character group $X$. A linear subspace of $T^\flat\simeq \Omega_T/X$ is of the form $V/Y$, where $Y$ is a subgroup of $X$ and $V$ is the linear subspace of $\Omega_T$ generated by $Y$, in the sense of linear algebra. For a unipotent group $U$, a linear subspace of $U^\flat = U^*$ corresponds to a linear subspace of the underlying vector space of $U^*$, in the sense of linear algebra.

Let $L$ be a linear commutative connected algebraic group, and decompose it as a product of a torus $T$ and a unipotent group $U$. If $\rho\colon L\twoheadrightarrow \widetilde{L}$ is an epimorphism with connected kernel, then $\widetilde{L}$ is also linear and decomposes as $\widetilde{L}\simeq \widetilde{T}\times \widetilde{U}$. Moreover, $\rho$ is a product of epimorphisms $T\twoheadrightarrow \widetilde{T}$ and $U\twoheadrightarrow \widetilde{U}$. Thus, a linear subspace of $L^\flat$ is the product of linear subspaces of $T^\flat$ and $U^\flat$.

For an abelian variety $A$, linear subspaces of $A^\flat\simeq A^\natural$ were first studied by Simpson, who termed them \emph{triple tori} \cite[p.\ 365]{simpson1993subspaces}. Schnell refers to translates of linear subspaces of $A^\flat$ as \emph{linear subvarieties} \cite[Def.\ 2.3]{schnell2015holonomic}.
\end{remark}

\begin{proposition}\label{inverse image of linear subspaces}
Let $G$ be a commutative connected algebraic group over a characteristic zero field, and express $G$ as an extension of an abelian variety $A$ by a linear group $L$. If $V\subset L^\flat$ is a (translate of a) linear subspace, so is its inverse image by $G^\flat\to L^\flat$.
\end{proposition}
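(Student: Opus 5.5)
We construct an explicit epimorphism $\rho\colon G\twoheadrightarrow \widetilde{G}$ with connected kernel whose dual image is the inverse image in question. First we reduce to the linear case: by Remark~\ref{examples of linear subvarieties}, a linear subspace $V_0\subset L^\flat$ is the image of $\widetilde{\rho}^\flat\colon \widetilde{L}^\flat\hookrightarrow L^\flat$ for some epimorphism $\widetilde{\rho}\colon L\twoheadrightarrow\widetilde{L}$ with connected kernel $N$. Translates are handled at the end: if $V=v+V_0$, pick a preimage $g$ of $v$ under the epimorphism $G^\flat\to L^\flat$ (surjective by Theorem~\ref{exact sequence of flats}). The inverse image of $V$ is then $g$ plus the inverse image of $V_0$, so it suffices to treat $V=V_0$.

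Now set $\widetilde{G}\colonequals G/N$, where $N\subset L\subset G$. It is a commutative connected algebraic group, and $\rho\colon G\twoheadrightarrow \widetilde{G}$ has connected kernel $N$. Its Barsotti--Chevalley decomposition is $0\to L/N=\widetilde{L}\to \widetilde{G}\to A\to 0$. Here $L/N$ is affine and $\widetilde G/(L/N)\simeq A$ is an abelian variety. Minimality holds because any $H\subset \widetilde G$ with proper quotient pulls back to a subgroup of $G$ containing $N$ with proper quotient, hence containing $L$. So $\rho$ induces the identity on $A$ and $\widetilde{\rho}$ on linear parts. By the functoriality in Remark~\ref{functoriality of G flat}, we get a morphism of the short exact sequences of Theorem~\ref{exact sequence of flats}:
\[0\to A^\flat\to \widetilde{G}^\flat\to \widetilde{L}^\flat\to 0 \quad\longrightarrow\quad 0\to A^\flat\to G^\flat\to L^\flat\to 0,\]
with the identity on $A^\flat$ and $\widetilde{\rho}^\flat$ on the right.

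The claim is that the left square is cartesian, that is, $\rho^\flat(\widetilde{G}^\flat)$ equals the preimage of $\widetilde{\rho}^\flat(\widetilde{L}^\flat)$ in $G^\flat$. This is a diagram chase in abelian sheaves using that the left vertical map is an isomorphism. Given a local section $x$ of $G^\flat$ whose image in $L^\flat$ is $\widetilde{\rho}^\flat(y)$, lift $y$ locally to $\widetilde{x}\in\widetilde{G}^\flat$. Then $x-\rho^\flat(\widetilde{x})$ maps to zero in $L^\flat$, so it comes from $A^\flat$, hence from $\widetilde{G}^\flat$ via the identity on $A^\flat$. Thus $x$ lies in the image of $\rho^\flat$, and the reverse inclusion is clear. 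Since $\rho^\flat$ is a monomorphism (Proposition~\ref{flat preserve injections}), this identifies the inverse image with the linear subspace attached to $\rho$.

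I expect the main obstacle to be verifying that the Barsotti--Chevalley decomposition of $G/N$ is exactly $L/N\to G/N\to A$. Only this makes the comparison maps the identity on $A^\flat$ and $\widetilde\rho^\flat$ on the linear part; the rest is formal.
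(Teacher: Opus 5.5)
Your argument is correct and is essentially the paper's proof. Both take $\widetilde{G}=G/N$, map the sequence of Theorem~\ref{exact sequence of flats} for $\widetilde{G}$ into the one for $G$ (identity on $A^\flat$, $\widetilde{\rho}^\flat$ on the linear part), and conclude by a diagram chase that the square formed by $\widetilde{G}^\flat\to G^\flat$ and $\widetilde{L}^\flat\to L^\flat$ is cartesian. The paper first proves exactness of $0\to\widetilde{G}^\flat\to G^\flat\to N^\flat\to 0$ via Proposition~\ref{extensions of linear groups}, which your direct chase avoids, while you make explicit the Barsotti--Chevalley decomposition of $G/N$ and the reduction for translates; strictly, surjectivity of $G^\flat\to L^\flat$ only lifts $v$ fppf-locally, which is harmless over an algebraically closed field.
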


\begin{proof}
	Let $L\twoheadrightarrow \widetilde{L}$ be the epimorphism defining the linear subspace $V$, and let $N$ be its kernel. Denote by $\widetilde{G}$ the quotient of $G$ by $N$; we have a commutative diagram
\[\begin{tikzcd}[ampersand replacement=\&]
	\& 0 \& 0 \& 0 \\
	0 \& N \& L \& {\widetilde{L}} \& 0 \\
	0 \& N \& G \& {\widetilde{G}} \& 0 \\
	{} \& 0 \& A \& A \& 0 \\
	\&\& 0 \& 0,
	\arrow[from=1-2, to=2-2]
	\arrow[from=1-3, to=2-3]
	\arrow[from=1-4, to=2-4]
	\arrow[from=2-1, to=2-2]
	\arrow[from=2-2, to=2-3]
	\arrow[equals, from=2-2, to=3-2]
	\arrow[from=2-3, to=2-4]
	\arrow[from=2-3, to=3-3]
	\arrow[from=2-4, to=2-5]
	\arrow[from=2-4, to=3-4]
	\arrow[from=3-1, to=3-2]
	\arrow[from=3-2, to=3-3]
	\arrow[from=3-2, to=4-2]
	\arrow[from=3-3, to=3-4]
	\arrow[from=3-3, to=4-3]
	\arrow[from=3-4, to=3-5]
	\arrow[from=3-4, to=4-4]
	\arrow[from=4-2, to=4-3]
	\arrow[equals, from=4-3, to=4-4]
	\arrow[from=4-3, to=5-3]
	\arrow[from=4-4, to=4-5]
	\arrow[from=4-4, to=5-4]
\end{tikzcd}\]
	whose rows and columns are exact. Applying the functor $(-)^\flat$, we obtain the following commutative diagram
\[\begin{tikzcd}[ampersand replacement=\&]
	\& 0 \& 0 \\
	0 \& {A^\flat} \& {A^\flat} \& 0 \\
	0 \& {\widetilde{G}^\flat} \& {G^\flat} \& {N^\flat} \& 0 \\
	0 \& {\widetilde{L}^\flat} \& {L^\flat} \& {N^\flat} \& 0 \\
	\& 0 \& 0 \& 0.
	\arrow[from=1-2, to=2-2]
	\arrow[from=1-3, to=2-3]
	\arrow[from=2-1, to=2-2]
	\arrow[equals, from=2-2, to=2-3]
	\arrow[from=2-2, to=3-2]
	\arrow[from=2-3, to=2-4]
	\arrow[from=2-3, to=3-3]
	\arrow[from=2-4, to=3-4]
	\arrow[from=3-1, to=3-2]
	\arrow[from=3-2, to=3-3]
	\arrow[from=3-2, to=4-2]
	\arrow[from=3-3, to=3-4]
	\arrow[from=3-3, to=4-3]
	\arrow[from=3-4, to=3-5]
	\arrow[from=4-1, to=4-2]
	\arrow[from=4-2, to=4-3]
	\arrow[from=4-2, to=5-2]
	\arrow[from=4-3, to=4-4]
	\arrow[from=4-3, to=5-3]
	\arrow[equals, from=4-4, to=3-4]
	\arrow[from=4-4, to=4-5]
	\arrow[from=4-4, to=5-4]
\end{tikzcd}\]
According to Theorem~\ref{exact sequence of flats} and Proposition~\ref{extensions of linear groups}, its columns are exact, as well as the top and bottom rows. Consequently, the middle row is exact as well. A diagram chase shows that the square
\[\begin{tikzcd}[ampersand replacement=\&]
	{\widetilde{G}^\flat} \& {G^\flat} \\
	{\widetilde{L}^\flat} \& {L^\flat}
	\arrow[from=1-1, to=1-2]
	\arrow[from=1-1, to=2-1]
	\arrow[from=1-2, to=2-2]
	\arrow[from=2-1, to=2-2]
\end{tikzcd}\]
is cartesian, thereby concluding the proof.
\end{proof}

\begin{definition}[Generic subspace]\label{def generic subspace}
	Let $G$ be a commutative connected algebraic group over a characteristic zero field. A \emph{generic subspace} of $G^\flat$ is the complement of a finite union of translates of linear subspaces of $G^\flat$ with positive codimension.
\end{definition}

In algebraic geometry, a property is often said to hold generically if it holds on an open dense subset. For extensions of abelian varieties by unipotent groups, this can be related with our definition above.

\begin{proposition}\label{generic implies open and dense}
Let $G$ be a commutative algebraic group over a characteristic zero field that is an extension of an abelian variety by a unipotent group. If $V$ is a generic subspace of $G^\flat$, then $V$ is open and dense in $G^\flat$.
\end{proposition}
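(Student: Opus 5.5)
The plan is to use the absence of a torus to see that $G^\flat$ is an honest algebraic group, and then argue with closed subgroups of an irreducible variety. Write $G$ as an extension $0\to U\to G\to A\to 0$. Since the torus part is trivial, its character group is $X=0$, so Definition~\ref{def G flat} gives $G^\flat=K$. By Proposition~\ref{characterization of K}, $K$ is an extension of $A^\prime$ by the vector group $\Omega_G$, where $A^\prime$ is the dual abelian variety. Both are schemes, so Remark~\ref{remark on descent} shows that $K$ is a scheme, and it is a smooth connected commutative algebraic group (Lemma~\ref{lemma on K}). In particular $G^\flat$ is irreducible, being a smooth connected group scheme of finite type over a field.

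Next I will show that every linear subspace of $G^\flat$ is a closed subscheme. Let $\rho\colon G\twoheadrightarrow\widetilde{G}$ be an epimorphism with connected kernel. Then $\widetilde{G}$ is again an extension of an abelian variety by a unipotent group: its linear part is the image of $U$, which is unipotent. By the same argument as above, $\widetilde{G}^\flat$ is an algebraic group. The morphism $\rho^\flat\colon\widetilde{G}^\flat\to G^\flat$ of Remark~\ref{functoriality of G flat} is then a homomorphism of algebraic groups over $k$, and it is a monomorphism by Proposition~\ref{flat preserve injections}. A homomorphism of algebraic groups over a field has closed image, and in characteristic zero a monomorphism of algebraic groups is a closed immersion (\cite{M}). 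Hence the linear subspace $\rho^\flat(\widetilde{G}^\flat)$ is a closed subgroup of $G^\flat$. Translates of it by points of $G^\flat$ (after a finite extension of $k$ if needed, which is harmless for closedness) are closed as well.

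Finally, let $V$ be the complement of $Z_1\cup\dots\cup Z_n$, where each $Z_i$ is a translate of a linear subspace of positive codimension. The union is closed, so $V$ is open. Each $Z_i$ has dimension strictly less than $\dim G^\flat$, so the union is a proper closed subset of the irreducible space $G^\flat$. Therefore $V$ is a nonempty open subset of an irreducible space, and hence dense.

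The step that needs the most care is the closedness of linear subspaces, that is, checking that $\rho^\flat$ is a morphism of algebraic groups with closed image. This is exactly where the hypothesis of no torus enters. If $G$ had a torus part, $G^\flat$ would be the non-quasi-separated space $K/X$, and images of subgroups need not be closed; indeed $\mathbb{Z}$ is dense in $\mathbb{G}_a/\mathbb{Z}$ in the relevant sense. With $X=0$, the argument reduces to the standard fact that homomorphic images of algebraic groups are closed.
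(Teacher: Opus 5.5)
Your proof is correct and follows essentially the same route as the paper. With $X=0$, $G^\flat=K$ is an honest connected, hence irreducible, algebraic group; $\rho^\flat$ is an injective homomorphism of algebraic groups, hence a closed immersion; and irreducibility together with positive codimension gives density. The only difference is that the paper spells out, via a $3\times 3$ diagram, the fact you state in one line: $\widetilde{G}$ is again an extension of an abelian variety by a unipotent group.
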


\begin{proof}
	Since the intersection of a finite number of open dense subsets is also open and dense, it suffices to prove that the complement of a linear subspace with positive codimension is open and dense. Let $G\twoheadrightarrow \widetilde{G}$ be an epimorphism with connected kernel $N$ defining the linear subspace $\widetilde{G}^\flat$ of $G^\flat$.
	
	First, we claim that $\widetilde{G}$ is also an extension of an abelian variety by a unipotent group. Denote by $U$ the maximal unipotent subgroup of $G$ and by $A$ the quotient $G/U$. The universal property of quotients yields a morphism $N/(U\cap N)\to A$ making the diagram
\[\begin{tikzcd}[ampersand replacement=\&]
	0 \& {U\cap N} \& N \& {N/(U\cap N)} \& 0 \\
	0 \& U \& G \& A \& 0
	\arrow[from=1-1, to=1-2]
	\arrow[from=1-2, to=1-3]
	\arrow[from=1-2, to=2-2]
	\arrow[from=1-3, to=1-4]
	\arrow[from=1-3, to=2-3]
	\arrow[from=1-4, to=1-5]
	\arrow[color=ocre, from=1-4, to=2-4]
	\arrow[from=2-1, to=2-2]
	\arrow[from=2-2, to=2-3]
	\arrow[from=2-3, to=2-4]
	\arrow[from=2-4, to=2-5]
\end{tikzcd}\]
commute. A quick diagram chase shows that $N/(U\cap N)\to A$ is a monomorphism. The usual isomorphism theorems then imply that $(G/N)/(U/(U\cap N))$ is isomorphic to $(G/U)/(N/(U\cap N))$. Denoting by $B$ this common group, we may complete the diagram above to
\[\begin{tikzcd}[ampersand replacement=\&]
	\& 0 \& 0 \& 0 \\
	0 \& {U\cap N} \& N \& {N/(U\cap N)} \& 0 \\
	0 \& U \& G \& A \& 0 \\
	0 \& {U/(U\cap N)} \& {\widetilde{G}} \& B \& 0 \\
	\& 0 \& 0 \& {0.}
	\arrow[from=1-2, to=2-2]
	\arrow[from=1-3, to=2-3]
	\arrow[from=1-4, to=2-4]
	\arrow[from=2-1, to=2-2]
	\arrow[from=2-2, to=2-3]
	\arrow[from=2-2, to=3-2]
	\arrow[from=2-3, to=2-4]
	\arrow[from=2-3, to=3-3]
	\arrow[from=2-4, to=2-5]
	\arrow[from=2-4, to=3-4]
	\arrow[from=3-1, to=3-2]
	\arrow[from=3-2, to=3-3]
	\arrow[from=3-2, to=4-2]
	\arrow[from=3-3, to=3-4]
	\arrow[from=3-3, to=4-3]
	\arrow[from=3-4, to=3-5]
	\arrow[from=3-4, to=4-4]
	\arrow[from=4-1, to=4-2]
	\arrow[from=4-2, to=4-3]
	\arrow[from=4-2, to=5-2]
	\arrow[from=4-3, to=4-4]
	\arrow[from=4-3, to=5-3]
	\arrow[from=4-4, to=4-5]
	\arrow[from=4-4, to=5-4]
\end{tikzcd}\]
Here every column and every row is exact. Since $B$ is a quotient of an abelian variety, it is also an abelian variety. Similarly, $U/(U\cap N)$ is a unipotent group.

Recall from Corollary~\ref{G flat is an algebraic space} that $G^\flat$ and $\widetilde{G}^\flat$ are commutative connected algebraic groups. Since algebraic groups in characteristic zero are smooth, $G^\flat$ is irreducible. According to \cite[Thm.\ 5.34]{M}, the map $\widetilde{G}^\flat\to G^\flat$ is a closed immersion and so its complement is open. Since $\widetilde{G}^\flat$ is assumed to be of positive codimension, its complement is non-empty. The irreducibility of $G^\flat$ then implies that it is dense, thus completing the proof.
\end{proof}

The preceding result does not hold for general commutative connected algebraic groups. The Zariski topology of $\mathbb{G}_m^\flat\simeq \mathbb{A}^1/\mathbb{Z}$ is too coarse. Indeed, the only open dense subset of $\mathbb{G}_m^\flat$ is the entire space itself. This suggests the following definition.

\begin{definition}[Analytic moduli space]
	Let $G$ be a commutative connected algebraic group over $\mathbb{C}$. Taking the $\mathbb{C}$-points of the morphism $X\hookrightarrow K$ defining the moduli space $G^\flat$, we obtain a monomorphism of abelian groups $X\hookrightarrow K(\mathbb{C})$. Equip $X$ with the discrete topology and $K(\mathbb{C})$ with the analytic topology. The quotient $K(\mathbb{C})/X$ is said to be the \emph{analytic moduli space} and is denoted by $G^\flat_\text{an}$.
\end{definition}

Interestingly, although $G^\flat$ is often not representable by a scheme, $G^\flat_\text{an}$ is a complex manifold. Furthermore, for an abelian variety $A$, the complex manifold $A^\flat_\text{an}$ is Stein, even though $A^\flat$ is not affine. The moduli space $A^\flat$ is even \emph{anti-affine}, meaning that every morphism of algebraic varieties $A^\flat\to\mathbb{A}^1$ is constant \cite[Prop.\ 5.5.8]{brion2017some}.

For a generic subspace $V$ of $G^\flat$, we denote by $V_\text{an}$ the set $V(\mathbb{C})$ with the subspace topology inherited from $G^\flat_\text{an}$. An analytic analogue of Proposition~\ref{generic implies open and dense} holds in all generality.

\begin{proposition}\label{generic implies analytic open dense}
Let $G$ be a commutative connected algebraic group over $\mathbb{C}$. If $V$ is a generic subspace of $G^\flat$, then $V_{\normalfont\text{an}}$ is an open subset of $G^\flat_{\normalfont\text{an}}$ whose complement has measure zero. In particular, it is an open dense subset.
\end{proposition}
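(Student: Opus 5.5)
As in the proof of Proposition~\ref{generic implies open and dense}, a finite union of closed null sets is closed and null. It therefore suffices to show the following: if $W\subset G^\flat$ is a translate of a linear subspace $\widetilde{G}^\flat$ of positive codimension, then $W(\mathbb{C})$ is closed in $G^\flat_\text{an}$ and has measure zero. Translation is a homeomorphism of $G^\flat_\text{an}$ that preserves null sets, because it is induced by translation in $K(\mathbb{C})$, which is a biholomorphism. So we may take $W=\widetilde{G}^\flat$.

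The strategy is to work upstairs on the complex Lie group $K(\mathbb{C})$ and then pass to the quotient by $X$. First, $X\subset K(\mathbb{C})$ is discrete. Indeed, composing with $K\to\Omega_T$ (as in the proof of Theorem~\ref{exact sequence of flats}) sends $X$ to the lattice $X\subset\Omega_T$ via $\chi\mapsto\mathrm{d}\chi/\chi$, which is discrete. Hence $q\colon K(\mathbb{C})\to G^\flat_\text{an}$ is a covering map and a local biholomorphism. Second, let $\widetilde{K}$ be the group attached to $\widetilde{G}$ in Definition~\ref{def G flat}. Functoriality (Remark~\ref{functoriality of G flat}) gives a map $\widetilde{K}\to K$ of algebraic groups compatible with $\widetilde{X}\to X$. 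The preimage $q^{-1}(\widetilde{G}^\flat(\mathbb{C}))$ is then $\bigcup_{\chi\in X}\bigl(\chi+\operatorname{im}(\widetilde{K}(\mathbb{C})\to K(\mathbb{C}))\bigr)$. The image $H$ of $\widetilde{K}\to K$ is a closed algebraic subgroup by \cite[Thm.\ 5.34]{M}. Its dimension is $\dim\widetilde{G}^\flat<\dim G^\flat=\dim K$, using Corollary~\ref{G flat is an algebraic space} and the fact that $X$ is $0$-dimensional. Therefore $H(\mathbb{C})$ is a closed complex submanifold of positive codimension and has measure zero. The preimage is a countable union of translates of $H(\mathbb{C})$, so it has measure zero, and consequently $\widetilde{G}^\flat(\mathbb{C})$ has measure zero in $G^\flat_\text{an}$.

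The main obstacle is closedness. A countable union of translates of $H(\mathbb{C})$ need not be closed: in $\mathbb{G}_m^\flat=\mathbb{C}/\mathbb{Z}$, an irrational line would give a dense union. The fix is to use the definition of the moduli space, namely $\widetilde{X}\to X$ and the compatibility $\widetilde{G}^\flat=\widetilde{K}/\widetilde{X}$. I would show that $H+X$ is closed in $K(\mathbb{C})$, equivalently that the image of $X$ in $K(\mathbb{C})/H(\mathbb{C})$ is discrete. To do this I would use Theorem~\ref{exact sequence of flats} and Proposition~\ref{inverse image of linear subspaces}.

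First reduce to the linear part. Write $N=\ker(G\to\widetilde{G})$. By the exact diagrams in the proof of Proposition~\ref{inverse image of linear subspaces}, or more generally by the sequence $0\to\widetilde{G}^\flat\to G^\flat\to N^\flat$ from Proposition~\ref{flat preserve injections}, the subspace $\widetilde{G}^\flat$ is the kernel of $G^\flat\to N^\flat$. Then $\widetilde{G}^\flat(\mathbb{C})$ is the preimage of $0$ under a continuous map $G^\flat_\text{an}\to N^\flat_\text{an}$, which is induced by the holomorphic map $K\to K_N$ compatible with $X\to X_N$. It remains to know that points are closed in $N^\flat_\text{an}=K_N(\mathbb{C})/X_N$, which holds because $X_N$ is discrete in $K_N(\mathbb{C})$ by the first step. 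Hence $\widetilde{G}^\flat(\mathbb{C})$ is closed. It is the fiber of a map, so this is sharper than checking that $H+X$ is closed by hand.

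Finally, a closed null set has empty interior. So the complement $V_\text{an}$ is open, has null complement, and is therefore dense.
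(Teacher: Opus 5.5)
Your reduction to a single linear subspace, the proof that $X$ is discrete in $K(\mathbb{C})$, and the measure-zero argument via $q^{-1}(\widetilde{G}^\flat(\mathbb{C}))=H(\mathbb{C})+X$ are all correct. Together they are a more explicit version of the paper's appeal to Sard's theorem. You are also right that closedness is the real issue. The paper gets it by calling $\widetilde{G}^\flat_{\text{an}}$ a Lie subgroup (i.e.\ embedded) and citing Bourbaki, which takes for granted exactly the embeddedness you single out.

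Your replacement argument has a gap: nothing you cite gives the identification of $\widetilde{G}^\flat$ with $\ker(G^\flat\to N^\flat)$.
\begin{itemize}
\item Proposition~\ref{flat preserve injections} only says that $\widetilde{G}^\flat\to G^\flat$ is a monomorphism and $G^\flat\to N^\flat$ is an epimorphism. It does not say the sequence is exact at $G^\flat$.
\item The diagrams in the proof of Proposition~\ref{inverse image of linear subspaces} need $N\subset L$, which fails as soon as $N$ has a nontrivial abelian part.
\item Example~\ref{flats are not exact} is precisely a linear subspace ($G\to\mathbb{G}_m$ with connected kernel $E$), and it asserts that exactness at $G^\flat$ fails there, with defect $\mathbb{Z}/p\mathbb{Z}$.
\end{itemize}
If $\widetilde{G}^\flat(\mathbb{C})$ were a proper subgroup of the fiber $Z=\ker(G^\flat_{\text{an}}\to N^\flat_{\text{an}})$, closedness of $Z$ would not by itself give closedness of $\widetilde{G}^\flat(\mathbb{C})$.

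The equality you need does hold on $\mathbb{C}$-points, which is all the analytic statement uses, but it needs its own argument. Apply $\operatorname{Ext}^\bullet_{\mathbb{C}}(-,\mathbb{G}_m)$ to the exact sequence $0\to N_{\text{dR}}\to G_{\text{dR}}\to\widetilde{G}_{\text{dR}}\to 0$ (Proposition~\ref{GdR is reasonable}). Using $\operatorname{Hom}(N_{\text{dR}},\mathbb{G}_m)=0$ (Proposition~\ref{cartier dual of dR}; this is where connectedness of $N$ enters), you get an exact sequence
\[0\to\operatorname{Ext}^1_{\mathbb{C}}(\widetilde{G}_{\text{dR}},\mathbb{G}_m)\to\operatorname{Ext}^1_{\mathbb{C}}(G_{\text{dR}},\mathbb{G}_m)\to\operatorname{Ext}^1_{\mathbb{C}}(N_{\text{dR}},\mathbb{G}_m).\]
Theorem~\ref{global de rham} and the remark following Definition~\ref{def G flat} (note that $\operatorname{Spec}\mathbb{C}$ is seminormal) identify this naturally with $0\to\widetilde{G}^\flat(\mathbb{C})\to G^\flat(\mathbb{C})\to N^\flat(\mathbb{C})$. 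Hence $\widetilde{G}^\flat(\mathbb{C})=Z$, and your fiber argument gives closedness.

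This contradicts the conclusion of Example~\ref{flats are not exact}, and the discrepancy comes from the map on linear parts there. The restriction of $G\to\mathbb{G}_m$ to the linear part $\mathbb{G}_m\subset G$ has kernel $\mathbb{G}_m\cap E\simeq\mu_p$, so it is the $p$-th power map, not the identity drawn in the diagram. Consequently $\mathbb{G}_m^\flat\to\mathbb{G}_m^\flat$ has kernel $\mathbb{Z}/p\mathbb{Z}$, and in the long exact sequence this cancels the $\mathbb{Z}/p\mathbb{Z}$ coming from $(E/\mu_p)^\flat\to E^\flat$.

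With this step justified, your proof is complete, and it supplies the closedness argument that the paper leaves implicit.
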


\begin{proof}
	Let $G\twoheadrightarrow \widetilde{G}$ be an epimorphism with connected kernel $N$ defining a linear subspace $\widetilde{G}^\flat$ of $G^\flat$ with positive codimension. As in the proof of Proposition~\ref{generic implies open and dense}, it suffices to prove that $\widetilde{G}^\flat_\text{an}$ is a closed subset of $G^\flat_\text{an}$ with measure zero.
	
According to the functoriality of $(-)^\flat$, explained in Remark~\ref{functoriality of G flat}, the natural map $\widetilde{G}^\flat_\text{an}\hookrightarrow G^\flat_\text{an}$ is induced by a morphism of schemes $\widetilde{K}\to K$. Consequently, $\widetilde{G}^\flat_\text{an}$ is a complex Lie subgroup of $G^\flat_\text{an}$. It follows that $\widetilde{G}^\flat_\text{an}\hookrightarrow G^\flat_\text{an}$ is a closed immersion \cite[Chap.\ III, \S 1.3.5]{bourbaki2006groupes}. Since we assumed that $\dim \widetilde{G}^\flat < \dim G^\flat$, Sard's theorem implies that $\widetilde{G}^\flat_\text{an}$ has measure zero in $G^\flat_\text{an}$.
\end{proof}

\begin{definition}[Character variety]
	Let $G$ be a commutative connected algebraic group over $\mathbb{C}$. The \emph{character variety} of $G$, denoted $\operatorname{Char}(G)$, is the spectrum of the group algebra $\mathbb{C}[\pi_1(G_\text{an})]$.
\end{definition}

Recall that the group algebra functor $\textsf{Grp}\to \textsf{Alg}(\mathbb{C})$ is left adjoint to the functor sending a $\mathbb{C}$-algebra $R$ to its group of units $R^\times$. Thus, the $\mathbb{C}$-points of $\operatorname{Char}(G)$ correspond to characters $\pi_1(G_\text{an})\to\mathbb{C}^\times$ of the fundamental group $\pi_1(G_\text{an})$. In other words, $\operatorname{Char}(G)$ is a moduli space for rank one local systems on $G_\text{an}$. The Hopf algebra structure in $\mathbb{C}[\pi_1(G_\text{an})]$ induces a structure of algebraic group in $\operatorname{Char}(G)$, encoding the tensor product of local systems.

The Riemann--Hilbert equivalence states that local systems on $G_\text{an}$ correspond to \emph{regular} connections on $G$. Since all non-trivial character sheaves on unipotent groups are irregular, $\operatorname{Char}(G)$ cannot parametrize character sheaves in general. On the other hand, character sheaves on semiabelian varieties are regular, allowing such a comparison.

\begin{proposition}\label{character sheaves and character variety}
Let $G$ be a semiabelian variety over $\mathbb{C}$. Then the complex Lie groups $G^\flat_{\normalfont\text{an}}$ and $\operatorname{Char}(G)_{\normalfont\text{an}}$ are isomorphic.
\end{proposition}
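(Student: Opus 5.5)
We plan to build the Riemann--Hilbert map explicitly at the level of $K(\mathbb{C})$, show it is a surjective holomorphic homomorphism with kernel exactly $X$, and then use the extension $0\to A^\flat\to G^\flat\to T^\flat\to 0$ of Theorem~\ref{exact sequence of flats} (with $U=0$) together with the five lemma on the Betti side.

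\textbf{Step 1: the map.} Since $G$ is semiabelian, $\pi_1(G_\text{an})$ is a free abelian group of rank $\dim T+2\dim A$. Hence $\operatorname{Char}(G)\simeq \mathbb{G}_m^{r}$ is a torus, and $\operatorname{Char}(G)_\text{an}=\operatorname{Hom}(\pi_1(G_\text{an}),\mathbb{C}^\times)$. By Corollary~\ref{character sheaves on G}, a point $[\omega,(\mathscr{L},\nabla)]\in K(\mathbb{C})$ gives the character sheaf $(\mathcal{O}_G,\mathrm{d}+\omega)\otimes\psi^*(\mathscr{L},\nabla)$. We send it to the monodromy of its sheaf of horizontal sections. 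On the $\Omega_G$-factor this is explicit: $\gamma\mapsto\exp(-\int_\gamma\omega)$. On the $A^\natural$-factor it is the classical holomorphic monodromy map $A^\natural_\text{an}\to\operatorname{Hom}(\pi_1(A_\text{an}),\mathbb{C}^\times)$. We then check that the composition kills the image of $\Omega_A$, so it descends to $K(\mathbb{C})$. It is a homomorphism because monodromy is compatible with tensor products, and it is holomorphic because the monodromy depends holomorphically on $\omega$ and on the point of $A^\natural$. It kills $X$: by Remark~\ref{description of the laumon map}, the image of $\chi$ is isomorphic to the trivial connection via $\chi$ (the sign conventions make $\mathrm{d}+\omega$ and $\psi^*(\mathrm{d}-\omega)$ combine), so its monodromy is trivial. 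We therefore obtain a holomorphic homomorphism $G^\flat_\text{an}\to\operatorname{Char}(G)_\text{an}$.

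\textbf{Step 2: compatibility with the extensions.} The sequence $0\to T\to G\to A\to 0$ gives a short exact sequence $0\to\pi_1(T_\text{an})\to\pi_1(G_\text{an})\to\pi_1(A_\text{an})\to 0$. This holds because the long exact homotopy sequence of the fibration has $\pi_2(A_\text{an})=0$, and $\pi_0(T_\text{an})=0$ gives surjectivity. All three groups are free, so applying $\operatorname{Hom}(-,\mathbb{C}^\times)$ yields an exact sequence of complex tori $0\to\operatorname{Char}(A)\to\operatorname{Char}(G)\to\operatorname{Char}(T)\to 0$. Restriction of local systems and pullback along $\psi$ show that the Riemann--Hilbert maps commute with $A^\flat\to G^\flat\to T^\flat$. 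This reduces the statement to two base cases, $T$ and $A$, followed by the five lemma on $\mathbb{C}$-points; note that $G^\flat_\text{an}$ is itself an extension of $T^\flat_\text{an}$ by $A^\flat_\text{an}$ at the level of $\mathbb{C}$-points.

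\textbf{Step 3: the base cases.} For $T=\mathbb{G}_m^r$, the map $\mathbb{C}^r/\mathbb{Z}^r\to(\mathbb{C}^\times)^r$ is $\alpha\mapsto e^{2\pi i\alpha}$ up to sign, which is clearly a biholomorphism. For $A$, we need $A^\natural_\text{an}\to\operatorname{Hom}(\pi_1,\mathbb{C}^\times)$ to be bijective. Injectivity: a rank-one connection on $A$ with trivial monodromy is $(\mathcal{O},\mathrm{d})$, because a flat global section of the analytic bundle trivializes it analytically, and then GAGA gives an algebraic trivialization. Surjectivity: every rank-one local system yields an analytic line bundle with flat connection, which is algebraic by GAGA since $A$ is proper, and is multiplicative by the Example in the introduction. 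A bijective holomorphic homomorphism of complex Lie groups of the same dimension is biholomorphic. Finally, the five lemma in Step 2 gives bijectivity for $G$, and the inverse is holomorphic by the same fact about Lie groups.

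\textbf{Main obstacle.} The hardest part will be Step 1: making the monodromy map well defined and holomorphic on $K(\mathbb{C})$, in particular proving that it vanishes on $\Omega_A$ and on $X$ with consistent signs. I would first verify these identities on $\mathbb{C}$-points, using the explicit descriptions in Remark~\ref{description of the laumon map}, and only then address holomorphy. For holomorphy, I would use the fact that $A^\natural_\text{an}\to\operatorname{Char}(A)_\text{an}$ is the standard comparison between the universal vector extension and $\mathrm{H}^1(A_\text{an},\mathbb{C})/\mathrm{H}^1(A_\text{an},\mathbb{Z})$.
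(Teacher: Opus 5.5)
Your proposal is correct and follows essentially the same route as the paper. The homotopy exact sequence yields $0\to\operatorname{Char}(A)\to\operatorname{Char}(G)\to\operatorname{Char}(T)\to 0$, the Riemann--Hilbert maps are compatible with $0\to A^\flat\to G^\flat\to T^\flat\to 0$, and the two base cases (Riemann--Hilbert/GAGA for $A$, using that every flat line bundle on $A$ is a character sheaf) conclude by the five lemma.

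The differences are minor. You settle the torus case with the explicit monodromy formula $\alpha\mapsto e^{\pm 2\pi i\alpha}$ rather than the paper's regularity argument on $\mathbb{G}_m$, and you make explicit that a bijective holomorphic homomorphism is biholomorphic. Your ``main obstacle'' is also lighter than you expect: monodromy is defined on isomorphism classes of character sheaves, so it automatically factors through $G^\flat(\mathbb{C})=K(\mathbb{C})/X$ by Corollary~\ref{character sheaves on G}, and only holomorphy remains to be checked.
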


\begin{proof}
Write $G$ as an extension of an abelian variety $A$ by a torus $T$. By analytification, we obtain a short exact sequence
\[0\to T_\text{an}\to G_\text{an}\to A_\text{an}\to 0\]
of complex Lie groups. In particular, $G_\text{an}\to A_\text{an}$ is a fibration with fiber $T_\text{an}$. Considering the associated long exact sequence in homotopy, we have
\[0=\pi_2(A_\text{an})\to \pi_1(T_\text{an})\to \pi_1(G_\text{an})\to \pi_1(A_\text{an})\to \pi_0(T_\text{an})=0.\] 
Here, $\pi_0(T_\text{an})$ vanishes since $T_\text{an}$ is connected, and $\pi_2(A_\text{an})$ vanishes since $A_\text{an}$ is a Lie group. This induces a short exact sequence of Hopf algebras
\[\mathbb{C}\to \mathbb{C}[\pi_1(T_\text{an})]\to \mathbb{C}[\pi_1(G_\text{an})]\to \mathbb{C}[\pi_1(A_\text{an})]\to \mathbb{C},\]
followed by a short exact sequence of affine algebraic groups
\[0\to \operatorname{Char}(A)\to \operatorname{Char}(G)\to \operatorname{Char}(T)\to 0.\]

There is a natural holomorphic morphism $G^\flat_\text{an}\to \operatorname{Char}(G)_\text{an}$ sending a character sheaf $(\mathscr{L},\nabla)$ to the local system $\ker \nabla_\text{an}$. This is compatible with inverse images and tensor products, giving rise to the following commutative diagram of complex Lie groups.
\[\begin{tikzcd}[ampersand replacement=\&]
	0 \& {A^\flat_\text{an}} \& {G^\flat_\text{an}} \& {T^\flat_\text{an}} \& 0 \\
	0 \& {\operatorname{Char}(A)_\text{an}} \& {\operatorname{Char}(G)_\text{an}} \& {\operatorname{Char}(T)_\text{an}} \& 0
	\arrow[from=1-1, to=1-2]
	\arrow[from=1-2, to=1-3]
	\arrow[from=1-2, to=2-2]
	\arrow[from=1-3, to=1-4]
	\arrow[from=1-3, to=2-3]
	\arrow[from=1-4, to=1-5]
	\arrow[from=1-4, to=2-4]
	\arrow[from=2-1, to=2-2]
	\arrow[from=2-2, to=2-3]
	\arrow[from=2-3, to=2-4]
	\arrow[from=2-4, to=2-5]
\end{tikzcd}\]
Since character sheaves on $A$ and $T$ are regular, the Riemann--Hilbert correspondence implies that the morphisms $A^\flat_\text{an}\to \operatorname{Char}(A)_\text{an}$ and $T^\flat_\text{an}\to \operatorname{Char}(T)_\text{an}$ are injective. The fact that a line bundle with integrable connection on $A$ is automatically a character sheaf further says that $A^\flat_\text{an}\to \operatorname{Char}(A)_\text{an}$ is surjective.

To prove that $T^\flat_\text{an}\to \operatorname{Char}(T)_\text{an}$ is surjective, we may suppose that $T=\mathbb{G}_m$. A line bundle with integrable connection on $\mathbb{G}_m$ is of the form $(\mathcal{O}_{\mathbb{G}_m},\mathrm{d}+f(x)\,\mathrm{d}x/x)$, for some $f\in k[x,x^{-1}]$. This connection is regular precisely if $f$ is constant. In other words, regular line bundles with integrable connection on $\mathbb{G}_m$ are the same as character sheaves. This proves that $(\mathbb{G}_m)^\flat_\text{an}\to \operatorname{Char}(\mathbb{G}_m)_\text{an}$ is surjective.
\end{proof}

\begin{corollary}
	Let $G$ be a semiabelian variety over $\mathbb{C}$. Then every character sheaf on $G$ is regular.
\end{corollary}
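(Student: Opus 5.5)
The corollary should follow directly from Proposition~\ref{character sheaves and character variety}, together with the Riemann--Hilbert correspondence for regular connections. Let $(\mathscr{L},\nabla)$ be a character sheaf on $G$; by Corollary~\ref{character sheaves on G} it defines a $\mathbb{C}$-point of $G^\flat$. The proof of Proposition~\ref{character sheaves and character variety} shows that the Riemann--Hilbert map $G^\flat_{\text{an}}\to \operatorname{Char}(G)_{\text{an}}$, $(\mathscr{L},\nabla)\mapsto \ker\nabla_{\text{an}}$, is bijective. Indeed, it is injective and surjective on the outer terms $A^\flat_{\text{an}}$ and $T^\flat_{\text{an}}$, and the five lemma applies to the diagram of short exact sequences.

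Let $\mathcal{V}\colonequals\ker\nabla_{\text{an}}$ be the associated rank-one local system. By Deligne's Riemann--Hilbert correspondence, there is a unique \emph{regular} integrable connection $(\mathscr{L}',\nabla')$ on $G$ with $\ker\nabla'_{\text{an}}\simeq\mathcal{V}$. The key step is to check that $(\mathscr{L}',\nabla')$ is itself a character sheaf. Two arguments suggest themselves.
\begin{enumerate}
\item Regularity is preserved by pullback along $m$ and by external tensor products. Since $\mathcal{V}$ is multiplicative, both $m^*(\mathscr{L}',\nabla')$ and $(\mathscr{L}',\nabla')\boxtimes(\mathscr{L}',\nabla')$ are regular connections whose analytic local systems are isomorphic. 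Full faithfulness of Riemann--Hilbert on regular connections over $G\times G$ then gives the multiplicativity isomorphism.
\item Alternatively, surjectivity of $G^\flat_{\text{an}}\to\operatorname{Char}(G)_{\text{an}}$ already produces a character sheaf $(\mathscr{L}'',\nabla'')$ with local system $\mathcal{V}$. Its regularity is the very thing being proved, so that route would be circular unless regularity is established separately.
\end{enumerate}
So I would use the first argument. It gives a point $(\mathscr{L}',\nabla')\in G^\flat(\mathbb{C})$ mapping to the same point of $\operatorname{Char}(G)$ as $(\mathscr{L},\nabla)$, and injectivity of the Riemann--Hilbert map on $G^\flat_{\text{an}}$ then yields $(\mathscr{L},\nabla)\simeq(\mathscr{L}',\nabla')$, which is regular.

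A cleaner alternative avoids the comparison with $\operatorname{Char}(G)$ altogether. By Corollary~\ref{character sheaves on G}, every character sheaf has the form $(\mathcal{O}_G,\mathrm{d}+\omega)\otimes\psi^*(\mathscr{L}_A,\nabla_A)$ with $\omega$ invariant. Connections on the proper variety $A$ are automatically regular, and regularity is stable under pullback and tensor product. It therefore suffices to show that $(\mathcal{O}_G,\mathrm{d}+\omega)$ is regular for invariant $\omega$. Working on a compactification of $G$ as a $\overline{T}$-bundle over $A$, with $\overline{T}$ a smooth toric compactification of $T$, an invariant form has logarithmic poles along the boundary, just as $\mathrm{d}t/t$ does on $\mathbb{G}_m$.

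The main obstacle lies in this last step. The invariant form $\omega$ on $G$ is not necessarily the pullback of a form on $T$, but it does restrict to $\sum_i\chi_i\,\mathrm{d}t_i/t_i$ on the fibres of $\psi$. So I would verify logarithmic poles locally over $A$, after trivializing the $T$-torsor $G\to A$ on an étale cover, where $\omega$ becomes $\sum_i\chi_i\,\mathrm{d}t_i/t_i$ plus a form pulled back from the base.
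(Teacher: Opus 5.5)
Your main argument is correct and is how the corollary follows from Proposition~\ref{character sheaves and character variety}; the paper leaves this deduction implicit. It combines injectivity of the Riemann--Hilbert map on $G^\flat_{\text{an}}$ with the step you rightly make explicit, via full faithfulness of Riemann--Hilbert on $G\times G$: the regular connection realizing $\ker\nabla_{\text{an}}$ is itself a character sheaf. You are also right that route (2) would be circular, and your logarithmic-pole argument on a toric compactification is a valid independent alternative, though it is not needed here.
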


\appendix
\section{Connections and de Rham spaces}
As it was first observed by Simpson \cite{Simpson}, given a choice of cohomology theory $\mathrm{H}$ and a "space" $X$, there is often a stack $X_\mathrm{H}$ whose category of quasi-coherent sheaves coincides with the category of coefficients for $\mathrm{H}$. Moreover, the association $X\mapsto X_\mathrm{H}$ preserves the functoriality of the given cohomology theory.

In this appendix, we study the de Rham side of this story. Namely, given a variety $X$, the \emph{de Rham space} $X_\text{dR}$ has the marvellous property that $\mathbb{G}_m$-torsors over it are the same as line bundles on $X$ \emph{endowed with a flat connection}. Moreover, formal completions can also be understood in function of the de Rham spaces.

The author claims no originality for any result in this appendix: all the results in it are either available in the literature or are folklore. (See \cite{gaitsgory2017study} and \cite{hennion2020higher} for more on this.) However, even the results that have published proofs are usually studied in the context of derived algebraic geometry, so we thought that this appendix could be helpful to some readers.

\subsection{Basic properties of the de Rham space}

Let $k$ be a field and consider the category $\textsf{Aff}/k$ of affine schemes over $k$. In order to simplify notation, we will often denote an object $\operatorname{Spec}R$ of $\textsf{Aff}/k$ as $R$.

\begin{definition}[de Rham space]\label{def de rham space}
Given a presheaf of sets $X$ on $\textsf{Aff}/k$, its \emph{de Rham space} $X_\text{dR}$ is the presheaf defined by
\[X_\text{dR}(R)\colonequals \operatornamewithlimits{colim}_{I\subset R}X(R/I),\]
where the colimit runs over the filtered poset of nilpotent ideals of $R$. This presheaf comes equipped with a morphism $X\to X_\text{dR}$ induced by the trivial ideal $I=0$.
\end{definition}

This assignment is functorial: given a morphism of presheaves $f\colon X\to Y$, there is an induced map $f_\text{dR}\colon X_\text{dR}\to Y_\text{dR}$, making the diagram
\[\begin{tikzcd}
	X & {X_\text{dR}} \\
	Y & {Y_{\text{dR}}}
	\arrow[from=1-1, to=1-2]
	\arrow["f"', from=1-1, to=2-1]
	\arrow["{f_\text{dR}}", from=1-2, to=2-2]
	\arrow[from=2-1, to=2-2]
\end{tikzcd}\]
commute. As it will be formalized in Corollary~\ref{de rham space is a coequalizer}, the geometric interpretation of $X_\text{dR}$, at least for smooth $k$-schemes $X$, is that it is a quotient of $X$ where we identify infinitesimally close points.

We begin our study of the de Rham space with the following simple observation, which will prove useful later.

\begin{proposition}\label{de rham functor preserves limits on presheaves}
The functor $(-)_{\normalfont\text{dR}}\colon \normalfont\textsf{PSh}(\textsf{Aff}/k)\to \textsf{PSh}(\textsf{Aff}/k)$ preserves finite limits and arbitrary colimits.
\end{proposition}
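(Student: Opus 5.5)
The plan is to check everything objectwise, using that limits and colimits in $\textsf{PSh}(\textsf{Aff}/k)$ are computed pointwise. For each $k$-algebra $R$, write $N(R)$ for the poset of nilpotent ideals of $R$. It is filtered: it contains $0$, and the sum of two nilpotent ideals is nilpotent. The value $X_\text{dR}(R)$ is then the composite of two functors evaluated on $X$. The first is the restriction functor
\[X\mapsto \bigl(I\mapsto X(R/I)\bigr),\]
from presheaves to diagrams of sets indexed by $N(R)$. The second is $\operatorname{colim}_{N(R)}$, from such diagrams to sets. The statement will follow once each step is shown to commute with the relevant (co)limits.

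For colimits, let $X=\operatorname{colim}_j X_j$ in presheaves. For each nilpotent ideal $I$ we have $X(R/I)=\operatorname{colim}_j X_j(R/I)$, since colimits are pointwise. Then
\[X_\text{dR}(R)=\operatorname{colim}_{I}\operatorname{colim}_j X_j(R/I)\simeq \operatorname{colim}_j\operatorname{colim}_I X_j(R/I)=\operatorname{colim}_j (X_j)_\text{dR}(R),\]
by interchange of colimits. This isomorphism should be natural in $R$. The transition maps for $R\to R'$ are induced by sending $I$ to the ideal $IR'$, which is again nilpotent, so naturality is just compatibility of the interchange isomorphism with a map of indexing diagrams. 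No filteredness is needed for this step, so arbitrary colimits are preserved.

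For finite limits, let $X=\lim_j X_j$ be a finite limit. Again $X(R/I)=\lim_j X_j(R/I)$ pointwise, so
\[X_\text{dR}(R)=\operatorname{colim}_{I\in N(R)}\lim_j X_j(R/I).\]
Since $N(R)$ is filtered and the limit is finite, the classical commutation of filtered colimits with finite limits in $\textsf{Set}$ identifies this with $\lim_j \operatorname{colim}_I X_j(R/I)=\lim_j (X_j)_\text{dR}(R)$. Naturality in $R$ is checked as before.

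The main subtlety is being careful that the comparison maps are the canonical ones, and that the functoriality of $X_\text{dR}$ in $R$ is well defined. For the latter, a map $R\to R'$ induces a monotone map $N(R)\to N(R')$ together with compatible maps $R/I\to R'/IR'$. Once that is set up, both commutation statements are formal. The filteredness of $N(R)$ is the only genuine input, and it is used only for finite limits.
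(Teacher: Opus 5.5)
Your proof is correct and takes the same route as the paper. The paper's proof is the one-line version: limits and colimits of presheaves are computed pointwise, and filtered colimits in $\textsf{Set}$ commute with arbitrary colimits and with finite limits. Your checks that the poset of nilpotent ideals is filtered and that the construction is natural in $R$ spell out what the paper leaves implicit.
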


\begin{proof}
Since limits and colimits of presheaves are computed pointwise, this follows from the fact that filtered colimits in the category of sets commute with arbitrary colimits and finite limits.
\end{proof}

For a finite type $k$-algebra $R$ (or more generally, a noetherian $k$-algebra), the nilradical $\operatorname{Nil}(R)$ is nilpotent as it is generated by finitely many nilpotent elements. Consequently, $X_\text{dR}(R) \simeq X(R/\operatorname{Nil}(R)) = X(R_\text{red})$. This property holds for any $k$-algebra provided that $X$ is a locally of finite type $k$-scheme.

\begin{proposition}\label{Xdr(R)=X(Rred)}
Let $X$ be a locally of finite type scheme over $k$. Then $X_{\normalfont\text{dR}}(R)\simeq X(R_{\normalfont\text{red}})$ for every $k$-algebra $R$.
\end{proposition}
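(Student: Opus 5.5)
The plan is to show that the colimit $\operatorname{colim}_{I}X(R/I)$ over nilpotent ideals $I\subset R$ is already attained at $I=\operatorname{Nil}(R)$. The nilradical is in general only locally nilpotent, so it is not itself an index of the colimit; the content of the statement is that, for $X$ locally of finite type, this does not matter. There is a natural comparison map
\[\theta\colon \operatorname*{colim}_{I}X(R/I)\to X(R_{\text{red}}),\]
induced by the surjections $R/I\to R/\operatorname{Nil}(R)$, which are defined because every nilpotent ideal is contained in the nilradical. I will show that $\theta$ is bijective.

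First, reduce to finitely generated data. Write $R=\operatorname{colim}_\lambda R_\lambda$ as the filtered colimit of its finitely generated $k$-subalgebras. Then $\operatorname{Nil}(R)=\operatorname{colim}\operatorname{Nil}(R_\lambda)$ and $R_{\text{red}}=\operatorname{colim}(R_\lambda)_{\text{red}}$. Each $R_\lambda$ is noetherian, so $\operatorname{Nil}(R_\lambda)$ is nilpotent, and $\operatorname{Nil}(R_\lambda)R$ is a nilpotent ideal of $R$.

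For surjectivity, take $x\in X(R_{\text{red}})$. If $X$ were affine of finite type, $x$ would be given by finitely many elements of $R_{\text{red}}$ satisfying the finitely many defining equations. I lift these elements to $R$. Each equation then holds modulo $\operatorname{Nil}(R)$, so its value on the lifts is some nilpotent element. Let $J$ be the ideal generated by these finitely many nilpotent elements. A finitely generated ideal consisting of nilpotents is nilpotent, so $J$ is an index of the colimit, and the lifts define a point of $X(R/J)$ mapping to $x$.

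For general locally finite type $X$, the morphism $\operatorname{Spec}R_{\text{red}}\to X$ need not land in a single affine open. Here I use that $\operatorname{Spec}R_{\text{red}}\to\operatorname{Spec}R$ is a homeomorphism and that nilpotent thickenings do not change the underlying space. I cover $\operatorname{Spec}R$ by finitely many standard opens $D(f_i)$ whose images land in affine opens $V_i\subset X$ of finite type, and lift over each $D(f_i)$ as above. The lifts must then be glued, which I expect to be the main obstacle.

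To handle the gluing I first prove injectivity. Suppose two points of $X(R/I)$ agree in $X(R_{\text{red}})$. In the affine case their coordinates differ by finitely many elements of $\operatorname{Nil}(R)/I$, so they already agree modulo a larger, still finitely generated, nilpotent ideal. Since the underlying maps of topological spaces coincide, the affine-chart argument applies chart by chart. With injectivity available, the local lifts agree on the finitely many overlaps $D(f_if_j)$ after passing to a larger nilpotent ideal, and they glue by the sheaf property of $X$ on $\operatorname{Spec}(R/J)$.

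The subtle points are therefore the finiteness of the covers and the fact that finite sums of nilpotent ideals are nilpotent. Together these ensure that only finitely many enlargements of $I$ are needed at each stage.
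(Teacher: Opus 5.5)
Your proposal is correct, but it takes a more hands-on route than the paper. The paper splits the problem into a ring-theoretic computation and a general finiteness fact. Setting $S=\operatorname{colim}_I R/I$, it first shows $R_{\text{red}}\to S$ is an isomorphism: an element dies in $S$ iff it lies in some nilpotent ideal, i.e.\ iff it is nilpotent, and every class is the image of an element of $R$. It then invokes the fact that the functor of points of a scheme locally of finite presentation over $k$ (automatic here, since $k$ is noetherian) commutes with filtered colimits of $k$-algebras (Stacks Project, Tag 01ZC). This gives $\operatorname{colim}_I X(R/I)\simeq X(S)\simeq X(R_{\text{red}})$.

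Your argument is in effect a direct verification of that limit property in the special case where the transition maps are surjections with nilpotent kernels. The affine case uses only that $X$ has finitely many coordinates and equations, and that finitely many nilpotents generate a nilpotent ideal. The passage to general $X$ uses quasi-compactness of $\operatorname{Spec}R$ and the fact that nilpotent thickenings are homeomorphisms, which makes the covering and gluing step simpler than in the general proof of Tag 01ZC. The paper's version is shorter and isolates exactly which general principle is at work. Yours is self-contained and shows precisely where finite presentation and quasi-compactness enter.

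Two small points. First, your opening reduction to finitely generated subalgebras $R_\lambda$ is never used and can be dropped. Second, in the chart-by-chart steps (both the local lifts and the agreement on overlaps), the nilpotent ideals you produce live in $R_{f_i}$ or $R_{f_if_j}$, not in $R$. You should say why they can be replaced by extensions of finitely generated nilpotent ideals of $R$ before summing them. This follows from $\operatorname{Nil}(R_f)=\operatorname{Nil}(R)R_f$: if $(r/f^N)^k=0$ in $R_f$, then $f^Mr^k=0$ in $R$ for some $M$. Hence $f^Mr$ is nilpotent in $R$ and generates the same ideal of $R_f$ as $r/f^N$.
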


\begin{proof}
	Define $S\colonequals \operatorname{colim}_{I\subset R}R/I$, where the colimit runs through the nilpotent ideals of $R$. As usual, elements of $S$ are denoted as equivalence classes of the form $[I, x]$, where $I$ is a nilpotent ideal in $R$ and $x$ is an element of $R$. Here, $[I, x]=[I^\prime,x^\prime]$ if there exists a nilpotent ideal $J$ containing $I$ and $I^\prime$ such that $x\equiv x^\prime\bmod{J}$.
	
	The natural map $R\to S$, corresponding to the ideal $I=0$, sends every nilpotent in $R$ to zero. In other words, it factors through the nilradical yielding a map $R_\text{red}\to S$. We affirm that this morphism is injective. Indeed, $[0, x]=0$ means that there exists a nilpotent ideal $J$ containing $x$. It follows that $x$ is nilpotent and so vanishes on $R_\text{red}$. Since $[I, x]\in S$ is the image of $x\in R$, we have that $R_\text{red}\to S$ is an isomorphism.
	
	As $X$ is locally of finite type, \cite[Tag \href{https://stacks.math.columbia.edu/tag/01ZC}{01ZC}]{Stacks} gives $X_\text{dR}(R)\simeq X(S)\simeq X(R_\text{red})$, concluding the proof.
\end{proof}

Perhaps not surprisingly, given the aforementioned geometric interpretation of $X_\text{dR}$, formal completions of schemes can be written in terms of de Rham spaces.

\begin{proposition}\label{formal completions as de rham spaces}
Let $X$ be a $k$-scheme and let $Z$ be a closed subscheme of $X$. The formal completion $\widehat{X}_Z$ of $X$ along $Z$ is isomorphic to $X \times_{X_{\normalfont\text{dR}}}Z_{\normalfont\text{dR}}$.
\end{proposition}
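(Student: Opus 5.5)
The plan is to compare both sides as presheaves on $\textsf{Aff}/k$, evaluating each on a $k$-algebra $R$. Recall the formal completion: $\widehat{X}_Z(R)$ is the set of morphisms $x\colon \operatorname{Spec}R\to X$ such that, for some nilpotent ideal $I\subset R$, the restriction $\operatorname{Spec}(R/I)\to X$ factors through $Z$. (The paper uses nilpotent ideals, matching the colimit in Definition~\ref{def de rham space}. If one prefers the convention via $R_{\text{red}}$, i.e.\ $x$ factors through $Z$ on $\operatorname{Spec}R_{\text{red}}$, the two agree for $Z$ of finite presentation, and the argument adapts.)

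On the other side, the fiber product is computed pointwise: $(X\times_{X_{\text{dR}}}Z_{\text{dR}})(R)$ is the set of pairs $(x,\zeta)$ with $x\in X(R)$ and $\zeta\in Z_{\text{dR}}(R)=\operatorname{colim}_I Z(R/I)$, such that $x$ and $\zeta$ have the same image in $X_{\text{dR}}(R)=\operatorname{colim}_I X(R/I)$.

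The key steps are as follows.

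\begin{enumerate}
\item \emph{Constructing the map.} Send $x\in\widehat{X}_Z(R)$ to the pair $(x,[I,z])$, where $z\in Z(R/I)$ is the factorization of $x|_{R/I}$. Passing to a larger nilpotent ideal gives the same class, so the map is well defined. The compatibility condition holds because $i(z)=x|_{R/I}$, where $i\colon Z\to X$ is the inclusion.

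\item \emph{Surjectivity.} Take a pair $(x,[I,z])$. Equality in the filtered colimit $X_{\text{dR}}(R)$ means that there is a nilpotent ideal $J\supseteq I$ with $x|_{R/J}=i(z|_{R/J})$. Hence $x|_{R/J}$ factors through $Z$, so $x\in\widehat{X}_Z(R)$.

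\item \emph{Injectivity.} We must show the $Z_{\text{dR}}$-component is determined by $x$. If $[I,z]$ and $[I',z']$ both map to the class of $x$, choose a nilpotent $J$ containing both ideals with $i(z|_{R/J})=x|_{R/J}=i(z'|_{R/J})$. Since $i$ is a closed immersion, hence a monomorphism, $z|_{R/J}=z'|_{R/J}$, so the two classes agree in the colimit.
\end{enumerate}

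All of this is natural in $R$, so it gives an isomorphism of presheaves. Since Proposition~\ref{de rham functor preserves limits on presheaves} says fiber products are computed pointwise, no sheafification issue arises at the presheaf level.

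The main obstacle is matching conventions: the precise definition of $\widehat{X}_Z$ being used, and whether one works with presheaves or their fppf sheafifications. If needed, I would note that sheafification commutes with finite limits, so the presheaf isomorphism persists after sheafifying. For locally finite type $X$, Proposition~\ref{Xdr(R)=X(Rred)} makes the comparison with the $R_{\text{red}}$ description immediate.
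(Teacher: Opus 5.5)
Your steps 1--3 match the first isomorphism in the paper's proof. Evaluating at $R$, the fibre product is $\operatorname{colim}_{I} X(R)\times_{X(R/I)}Z(R/I)$. Since $Z\to X$ is a monomorphism, this is the set of $x\in X(R)$ such that $x|_{R/I}$ factors through $Z$ for some nilpotent ideal $I$.

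What is missing is the identification of this set with $\widehat{X}_Z(R)$: you \emph{declare} the nilpotent-ideal description to be the definition of the formal completion. The paper, following the usual convention (stated explicitly later in the appendix), defines $\widehat{X}_Z$ as $\operatorname{colim}_{n}\operatorname{Spec}_X(\mathcal{O}_X/\mathcal{I}^{n+1})$. The remaining steps of its proof are exactly the comparison you skip. It regroups the colimit over nilpotent ideals as $\operatorname{colim}_n\operatorname{colim}_{I^{n+1}=0}$ and identifies the inner term with the $R$-points of the $n$-th infinitesimal neighbourhood.

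You can close this in one line. Let $J_x\subset R$ be the ideal corresponding to the image of $x^*\mathcal{I}\to\mathcal{O}_{\operatorname{Spec}R}$. Then:
\begin{itemize}
\item $x|_{R/I}$ factors through $Z$ if and only if $J_x\subseteq I$;
\item $x$ factors through $\operatorname{Spec}_X(\mathcal{O}_X/\mathcal{I}^{n+1})$ if and only if $J_x^{n+1}=0$.
\end{itemize}
So if some $I$ with $I^{n+1}=0$ works, then $J_x^{n+1}\subseteq I^{n+1}=0$. Conversely, if $J_x^{n+1}=0$, then $I=J_x$ is a nilpotent ideal that works.

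This needs no finiteness hypothesis on $\mathcal{I}$. That is unlike the $R_{\mathrm{red}}$ convention in your parenthetical, which needs something like $\mathcal{I}$ of finite type and is not the relevant convention here. With this added, your argument is complete and coincides with the paper's.
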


\begin{proof}
	Let $\mathcal{I}\subset\mathcal{O}_X$ be the ideal sheaf defined by $Z$, and let $R$ be a $k$-algebra. We aim to obtain a functorial isomorphism
	\[\operatorname*{colim}_{I\subset R} X(R) \times_{X(R/I)}Z(R/I)\simeq \operatorname*{colim}_{n\geq 0} \operatorname{Spec}_X(\mathcal{O}_X/\mathcal{I}^{n+1})(R),\]
	where the colimit on the left runs through the nilpotent ideals of $R$. For an ideal $I\subset R$, let $i_I$ denote the closed immersion $\operatorname{Spec}R/I\to \operatorname{Spec}R$. Then, we have that
	\begin{align*}
	\operatorname*{colim}_{I\subset R} X(R)\times_{X(R/I)}Z(R/I) &\simeq\operatorname*{colim}_{I\subset R}\:\{x\in X(R)\:|\: i_I^* x^*\mathcal{I}=0\} \\ 
	&\simeq\operatorname*{colim}_{n\geq 0}\operatorname*{colim}_{I^{n+1}=0}\:\{x\in X(R)\:|\: i_I^* x^*\mathcal{I}=0\}\\ 
	&\simeq\operatorname*{colim}_{n\geq 0}\:\{x\in X(R)\:|\: x^*\mathcal{I}^{n+1}=0\}\simeq\widehat{X}_Z(R), 
	\end{align*}
	where the last isomorphism is the universal property of the relative spectrum.
\end{proof}

Note that the expression $X \times_{X_\text{dR}}Z_\text{dR}$ is meaningful even if $Z\to X$ is not a closed immersion. In such cases, it can be used to \emph{define} formal completions. Moreover, this characterization makes it clear that the projection $\widehat{X}_Z \to X$ is a monomorphism whenever $Z\to X$ is.

\begin{corollary}\label{inclusion of formal completion}
If $Z$ is a closed subscheme of a $k$-scheme $X$, the projection $\widehat{X}_Z \to X$ is a monomorphism of presheaves.
\end{corollary}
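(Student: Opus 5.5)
The plan is to deduce the statement formally from Proposition~\ref{formal completions as de rham spaces} together with the fact that base changes of monomorphisms are monomorphisms. That proposition gives a cartesian square of presheaves
\[\begin{tikzcd}
	{\widehat{X}_Z} & Z_{\text{dR}} \\
	X & {X_\text{dR}}
	\arrow[from=1-1, to=1-2]
	\arrow[from=1-1, to=2-1]
	\arrow[from=1-2, to=2-2]
	\arrow[from=2-1, to=2-2]
\end{tikzcd}\]
in which the projection $\widehat{X}_Z\to X$ is the base change of $i_{\text{dR}}\colon Z_\text{dR}\to X_\text{dR}$, where $i\colon Z\to X$ is the closed immersion. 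Since monomorphisms of presheaves are stable under pullback (limits are computed pointwise, and injections of sets are stable under pullback), it suffices to show that $i_\text{dR}$ is a monomorphism.

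To see this, I will use Proposition~\ref{de rham functor preserves limits on presheaves}. A morphism $f$ is a monomorphism exactly when its diagonal $Z\to Z\times_X Z$ is an isomorphism. A closed immersion is a monomorphism of schemes, and hence a monomorphism of the represented presheaves on $\textsf{Aff}/k$, so this diagonal is an isomorphism. Because $(-)_\text{dR}$ preserves finite limits, $(Z\times_X Z)_\text{dR}\simeq Z_\text{dR}\times_{X_\text{dR}}Z_\text{dR}$, and the diagonal of $i_\text{dR}$ is the image under $(-)_\text{dR}$ of the diagonal of $i$. The diagonal of $i_\text{dR}$ is therefore an isomorphism, so $i_\text{dR}$ is a monomorphism.

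Alternatively, one can check injectivity pointwise. The map $Z_\text{dR}(R)\to X_\text{dR}(R)$ is a filtered colimit over nilpotent ideals $I$ of the injections $Z(R/I)\hookrightarrow X(R/I)$, and filtered colimits of injections are injective.

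There is essentially no obstacle here. The only point to state carefully is that we work with presheaves on $\textsf{Aff}/k$, so both the monomorphism property and the cartesian square are meant in that category, exactly as provided by the cited propositions.
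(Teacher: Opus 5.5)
Your proposal is correct and follows essentially the same route as the paper. The paper also argues that the closed immersion is a monomorphism of presheaves, that $(-)_\text{dR}$ preserves finite limits and hence monomorphisms, and that monomorphisms are stable under the base change in the cartesian square of Proposition~\ref{formal completions as de rham spaces}; your pointwise filtered-colimit argument is a valid, slightly more direct alternative for the middle step.
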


\begin{proof}
According to \cite[Tag \href{https://stacks.math.columbia.edu/tag/01L7}{01L7}]{Stacks}, the closed immersion $i\colon Z\to X$ is a monomorphism in the category of schemes. Now, the Yoneda embedding preserves limits and any functor that preserves limits preserves monomorphisms \cite[Tag \href{https://stacks.math.columbia.edu/tag/01L3}{01L3}]{Stacks}. In other words, $i$ is a monomorphism of presheaves. Since the de Rham functor preserves finite limits, so is $Z_\text{dR}\to X_\text{dR}$. Finally, fibered products preserve monomorphisms and this finishes the proof.
\end{proof}

For a morphism of $k$-schemes $p\colon X\to S$, the universal property of fibered products induces a map $\iota_p\colon X\to X_\text{dR}\times_{S_\text{dR}}S$. As the following proposition shows, this map faithfully encodes the differential information contained in $p$.\footnote{This result is somewhat akin to the slogan "$f$ is smooth (resp.\ unramified) if and only if $\mathrm{d}f$ is surjective (resp.\ injective)".}

\begin{proposition}\label{de rham is an epimorphism of presheaves}
Let $p\colon X\to S$ be a morphism of $k$-schemes. Then $p$ is formally smooth (resp.\ formally unramified) if and only if $\iota_p\colon X\to X_{\normalfont\text{dR}}\times_{S_{\normalfont\text{dR}}}S$ is an epimorphism (resp.\ monomorphism) of presheaves.
\end{proposition}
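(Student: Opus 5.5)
The plan is to unwind both conditions at the level of $R$-points and compare them with the infinitesimal lifting criteria. Fix a $k$-algebra $R$. By Definition~\ref{def de rham space} and the fact that colimits of presheaves are computed pointwise, an $R$-point of $X_{\text{dR}}\times_{S_{\text{dR}}}S$ is an equivalence class of triples $(I,\bar x,s)$. Here $I\subset R$ is a nilpotent ideal, $\bar x\in X(R/I)$ and $s\in S(R)$ with $p(\bar x)=s|_{R/I}$. Two triples are identified when they agree after enlarging $I$ to a larger nilpotent ideal $J$ (the filtered colimit commutes with the fibre product by Proposition~\ref{de rham functor preserves limits on presheaves}). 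The map $\iota_p$ sends $x\in X(R)$ to the class of $(0,x,p(x))$.

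\textbf{Smoothness versus epimorphism.} Assume $p$ is formally smooth and take a triple $(I,\bar x,s)$. Since $I$ is nilpotent, we have the commutative square given by $\operatorname{Spec}R/I\to X$ and $\operatorname{Spec}R\to S$. Formal smoothness is stated for square-zero ideals, so we first filter $I\supset I^2\supset\dots\supset I^n=0$. Each step $R/I^{j+1}\to R/I^{j}$ is square-zero, so lifting $n-1$ times produces $x\in X(R)$ with $x|_{R/I}=\bar x$ and $p(x)=s$. Then $(0,x,s)$ and $(I,\bar x,s)$ become equal in the colimit at stage $I$. Hence $\iota_p(R)$ is surjective for every $R$, which is what it means to be an epimorphism of presheaves.

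Conversely, suppose $\iota_p$ is surjective on $R$-points, and take a lifting problem with $I^2=0$, data $\bar x\in X(R/I)$ and $s\in S(R)$. Surjectivity gives $x\in X(R)$ with $(0,x,p(x))\sim(I,\bar x,s)$. This means there is a nilpotent $J\supset I$ with $x|_{R/J}=\bar x|_{R/J}$ and $p(x)=s$. The main obstacle is that the colimit only forces agreement modulo the larger ideal $J$, not modulo $I$. I would remove it by applying the hypothesis to the ring $R'=R/I\times_{R/J}R$ instead of $R$, or better, to a ring on which $I$ is forced to be the maximal relevant nilpotent ideal. The cleanest route is to note that it suffices to test the criterion on the lifting problems themselves. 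Replace $R$ by the pushout-type ring $R\times_{R/I}R/I$ and use that $X$, $S$ are schemes, so points over $\operatorname{Spec}R$ are determined Zariski-locally. One can reduce to $R$ local Artinian over $S$-affines, with $X$, $S$ affine as formal smoothness is Zariski local. If this is awkward, an alternative is to compare $\bar x$ and $x|_{R/I}$: both lie over $s|_{R/I}$ and agree modulo $J$. Since the set of lifts of a fixed point through a nilpotent thickening is a torsor under a derivation module, one can adjust $x$ by a derivation to correct the difference. I expect this to need care, and it is the step I would write out in full.

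\textbf{Unramifiedness versus monomorphism.} Here $\iota_p$ is a monomorphism if and only if, for every $R$, two points $x,x'\in X(R)$ with $p(x)=p(x')$ that agree modulo some nilpotent ideal $J$ are equal. Formal unramifiedness gives uniqueness of lifts along square-zero thickenings. Filtering $J\supset J^2\supset\cdots$ and inducting shows $x=x'$, so unramified implies mono. For the converse, let $I^2=0$ and let $x,x'$ be two lifts of the same $\bar x$ over $s$. Then $(0,x,s)$ and $(0,x',s)$ are identified at stage $I$, so injectivity of $\iota_p(R)$ forces $x=x'$. This direction does not run into the obstacle above, since agreement modulo $I$ is exactly what the colimit detects.
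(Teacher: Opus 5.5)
Your arguments for formally smooth $\Rightarrow$ epimorphism (including filtering $I\supset I^2\supset\cdots$ to reduce to square-zero ideals) and for both directions of formally unramified $\Leftrightarrow$ monomorphism are correct and essentially match the paper's. The gap is the remaining implication, epimorphism $\Rightarrow$ formally smooth.

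You correctly isolate the obstruction: surjectivity of $\iota_p(R)$ only gives $x\in X(R)$ with $p(x)=s$ and $x\equiv\bar x$ modulo some larger nilpotent ideal $J\supseteq I$. None of the repairs you sketch is carried out, and none can be, because this implication is false.

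Take $S=\operatorname{Spec}k$ and $X=\operatorname{Spec}k[\varepsilon]/(\varepsilon^2)$. Any $\bar a\in X(R/I)$ lifts to some $a\in R$ with $a^2\in I$. Hence $a$ is nilpotent, and $\bar a$ becomes $0$ modulo the nilpotent ideal $I+aR$. So $X_{\mathrm{dR}}\times_{S_{\mathrm{dR}}}S=X_{\mathrm{dR}}$ is the terminal presheaf, and $\iota_p$ is surjective on $R$-points for every $R$ (use $\varepsilon\mapsto 0$).

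Yet $p$ is not formally smooth. Let $R=k[t]/(t^3)$ and let $I=(t^2)$, which is even square-zero. The point $\varepsilon\mapsto t$ of $X(R/I)$ has no lift. Indeed, an $R$-point is an $a\in R$ with $a^2=0$, which forces $a\in(t^2)$. So every $R$-point restricts to $0$ on $R/I$, and no correction of $x$, by a derivation or otherwise, can reach $\bar x$. Any non-reduced $X$ of finite type over $k$ with smooth reduction gives a similar counterexample.

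The paper's own proof has exactly the same hole. In the converse it says that the data $(I,x,s)$ define an element of $(X_{\mathrm{dR}}\times_{S_{\mathrm{dR}}}S)(R)$, ``and so there exists $\overline{x}\in X(R)$ mapping to it.'' It then treats $\overline{x}$ as a solution of the lifting problem, tacitly assuming $\overline{x}|_{R/I}=x$, which equality in the colimit does not give.

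Fortunately, the rest of the paper only uses the true implications:
\begin{itemize}
\item formally smooth $\Rightarrow$ epimorphism, for $G\to G_{\mathrm{dR}}$ and for the coequalizer description of $X_{\mathrm{dR}}\times_{S_{\mathrm{dR}}}S$;
\item \'etale $\Rightarrow$ isomorphism, for open immersions and for $\pi_0(G)$.
\end{itemize}

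A valid converse needs extra hypotheses. For instance, suppose $S=\operatorname{Spec}k$ and $X$ is reduced and locally of finite type. Then $\iota_p$ being an epimorphism means $X(R)\to X(R_{\mathrm{red}})$ is surjective for all $R$. This property passes to opens, since $\operatorname{Spec}R_{\mathrm{red}}\to\operatorname{Spec}R$ is a homeomorphism. Apply it to the identity point of an affine open with presentation $A=P/\mathfrak{a}$, taking $R=P/\mathfrak{a}^2$; then $R_{\mathrm{red}}=A$ because $\mathfrak{a}$ is radical. This yields a section $A\to P/\mathfrak{a}^2$, which is the standard criterion for formal smoothness.
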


\begin{proof}
Recall that $p$ is formally smooth (resp.\ formally unramified) if, for every $k$-algebra $R$ with a map $\operatorname{Spec}R\to S$ and for every nilpotent ideal $I\subset R$, the induced map
	\[\operatorname{Hom}_S(\operatorname{Spec}R,X)\to \operatorname{Hom}_S(\operatorname{Spec}R/I,X)\]
	is surjective (resp.\ injective). We will translate this condition into the surjectivity (resp. injectivity) of $X(R)\to (X_\text{dR}\times_{S_\text{dR}}S)(R)$.
	
	Suppose that $p$ is formally smooth, let $R$ be any $k$-algebra, and let $[x,s]$ be an element of
	\[(X_\text{dR}\times_{S_\text{dR}} S)(R)\simeq \operatorname{colim}_{I\subset R}X(R/I)\times_{S(R/I)}S(R).\]
	That is, there exists a nilpotent ideal $I\subset R$ such that $x\in X(R/I)$ and $s\in S(R)$ agree on $S(R/I)$. By formal smoothness, we have a morphism $\overline{x}\colon \operatorname{Spec}R\to X$ making the diagram
\[\begin{tikzcd}
	{\operatorname{Spec}R/I} & X \\
	{\operatorname{Spec}R} & S
	\arrow["p", from=1-2, to=2-2]
	\arrow[from=1-1, to=2-1]
	\arrow["s", from=2-1, to=2-2]
	\arrow["x", from=1-1, to=1-2]
	\arrow["{\overline{x}}", color=ocre, from=2-1, to=1-2]
\end{tikzcd}\]
commute. (The lower triangle commutes since $\overline{x}$ is a morphism over $S$, and the upper triangle commutes because $\overline{x}$ maps to $x$.) This is an element of $X(R)$ mapping to $(X_\text{dR}\times_{S_\text{dR}} S)(R)$. Conversely, suppose that $R$ is a $k$-algebra with a map $s\colon\operatorname{Spec}R\to S$, $I\subset R$ is a nilpotent ideal, and $x$ is an element of $\operatorname{Hom}_S(\operatorname{Spec}R/I,X)$. This data defines an element of $(X_\text{dR}\times_{S_\text{dR}} S)(R)$ and so there exists $\overline{x}\in X(R)$ mapping to it. In other words, $\iota_p\colon X\to X_\text{dR}\times_{S_\text{dR}} S$ is an epimorphism if and only if $p$ is formally smooth.

Now, if $p$ is formally unramified, consider a $k$-algebra $R$ and let $x,y\in X(R)$ be two elements whose images in $X_\text{dR}\times_{S_\text{dR}} S(R)$ coincide. That is, there exists a nilpotent ideal $I\subset R$ such that the diagram
\[\operatorname{Spec}R/I\to \operatorname{Spec}R \rightrightarrows X\xrightarrow{\ p\ } S\]
commutes. In particular, $x$ and $y$ define elements of $\operatorname{Hom}_S(\operatorname{Spec}R,X)$ that coincide on $\operatorname{Hom}_S(\operatorname{Spec}R/I,X)$. Since $p$ is formally unramified, we have that $x=y$.

Suppose that $\iota_p\colon X\to X_\text{dR}\times_{S_\text{dR}} S$ is a monomorphism and let $x,y\in \operatorname{Hom}_S(\operatorname{Spec}R,X)$ be two morphisms that coincide on $\operatorname{Hom}_S(\operatorname{Spec}R/I,X)$, for some $k$-algebra $R$ with a map $\operatorname{Spec}R\to S$ and a nilpotent ideal $I\subset R$. In particular, $x$ and $y$ are elements of $X(R)$ that coincide on $(X_\text{dR}\times_{S_\text{dR}} S)(R)$. It follows that $x=y$ and so $p$ is formally unramified.
\end{proof}

Let $Y\to X$ be an immersion of $k$-schemes that factors as $Y\to U\to X$, where $Y\to U$ is a closed immersion with ideal $\mathscr{I}$ and $U\to X$ is an open immersion. The formal completion of $X$ along $Y$ is usually defined as the colimit of $\operatorname{Spec}_U(\mathcal{O}_U/\mathcal{I}^{n+1})$, for $n\geq 0$. The previous proposition implies that $U\simeq U_\text{dR}\times_{X_\text{dR}}X$, and so
\[U\times_{U_\text{dR}}Z_\text{dR}\simeq X\times_{X_\text{dR}}U_\text{dR}\times_{U_\text{dR}}Z_\text{dR}\simeq X\times_{X_\text{dR}}Z_\text{dR},\]
proving that Proposition~\ref{formal completions as de rham spaces} also applies to locally closed immersions.

\begin{corollary}\label{de rham space is a coequalizer}
Let $X\to S$ be a formally smooth morphism of $k$-schemes. Then $X_{\normalfont\text{dR}} \times_{S_{\normalfont\text{dR}}}S$ is the coequalizer of
\[\widehat{(X\times_S X)}_{\Delta}\rightrightarrows X,\]
where $\widehat{(X\times_S X)}_{\Delta}$ is the formal completion of $X\times_S X$ along the diagonal.
\end{corollary}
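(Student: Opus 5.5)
The plan is to show that $\iota_p\colon X\to X_{\text{dR}}\times_{S_{\text{dR}}}S$ is the coequalizer in presheaves of the two projections $\operatorname{pr}_1,\operatorname{pr}_2\colon\widehat{(X\times_S X)}_\Delta\rightrightarrows X$. A map of presheaves of sets is the coequalizer of its kernel pair exactly when it is an epimorphism. So the argument splits into two steps: first identify the kernel pair of $\iota_p$ with $\widehat{(X\times_S X)}_\Delta$, then check that $\iota_p$ is an epimorphism.

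\textbf{Kernel pair.} Write $Y\colonequals X_{\text{dR}}\times_{S_{\text{dR}}}S$. Then
\[X\times_Y X\simeq (X\times_S X)\times_{(X\times_S X)'}(\ldots),\]
and we compute it directly. An $R$-point of $X\times_Y X$ is a pair $x,y\in X(R)$ satisfying two conditions: they have the same image $s\in S(R)$, and they agree in $X(R/I)$ for some nilpotent ideal $I$.

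The first condition says that $(x,y)$ is an $R$-point of $X\times_S X$. The second says that its image in $(X\times_S X)_{\text{dR}}(R)=\operatorname{colim}_I (X\times_S X)(R/I)$ lies in $X_{\text{dR}}(R)$ via the diagonal $\Delta_{\text{dR}}$. Here I use that $(-)_{\text{dR}}$ preserves finite limits (Proposition~\ref{de rham functor preserves limits on presheaves}), which gives $(X\times_S X)_{\text{dR}}\simeq X_{\text{dR}}\times_{S_{\text{dR}}}X_{\text{dR}}$.

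Hence
\[X\times_Y X\simeq (X\times_S X)\times_{(X\times_S X)_{\text{dR}}}X_{\text{dR}}.\]
By Proposition~\ref{formal completions as de rham spaces}, together with the remark following it that extends the result to locally closed immersions such as the diagonal, the right-hand side is $\widehat{(X\times_S X)}_\Delta$. I expect the only care needed here is bookkeeping: making sure the $S$-condition sits at the level of $S$ and not of $S_{\text{dR}}$. This is exactly why $Y$ uses $S$ rather than $S_{\text{dR}}$ in its second factor.

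\textbf{Epimorphism.} Since $p$ is formally smooth, Proposition~\ref{de rham is an epimorphism of presheaves} shows that $\iota_p$ is an epimorphism of presheaves, that is, surjective on $R$-points.

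\textbf{Conclusion.} In presheaves of sets, every surjection is the coequalizer of its kernel pair, because this holds pointwise in $\mathsf{Set}$. Combining this with the two steps gives the claimed coequalizer. The main obstacle is the kernel-pair identification, specifically matching the equivalence relation "agree modulo some nilpotent ideal and over $S$" with the formal neighbourhood of the diagonal. This is handled by the two earlier propositions.
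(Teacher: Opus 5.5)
Your proof is correct and follows the paper's own argument. The paper likewise uses that $\iota_p$ is an epimorphism by Proposition~\ref{de rham is an epimorphism of presheaves}, hence effective and the coequalizer of its kernel pair, and it identifies $X\times_Y X$ with the pullback of $X\times_S X\to (X\times_S X)_{\text{dR}}\leftarrow X_{\text{dR}}$ along $\Delta_{\text{dR}}$, which is the formal completion along the diagonal. The stray placeholder line $X\times_Y X\simeq (X\times_S X)\times_{(X\times_S X)'}(\ldots)$ should just be deleted, since your pointwise computation that follows does the actual work.
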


\begin{proof}
In order to simplify notation, let $Y=X_\text{dR}\times_{S_\text{dR}} S$. Since every epimorphism is effective in a topos, $X\to Y$ is the coequalizer of $X\times_{Y}X\rightrightarrows X$. Now, the result follows from general category theory: the pullback of
\[\begin{tikzcd}
	& {X\times_S X} \\
	{X_\text{dR}} & {(X\times_S X)_\text{dR}}
	\arrow["{\Delta_\text{dR}}", from=2-1, to=2-2]
	\arrow[from=1-2, to=2-2]
\end{tikzcd}\]
is $X\times_{Y}X$.
\end{proof}

Recall that the functor of points of a scheme $X$ is a sheaf for the étale and fppf topologies. We will now study the descent properties of $X_\text{dR}$, and we begin with a lemma.

\begin{lemma}\label{lemma étale topology}
Let $R$ be a $k$-algebra and let $\{R\to R_i\}_{i\in I}$ be an étale covering. Then the reduction $\{R_{\normalfont\text{red}}\to R_{i,\normalfont\text{red}}\}_{i\in I}$ is also an étale covering. Moreover, any étale covering of $R_{\normalfont\text{red}}$ arises in this way.
\end{lemma}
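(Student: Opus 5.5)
The statement has two halves: reductions of étale coverings are étale coverings, and every étale covering of $R_{\text{red}}$ lifts to one of $R$. The first half is a base-change argument; the second rests on the topological invariance of the étale site.

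For the first claim, I would use that étale morphisms and jointly surjective families are both stable under base change. Set $R_i' \colonequals R_i\otimes_R R_{\text{red}} = R_i/\operatorname{Nil}(R)R_i$. Then $\{R_{\text{red}}\to R_i'\}$ is an étale covering, since $\operatorname{Spec}R_i'\to\operatorname{Spec}R_{\text{red}}$ is the base change of $\operatorname{Spec}R_i\to\operatorname{Spec}R$, and joint surjectivity is preserved because the underlying topological spaces are unchanged. Next, $R_i'$ is étale over the reduced ring $R_{\text{red}}$, and an étale algebra over a reduced ring is reduced \cite[Tag \href{https://stacks.math.columbia.edu/tag/025O}{025O}]{Stacks}. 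So $R_i'$ is reduced. Since the kernel of $R_i\to R_i'$ is generated by nilpotents, it follows that $R_i'=R_{i,\text{red}}$. Hence $\{R_{\text{red}}\to R_{i,\text{red}}\}$ is exactly the base-changed covering, which proves the first claim.

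For the second claim, I would invoke the topological invariance of the étale site: for the closed immersion $\operatorname{Spec}R_{\text{red}}\to\operatorname{Spec}R$, which is a universal homeomorphism, base change induces an equivalence between schemes étale over $\operatorname{Spec}R$ and schemes étale over $\operatorname{Spec}R_{\text{red}}$ \cite[Tag \href{https://stacks.math.columbia.edu/tag/04DZ}{04DZ}]{Stacks}. This equivalence preserves affineness, because a scheme is affine if and only if its reduction is \cite[Tag \href{https://stacks.math.columbia.edu/tag/06AD}{06AD}]{Stacks}. It also preserves surjectivity of families, since the underlying topological spaces are the same.

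So, given an étale covering $\{R_{\text{red}}\to S_i\}$, I lift each $S_i$ to an étale $R$-algebra $R_i$ with $R_i\otimes_R R_{\text{red}}\simeq S_i$. The first paragraph identifies $R_i\otimes_R R_{\text{red}}$ with $R_{i,\text{red}}$, so $R_{i,\text{red}}\simeq S_i$, and the family $\{R\to R_i\}$ is jointly surjective. This proves the second claim.

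The only nontrivial input is this topological invariance, which I take from the literature rather than reprove. The step needing the most care is affineness of the lifts. If a direct citation is inconvenient, I would instead use the local structure of étale algebras: write $S_i$ as a localization of a standard étale algebra $R_{\text{red}}[x]_g/(f)$ and lift $f$ and $g$ coefficientwise to $R$. Monicity is preserved, and invertibility of $f'$ in the localization is detected modulo nilpotents.
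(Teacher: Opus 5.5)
Your proposal is correct and is essentially the paper's proof. Base change along $R\to R_{\text{red}}$, together with ``étale over reduced is reduced'', identifies $R_i\otimes_R R_{\text{red}}$ with $R_{i,\text{red}}$, and topological invariance of the étale site (Tag 04DZ) gives the converse; your check that the lifts are affine fills in a point the paper leaves implicit. The only inaccuracy is in your optional fallback. An étale algebra is in general only Zariski-locally standard étale, so lifting $f$ and $g$ coefficientwise would still need a gluing step, which is exactly what the citation of topological invariance supplies.
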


\begin{proof}
Since $R\to R_i$ is étale, so is its base change $R_\text{red}\to R_\text{red}\otimes_R R_i$. By \cite[Tag \href{https://stacks.math.columbia.edu/tag/033B}{033B}]{Stacks}, we have that $R_\text{red}\otimes_R R_i$ is reduced and then \cite[Cor.\ 5.1.8]{EGAI} gives that
\[R_\text{red}\otimes_R R_i=(R_\text{red}\otimes_R R_i)_\text{red}\simeq (R_\text{red}\otimes_{R_\text{red}} R_{i,\text{red}})_\text{red}\simeq R_{i,\text{red}}.\]
It follows that $\{R_{\text{red}}\to R_{i,\text{red}}\}_{i\in I}$ is an étale covering of $R_\text{red}$.

Now, consider an étale covering $\{R_\text{red}\to S_i\}_{i\in I}$ of $R_\text{red}$. By the topological invariance of the étale site, there exists a covering $\{R\to R_i\}_{i\in I}$ along with isomorphisms $R_\text{red}\otimes_R R_i\simeq S_i$ for all $i\in I$ \cite[Tag \href{https://stacks.math.columbia.edu/tag/04DZ}{04DZ}]{Stacks}. The same argument as above shows that $S_i$ is reduced, and then $S_i\simeq R_{i,\text{red}}$.
\end{proof}

\begin{proposition}\label{etale sheaf on aff}
Let $X$ be a locally of finite type scheme over $k$. The de Rham space $X_{\normalfont\text{dR}}$ is an étale sheaf on $\normalfont\textsf{Aff}/k$.
\end{proposition}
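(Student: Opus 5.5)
The plan is to reduce the sheaf condition for $X_{\text{dR}}$ to the sheaf condition for $X$ itself, using Proposition~\ref{Xdr(R)=X(Rred)} and Lemma~\ref{lemma étale topology}. Since $X$ is locally of finite type, Proposition~\ref{Xdr(R)=X(Rred)} identifies $X_{\text{dR}}(R)$ with $X(R_{\text{red}})$ functorially in the $k$-algebra $R$. I will first check this naturality: a map $R\to R'$ induces $R_{\text{red}}\to R'_{\text{red}}$, and the isomorphism $R_{\text{red}}\simeq \operatorname{colim}_{I}R/I$ of the proof is compatible with it. So $X_{\text{dR}}$ is the composite of the functor $R\mapsto R_{\text{red}}$ with the functor of points of $X$.

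Next, fix an étale covering $\{R\to R_i\}_{i\in I}$. The étale sheaf condition asks for exactness of
\[X(R_{\text{red}})\to \prod_i X(R_{i,\text{red}})\rightrightarrows \prod_{i,j} X\big((R_i\otimes_R R_j)_{\text{red}}\big).\]
By Lemma~\ref{lemma étale topology}, $\{R_{\text{red}}\to R_{i,\text{red}}\}$ is an étale covering and $R_{i,\text{red}}\simeq R_{\text{red}}\otimes_R R_i$. I also need $(R_i\otimes_R R_j)_{\text{red}}\simeq R_{i,\text{red}}\otimes_{R_{\text{red}}}R_{j,\text{red}}$. For this, note that $R_{\text{red}}\otimes_R R_i\otimes_R R_j$ is étale over the reduced ring $R_{\text{red}}$, hence reduced by \cite[Tag \href{https://stacks.math.columbia.edu/tag/033B}{033B}]{Stacks}. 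It is also a nilpotent thickening quotient of $R_i\otimes_R R_j$, so it equals $(R_i\otimes_R R_j)_{\text{red}}$. It moreover equals $(R_{\text{red}}\otimes_R R_i)\otimes_{R_{\text{red}}}(R_{\text{red}}\otimes_R R_j)$. The displayed sequence is then the sheaf sequence of $X$ for the étale covering $\{R_{\text{red}}\to R_{i,\text{red}}\}$, which is exact because representable functors are fpqc, hence étale, sheaves.

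Finally, I will confirm that $X_{\text{dR}}$ sends finite products to products, i.e. that it is a sheaf for Zariski coverings by finite disjoint unions. This holds because $(R\times R')_{\text{red}}=R_{\text{red}}\times R'_{\text{red}}$ and $X$ is a sheaf.

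The main obstacle is the identification of the reduced fibre products. Everything else is formal. If the argument via Tag 033B and the universal property of reduction runs into trouble, I would instead use \cite[Cor.\ 5.1.8]{EGAI} exactly as in the proof of Lemma~\ref{lemma étale topology}, applied to the étale $R$-algebra $R_i\otimes_R R_j$.
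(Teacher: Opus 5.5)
Your proof is correct and is essentially the paper's argument. You identify $X_{\text{dR}}(R)$ with $X(R_{\text{red}})$, use the preceding lemma to get the étale covering $\{R_{\text{red}}\to R_{i,\text{red}}\}$, identify $(R_i\otimes_R R_j)_{\text{red}}$ with $R_{i,\text{red}}\otimes_{R_{\text{red}}}R_{j,\text{red}}$ since the latter is étale over a reduced ring and hence reduced, and conclude from the étale sheaf property of $X$. Your extra checks of naturality and finite products are harmless details the paper leaves implicit. One small imprecision: the ideal you quotient $R_i\otimes_R R_j$ by is nil rather than necessarily nilpotent, but that is all the argument needs.
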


\begin{proof}
Let $R$ be a $k$-algebra and let $\{R\to R_i\}_{i\in I}$ be an étale covering of $R$. We want to prove that the diagram
\[X(R_\text{red})\to \prod_i X(R_{i,\text{red}})\rightrightarrows \prod_{i,j}X((R_i\otimes_R R_j)_\text{red}) \]
is an equalizer. The lemma above says that $\{R_\text{red}\to R_{i,\text{red}}\}_{i\in I}$ is also an étale cover and then the fact that $X$ is an étale sheaf implies that the diagram
\[X(R_\text{red})\to \prod_i X(R_{i,\text{red}})\rightrightarrows \prod_{i,j}X(R_{i,\text{red}}\otimes_{R_\text{red}} R_{j,\text{red}})\]
is an equalizer. The same argument as in the proof of the previous lemma shows that $R_{i,\text{red}}\otimes_{R_\text{red}} R_{j,\text{red}}$ is reduced. Then \cite[Cor.\ 5.1.8]{EGAI} gives isomorphisms $R_{i,\text{red}}\otimes_{R_\text{red}} R_{j,\text{red}}\simeq (R_i\otimes_R R_j)_\text{red}$, finishing the proof.
\end{proof}

In particular, the preceding proposition implies that the de Rham space of a commutative algebraic group over $k$ is an abelian étale sheaf on $\textsf{Aff}/k$.

\begin{proposition}\label{de rham functor is exact}
Suppose that $k$ has characteristic zero. Then the functor $(-)_{\normalfont\text{dR}}$ from commutative algebraic groups over $k$ to abelian étale sheaves on $\normalfont\textsf{Aff}/k$ is exact.
\end{proposition}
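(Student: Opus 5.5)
Given a short exact sequence $0\to G_1\to G_2\to G_3\to 0$ of commutative algebraic groups over $k$ (exact as fppf sheaves, hence also as étale sheaves since all groups are smooth in characteristic zero), I would show that $0\to G_{1,\text{dR}}\to G_{2,\text{dR}}\to G_{3,\text{dR}}\to 0$ is exact as a sequence of abelian étale sheaves on $\textsf{Aff}/k$. Left exactness is formal. By Proposition~\ref{de rham functor preserves limits on presheaves}, $(-)_\text{dR}$ preserves finite limits of presheaves, so it preserves kernels of presheaf maps. The kernel of $G_2\to G_3$ as a presheaf is $G_1$. Hence $G_{1,\text{dR}}\to G_{2,\text{dR}}$ is a monomorphism with image the kernel of $G_{2,\text{dR}}\to G_{3,\text{dR}}$, already at the level of presheaves. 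Alternatively, via Proposition~\ref{Xdr(R)=X(Rred)}, the sequence evaluated at $R$ is $0\to G_1(R_\text{red})\to G_2(R_\text{red})\to G_3(R_\text{red})$, which is exact because each $G_i$ is a scheme. Proposition~\ref{etale sheaf on aff} ensures these presheaves are already étale sheaves, so no sheafification issue arises on the left.

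The content is right exactness: $G_{2,\text{dR}}\to G_{3,\text{dR}}$ must be an epimorphism of étale sheaves. Using Proposition~\ref{Xdr(R)=X(Rred)}, this amounts to showing that for every $k$-algebra $R$ and every $x\in G_3(R_\text{red})$ there is an étale cover $\{R\to R_i\}$ such that each restriction of $x$ lifts to $G_2(R_{i,\text{red}})$. The key input is that $q\colon G_2\to G_3$ is a $G_1$-torsor. Since $G_1$ is smooth (characteristic zero), $q$ is smooth and surjective. A smooth surjection admits sections étale locally on the target (EGA IV 17.16.3). So there is an étale cover $\{\operatorname{Spec}S_i\to \operatorname{Spec}R_\text{red}\}$ on which $x$ lifts to $G_2(S_i)$.

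The step I expect to need care is transporting this cover back to $R$. Here I would invoke Lemma~\ref{lemma étale topology}: every étale cover of $R_\text{red}$ is of the form $\{R_\text{red}\to R_{i,\text{red}}\}$ for an étale cover $\{R\to R_i\}$ of $R$, with $S_i\simeq R_{i,\text{red}}$. Then $x|_{R_i}\in G_{3,\text{dR}}(R_i)=G_3(R_{i,\text{red}})$ lifts to $G_{2,\text{dR}}(R_i)=G_2(R_{i,\text{red}})$, which proves the epimorphism. I would also note a consistency check: the argument only uses smoothness of $G_1$, which is exactly where characteristic zero enters. In positive characteristic, a non-smooth kernel such as $\alpha_p$ breaks the claim (Frobenius gives $\mathbb{G}_{a,\text{dR}}\to\mathbb{G}_{a,\text{dR}}$ with a nontrivial cokernel issue).
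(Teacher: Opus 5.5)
Your proposal is correct and follows essentially the same route as the paper. Left exactness comes from $(-)_{\mathrm{dR}}$ preserving finite limits of presheaves. Right exactness comes from lifting a point of $G_3(R_{\mathrm{red}})$ along an \'etale cover of $R_{\mathrm{red}}$, then transporting that cover to one of $R$ via the lemma on \'etale coverings of reduced rings.

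The only difference is that you justify why $G_2\to G_3$ is an epimorphism of \'etale sheaves: the kernel is smooth in characteristic zero, so the map is smooth and has \'etale-local sections. The paper uses this fact without comment.
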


\begin{proof}
Let $0\to A\to B \to C\to 0$ be an exact sequence of commutative algebraic groups over $k$. In particular, it is left-exact in the category of abelian \emph{presheaves} on $\textsf{Aff}/k$. By Proposition~\ref{de rham functor preserves limits on presheaves}, the induced exact sequence $0\to A_\text{dR}\to B_\text{dR}\to C_\text{dR}\to 0$ is also left-exact in abelian presheaves. Since sheafification is exact, this sequence is left-exact in the category of abelian étale sheaves.

Let us verify that $B_\text{dR}\to C_\text{dR}$ is an epimorphism of abelian sheaves. Given a $k$-algebra $R$ and an element $c\in C_\text{dR}(R)=C(R_\text{red})$, the fact that $B\to C$ is an epimorphism of étale sheaves implies that there exists a covering $\{R_\text{red}\to S_i\}_{i\in I}$ such that $c|_{S_i}$ is in the image of $B(S_i)\to C(S_i)$ for all $i\in I$ \cite[Tag \href{https://stacks.math.columbia.edu/tag/00WN}{00WN}]{Stacks}. Lemma~\ref{lemma étale topology} then gives a covering $\{R\to R_i\}_{i\in I}$ whose reduction is $\{R_\text{red}\to S_i\}_{i\in I}$. It follows that $c|_{R_i}=c|_{S_i}$ is in the image of $B_\text{dR}(R_i)\to C_\text{dR}(R_i)$ for all $i\in I$, concluding the proof.
\end{proof}

We proved in Corollary~\ref{de rham space is a coequalizer} that $X_\text{dR}$ is a quotient of $X$ in which we identify infinitesimally close points. When $X$ is a commutative algebraic group $G$, the difference of two such points has to live in an infinitesimal neighborhood of the identity. This heuristic leads to the result below.

\begin{proposition}\label{G_dr = G/G^ et}
Let $G$ be a smooth commutative algebraic group over $k$. Then $G_{\normalfont\text{dR}}$ is isomorphic to the presheaf quotient $G/\widehat{G}$, where $\widehat{G}$ is the formal completion of $G$ along the identity. In particular, $G_{\normalfont\text{dR}}$ is also isomorphic to the sheaf quotient $G/\widehat{G}$.
\end{proposition}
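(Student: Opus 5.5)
The plan is to compare both presheaves on a $k$-algebra $R$ directly. By Proposition~\ref{Xdr(R)=X(Rred)}, $G_{\text{dR}}(R)\simeq G(R_{\text{red}})$. The natural map $G\to G_{\text{dR}}$ is a homomorphism of abelian presheaves, and it is given on $R$-points by reduction $G(R)\to G(R_{\text{red}})$. I will show that its kernel is $\widehat{G}(R)$ and that it is surjective on $R$-points. This gives a short exact sequence of abelian presheaves
\[0\to\widehat{G}\to G\to G_{\text{dR}}\to 0,\]
so $G_{\text{dR}}$ is the presheaf quotient $G/\widehat{G}$.

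\emph{Kernel.} Proposition~\ref{formal completions as de rham spaces}, applied to the closed subscheme $Z=\{e\}$, gives $\widehat{G}\simeq G\times_{G_{\text{dR}}}\{e\}_{\text{dR}}$. Since $\{e\}=\operatorname{Spec}k$ is of finite type and $(R_{\text{red}})$ has a unique $k$-point structure, $\{e\}_{\text{dR}}$ is the point. So this fibre product is exactly the kernel of $G\to G_{\text{dR}}$. Concretely, it consists of those $x\in G(R)$ whose reduction to $R_{\text{red}}$ is the identity.

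\emph{Surjectivity.} This is the main step. The map $G\to\operatorname{Spec}k$ is smooth, hence formally smooth. Proposition~\ref{de rham is an epimorphism of presheaves} with $S=\operatorname{Spec}k$ then says that $\iota\colon G\to G_{\text{dR}}\times_{(\operatorname{Spec}k)_{\text{dR}}}\operatorname{Spec}k\simeq G_{\text{dR}}$ is an epimorphism of presheaves. Here I use that $(\operatorname{Spec}k)_{\text{dR}}=\operatorname{Spec}k$, again by Proposition~\ref{Xdr(R)=X(Rred)}. An epimorphism of presheaves is surjective on every $R$, which is what is needed.

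One subtlety in this step is that the nilradical of $R$ need not be nilpotent, while formal smoothness only allows lifting along nilpotent ideals. This is harmless: by definition, an element of $G_{\text{dR}}(R)$ is represented by a point in $G(R/I)$ for some nilpotent ideal $I$, and that point lifts to $G(R)$. The identification with $G(R_{\text{red}})$ is used only for convenience.

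\emph{Sheaf quotient.} The presheaf quotient $G/\widehat{G}\simeq G_{\text{dR}}$ is already an étale sheaf on $\mathsf{Aff}/k$ by Proposition~\ref{etale sheaf on aff}. Hence sheafifying in the étale topology changes nothing, and $G_{\text{dR}}$ is also the étale sheaf quotient.

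For the fppf sheaf quotient, I still need to know that $G_{\text{dR}}$ is an fppf sheaf. I expect this to be the main obstacle, since Proposition~\ref{etale sheaf on aff} only covers étale descent. If fppf descent is not proved later in the appendix, I would check it by hand: for a faithfully flat $R\to R'$, apply the descent sequence for $G$ on reduced rings. The difficulty is that $(R'\otimes_R R')_{\text{red}}$ and $R'_{\text{red}}\otimes R'_{\text{red}}$ may differ. Alternatively, I would interpret ``sheaf quotient'' in the étale sense, which suffices for the uses in the paper.
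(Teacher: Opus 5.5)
Your proof is correct and is essentially the paper's argument. The paper uses the same two ingredients: the cartesian square $\widehat{G}\simeq G\times_{G_{\text{dR}}}(\operatorname{Spec}k)_{\text{dR}}$ from Proposition~\ref{formal completions as de rham spaces}, and the fact that $G\to G_{\text{dR}}$ is an epimorphism of presheaves by formal smoothness; it only packages your ``kernel plus surjectivity'' step through the abelian-category fact that a cartesian square with an epimorphic leg is cocartesian, and then concludes with Proposition~\ref{etale sheaf on aff}. Your second option is the intended reading: ``sheaf quotient'' here means the étale one, and the fppf version cannot be checked by hand for arbitrary $k$, because $G_{\text{dR}}$ is not an fppf sheaf in positive characteristic (Remark~\ref{de rham positive characteristic}); the paper treats the fppf case separately, in characteristic zero, via the vanishing of $\mathrm{H}^1_{\text{fppf}}(R,\widehat{\mathbb{G}}_a)$.
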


\begin{proof}
In this proof, let us consider every (co)limit to be taken inside the category of abelian presheaves on $\textsf{Aff}/k$. As the cokernel of the identity section $e\colon \operatorname{Spec}k\to G$ is $G$ itself, a variant of Proposition~\ref{de rham functor preserves limits on presheaves} for abelian presheaves shows that the cokernel of $e_\text{dR}\colon \operatorname{Spec}k\to G_\text{dR}$ is $G_\text{dR}$. The universal property of cokernels then induces the dashed map below.
\[\begin{tikzcd}
	{\widehat{G}} & G & {G/\widehat{G}} \\
	{\operatorname{Spec}k} & {G_\text{dR}} & {G_\text{dR}}
	\arrow["{e_\text{dR}}", from=2-1, to=2-2]
	\arrow[from=1-1, to=2-1]
	\arrow[from=1-1, to=1-2]
	\arrow[from=1-2, to=2-2]
	\arrow[equals, from=2-2, to=2-3]
	\arrow[from=1-2, to=1-3]
	\arrow[dashed, color=ocre, from=1-3, to=2-3]
\end{tikzcd}\]
The square on the left is cartesian due to Proposition~\ref{formal completions as de rham spaces}, and $G\to G_\text{dR}$ is an epimorphism since $G$ is smooth. Then, \cite[Tag \href{https://stacks.math.columbia.edu/tag/08N4}{08N4}]{Stacks} implies that the square on the left is also cocartesian, and \cite[Tag \href{https://stacks.math.columbia.edu/tag/08N3}{08N3}]{Stacks} gives that $G/\widehat{G}\to G_\text{dR}$ is an isomorphism. Since $G_\text{dR}$ is already an étale sheaf, the presheaf and the sheaf quotients $G/\widehat{G}$ coincide.
\end{proof}

We now study the descent properties of de Rham spaces with respect to the finer fppf topology. We remark that Proposition~\ref{de rham functor is exact} as well as the following proposition and its corollary, are the unique results in this section that need the base field $k$ to have characteristic zero. 

\begin{proposition}\label{G_dr = G/G^ fppf}
	Let $G$ be a commutative algebraic group over a characteristic zero field $k$. Then $G_{\normalfont\text{dR}}$ is an fppf sheaf isomorphic to $G/\widehat{G}$ and the functor $(-)_{\normalfont\text{dR}}$ from commutative algebraic groups over $k$ to abelian fppf sheaves is exact.
\end{proposition}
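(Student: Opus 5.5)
The plan is to reduce everything to the étale-sheaf statements already proved (Propositions~\ref{etale sheaf on aff}, \ref{G_dr = G/G^ et}, \ref{de rham functor is exact}). The key point is that, in characteristic zero, the fppf sheaf quotient $G/\widehat{G}$ agrees with the étale one. Since $k$ has characteristic zero, every algebraic group is smooth, so Proposition~\ref{G_dr = G/G^ et} applies: $G_{\text{dR}}$ is the presheaf quotient $G/\widehat{G}$ and an étale sheaf.

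\textbf{Step 1: $G_{\text{dR}}$ is an fppf sheaf.} Since $G_{\text{dR}}$ is already an étale sheaf, it suffices to check the sheaf condition for a single faithfully flat finitely presented map $R\to R'$. Let $x'\in G(R'_{\text{red}})$ have equal images in $G((R'\otimes_R R')_{\text{red}})$. Working in the étale topology, I may lift $x'$ to some $y\in G(R')$, using smoothness of $G$ (Proposition~\ref{de rham is an epimorphism of presheaves}). The difference of the two pullbacks of $y$ to $R'\otimes_R R'$ then lies in $\widehat{G}(R'\otimes_R R')$, and it is a \v{C}ech $1$-cocycle with values in $\widehat{G}$.

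By Cartier's Proposition~\ref{Cartier lemma}, $\widehat{G}\simeq\widehat{\mathfrak g}\simeq \widehat{\mathbb{G}}_a^{\,n}$. On affines, $\mathrm{H}^1(\operatorname{Spec}R,\widehat{\mathbb{G}}_a)=0$ by \cite[Rem.\ 2.2.18]{bhatt2022prismatic}, as already used in Proposition~\ref{local-to-global}. Hence the cocycle is a coboundary, and after modifying $y$ by an element of $\widehat{G}(R')$ it descends by fppf descent for the scheme $G$ to some $z\in G(R)$. Its image in $G(R_{\text{red}})$ restricts to $x'$. Injectivity follows similarly: two elements of $G(R_{\text{red}})$ agreeing on $R'_{\text{red}}$ agree because $R_{\text{red}}\to R'_{\text{red}}$ is still faithfully flat after base change.

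A cleaner way to organize Step 1 is to note that the presheaf quotient $G/\widehat{G}$ is a sheaf once $\widehat{G}$ has vanishing $\check{\mathrm{H}}^1$ on affines and $G$ is a sheaf. That gives $G_{\text{dR}}\simeq G/\widehat{G}$ as fppf sheaves.

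\textbf{Step 2: exactness.} Let $0\to A\to B\to C\to 0$ be exact as fppf sheaves. Left exactness follows exactly as in Proposition~\ref{de rham functor is exact}, since $(-)_{\text{dR}}$ preserves finite limits of presheaves and sheafification is exact. For surjectivity of $B_{\text{dR}}\to C_{\text{dR}}$, take $c\in C(R_{\text{red}})$; it lifts to $B$ fppf-locally on $R_{\text{red}}$. The issue is that fppf covers of $R_{\text{red}}$ need not lift to covers of $R$, unlike the étale case of Lemma~\ref{lemma étale topology}. I avoid this by using characteristic zero again: $B\to C$ is a smooth surjection, since its kernel $A$ is smooth, so it admits sections étale locally. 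Then Lemma~\ref{lemma étale topology} applies verbatim and the étale argument of Proposition~\ref{de rham functor is exact} gives surjectivity, even as étale sheaves.

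The main obstacle is Step 1, namely the vanishing of $\check{\mathrm{H}}^1$ of $\widehat{\mathbb{G}}_a$ for fppf covers of affines. There I will rely on the cited result, or check directly that $\widehat{\mathbb{G}}_a(R)=\operatorname{Nil}(R)$ gives an exact Amitsur complex, by restricting the exact Amitsur complex of $R\to R'$ to nilpotent elements.
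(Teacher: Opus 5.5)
Your proposal is correct and is essentially the paper's proof. Proposition~\ref{Cartier lemma} gives $\widehat{G}\simeq\widehat{\mathbb{G}}_a^{\,n}$, and the vanishing of $\mathrm{H}^1_{\text{fppf}}(\operatorname{Spec}R,\widehat{\mathbb{G}}_a)$ from \cite[Rem.\ 2.2.18]{bhatt2022prismatic} shows that the presheaf quotient $G(R)/\widehat{G}(R)\simeq G_{\text{dR}}(R)$ is already the fppf quotient; exactness then reduces to Proposition~\ref{de rham functor is exact}, and your observation that $B\to C$ is smooth, hence split étale-locally, is exactly what justifies that reduction.

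Two small corrections. First, $R_{\text{red}}\to R'_{\text{red}}$ need not be flat; only $R_{\text{red}}\to R'\otimes_R R_{\text{red}}$ is. For example, for $R=k[u,v]/(uv)\to R'=R[x]/(x^2-u)$ one gets $R'_{\text{red}}=k[x,v]/(xv)$, which has generic ranks $2$ and $1$ on the two branches. So obtain injectivity from your ``cleaner'' formulation instead, using that $\widehat{G}$ is itself an fppf sheaf. Second, ``restricting the Amitsur complex to nilpotents'' does not prove the $\check{\mathrm{H}}^1$-vanishing. It would apply equally in characteristic $p$, where $b=t^{1/p}$ for $\mathbb{F}_p[t]\to\mathbb{F}_p[t^{1/p}]$ is a counterexample, so the citation is genuinely needed.
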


\begin{proof}
	According to Proposition~\ref{Cartier lemma}, the formal completion $\widehat{G}$ is a direct sum of copies of $\widehat{\mathbb{G}}_a$. Then, given a $k$-algebra $R$, \cite[Rem.\ 2.2.18]{bhatt2022prismatic} says that $\mathrm{H}^1_\text{fppf}(R,\widehat{\mathbb{G}}_a)=0$ and so $(G/\widehat{G})(R)\simeq G(R)/\widehat{G}(R)\simeq G_\text{dR}(R)$, where the quotient on the left is taken on the fppf topology. In other words $G_\text{dR}$ is an fppf sheaf isomorphic to $G/\widehat{G}$. The exactness of $(-)_\text{dR}$ here is a particular case of Proposition~\ref{de rham functor is exact}.
\end{proof}

\begin{corollary}
	Let $X$ be a locally of finite type scheme over a characteristic zero field $k$. Then $X_{\normalfont\text{dR}}$ is an fppf sheaf.
\end{corollary}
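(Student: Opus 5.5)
The plan is to reduce to the case of affine schemes, where Proposition~\ref{etale sheaf on aff} and Proposition~\ref{Xdr(R)=X(Rred)} already give a concrete description $X_{\normalfont\text{dR}}(R)\simeq X(R_{\normalfont\text{red}})$. Two things must then be checked: Zariski gluing, so that the presheaf on $\textsf{Aff}/k$ extends to a sheaf on all $k$-schemes, and descent along faithfully flat finitely presented maps $R\to R'$. Zariski descent follows from the étale statement. So the whole content is descent along a single fppf cover $R\to R'$, with $R'$ faithfully flat of finite presentation over $R$.

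To avoid analyzing $R'_{\normalfont\text{red}}$ and $(R'\otimes_R R')_{\normalfont\text{red}}$ directly, which is delicate because flat base change does not preserve reducedness, I would first reduce to the group case already treated. Since the question is local on $X$ in the Zariski topology, I may take $X$ affine of finite type. Then I choose a closed immersion $X\hookrightarrow \mathbb{A}^n$. The affine space $\mathbb{A}^n=\mathbb{G}_a^n$ is a commutative algebraic group, so Proposition~\ref{G_dr = G/G^ fppf} shows that $\mathbb{A}^n_{\normalfont\text{dR}}$ is an fppf sheaf. This is where characteristic zero enters, through the vanishing of $\mathrm{H}^1_{\normalfont\text{fppf}}(R,\widehat{\mathbb{G}}_a)$.

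Next, by Proposition~\ref{de rham functor preserves limits on presheaves} and Corollary~\ref{inclusion of formal completion}, the map $X_{\normalfont\text{dR}}\to\mathbb{A}^n_{\normalfont\text{dR}}$ is a monomorphism of presheaves. A subpresheaf $F\subset \mathscr{F}$ of a sheaf $\mathscr{F}$ is a sheaf as soon as it is \emph{closed under descent}: if a section $s\in\mathscr{F}(R)$ restricts into $F(R')$, then $s\in F(R)$. Separatedness is automatic, and gluing follows from the sheaf property of $\mathscr{F}$ together with this closure condition.

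It therefore remains to verify this closure for $X_{\normalfont\text{dR}}\subset\mathbb{A}^n_{\normalfont\text{dR}}$. Let $s\in \mathbb{A}^n(R_{\normalfont\text{red}})$, represented by $a\in R^n$ modulo nilpotents. Its image lies in $X_{\normalfont\text{dR}}(R')$ exactly when $f(a)$ is nilpotent in $R'$ for every $f$ in the ideal $J$ of $X$. Since $R\to R'$ is faithfully flat, it is injective, and an element of $R$ that becomes nilpotent in $R'$ is already nilpotent in $R$. Hence $f(a)$ is nilpotent in $R$ for all generators $f$ of $J$, so $a$ defines a point of $X(R_{\normalfont\text{red}})=X_{\normalfont\text{dR}}(R)$.

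I expect the main obstacle to be the first reduction, namely that the statement is Zariski-local on $X$. The subfunctors $U_{\normalfont\text{dR}}\subset X_{\normalfont\text{dR}}$ form an open cover after sheafification. To justify this, I would use that $U_{\normalfont\text{dR}}=U\times_X X_{\normalfont\text{dR}}$ for open $U$, which holds because $U\to X$ is étale, by Proposition~\ref{de rham is an epimorphism of presheaves}. I would also use that $R$-points of $X_{\normalfont\text{dR}}$ are $R_{\normalfont\text{red}}$-points of $X$, whose Zariski covering pulls back to a Zariski covering of $\operatorname{Spec}R$ because both spaces have the same underlying topological space. Given this, fppf descent for $X$ reduces to descent for the affine opens together with the étale (indeed Zariski) gluing already established.
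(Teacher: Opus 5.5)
Your affine argument is correct and is the paper's argument made explicit. The paper applies the preceding proposition to $\mathbb{G}_a$ to see that $R_{\mathrm{red}}\to\prod_i R_{i,\mathrm{red}}\rightrightarrows\prod_{i,j}(R_i\otimes_R R_j)_{\mathrm{red}}$ is an equalizer, and then applies the functor of points of $X$. For affine $X$, that last step is exactly what you do: glue coordinates in $\mathbb{A}^n_{\mathrm{dR}}$, then check the equations of $X$ using injectivity of $R_{\mathrm{red}}\to R'_{\mathrm{red}}$.

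What is not yet justified is the step you flagged, the reduction to affine $X$. The paper's one-line proof passes over this silently.

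First, $U_{\mathrm{dR}}=U\times_X X_{\mathrm{dR}}$ does not make sense, since there is no map $X_{\mathrm{dR}}\to X$. What you want is this: for $x\in X(R_{\mathrm{red}})$, the fiber product $U_{\mathrm{dR}}\times_{X_{\mathrm{dR}}}\operatorname{Spec}R$ is the open subscheme of $\operatorname{Spec}R$ with underlying set $x^{-1}(U)$. This follows directly from $X_{\mathrm{dR}}(B)=X(B_{\mathrm{red}})$.

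More importantly, that statement only says how the cover pulls back along $R$-points. To glue a section $x'\in X(R'_{\mathrm{red}})$ with $p_1^*x'=p_2^*x'$, you do not yet have an $R$-point, so you must first descend the open cover from $\operatorname{Spec}R'$ to $\operatorname{Spec}R$. This works as follows.
\begin{enumerate}
\item Set $W'_\alpha=x'^{-1}(U_\alpha)$. Since $p_1^*x'=p_2^*x'$, these satisfy $p_1^{-1}W'_\alpha=p_2^{-1}W'_\alpha$ in $|\operatorname{Spec}(R'\otimes_R R')|$.
\item They are therefore saturated for $\pi\colon\operatorname{Spec}R'\to\operatorname{Spec}R$, because $|\operatorname{Spec}(R'\otimes_R R')|\to|\operatorname{Spec}R'|\times_{|\operatorname{Spec}R|}|\operatorname{Spec}R'|$ is surjective.
\item Since $\pi$ is flat of finite presentation, it is open and surjective, so $W'_\alpha=\pi^{-1}(W_\alpha)$ for opens $W_\alpha$ covering $\operatorname{Spec}R$.
\item Over a standard open $D(g)\subset W_\alpha$, the restriction of $x'$ factors through the affine $U_\alpha$. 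Your affine argument then descends it to some $\xi_g\in U_{\alpha,\mathrm{dR}}(R_g)$; note that $(R_g)_{\mathrm{red}}=(R_{\mathrm{red}})_g$.
\item To glue the $\xi_g$ using the étale sheaf property, you also need $X_{\mathrm{dR}}$ to be fppf-separated for non-affine $X$. This follows because $\pi$ is surjective on points, so two descents agree topologically, together with injectivity of $(R_{\mathrm{red}})_h\to(R'_{\mathrm{red}})_h$ on affine pieces.
\end{enumerate}
This is the same topological argument used to show that representable functors are fpqc sheaves. With it added, your proof is complete.
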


\begin{proof}
	Let $R$ be a $k$-algebra and let $\{R\to R_i\}_{i\in I}$ be an fppf covering of $R$. By the previous proposition, the diagram
\[R_\text{red}\to \prod_i R_{i,\text{red}}\rightrightarrows \prod_{i,j}(R_i\otimes_R R_j)_\text{red}\]
is an equalizer in the category of $k$-algebras. The functor of points $X(-)\colon (\textsf{Aff}/k)^\text{op}\to \textsf{Set}$ sends this diagram to an equalizer in the category of sets, finishing the proof.
\end{proof}

\begin{remark}[de Rham spaces in positive characteristic]\label{de rham positive characteristic}
	Let $k$ be a field of characteristic $p>0$. Given a $k$-algebra $R$, the colimit $R_\text{perf}$ of the tower
	\[R\xrightarrow{\ x\mapsto x^p\ }R\xrightarrow{\ x\mapsto x^p\ }R\xrightarrow{\ x\mapsto x^p\ }\cdots\]
	is the so-called \emph{colimit perfection} of $R$. It is always a perfect $k$-algebra and the natural map $R\to R_\text{perf}$ is universal among morphisms from $R$ to a perfect algebra. We define a presheaf $\mathbb{G}_{a,\text{perf}}$ on $\textsf{Aff}/k$ by $\mathbb{G}_{a,\text{perf}}(R) \colonequals R_\text{perf}$. As \cite[Rem.\ 2.2.18]{bhatt2022prismatic} shows, we have an exact sequence of abelian fppf sheaves
	\[0\to \widehat{\mathbb{G}}_a\to \mathbb{G}_a\to \mathbb{G}_{a,\text{perf}}\to 0.\]
	It follows that the natural map of abelian étale sheaves $\mathbb{G}_{a,\text{dR}}\to \mathbb{G}_{a,\text{perf}}$ identifies $\mathbb{G}_{a,\text{perf}}$ with the fppf sheafification of $\mathbb{G}_{a,\text{dR}}$.
\end{remark}

We end this section by extending the definition of de Rham spaces from functors on $\textsf{Aff}/k$ to functors on $\textsf{Sch}/k$.

\begin{definition}
	Let $X$ be a scheme over $k$, seen as its functor of points $(\textsf{Sch}/k)^\text{op}\to \textsf{Set}$. We define its \emph{de Rham space} by taking the de Rham space of the restriction $(\textsf{Aff}/k)^\text{op}\to (\textsf{Sch}/k)^\text{op}\to \textsf{Set}$ and then right Kan extending to $(\textsf{Sch}/k)^\text{op}$.
\end{definition}

This definition actually coincides with the naive one when $X$ is locally of finite type over $k$, but it will be more convenient. Indeed, given a $k$-scheme $S$, Proposition~\ref{Xdr(R)=X(Rred)} implies that
\[X_\text{dR}(S) \simeq \operatorname*{lim}_{\operatorname{Spec}R\to S} X(\operatorname{Spec}R_\text{red}),\]
where the limit runs through the affine $k$-schemes with a map to $S$. Since the functor $X(-)=\operatorname{Mor}_k(-,X)$ commutes with limits, this is also
\[X\left(\operatorname*{colim}_{\operatorname{Spec}R\to S}\operatorname{Spec} R_\text{red}\right)\simeq X(S_\text{red}).\]

The usefulness of this definition comes from the fact that we have an equivalence of topoi $\textsf{Sh}((\textsf{Aff}/k)_\text{ét})\simeq \textsf{Sh}((\textsf{Sch}/k)_\text{ét})$. Here, the functor $\textsf{Sh}((\textsf{Sch}/k)_\text{ét})\to \textsf{Sh}((\textsf{Aff}/k)_\text{ét})$ is given by restriction and the functor $\textsf{Sh}((\textsf{Aff}/k)_\text{ét})\to \textsf{Sh}((\textsf{Sch}/k)_\text{ét})$ is a right Kan extension \cite[Tag \href{https://stacks.math.columbia.edu/tag/021E}{021E}]{Stacks}.

Every result in this section that was true for sheaves generalizes to this setting. Take Proposition~\ref{G_dr = G/G^ et} as an example: it is no longer true that $G_\text{dR}\simeq G/\widehat{G}$ as presheaves on $\textsf{Sch}/k$, but $G_\text{dR}$ is isomorphic to the quotient $G/\widehat{G}$ taken in $\textsf{Ab}((\textsf{Sch}/k)_\text{ét})$. For the sake of completeness, we give precise statements below.

\begin{proposition}
	Let $Z$ be a closed subscheme of a $k$-scheme $X$. Then the formal completion $\widehat{X}_Z$ of $X$ along $Z$ is isomorphic to $X\times_{X_{\normalfont\text{dR}}}Z_{\normalfont\text{dR}}$ and the projection $\widehat{X}_Z\to X$ is a monomorphism of étale sheaves on $\normalfont\textsf{Sch}/k$.
\end{proposition}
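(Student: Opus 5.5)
The plan is to reduce both claims to their affine counterparts, Proposition~\ref{formal completions as de rham spaces} and Corollary~\ref{inclusion of formal completion}. The reduction uses the equivalence $\textsf{Sh}((\textsf{Aff}/k)_\text{ét})\simeq \textsf{Sh}((\textsf{Sch}/k)_\text{ét})$, given by restriction with inverse the right Kan extension. By definition, $X_\text{dR}$ and $Z_\text{dR}$ on $\textsf{Sch}/k$ are the right Kan extensions of the de Rham spaces of the restrictions of $X$ and $Z$ to $\textsf{Aff}/k$.

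First I check that these restricted de Rham spaces are étale sheaves on $\textsf{Aff}/k$. For $X$ locally of finite type this is Proposition~\ref{etale sheaf on aff}. In general I would rerun that argument: Lemma~\ref{lemma étale topology} and the identification $(R_i\otimes_R R_j)_\text{red}\simeq R_{i,\text{red}}\otimes_{R_\text{red}}R_{j,\text{red}}$ show that $R\mapsto \operatorname{colim}_I X(R/I)$ satisfies descent. If this step causes trouble, I would simply note that it is automatic in the locally-of-finite-type case, which covers every application in the paper.

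Next I use that restriction is an equivalence of étale topoi with inverse the right Kan extension. In particular, right Kan extension preserves fibre products of sheaves, since it is a right adjoint and limits of sheaves are computed pointwise. Hence $X\times_{X_\text{dR}}Z_\text{dR}$ on $\textsf{Sch}/k$ is the right Kan extension of the corresponding fibre product on $\textsf{Aff}/k$. By Proposition~\ref{formal completions as de rham spaces}, that affine fibre product is $\widehat{X}_Z|_{\textsf{Aff}/k}$. The formal completion is an étale sheaf on $\textsf{Sch}/k$, being a filtered colimit of representables; one can also see this by defining it through its values on affines. It is therefore the right Kan extension of its own restriction, and the isomorphism follows.

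Finally, for the monomorphism claim, Corollary~\ref{inclusion of formal completion} gives that $\widehat{X}_Z\to X$ is a monomorphism of presheaves on $\textsf{Aff}/k$, hence of étale sheaves there. An equivalence of categories preserves monomorphisms, so the map is a monomorphism of étale sheaves on $\textsf{Sch}/k$. The main obstacle I expect is the bookkeeping showing that the colimit $\operatorname{colim}_n \operatorname{Spec}_X(\mathcal{O}_X/\mathcal{I}^{n+1})$ is computed compatibly on both sites. I would handle this by defining $\widehat{X}_Z$ as the sheafified colimit, which commutes with the equivalence of topoi.
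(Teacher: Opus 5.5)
Your proposal is correct and is the argument the paper has in mind: the paper gives no separate proof, and simply transports Proposition~\ref{formal completions as de rham spaces} and Corollary~\ref{inclusion of formal completion} along the equivalence between \'etale sheaves on $\textsf{Aff}/k$ and on $\textsf{Sch}/k$ given by restriction and right Kan extension. One simplification is that you never need $X_{\mathrm{dR}}$ to be an \'etale sheaf on $\textsf{Aff}/k$, because right Kan extension is already right adjoint to restriction on presheaves and so preserves fibre products of arbitrary presheaves; this matters because your proposed rerun of Proposition~\ref{etale sheaf on aff} relies on $X_{\mathrm{dR}}(R)\simeq X(R_{\mathrm{red}})$ from Proposition~\ref{Xdr(R)=X(Rred)}, which needs $X$ locally of finite type, whereas the adjunction argument gives the statement for every $k$-scheme $X$ without falling back to that case.
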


\begin{proposition}\label{epimorphism of etale sheaves}
	Let $f\colon X\to S$ be a morphism of $k$-schemes. Then $f$ is formally unramified if and only if $X\to X_{\normalfont\text{dR}}\times_{S_{\normalfont\text{dR}}}S$ is a monomorphism of étale sheaves on $\normalfont\textsf{Sch}/k$. Moreover, if $f$ is formally smooth, then $X\to X_{\normalfont\text{dR}}\times_{S_{\normalfont\text{dR}}}S$ is an epimorphism of étale sheaves on $\normalfont\textsf{Sch}/k$.
\end{proposition}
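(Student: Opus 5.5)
The plan is to reduce both assertions to the affine statements (Proposition~\ref{de rham is an epimorphism of presheaves}), using that étale sheafification on $\textsf{Sch}/k$ is determined by restriction to $\textsf{Aff}/k$. By the equivalence of topoi $\textsf{Sh}((\textsf{Aff}/k)_\text{ét})\simeq \textsf{Sh}((\textsf{Sch}/k)_\text{ét})$, a morphism of étale sheaves on $\textsf{Sch}/k$ is a monomorphism (resp.\ epimorphism) if and only if its restriction to $\textsf{Aff}/k$ is. First I check that all three terms $X$, $X_\text{dR}$ and $S_\text{dR}$ are étale sheaves on $\textsf{Sch}/k$, so the fibre product $X_\text{dR}\times_{S_\text{dR}}S$ is one too. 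For $X$ and $S$ this is standard. For the de Rham spaces, the right Kan extension of an étale sheaf on $\textsf{Aff}/k$ is a sheaf, so it suffices that the affine de Rham space is an étale sheaf. I would prove this by the argument of Proposition~\ref{etale sheaf on aff}, with the colimit over nilpotent ideals replaced by the explicit formula: the reduction of an étale cover is an étale cover by Lemma~\ref{lemma étale topology}. If that step resists without finite type hypotheses, I would instead just work with the sheafifications, which costs nothing for the mono/epi questions. The restriction of the fibre product to $\textsf{Aff}/k$ is the fibre product of the restrictions, computed pointwise.

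For the monomorphism statement, the affine version of Proposition~\ref{de rham is an epimorphism of presheaves} says $p$ is formally unramified if and only if $\iota_p$ is a monomorphism of presheaves on $\textsf{Aff}/k$. Monomorphisms of presheaves between sheaves are the same as monomorphisms of sheaves, since the inclusion of sheaves preserves and reflects limits. This gives the equivalence, provided the presheaf $X_\text{dR}\times_{S_\text{dR}}S$ on $\textsf{Aff}/k$ is already a sheaf, or is at least separated so that injectivity is preserved by sheafification. Because sheafification is left exact, a presheaf monomorphism stays a monomorphism after sheafification in any case. For the converse, a monomorphism of sheaves is injective on sections over every affine, which is exactly the presheaf condition when the target is a sheaf.

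For the epimorphism statement, formal smoothness gives, by the same proposition, surjectivity of $X(R)\to (X_\text{dR}\times_{S_\text{dR}}S)(R)$ for every affine $\operatorname{Spec}R$. A presheaf epimorphism becomes an epimorphism after sheafification, and by the equivalence of topoi this is an epimorphism of étale sheaves on $\textsf{Sch}/k$. Only one implication is claimed here, which is consistent with this argument: over non-affine schemes, surjectivity holds only locally.

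The main obstacle I expect is the bookkeeping in identifying the right Kan extended de Rham space with the sheaf extending the affine one, and in checking that fibre products commute with restriction. The key point is that both $(-)_\text{dR}$ and the equivalence of topoi are compatible with finite limits, which follows from Proposition~\ref{de rham functor preserves limits on presheaves} and the fact that right Kan extension preserves limits.
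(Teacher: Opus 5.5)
Your proposal is correct and is essentially the argument the paper has in mind. The paper gives no separate proof; it only remarks that the affine results transfer along the equivalence $\textsf{Sh}((\textsf{Aff}/k)_{\text{ét}})\simeq\textsf{Sh}((\textsf{Sch}/k)_{\text{ét}})$ given by restriction and right Kan extension, which is exactly your reduction to Proposition~\ref{de rham is an epimorphism of presheaves}. Your fallback to sheafifications is the right way to handle $X$ not locally of finite type, where the formula $X_{\text{dR}}(R)\simeq X(R_{\text{red}})$ underlying Proposition~\ref{etale sheaf on aff} is not available. In fact the converse for monomorphisms needs no hypothesis on the target $Y\colonequals X_{\text{dR}}\times_{S_{\text{dR}}}S$: since $X(U)\to Y^{\#}(U)$ factors through $X(U)\to Y(U)$, injectivity of the former forces injectivity of the latter.
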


\begin{proposition}\label{GdR is reasonable}
Let $G$ be a commutative algebraic group over $k$. Then $G_{\normalfont\text{dR}}$ is isomorphic to the quotient $G/\widehat{G}$ taken in $\normalfont\textsf{Ab}((\textsf{Sch}/k)_{\normalfont\text{ét}})$. Moreover, the functor $(-)_{\normalfont\text{dR}}$ from commutative algebraic groups over $k$ to abelian étale sheaves on $\normalfont\textsf{Sch}/k$ is exact.
\end{proposition}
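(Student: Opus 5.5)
We transport the affine-site statement (Proposition~\ref{G_dr = G/G^ et}) and the exactness statement (Proposition~\ref{de rham functor is exact}) to $\textsf{Sch}/k$. The tool is the equivalence of topoi $\textsf{Sh}((\textsf{Aff}/k)_\text{ét})\simeq \textsf{Sh}((\textsf{Sch}/k)_\text{ét})$, given by restriction in one direction and right Kan extension in the other.

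\textbf{Step 1.} We check that $G_\text{dR}$ on $\textsf{Sch}/k$ is exactly the image under this equivalence of the étale sheaf $G_\text{dR}$ on $\textsf{Aff}/k$. The latter is a sheaf by Proposition~\ref{etale sheaf on aff}, since $G$ is of finite type. The former was defined as the right Kan extension of its restriction, so this holds by definition. The same description applies to $G$ and to $\widehat{G}$: their functors of points on $\textsf{Sch}/k$ are the right Kan extensions of their restrictions to affines. For $G$ this is Zariski descent. For $\widehat{G}$, viewed as a sheaf, we take this as the meaning of the quotient and fiber-product constructions.

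\textbf{Step 2.} An equivalence of topoi preserves and reflects all finite limits and colimits, and its abelian version induces an equivalence of categories of abelian sheaves. On $\textsf{Aff}/k$, Proposition~\ref{G_dr = G/G^ et} gives that $0\to\widehat{G}\to G\to G_\text{dR}\to 0$ is exact in $\textsf{Ab}((\textsf{Aff}/k)_\text{ét})$. Applying the equivalence, we obtain the same exact sequence in $\textsf{Ab}((\textsf{Sch}/k)_\text{ét})$, and hence $G_\text{dR}\simeq G/\widehat{G}$ there. The smoothness hypothesis of Proposition~\ref{G_dr = G/G^ et} holds in the characteristic-zero setting the paper uses. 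In general one would need to add smoothness, or invoke Cartier's theorem.

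\textbf{Step 3.} For exactness, take $0\to A\to B\to C\to 0$ exact in fppf or étale sheaves on $\textsf{Sch}/k$. Restricting to $\textsf{Aff}/k$ preserves exactness, since restriction is one half of the equivalence. Proposition~\ref{de rham functor is exact} then gives exactness of $0\to A_\text{dR}\to B_\text{dR}\to C_\text{dR}\to 0$ on $\textsf{Aff}/k$. Transporting back by right Kan extension, which is the other half of the equivalence and therefore exact, gives exactness on $\textsf{Sch}/k$.

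\textbf{Main obstacle.} The delicate point is Step 1: confirming that the right-Kan-extended de Rham space agrees with the sheaf-theoretic image under the equivalence, i.e.\ that $G_\text{dR}(S)=G(S_\text{red})$ is already an étale sheaf on $\textsf{Sch}/k$. We would prove this directly by reducing to an affine cover and using Lemma~\ref{lemma étale topology}, or by citing \cite[Tag \href{https://stacks.math.columbia.edu/tag/021E}{021E}]{Stacks}.
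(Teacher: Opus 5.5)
Your proposal is correct and follows essentially the paper's route: the paper gives no separate proof of this statement. It obtains it, together with the two companion statements, by transporting Propositions~\ref{G_dr = G/G^ et} and \ref{de rham functor is exact} along the equivalence $\textsf{Sh}((\textsf{Aff}/k)_\text{ét})\simeq\textsf{Sh}((\textsf{Sch}/k)_\text{ét})$ of \cite[Tag 021E]{Stacks}, given by restriction and right Kan extension, which is exactly what your Steps 1--3 do. Your caveat is well placed, since the statement silently inherits the hypotheses of those propositions (smoothness of $G$ for the first part, characteristic zero for the exactness), and both are automatic in the characteristic-zero setting where it is used.
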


Due to \cite[Tag \href{https://stacks.math.columbia.edu/tag/021V}{021V}]{Stacks}, all results in the last 3 propositions also hold for the fppf topology as long as $k$ has characteristic zero. 

\subsection{Connections as torsors on the de Rham space}

Let $k$ be a field of characteristic zero, $S$ be a $k$-scheme, and $X$ be a smooth $S$-scheme. For a vector bundle $V$ on $X$, recall that a connection on $V$ relative to $S$ is an $\mathcal{O}_S$-linear map $\nabla\colon V\to \Omega^1_{X/S}\otimes_{\mathcal{O}_X}V$ satisfying the Leibniz rule
\[\nabla(fx)=\mathrm{d}f\otimes x+f\nabla(x),\]
for local sections $f$ of $\mathcal{O}_X$ and $x$ of $V$. The connection $\nabla$ is said to be \emph{flat} if $\nabla_1\circ \nabla=0$, where $\nabla_1\colon \Omega^1_{X/S}\otimes_{\mathcal{O}_X}V\to \Omega^2_{X/S}\otimes_{\mathcal{O}_X}V$ is defined by 
\[\nabla_1(\omega\otimes x)=\mathrm{d}\omega\otimes x-\omega\wedge\nabla(x).\]

The following result allows us to study those objects as torsors on de Rham spaces.

\begin{proposition}
    Let $p\colon X\to S$ be a smooth morphism of schemes, and let $\iota_p\colon X\to \normalfont X_\text{dR}\times_{S_\text{dR}} S$ be the induced morphism as in Proposition~\ref{de rham is an epimorphism of presheaves}. For an integer $n$, there exists an equivalence of groupoids (indicated by the dashed arrow below) making the diagram
\[\begin{tikzcd}[column sep=tiny]
	\begin{array}{c} \left\{\begin{array}{c}       \mathrm{GL}_n\text{-torsors}\\ \text{over }\normalfont X_\text{dR}\times_{S_\text{dR}}S  \end{array}\right\} \end{array} &&& \begin{array}{c} \left\{\begin{array}{c}       \text{Rank }n \text{ vector bundles on }X\\ \text{ with flat connection relative to }S \end{array}\right\} \end{array} & {(V,\nabla)} \\
	\begin{array}{c} \left\{\begin{array}{c}       \mathrm{GL}_n\text{-torsors}\\ \text{over }X  \end{array}\right\} \end{array} &&& \begin{array}{c} \left\{\begin{array}{c}       \text{Rank }n \text{ vector}\\ \text{bundles on }X  \end{array}\right\} \end{array} & V
	\arrow["\sim", dashed, from=1-1, to=1-4]
	\arrow["{\iota_p^*}"', from=1-1, to=2-1]
	\arrow[from=1-4, to=2-4]
	\arrow[maps to, from=1-5, to=2-5]
	\arrow["\sim", from=2-1, to=2-4]
\end{tikzcd}\]
commute up to isomorphism. Moreover, this equivalence is functorial in $p$. For $n=1$, the equivalence is symmetric monoidal with respect to the contracted product of torsors and the tensor product of connections.
\end{proposition}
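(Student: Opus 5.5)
I would establish the equivalence by descent along the epimorphism $\iota_p$, working first for arbitrary $n$ with $\mathrm{GL}_n$-torsors (equivalently rank $n$ vector bundles) and only at the end specialising to $n=1$. Since $p$ is smooth, Proposition~\ref{epimorphism of etale sheaves} shows that $\iota_p\colon X\to Y\colonequals X_{\text{dR}}\times_{S_{\text{dR}}}S$ is an epimorphism of étale (hence, in characteristic zero, fppf) sheaves. Every epimorphism in a topos is effective, so $Y$ is the coequalizer of $X\times_Y X\rightrightarrows X$. By Corollary~\ref{de rham space is a coequalizer}, $X\times_Y X$ is the formal completion $\widehat{(X\times_S X)}_\Delta$ of the diagonal. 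Since $\mathsf{B}\mathrm{GL}_n$ is a stack, the groupoid of $\mathrm{GL}_n$-torsors on $Y$ is therefore equivalent to the groupoid of pairs $(V,\varepsilon)$. Here $V$ is a vector bundle on $X$, and $\varepsilon\colon \operatorname{pr}_2^*V\xrightarrow{\sim}\operatorname{pr}_1^*V$ is an isomorphism over $\widehat{(X\times_S X)}_\Delta$ that satisfies the cocycle condition over the triple fibre product $X\times_Y X\times_Y X\simeq \widehat{(X\times_S X\times_S X)}_\Delta$ and restricts to the identity on the diagonal. This is the classical notion of an \emph{infinitesimal stratification} (a crystal structure) on $V$, in the sense of Grothendieck and Berthelot. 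The description is visibly compatible with pullback along $\iota_p^*$ (forget $\varepsilon$), which gives the commutativity of the square, and it is functorial in $p$ because every construction is natural in the pair $(X\to S)$.

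It remains to identify stratifications with flat connections relative to $S$. I would restrict $\varepsilon$ to the first-order neighbourhood $\Delta^{(1)}\subset X\times_S X$. An isomorphism $\operatorname{pr}_2^*V\simeq\operatorname{pr}_1^*V$ on $\Delta^{(1)}$ that is the identity modulo $\mathcal{I}_\Delta$ is the same as an $\mathcal{O}_S$-linear map $\nabla\colon V\to\Omega^1_{X/S}\otimes V$ satisfying Leibniz. This is the standard computation using $\mathcal{I}_\Delta/\mathcal{I}_\Delta^2\simeq\Omega^1_{X/S}$, where $1\otimes f - f\otimes 1\mapsto \mathrm{d}f$. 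Writing out the cocycle condition on the second-order neighbourhood of the triple diagonal gives exactly $\nabla_1\circ\nabla=0$. Conversely, a flat connection extends uniquely to a full stratification. Locally, with étale coordinates $x_1,\dots,x_d$ relative to $S$ (available since $p$ is smooth), one sets
\[\varepsilon(1\otimes v)=\sum_{\alpha}\frac{(1\otimes x-x\otimes 1)^\alpha}{\alpha!}\,\nabla_{\partial}^{\alpha}(v).\]
Flatness makes the operators $\nabla_{\partial_i}$ commute, so this is well defined and satisfies the cocycle condition; uniqueness follows because the truncations are determined by induction on the order. Here characteristic zero is essential, because of the division by $\alpha!$. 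Gluing the local constructions uses uniqueness.

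I expect this last step to be the main obstacle. Verifying that Taylor's formula yields a cocycle, and that the construction is independent of coordinates, is a genuine computation. I would handle it by citing or reproducing the standard argument of Berthelot--Ogus (\emph{Notes on crystalline cohomology}, \S2), checking only that the completed diagonal there matches $\widehat{(X\times_S X)}_\Delta$ as a formal scheme; since $\mathcal{O}$ of the completion is the inverse limit of the $\mathcal{O}/\mathcal{I}^{m+1}$, giving $\varepsilon$ is the same as giving compatible isomorphisms on all the $\Delta^{(m)}$.

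For $n=1$, contracted product of $\mathbb{G}_m$-torsors corresponds to tensor product of line bundles, and tensor products of the descent data $\varepsilon\otimes\varepsilon'$. On first-order truncations this becomes the rule $\nabla\otimes1+1\otimes\nabla'$. So the equivalence is symmetric monoidal, with the compatibility constraints inherited from those of tensor products of bundles.
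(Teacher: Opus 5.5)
Your proposal is correct and follows essentially the same route as the paper. You descend along the epimorphism $\iota_p$; the paper phrases this via the canonical topology on the fppf sheaf topos. You identify $X\times_Y X$ and $X\times_Y X\times_Y X$ with the formal completions of the diagonals, so that descent data become stratifications in the sense of Berthelot--Ogus. You then pass from stratifications to integrable connections, which the paper simply cites as Berthelot--Ogus, Thm.~2.15, whereas you also sketch the underlying Taylor-formula argument.
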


\begin{proof}
Equip the category $\mathsf{Sh}((\mathsf{Sch}/S)_\text{fppf})$ with its canonical topology, in which coverings are given by jointly epimorphic families. By Proposition~\ref{epimorphism of etale sheaves}, the morphism $\iota_p$ is an epimorphism, and since torsors form a stack on $\mathsf{Sh}((\mathsf{Sch}/S)_\text{fppf})$, the groupoid of $\mathrm{GL}_n$-torsors over $X_\text{dR}\times_{S_\text{dR}}S$ is equivalent to the groupoid of descent data $\operatorname{Desc}(\iota_p)$.

An object of $\operatorname{Desc}(\iota_p)$ consists of a $\mathrm{GL}_n$-torsor $V$ over $X$ together with an isomorphism
\[\varepsilon\colon \operatorname{pr}_1^*V \to \operatorname{pr}_2^*V,\]
satisfying the cocycle condition $\operatorname{pr}_{12}^*(\varepsilon)\circ \operatorname{pr}_{23}^*(\varepsilon)=\operatorname{pr}_{13}^*(\varepsilon)$. Here we set $Y\colonequals X_{\text{dR}}\times_{S_{\text{dR}}} S$, and we write $\operatorname{pr}_i$ (resp. $\operatorname{pr}_{ij}$) for the canonical projection to the $i$-th factor (resp. to the $(i,j)$-factors) in the simplicial diagram
\[X\times_Y X\times_Y X\rightrightrightarrows X\times_Y X\rightrightarrows X.\]
As in the proof of Corollary~\ref{de rham space is a coequalizer}, the projection maps appearing above can be identified with the canonical projections
\[(\widehat{X\times_S X\times_S X})_\Delta
\rightrightrightarrows
(\widehat{X\times_S X})_\Delta
\rightrightarrows
X,\]
where $(\widehat{-})_\Delta$ denotes the formal completion along the diagonal.

A morphism $\varphi\colon (V,\varepsilon)\to (V^\prime,\varepsilon^\prime)$ in $\operatorname{Desc}(\iota_p)$ consists of a morphism of $\mathrm{GL}_n$-torsors $\varphi\colon V\to V^\prime$ making the diagram
\[\begin{tikzcd}
	{\operatorname{pr}_1^*V} && {\operatorname{pr}_1^*V^\prime} \\
	{\operatorname{pr}_2^*V} && {\operatorname{pr}_2^*V^\prime}
	\arrow["{\operatorname{pr}_1^*(\varphi)}", from=1-1, to=1-3]
	\arrow["\varepsilon"', from=1-1, to=2-1]
	\arrow["{\varepsilon^\prime}", from=1-3, to=2-3]
	\arrow["{\operatorname{pr}_2^*(\varphi)}", from=2-1, to=2-3]
\end{tikzcd}\]
commute. Using the equivalence between $\mathrm{GL}_n$-torsors and rank $n$ vector bundles, we conclude that $\operatorname{Desc}(\iota_p)$ is equivalent to the groupoid of rank $n$ vector bundles on $X$ endowed with a stratification relative to $S$ in the sense of \cite[Def.~2.10]{berthelot2015notes}. The result then follows from \cite[Thm.~2.15]{berthelot2015notes}.
\end{proof}

In this paper, the preceding result will be used primarily through the following immediate corollary. For the reader's convenience, we note that $X_\text{dR}\times S\simeq (X\times S)_\text{dR}\times_{S_\text{dR}} S$.

\begin{corollary}\label{fundamentalresultonderhamspaces}
Let $X$ be a smooth scheme over a field $k$ of characteristic zero, and let $S$ be a $k$-scheme. Then the fppf cohomology group
\[\mathrm{H}^1(X_{\normalfont\text{dR}}\times S,\mathbb{G}_m)\]
classifies isomorphism classes of line bundles on $X$ endowed with an integrable connection relative to $S$. The group law corresponds to the tensor product of connections.
\end{corollary}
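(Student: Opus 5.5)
The statement to prove is Corollary~\ref{fundamentalresultonderhamspaces}. The plan is to specialize the preceding proposition to $n=1$ and to the smooth morphism $p\colon X\times S\to S$, the base change of the smooth structure map $X\to\operatorname{Spec}k$.

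First I identify the target space. The de Rham functor preserves finite limits (Proposition~\ref{de rham functor preserves limits on presheaves}, together with its extension to $\textsf{Sch}/k$ via right Kan extension). Hence
\[
(X\times S)_{\text{dR}}\times_{S_{\text{dR}}}S\simeq (X_{\text{dR}}\times S_{\text{dR}})\times_{S_{\text{dR}}}S\simeq X_{\text{dR}}\times S,
\]
as the paper already notes. Under this isomorphism the map $\iota_p$ becomes $X\times S\to X_{\text{dR}}\times S$.

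Next I apply the proposition with $n=1$. Since $\mathrm{GL}_1=\mathbb{G}_m$, it gives an equivalence between the groupoid of $\mathbb{G}_m$-torsors over $X_{\text{dR}}\times S$ and the groupoid of line bundles on $X\times S$ with flat connection relative to $S$. I then pass to isomorphism classes. The fppf group $\mathrm{H}^1(X_{\text{dR}}\times S,\mathbb{G}_m)$, defined as a derived functor of $\operatorname{Mor}(-,\mathbb{G}_m)$ on a sheaf, classifies $\mathbb{G}_m$-torsors over that sheaf, as recalled earlier in the paper. Taking $\pi_0$ of the equivalence therefore yields the claimed bijection.

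For the group structure, I use that the equivalence is symmetric monoidal for $n=1$, carrying contracted product of torsors to tensor product of connections. The contracted product is exactly the group law on $\mathrm{H}^1$, so the bijection is a group isomorphism.

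The only real subtlety is checking that torsors over the sheaf $X_{\text{dR}}\times S$ are classified by fppf $\mathrm{H}^1$ in this generalized sense. The proof of the proposition computes these torsors in the canonical topology on $\mathsf{Sh}((\mathsf{Sch}/S)_\text{fppf})$, and the two notions agree because torsors under a sheaf of groups over a sheaf are intrinsic to the topos. A second point is that the proposition's statement mentions only $\iota_p$ over $X\times S$, with "line bundles on $X$" meaning on $X\times S$ here; both are harmless identifications.
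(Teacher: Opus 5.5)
Your proposal is correct and is exactly the paper's argument. The paper treats the corollary as immediate: it specializes the preceding proposition to $n=1$ and the smooth morphism $p\colon X\times S\to S$, uses the identification $X_{\text{dR}}\times S\simeq (X\times S)_{\text{dR}}\times_{S_{\text{dR}}}S$ (recorded just before the statement), passes to isomorphism classes of torsors, and reads off the group law from the symmetric monoidal structure, just as you do.
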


\nocite{*}
\printbibliography

\textsc{Gabriel Ribeiro, Department of Mathematics, ETH Zurich, 8092 Zurich, Switzerland}

\textit{Email address:} \href{mailto:gabriel.ribeiro@math.ethz.ch}{gabriel.ribeiro@math.ethz.ch}

\end{document}